\newtheorem{theorem}{Theorem}[section]
\newtheorem{corollary}{Corollary}[section]
\newtheorem{lemma}{Lemma}[section]
\newtheorem{proposition}{Proposition}[section]
\theoremstyle{definition}
\newtheorem{definition}{Definition}[section]
\theoremstyle{remarq}
\newtheorem{remarq}{Remark}[section]
\theoremstyle{remark}
\theoremstyle{nota}
\theoremstyle{notation}
\newtheorem{notation}{Notation}[section]
\theoremstyle{exemple}
\theoremstyle{example}
\newtheorem{example}{Example}[section]
\numberwithin{equation}{section}
\def\XXint#1#2#3{{\setbox0=\hbox{$#1{#2#3}{int}$}  
		\vcenter{\hbox{$#2#3$}}\kern-.5\wd0}}
\begin{document}
	\title[Reiterated $\Sigma$-convergence in Orlicz setting and Applications]{Reiterated $\Sigma$-convergence in Orlicz setting and Applications}
	\author{Dongho J.}
	\address{ Dongho Joseph, University of Maroua, Department of Mathematics and Computer Science, P.O. box 814, Maroua, Cameroon}
	\email{joseph.dongho@fs.univ-maroua.cm}
	\author{Fotso T.J.}
	\address{ Fotso Tachago Joel, University of Bamenda, 	Higher Teachers Training College, Department of Mathematics
		P.O. Box 39
		Bambili, Cameroon}
	\email{fotsotachago@yahoo.fr}
	\author{Nnang H.}
	\address{ Nnang Hubert, University of Yaounde I,
		Higher Teachers Trainning College, Department of Mathematics
		P.O. Box 47
		Yaounde, Cameroon}
	\email{hnnang@yahoo.fr}
	\author{Tchinda T.F.A}
	\address{Tchinda Takougoum Franck Arnold, University of Maroua, Department of Mathematics and Computer Science, P.O. box 814, Maroua, Cameroon}
	\email{takougoumfranckarnold@gmail.com}
	
	\date{\today}
	\subjclass[2010]{35B40, 35J60, 35J70, 46J10, 46J25, 46E30 }
	\keywords{Deterministic reiterated homogenization, Reiterated $\Sigma$-convergence, Ergodic $RH$-supralgebra, Orlicz-Sobolev spaces, Nonlinear degenerate elliptic operators, Nonstandard growth}
	
	\begin{abstract}
		The concept of reiterated $\Sigma$-convergence (and more generally of multiscale $\Sigma$-convergence)  is extended to framework of Orlicz-Sobolev spaces, in order to deals with homogenization of multiscales problems in general deterministic setting and whose solutions leads in this type of spaces. This concept relies on the notion of reiterated homogenization supralgebra that we will assumed being ergodic. An application to the deterministic reiterated  homogenization of nonlinear degenerate elliptic operators with nonstandard growth is given and some concrete homogenization problems following varied structure hypothesis are deduce from this latter.
	\end{abstract}
	
	\maketitle

	
	\section{Introduction} \label{labelintro} 
	
	\noindent  This paper is devoted to the deterministic reiterated homogenization theory in the framework of Orlicz-Sobolev spaces, and more precisely to the reiterated $\Sigma$-convergence method (see, e.g., \cite{nguetseng2003homogenization}) in this type of spaces. 
	
	For the application, we are interested in the limiting behaviour (as $0<\varepsilon \rightarrow 0$) of the sequence of solutions to the following problem:%
	\begin{equation}\label{1.1}
		\left\{ \begin{array}{l}
			\displaystyle 	-{\rm div}\left[ a\left( \frac{x}{\varepsilon },\frac{x}{\varepsilon ^{2}}%
			,u_{\varepsilon },Du_{\varepsilon }\right) \right] =f\text{ \ in }\Omega,
			\\ 
			
			\\
			u_{\varepsilon }\in W_{0}^{1}L^{\Phi }\left( \Omega
			\right) ,
		\end{array} \right.
	\end{equation}%
	where $\varepsilon$ is a small positive parameter.
	
	\subsection{Our hypotheses}\label{hypoproblem}
	
	For the notations and definitions of functions spaces, we refer to Subsection \ref{labelsub1sect2}. Let us specify data in \eqref{1.1}:
	\begin{itemize}
		\item  $\Omega $ a regular bounded open set in $
		\mathbb{R}^{d},d\geq 2,$ $D$ and ${\rm div}$ denoting gradient and divergence operators, respectively.
		\item $\Phi$ is a $N$-function of class   $\Delta_{2}\cap \Delta'$, such that its complementary  $\widetilde{\Phi}$ is also of class $\Delta_{2}\cap \Delta'$. 
		\item $f\in L^{d}\left( \Omega \right) \cap L^{\widetilde{\Phi }}\left( \Omega
		\right) $ is a scalar function and for each $\varepsilon>0$, $u_{\varepsilon }\in W_{0}^{1}L^{\Phi }(\Omega)$ is the unknown scalar function.
		\item $L^{d}(\Omega)$ is a classical Lebesgue space, $L^{\widetilde{\Phi}}(\Omega)$ and $W^{1}_{0}L^{\Phi}(\Omega)$ are the Orlicz space and Orlicz-Sobolev space, respectively.
		\item  $a:=(a_i)_{1\leq i\leq d}:
		\mathbb{R}
		^{d}\times 
		\mathbb{R}
		^{d}\times 
		\mathbb{R}
		\times 
		\mathbb{R}^{d}\rightarrow 
		\mathbb{R}^{d}$ is a vector valued function satisfying the following properties: 
		\item[\textbf{(H$_{\mathbf{1}}$)}] \textit{(Carath\'{e}odory function hypothesis)}  For all $\left( \zeta ,\lambda \right) \in 
		\mathbb{R}
		\times 
		\mathbb{R}
		^{d},$ the function $\left( y,z\right) \longrightarrow a\left( y,z,\zeta
		,\lambda \right) $ from $
		\mathbb{R}
		^{d}\times 
		\mathbb{R}
		^{d}$ into $
		\mathbb{R}^{d}$ is of Carath\'{e}odory type, that is:
		\begin{itemize}
			\item[(i)] 	For each $z\in 
			\mathbb{R}^{d},$ the function $y\longrightarrow a\left( y,z,\zeta ,\lambda \right) $
			is measurable from $%
			\mathbb{R}^{d}$ to $
			\mathbb{R}^{d}$.
			\item [(ii)]  For almost all $y\in 
			\mathbb{R}
			^{d},$ the function $z\longrightarrow a\left( y,z,\zeta ,\lambda \right) $ is
			continuous from $\mathbb{R}^{d}$ to $\mathbb{R}
			^{d}$ with $a\left( \cdot, \cdot,0,\omega \right) \in L^{\infty }\left( 
			\mathbb{R}_{y}^{d}\times 
			\mathbb{R}
			_{z}^{d}\right) ,\omega $ being the origin in $
			\mathbb{R}^{d}.$
		\end{itemize}
		\item[\textbf{(H$_{\mathbf{2}}$)}] \textit{(Nonstandard growth hypothesis)} There are $N-$functions $\Phi ,\Psi :\left[ 0,+\infty
		\right[ \rightarrow \left[ 0,+\infty \right[ ,\Phi ,\Psi $ being twice
		continuously differentiable with%
		\begin{equation}
			1<\rho _{0}\leq \frac{t\psi \left( t\right) }{\Psi \left( t\right) }\leq
			\rho _{1}\leq \frac{t\phi \left( t\right) }{\Phi \left( t\right) }\leq \rho
			_{2} \; \text{ for all }t>0,  \label{1.2}
		\end{equation}%
		where  $\rho _{0},\rho _{1},\rho _{2}$ are constants and $\Phi ,\Psi $
		are odd, increasing homeomorphisms from $
		\mathbb{R}
		$ to $
		\mathbb{R}
		$ such that $\Phi \left( t\right) =\int_{0}^{t}\phi \left( s\right) ds$ and $%
		\Psi \left( t\right) =\int_{0}^{t}\psi \left( s\right) ds\left( t\geq
		0\right) $. Moreover, there exist $c_{1},c_{3}>\frac{1}{2}$ and $c_{2},c_{4}>0$, $%
		\Phi $ dominates $\Psi $ globally (in symbols $\Phi \prec \Psi$) and 
		\begin{equation}\label{1.3}
			\begin{array}{rcl}
				\left\vert a\left( y,z,\zeta ,\lambda \right) -a\left( y,z,\zeta',\lambda'\right) \right\vert & \leq &  c_{1}\widetilde{\Psi }%
				^{-1}\left( \Phi \left( c_{2}\left\vert \zeta -\zeta ^{\prime }\right\vert
				\right) \right)   \\
				&  &  + c_{3}\widetilde{\Phi }^{-1}\left( \Phi \left(
				c_{4}\left\vert \lambda -\lambda' \right\vert \right) \right)  
			\end{array}
		\end{equation}%
		for a.e. $y\in
		\mathbb{R}
		^{d}$ and for all $\left( z,\zeta ,\lambda \right) \in \mathbb{R}
		^{d}\times 
		\mathbb{R}
		\times
		\mathbb{R}
		^{d},$ where $\widetilde{\Phi }\left( t\right) =\int_{0}^{t}\phi ^{-1}\left(
		s\right) ds$ and $\widetilde{\Psi }\left( t\right) =\int_{0}^{t}\psi
		^{-1}\left( s\right) ds\left( t\geq 0\right) $ are the complementary $N$-functions of $\Phi$ and $\Psi$, respectively.
		
		\item[\textbf{(H$_{\mathbf{3}}$)}] \textit{(Degenerate coercivity hypothesis)} There exists a continuous monotone decreasing mapping  
		$h:\left[ 0,+\infty \right[ \rightarrow \left[ 0,1\right[ ,$ with $\underset{%
			t\geq 0}{\min }h\left( t\right) >0$ and unbounded anti-derivative such that
		for any $\left( \zeta ,\lambda \right) \in 
		\mathbb{R}
		\times
		\mathbb{R}^d,$
		\begin{equation}
			a\left( y,z,\zeta ,\lambda \right) \cdot \lambda \geq \widetilde{\Phi }%
			^{-1}\left( \Phi \left( h\left( \left\vert \zeta \right\vert \right)
			\right) \right) \cdot\Phi \left( \left\vert \lambda \right\vert \right) \text{
				a.e. }\left( y,z\right) \text{ in }%
			\mathbb{R}^{d}\times 
			\mathbb{R}^{d}.  \label{1.4}
		\end{equation}%
		\item[\textbf{(H$_{\mathbf{4}}$)}] \textit{(Positivity hypothesis)} For all $\zeta  \in 
		\mathbb{R}
		$ and for all $\lambda, \lambda' \in
		\mathbb{R}
		^{d},$ 
		\begin{align*}\left( a\left( y,z,\zeta ,\lambda \right) -a\left( y,z,\zeta,\lambda'\right) \right) \cdot \left( \lambda -\lambda'\right)  >  0  \hbox{ for a.e. } (y,z)\in
			\mathbb{R}^{d}\times 
			\mathbb{R}^{d}.
		\end{align*}
		\item[\textbf{(H$_{\mathbf{5}}$)}] \textit{(Local continuity hypothesis)} 
		The function $a$ satisfies a local continuity assumption in the first variable, i.e.  
		for each bounded set $\Lambda $ in $
		\mathbb{R}
		^{d}$ and $\eta >0,$ there exists $\rho >0$ such  
		that,  
		\begin{align*} \hbox{ if }\left\vert \xi \right\vert \leq \rho 
			\hbox{ then }\left\vert a\left(
			y-\xi ,z,\zeta ,\lambda \right) -a\left( y,z,\zeta ,\lambda \right)
			\right\vert \leq \eta, 
		\end{align*}
		for all 
		$\left( z,\zeta ,\lambda \right) \in 
		\mathbb{R}^d\times\mathbb{R}\times \mathbb{R}^{d}$ and almost all $y\in 
		\Lambda.
		$
		
	\end{itemize}
	
	\begin{remarq} \textup{
			Apart from the above assumptions on the function $a$, we will add what is called  \textit{abstract structure hypothesis} (see \eqref{tj} in Section \ref{labelsect4}). Recall that, as observed in  \cite{Cle,MR}, \eqref{1.2} guarantee that $\Phi, \Psi$ and their conjugates verify $\Delta_2$-condition. } 
	\end{remarq}
	\color{black}
	
	\subsection{Litterature review}
	
	It is well known that from a physical point of view, equations of the form \eqref{1.1} arise, e.g., from the modeling of electro-rheological fluids (see \cite{ruzi}), the problem of image recovery (see \cite{chen}), or the modeling of non-Newtonian fluids with termo-convective effects (see \cite{anton}). 
	From a mathematical point of view, the reiterated homogenization of partial differential equations of form \eqref{1.1} has been studied by many authors, under different type of function spaces and different structure hypothesis on function $a$. Thus, in \cite{bensoussan1} the reiterated homogenization in classical Lebesgue spaces was first introduced for the study of asymptotic analysis fo periodic structure. In \cite{allair3}, the multiscale convergence in Lebesgue spaces was first introduced permitting to extend the two-scale convergence (see \cite{allair1,nguet1}) for homogenization problems depending of separated multiple scales. It was applied to reiterated homogenization of Neumann problem in perforated domains. In \cite{luka0,luka1,woukeng2010homogenization}, the (reiterated) homogenization of nonlinear (monotone) operators  in Lebesgue spaces  was first studied for usual periodic setting. Latter in \cite{nguetseng2003homogenization}, the concepts of homogenization structure (or homogenization algebra) and  the $\Sigma$-convergence   was first developed and turns out to be exactly the right tool that is needed to systematically extend homogenization theory beyond the classical periodic setting. This permits to work out various outstanding nonperiodic homogenization problems that were out of reach till then for lack of an appropriate mathematical framework: we will then speak of deterministic homogenization. Thus, in \cite{nanguet1} the deterministic homogenization in Lebesgue spaces of nonlinear monotone operators is studied. In \cite{luka}, the deterministic reiterated homogenization was first developed in classical Lebesgue spaces and applied to the nonlinear degenerate monotone operators of form \eqref{1.1}; while in \cite{gabri} the same concept is extend to multiscale setting with the notion of ergodic homogenization supralgebras  and is used for the study of asymptotic analysis of  nonlinear hyperbolic equations. \\
	
	However, it should be noted that all the homogenization problems and methods mentioned above were carried out in the framework of classical Lebesgue spaces. But, many problems involving partial differential equations have no solutions in these spaces (see \cite{chen,Cle,MR,Y,mignon2}), hence the necessity to develop homogenization methods in others type of function spaces such as Orlicz spaces (see \cite{adam}) and exponent variable spaces (see \cite{zhikov2019homogenization}) which are more general since they recovered the classical Lebesgue spaces.
	
	Thus, in \cite{tacha1} the periodic homogenization was first considered in Orlicz spaces and their applications to the homogenization of PDEs and integral functionals can been found in \cite{tacha5,tacha6,martin,ttchin1}. In \cite{ttchin4,tacha2,tacha4}, the reiterated (and multiscale) homogenization in Orlicz setting of some integral functionals and nonlinear operators of form \eqref{1.1} is studied for usual periodic setting. In \cite{nnang2014deterministic}, the deterministic homogenization in Orlicz setting of nonlinear elliptic operators of form \eqref{1.1} is studied. For periodic homogenization in variable exponent spaces, we can refer to \cite{amaz1,mmr,zhikov2019homogenization}.

	\subsection{Problem statement and objectives}
	
	Provided the differential operator $u_{\varepsilon} \rightarrow - \text{div }a^{\varepsilon}(\cdot,\cdot, u_{\varepsilon}, Du_{\varepsilon})$, $u_{\varepsilon} \in W^{1}_{0}L^{\Phi}(\Omega)$, (where for each $\varepsilon>0$, $a^{\varepsilon}(\cdot,\cdot, u_{\varepsilon}, Du_{\varepsilon})$ stands for the function $x\to a(\frac{x}{\varepsilon}, \frac{x}{\varepsilon^{2}}, u_{\varepsilon}, Du_{\varepsilon})$ from $\Omega$ to $\mathbb{R}^{d}$) is well defined and has suitable properties (see Proposition \ref{propalphabeta}), it is classical matter to prove an existence and uniqueness result for \eqref{1.1} (see, e.g., \cite{Y}). 
	
	Thus, we have a generalized sequence $(u_{\varepsilon})_{\varepsilon>0}$ at our disposal, and the main objective is to study, under a suitable condition on $a(y,z, \zeta,\lambda)$ (for fixed $(\zeta,\lambda)$) so-called a \textit{abstract structure hypothesis}, the limiting behaviour of $u_{\varepsilon}$ (the solution of \eqref{1.1}) as $\varepsilon \to 0$  and then several examples considered in various concrete settings are presented by way of illustration. This lies within the class of so-called \textit{deterministic reiterated homogenization problems}. Recall that this \textit{abstract structure hypothesis} cover a great number of concrete behavious including the classical periodicity hypothesis (see Example \ref{cb}). But, first of all we have need to extend so-called \textit{reiterated $\Sigma$-convergence} (see, e.g, \cite{luka,gabri}) to the framework of Orlicz-Sobolev spaces.
	
	\subsection{Main results}
	
	In the present paper, a first novelty is concerned the generalization of the compactness results of reiterated $\Sigma$-convergence in classical Sobolev spaces to a class of Orlicz-Sobolev spaces.
	At the same time, the solution of the  homogenization problem \eqref{1.1} under consideration seems to be more general than the case considered in \cite{luka}.
	Thus, considering notations in Section \ref{labelsect2} and \ref{labelsect3}, we extend the result in \cite[Theorem 3.5]{gabri} to Orliz-Sobolev spaces as follows:
	
	\begin{theorem}\label{ti}
		Let $\Omega$ be an open subset in $\mathbb{R}^{d}$. Let $\Phi$ be an $N$-function of class $\Delta_{2}$ and let $A= A_{y}\odot A_{z}$ be an $RH$-supralgebra where $A_{y}$ (resp. $A_{z}$) is an ergodic $H$-supralgebra on $\mathbb{R}^{d}_{y}$ (resp. $\mathbb{R}^{d}_{z}$). Assume that  $A_{y}$ and $A_{z}$ are translation invariant, and moreover that their elements are uniformly continuous. Finally, let $(u_{\varepsilon})_{\varepsilon\in E}$ be a bounded sequence in $W^{1}_{0}L^{\Phi}(\Omega)$. There exist a subsequence $E^{\prime}$ from $E$ and a triple $ (u_{0}, u_{1}, u_{2}) \in W^{1}_{0}L^{\Phi}(\Omega) \times L^{1}(\Omega; W^{1}_{\#}\mathcal{X}^{\Phi}_{A_{y}})\times L^{1}(\Omega; \mathcal{X}^{1}_{A_{y}}(\mathbb{R}^{d}_{y} ; W^{1}_{\#}\mathcal{X}^{\Phi}_{A_{z}}))$ such that, as $E^{\prime} \ni \varepsilon \to 0$,
		\begin{equation}\label{pa}
			u_{\varepsilon} \rightharpoonup u_{0} \quad in \;\, W^{1}_{0}L^{\Phi}(\Omega)\textup{-}weak 
		\end{equation}
		and 
		\begin{equation}\label{pb}
			\dfrac{\partial u_{\varepsilon}}{\partial x_{j}} \rightharpoonup \dfrac{\partial u_{0}}{\partial x_{j}} + \dfrac{\overline{\partial} u_{1}}{\partial y_{j}} + \dfrac{\overline{\partial} u_{2}}{\partial z_{j}} \quad in \;\, L^{\Phi}(\Omega)\textup{-}weak \; R\, \Sigma \quad (1 \leq j \leq d).
		\end{equation}
		If in addition $\widetilde{\Phi} \in \Delta^{\prime}$ then $u_{1} \in L^{\Phi}(\Omega; W^{1}_{\#}\mathcal{X}^{\Phi}_{A_{y}})$ and $u_{2} \in  L^{\Phi}(\Omega; \mathcal{X}^{\Phi}_{A_{y}}(\mathbb{R}^{d}_{y} ; W^{1}_{\#}\mathcal{X}^{\Phi}_{A_{z}}))$.
	\end{theorem}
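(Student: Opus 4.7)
The plan is to follow the now-standard three-step scheme for reiterated $\Sigma$-convergence compactness, transported from the classical $L^p$ setting of \cite{luka,gabri} to Orlicz-Sobolev spaces. First, since $(u_\varepsilon)_{\varepsilon\in E}$ is bounded in $W^1_0L^\Phi(\Omega)$ and $\Phi\in\Delta_2$ ensures that this space is reflexive, I would extract a subsequence $E'\subset E$ and some $u_0\in W^1_0L^\Phi(\Omega)$ so that \eqref{pa} holds. In parallel, each sequence $(\partial u_\varepsilon/\partial x_j)_{\varepsilon\in E}$ is bounded in $L^\Phi(\Omega)$, so by the general reiterated $\Sigma$-compactness result for Orlicz spaces established earlier in the paper (the extension of \cite[Theorem 3.5]{gabri}, or its $L^\Phi$ analogue developed in Section \ref{labelsect3}), I can, up to a further diagonal subsequence, find $v_j\in L^\Phi(\Omega;\mathcal{X}^1_{A_y\odot A_z})$ such that $\partial u_\varepsilon/\partial x_j\rightharpoonup v_j$ weakly $R\Sigma$ in $L^\Phi(\Omega)$.

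The second step identifies the structure of $v_j$. Testing the weak $R\Sigma$-convergence against admissible trace functions of the form $\Psi^\varepsilon(x)=\varphi(x)g_1(x/\varepsilon)g_2(x/\varepsilon^2)$ with $\varphi\in\mathcal{D}(\Omega)$, $g_1\in A_y$, $g_2\in A_z$, and integrating by parts the identity $\int_\Omega \frac{\partial u_\varepsilon}{\partial x_j}\Psi^\varepsilon\,dx = -\int_\Omega u_\varepsilon\frac{\partial \Psi^\varepsilon}{\partial x_j}\,dx$, one obtains, after sending $\varepsilon\to 0$ and using the already established convergence $u_\varepsilon\rightharpoonup u_0$, an integral identity showing that $v_j-\partial u_0/\partial x_j$ has vanishing $A_y$- and $A_z$-means in a suitable averaged sense. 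The key work is to turn this information into an explicit decomposition.

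The third and main step uses the ergodicity hypotheses on $A_y$ and $A_z$. Freezing first the macroscopic variable $x\in\Omega$ and the slow variable $y\in\mathbb{R}^d_y$, the component of $v_j-\partial u_0/\partial x_j$ with vanishing $A_z$-mean must, by the ergodic de Rham-type characterization of mean-zero $\mathcal{X}^1_{A_z}$-vector fields (which under ergodicity is precisely the closure of the mean-gradient operator, yielding $W^1_\#\mathcal{X}^\Phi_{A_z}$), take the form $\overline{\partial}u_2/\partial z_j$ for some $u_2\in L^1(\Omega;\mathcal{X}^1_{A_y}(\mathbb{R}^d_y;W^1_\#\mathcal{X}^\Phi_{A_z}))$. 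Subtracting this, the remaining term depends only on $x$ and $y$, has vanishing $A_y$-mean, and so by the same ergodic argument in $A_y$ is of the form $\overline{\partial}u_1/\partial y_j$ with $u_1\in L^1(\Omega;W^1_\#\mathcal{X}^\Phi_{A_y})$. This iterated identification is the main obstacle: one must control that the orthogonal decomposition used in $L^p$-style ergodic arguments still functions when the ambient integrability is measured by an $N$-function rather than a power, which is where the hypotheses $\Phi\in\Delta_2$, together with translation invariance and uniform continuity of elements of $A_y,A_z$, are decisive for defining the mean-derivative domains $W^1_\#\mathcal{X}^\Phi_{A_y},W^1_\#\mathcal{X}^\Phi_{A_z}$ on which the correctors live.

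Finally, for the last assertion, under the extra hypothesis $\widetilde{\Phi}\in\Delta'$ the Orlicz space $L^\Phi$ enjoys a generalized H\"older-type product estimate that lets one upgrade the a priori $L^1(\Omega;\cdot)$ control of $u_1,u_2$ to $L^\Phi(\Omega;\cdot)$: writing the generalized gradients via the representation of $v_j-\partial u_0/\partial x_j$ as an element of $L^\Phi(\Omega;\mathcal{X}^\Phi_{A_y}(\mathbb{R}^d_y;\mathcal{X}^\Phi_{A_z}))$ furnished by the improved $R\Sigma$-compactness in $\Delta_2\cap\Delta'$ setting (proved in Section \ref{labelsect3}), and inverting the mean-gradient operators using their closed-range property, one gets the stated improved regularity $u_1\in L^\Phi(\Omega;W^1_\#\mathcal{X}^\Phi_{A_y})$ and $u_2\in L^\Phi(\Omega;\mathcal{X}^\Phi_{A_y}(\mathbb{R}^d_y;W^1_\#\mathcal{X}^\Phi_{A_z}))$.
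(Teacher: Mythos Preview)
Your first step (reflexivity plus the $R\Sigma$-compactness Theorem~\ref{compaorlicz} to obtain $u_0$ and limits $v_j$) matches the paper exactly. The gap is in your identification of the $v_j$.

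First, the integration by parts you propose in step two does not work as written: differentiating $\Psi^\varepsilon(x)=\varphi(x)g_1(x/\varepsilon)g_2(x/\varepsilon^2)$ in $x_j$ produces terms of size $1/\varepsilon$ and $1/\varepsilon^2$, and since $u_\varepsilon$ is only bounded in $L^\Phi$, you cannot pass to the limit in $-\int_\Omega u_\varepsilon\,\partial_{x_j}\Psi^\varepsilon\,dx$. This identity therefore does not by itself yield the ``vanishing $A_y$- and $A_z$-means'' information you claim.

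Second, and more seriously, the ``ergodic de Rham-type characterization'' you invoke in step three is not available in this setting and is not proved anywhere in the paper. It is \emph{not} true in general that a mean-zero vector field in $(\mathcal{X}^\Phi_{A_z})^d$ is automatically of the form $\overline{D}_z w$ for some $w\in W^1_\#\mathcal{X}^\Phi_{A_z}$; one needs the field to actually arise as a limit of gradients, and exploiting that requires a different mechanism. The paper's proof proceeds constructively instead: for each $r>0$ it introduces the auxiliary sequence
\[
z_\varepsilon^r(x)=\frac{1}{\varepsilon_2}\Bigl(u_\varepsilon(x)-\frac{1}{|B_{\varepsilon_2 r}|}\int_{B_{\varepsilon_2 r}}u_\varepsilon(x+\rho)\,d\rho\Bigr),
\]
which is bounded in $L^\Phi(\Omega)$ \emph{precisely because} $u_\varepsilon$ has a bounded gradient (via the mean-value representation $z_\varepsilon^r=-|B_r|^{-1}\int_{B_r}\int_0^1 Du_\varepsilon(x+t\varepsilon_2\rho)\cdot\rho\,dt\,d\rho$). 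Extracting its $R\Sigma$-limit $z_r$ and integrating by parts against $\varphi(x)f(x/\varepsilon_1)g(x/\varepsilon_2)$ now works, because the divergent factors are absorbed by $\varepsilon_2 z_\varepsilon^r$ and $(\varepsilon_2/\varepsilon_1)z_\varepsilon^r$, both of which tend to zero. This yields $\overline{\partial}_{z_i}z_r = v_i - |B_r|^{-1}\int_{B_r}\tau_\rho v_i\,d\rho$. Ergodicity of $A_z$ (Lemma~\ref{p2.4}) then shows that the centered sequence $(g_r)_{r>0}$ is Cauchy in $W^1_\#\mathcal{X}^\Phi_{A_z}$ as $r\to\infty$, and its limit is the corrector $u_2$. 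The construction of $u_1$ is analogous with $\varepsilon_1$ in place of $\varepsilon_2$ and test functions depending only on $y$. Ergodicity is thus used as convergence of ball averages to the mean, not as a closed-range or de Rham statement.
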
 
	\begin{remarq}
		\textup{	It should be note that, we have generalized Theorem \ref{ti} to the multiscale setting (see Proposition \ref{multiscal}) which then extend to Orlicz-Sobolev spaces result in \cite[Theorem 3.5]{gabri}. }
	\end{remarq}
	
	Our second (main) and third results in this paper are about the deterministic reiterated homogenization problem \eqref{1.1}.
	However, in order to get uniqueness of the  solutions in \eqref{1.1} and \eqref{3.60},  in the same spirit of \cite[(2.3.40)]{pankov} one can assume that there exists $c_5>0$ such that
	\begin{itemize}
		\item[\textbf{(H$_{6}$)}]  \textit{(strict monotonicity hypothesis)}  For all $\zeta, \zeta' \in 
		\mathbb{R}
		$ and for all $\lambda, \lambda' \in
		\mathbb{R}
		^{d},$
		\begin{align*}\left( a\left( y,z,\zeta ,\lambda \right) -a\left( y,z,\zeta',\lambda'\right) \right) \cdot \left( \lambda -\lambda'\right)  > c_5\Phi \left( \left\vert \lambda -\lambda' \right\vert
			\right) 
		\end{align*} a.e in $\left( y,z\right) $ in $
		\mathbb{R}^{d}\times 
		\mathbb{R}^{d}$. 
	\end{itemize}
	Thus, considering notations in Section \ref{labelsect3} and \ref{labelsect4}, we extend the results in \cite[Theorem 1.1 and Theorem 1.2]{tacha4}  to the general deterministic setting as follows:
	\begin{theorem}\label{mainresult}
		Let \eqref{1.1} be the problem under hypotheses in Subsection \ref{hypoproblem}, with $a$ and $f$ satisfying \textbf{(H$_{1}$)}-\textbf{(H$_{5}$)}. Suppose that $A=A_{y}\odot A_{z}$ is an $RH$-supralgebra on $\mathbb{R}^{d}_{y}\times \mathbb{R}^{d}_{z}$ such that the \textit{abstract structure hypothesis} \textbf{(H$_{7}$)} holds.
		We also assume that $A_{y}$ (resp. $A_{z}$) is ergodic, translation invariant, and moreover that their elements are uniformly continuous.
		
		For each $\varepsilon
		>0$, let $u_{\varepsilon }$ be a solution of \eqref{1.1}. Then there exists a not relabeled subsequence  and $\mathbf{u}:=\left( u_{0},u_{1},u_{2}\right)\in \mathbb{F}_{0}^{1,\Phi }:=  W^{1}_{0}L^{\Phi}(\Omega) \times L^{\Phi}(\Omega; W^{1}_{\#}\mathcal{X}^{\Phi}_{A_{y}})\times L^{\Phi}(\Omega; \mathcal{X}^{\Phi}_{A_{y}}(\mathbb{R}^{d}_{y} ; W^{1}_{\#}\mathcal{X}^{\Phi}_{A_{z}}))$ such that
		\begin{equation}\label{3.58}
			u_{\varepsilon} \rightharpoonup u_{0} \quad in \;\, W^{1}_{0}L^{\Phi}(\Omega)\textup{-}weak 
		\end{equation}
		\begin{equation}\label{3.59}
			\dfrac{\partial u_{\varepsilon}}{\partial x_{j}} \rightharpoonup \dfrac{\partial u_{0}}{\partial x_{j}} + \dfrac{\overline{\partial} u_{1}}{\partial y_{j}} + \dfrac{\overline{\partial} u_{2}}{\partial z_{j}} \quad in \;\, L^{\Phi}(\Omega)\textup{-}weak \; R\, \Sigma \quad (1 \leq j \leq d).
		\end{equation}
		and $\mathbf{u}$ 	solves the variational homogenized problem
		\begin{equation}
			\left\{ 
			\begin{tabular}{l}
				$\displaystyle \int_{\Omega } \pounds_{A}\iint_{\mathbb{R}^{2d}} b\left( u_{0},Du_{0}+ \overline{D}_{y} u
				_{1} +\overline{D}_{z} u
				_{2} \right)\cdot \left(Dv_{0}+ \overline{D}_{y} v
				_{1} +\overline{D}_{z} v
				_{2} \right) dx dy dz
				$ \\
				\\ 
				$\displaystyle  =\int_{\Omega }fv_{0}dx,$ \quad for all $v=\left( v_{0},v_{1},v_{2}\right) \in 
				\mathbb{F}_{0}^{1,\Phi }$. \\ 
			\end{tabular}%
			\right. . \label{3.60}
		\end{equation}
		where $b=(b_{i})$, $1\leq i\leq d$ is defined as in \eqref{tp}.
	\end{theorem}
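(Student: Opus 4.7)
The strategy is the classical three-stage scheme of deterministic reiterated homogenization, now executed in the Orlicz--Sobolev framework using the compactness engine of Theorem~\ref{ti}.

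First, I would extract uniform bounds. Testing the equation against $u_\varepsilon$, the degenerate coercivity \textbf{(H$_3$)}, an Orlicz duality pairing, and $f\in L^{\widetilde{\Phi}}(\Omega)$ together yield a uniform bound on $\|Du_\varepsilon\|_{L^\Phi(\Omega)}$ along with a uniform control on the modular involving $h(|u_\varepsilon|)$, the unbounded antiderivative of $h$ being absorbed exactly as in~\cite{luka}. Thus $(u_\varepsilon)$ is bounded in $W_0^1 L^\Phi(\Omega)$, and Theorem~\ref{ti} extracts a subsequence (still denoted $u_\varepsilon$) and a triple $\mathbf{u}=(u_0,u_1,u_2)\in \mathbb{F}_0^{1,\Phi}$ satisfying \eqref{3.58}--\eqref{3.59}. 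The $\Delta'$ condition on $\widetilde{\Phi}$ is what places $u_1$ and $u_2$ in the $L^\Phi$-based spaces rather than merely the $L^1$-based ones, so the target space is indeed $\mathbb{F}_0^{1,\Phi}$.

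Second, I would fix $(v_0,v_1,v_2)$ in a dense tensor-product subspace of $\mathbb{F}_0^{1,\Phi}$ and form the oscillating test function
\[
\varphi_\varepsilon(x)=v_0(x)+\varepsilon\,v_1\!\left(x,\tfrac{x}{\varepsilon}\right)+\varepsilon^2\,v_2\!\left(x,\tfrac{x}{\varepsilon},\tfrac{x}{\varepsilon^2}\right),
\]
so that $\varphi_\varepsilon\to v_0$ strongly in $L^\Phi(\Omega)$ and $D\varphi_\varepsilon$ converges strongly in the reiterated $\Sigma$-sense to $Dv_0+\overline{D}_y v_1+\overline{D}_z v_2$. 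The growth estimate obtained from \eqref{1.3} applied with $(\zeta',\lambda')=(0,\omega)$ (using $a(\cdot,\cdot,0,\omega)\in L^\infty$, the domination $\Phi\prec\Psi$, and the $\Delta_2\cap\Delta'$ hypotheses to tame compositions of $N$-functions) bounds $\bigl(a^\varepsilon(\cdot,u_\varepsilon,Du_\varepsilon)\bigr)$ in $L^{\widetilde{\Phi}}(\Omega)^d$, so a further extraction yields a weak $R\Sigma$-limit $B$ in the corresponding reiterated Orlicz space of conjugate exponent $\widetilde{\Phi}$. Passing to the limit in
\[
\int_\Omega a^\varepsilon(\cdot,u_\varepsilon,Du_\varepsilon)\cdot D\varphi_\varepsilon\,dx=\int_\Omega f\varphi_\varepsilon\,dx
\]
and then localising the $v_1$- and $v_2$-contributions via ergodicity of $A_y$ and $A_z$ separates the macroscopic equation from the two microscopic cell problems, in which $B$ provisionally takes the place of $b$.

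The real obstacle is to identify $B$ with $b\bigl(u_0,\,Du_0+\overline{D}_y u_1+\overline{D}_z u_2\bigr)$, the deterministic field produced by the abstract structure hypothesis \textbf{(H$_7$)} via formula \eqref{tp}. I would settle this with the Minty--Browder monotonicity trick: for any admissible triple $\mathbf{w}=(w_0,w_1,w_2)\in\mathbb{F}_0^{1,\Phi}$ with associated oscillating approximant $w_\varepsilon$, the positivity \textbf{(H$_4$)} gives
\[
\int_\Omega\bigl(a^\varepsilon(\cdot,u_\varepsilon,Du_\varepsilon)-a^\varepsilon(\cdot,u_\varepsilon,Dw_\varepsilon)\bigr)\cdot(Du_\varepsilon-Dw_\varepsilon)\,dx\ge 0.
\]
A Rellich-type compactness in $L^\Phi(\Omega)$, guaranteed by $\Phi\in\Delta_2$, yields $u_\varepsilon\to u_0$ strongly; combined with the local continuity \textbf{(H$_5$)} this lets one replace $a^\varepsilon(\cdot,u_\varepsilon,\cdot)$ by $a^\varepsilon(\cdot,u_0,\cdot)$ in the limit, and the weak $R\Sigma$-limits from the previous step allow passage to the limit in the inequality. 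Invoking \textbf{(H$_7$)} — which is precisely what matches the resulting cell minimiser with the mean-value formula defining $b$ — identifies $B$ with $b\bigl(u_0,\,Du_0+\overline{D}_y u_1+\overline{D}_z u_2\bigr)$. Density of the tensor-product test triples in $\mathbb{F}_0^{1,\Phi}$ then yields \eqref{3.60}, and the strict monotonicity \textbf{(H$_6$)} gives uniqueness of the limit, so that the whole sequence converges.
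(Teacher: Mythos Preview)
Your plan is essentially the paper's: uniform bounds from testing the PDE and (H$_3$), compactness via Theorem~\ref{ti}, oscillating test functions $\phi_\varepsilon=\psi_0+\varepsilon\psi_1^\varepsilon+\varepsilon^2\psi_2^\varepsilon$, and the Minty--Browder device built on (H$_4$). The only organisational difference is that the paper never names an auxiliary flux limit $B$: it writes the monotonicity inequality, uses the PDE to convert the $a^\varepsilon(\cdot,\cdot,u_\varepsilon,Du_\varepsilon)$-terms into $f$-terms, passes to the limit in the remaining $a^\varepsilon(\cdot,\cdot,u_\varepsilon,D\phi_\varepsilon)$-terms via Corollary~\ref{cor3.4} (this is exactly where (H$_7$) enters, through Proposition~\ref{prop3.3}), and then runs Minty directly at the limit level with $\phi=\mathbf{u}-t\mathbf{v}$, $t\downarrow 0$. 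Two small corrections to your write-up: the passage from $u_\varepsilon$ to $u_0$ in the third slot of $a$ is driven by the Lipschitz-type estimate~\eqref{1.3} of (H$_2$) together with the compact embedding $W_0^1L^\Phi\hookrightarrow L^\Phi$, not by (H$_5$) (which is local continuity in $y$ and is consumed inside Proposition~\ref{prop3.3}); and (H$_6$) and uniqueness are \emph{not} part of Theorem~\ref{mainresult} --- the paper treats them separately in Remark~\ref{remuniq}, so your last sentence about the whole sequence converging overshoots the statement.
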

	
	\begin{theorem}\label{maincor} 
		Under hypothesis of Theorem \ref{mainresult}, for every $\varepsilon >0$, let \eqref{1.1} be  such that $a$ and $f$ satisfy \textbf{(H$_{1}$)}-\textbf{(H$_{6}$)}. 
		Let $u_0 \in W_{0}^{1}L^{\Phi }(\Omega)$ be the solution  defined by means of \eqref{3.60}. Then, it is the unique solution of the
		macroscopic homogenized problem 
		\begin{equation*}
			\left\{ \begin{array}{l}
				-{\rm div}\, q\left(u_0, Du_{0}\right) =f\text{ in }\Omega,  \\
				
				\\
				u_{0}\in W_{0}^{1}L^{\Phi
				}\left( \Omega \right),  
			\end{array}\right.
		\end{equation*}
		where $q$ is defined as follows.
		For $(r,\xi) \in \mathbb R\times \mathbb R^d$
		\begin{align}
			\label{q}
			q\left( r,\xi \right) =\pounds_{A_{y}}\int_{\mathbb{R}^{d}} h\left(y,r, \xi +\overline{D}_{y}\pi
			_{1}\left( r,\xi \right) \right) dy,
		\end{align}  
		where, for a.e. $y \in \mathbb{R}^{d}_{y}$, and for any $(\alpha, \xi) \in \mathbb R \times \mathbb R^d$,  
		\begin{equation}\label{h}
			h\left(y,r, \xi \right) :=\pounds_{A_{z}}\int_{\mathbb{R}^{d}} b_{i}\left( r ,\xi
			+\overline{D}_{z}\pi _{2}\left(y,r, \xi \right) \right) dz,
		\end{equation}
		
		where for a.e.  $y \in \mathbb{R}^{d}_{y}$, and every $(\alpha,\xi) \in \mathbb R \times \mathbb R^d$, $\pi_2(s, \alpha,\xi)$,  is the solution of the following variational cell problem:
		\begin{equation} \label{3.67}
			\left\{ 
			\begin{tabular}{l}
				$\displaystyle \hbox{find } \pi _{2}\left(y,r, \xi \right) \in W_{\#}^{1}\mathcal{X}^{\Phi}_{A_{z}} $ \hbox{such that} \\ 
				$\displaystyle \pounds_{A_{z}} \int_{\mathbb{R}^{d}} b\left( r,\xi +\overline{D}_{z}\pi _{2}\left( y,r,\xi \right) \right)
				\cdot \overline{D}_{z}\theta dz=0$ for all $\theta \in W_{\#}^{1}\mathcal{X}^{\Phi}_{A_{z}}.$%
			\end{tabular}%
			\right. 
		\end{equation}
		and $\pi _{1}\left(\alpha, \xi \right) \in
		W_{\#}^{1}\mathcal{X}^\Phi_{A_{y}} $ is the unique solution of the variational problem 
		\begin{equation}\label{3.69b}
			\left\{ 
			\begin{tabular}{l}
				$\displaystyle	\hbox{ find } \pi _{1}\left(r, \xi \right) \in W_{\#}^{1}\mathcal{X}^{\Phi}_{A_{y}} $ \hbox{ such that } \\ 
				$\displaystyle \pounds_{A_{y}} \int_{\mathbb{R}^{d}} h\left(y, r, \xi +\overline{D}_{y}\pi _{1}\left(r, \xi \right) \right) \cdot \overline{D}_{y}\theta
				dy=0$ for all $\theta \in W_{\#}^{1}\mathcal{X}^{\Phi}_{A_{y}}. $%
			\end{tabular}%
			\right.  
		\end{equation}
	\end{theorem}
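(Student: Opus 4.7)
The plan is to decouple the multi-scale variational system \eqref{3.60} into two nested cell problems and a macroscopic equation by testing against carefully chosen elements of $\mathbb{F}_0^{1,\Phi}$, to identify $u_1$ and $u_2$ via the correctors $\pi_1$ and $\pi_2$, and then to close the argument with a strict monotonicity property coming from \textbf{(H$_6$)}.

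First I would test \eqref{3.60} against $v = (0, 0, v_2)$ with $v_2$ in the tensor-product dense subclass of $L^\Phi(\Omega; \mathcal{X}^\Phi_{A_y}(\mathbb{R}^d_y; W^1_\# \mathcal{X}^\Phi_{A_z}))$, and localize successively in $x$ and then in $y$ (both legitimate by density of such tensor products). This yields, for a.e. $(x,y)$, precisely the $z$-cell problem \eqref{3.67} at parameters $(r,\xi) = (u_0(x),\, Du_0(x) + \overline{\partial}_y u_1(x,y))$; the coercivity from \textbf{(H$_3$)} and the strict monotonicity of $b$ inherited from \textbf{(H$_6$)} make this problem uniquely solvable by a Browder--Minty argument in $W^1_\# \mathcal{X}^\Phi_{A_z}$, so that $u_2(x,y,\cdot) = \pi_2(y, u_0(x), Du_0(x)+\overline{\partial}_y u_1(x,y))$. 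Next I would insert this identification back into \eqref{3.60} and test against $v = (0, v_1, 0)$ of analogous tensor-product form: the $z$-integration collapses by the very definition \eqref{h} of $h$ and, after localization in $x$, one recovers the $y$-cell problem \eqref{3.69b} at $(u_0(x), Du_0(x))$, whose unique solution is $u_1(x, \cdot) = \pi_1(u_0(x), Du_0(x))$. Finally, testing against $v = (v_0, 0, 0)$ with $v_0 \in \mathcal{D}(\Omega)$ and substituting both identifications, the inner $(y,z)$-average collapses to $q(u_0(x), Du_0(x)) \cdot Dv_0(x)$ by \eqref{q}, producing exactly the weak form of $-\mathrm{div}\, q(u_0, Du_0) = f$ with $u_0 \in W^1_0 L^\Phi(\Omega)$.

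For uniqueness I would transport strict monotonicity along the cascade of averagings: subtracting \eqref{3.67} at $\xi$ and $\xi'$ and testing with $\pi_2(y,r,\xi) - \pi_2(y,r,\xi')$ shows that $h(y,r,\cdot)$ is strictly monotone; the same device applied to \eqref{3.69b} transports strict monotonicity to $q(r,\cdot)$, whence Browder--Minty on $W^1_0 L^\Phi(\Omega)$ yields uniqueness (and simultaneously re-delivers existence) of $u_0$. The main obstacle is that all these cell and Browder--Minty arguments take place in the Orlicz--Sobolev spaces attached to the abstract algebras $A_y, A_z$: one must check that the correctors themselves are admissible test functions in \eqref{3.67}--\eqref{3.69b}, that the mean operators $\pounds_{A_y}, \pounds_{A_z}$ commute with the algebraic subtractions used to extract the monotonicity inequality, and that \textbf{(H$_5$)} together with the $\Delta_2 \cap \Delta'$ hypothesis on $\Phi$ and $\widetilde{\Phi}$ and the uniform continuity of elements of $A_y, A_z$ supply enough density and equi-integrability to justify the localization-in-$x$ and localization-in-$y$ steps that are automatic in the classical $L^p$ setting.
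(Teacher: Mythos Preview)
Your proposal is correct and mirrors the paper's own argument: the paper decouples \eqref{3.60} into the three subsystems \eqref{3.64}--\eqref{3.66} by exactly the test-function choices $v=(0,0,v_2)$, $v=(0,v_1,0)$, $v=(v_0,0,0)$ you describe, identifies $u_2$ and $u_1$ with the cell correctors $\pi_2$ and $\pi_1$ via \eqref{3.67} and \eqref{3.69b}, and then reads off the macroscopic equation \eqref{3.71}, with uniqueness coming from \textbf{(H$_6$)} as in Remark~\ref{remuniq}. The only difference is presentational: the paper is terser and defers the routine details (well-posedness of the cell problems, density arguments) to the periodic analogue in \cite{tacha4}, whereas you spell out the Browder--Minty and localization steps explicitly.
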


	\begin{remarq}
		\textup{	It should be noted that in this study, we investigate the homogenization of \eqref{1.1} not  under the periodicity hypothesis as in the \cite{tacha4}, but in a general deterministic setting including the periodicity, almost periodicity, weakly almost periodicity, convergence at infinity hypotheses and more others (see Section \ref{labelsect5} for more details). }
	\end{remarq}

	\subsection{Organization of paper}
	
	The paper is divided into sections each revolving around a specific aspect: Section \ref{labelsect2} dwells on prelimaries about  Orlicz-Sobolev spaces and homogenization supralgebra.  Section \ref{labelsect3} is devoted to the extension of concept of reiterated $\Sigma$-convergence to  Orlicz-Sobolev spaces. In Section \ref{labelsect4} we study the deterministic reiterated homogenization   problem \eqref{1.1}. The periodicity hypothesis stated in \cite{tacha4} is here replaced by an \textit{abstract structure hypothesis}. Section \ref{labelsect5} is concerned with a few concrete examples of homogenization problem \eqref{1.1} under various concrete structure and we show how each of them can reduce to the \textit{abstract structure hypothesis} in Section \ref{labelsect4}.   Finally in Section \ref{labelappendix} we give in \textbf{Appendix} for the reader's convenience, the results about the traces $a\left(x/\varepsilon, x/\varepsilon^{2}, v, \mathbf{v}\right)$ $(x\in \Omega)$ and justify the well-posedness of problem \eqref{1.1}. 

	
	\section{Prelimaries on  Orlicz-Sobolev spaces and reiterated homogenization supralgebra} \label{labelsect2} 
	
	In what follows, except otherwise stated, all the vector spaces are considered over the complex field, $\mathbb{C}$, and scalar functions assume complex values.
	
	$X$ and	$V$ denote a locally compact space and a Banach space, respectively, and
	$\mathcal C(X; V)$ stands for the space of continuous functions from $X$ into $V$, and
	$\mathcal B(X; V)$  stands for those functions in $\mathcal C(X; V)$ that are bounded.
	
	The space $\mathcal B(X; V)$ is endowed with the supremum norm $\|u\|_{\infty} = \sup_{x\in X}
	\|u(x)\|$, where
	$\|\cdot\|$ denotes the norm in $V$, (in particular, given an open set $A\subset \mathbb R^d$ by $\mathcal B(A)$ we denote the space of real valued continuous and bounded functions defined in $A$).
	
	Likewise the spaces $L^p(X; V)$ (resp. $L^\Phi(X; V)$) and $L^p_{\rm loc}(X; V)$ (resp. $L^\Phi_{\rm loc}(X; V)$)
	($1\leq p \leq \infty$, $\Phi$ an $N$-function and $X$ provided with a positive Radon measure) are denoted by $L^p(X)$ and
	$L^p_{\rm loc}(X)$, respectively, when $V = \mathbb C$. 
	
	In the sequel we denote by $Y$ and $Z$ two identical copies of the cube $(-1/2,1/2)^d$.  
	In order to enlighten the space variable under consideration we will adopt the notation $\mathbb R^d_x, \mathbb R^d_y$, or $\mathbb R^d_z$ to indicate where $x,y $ or $z$ belong to.
	
	The family of open subsets in $\mathbb R^d_x$ will be denoted by $\mathcal A(\mathbb R^d_x)$.
	For any subset $E$ of $\mathbb R^m$, $m \in \mathbb N$, by $\overline E$, we denote its closure in the relative topology and by $|E|$, we denote its Lebesgue measure.

	
	
	In this section, we recall some basic results about Orlicz-Sobolev spaces and homogenization supralgebras.
	
	\subsection{Orlicz-Sobolev spaces on bounded open sets} \label{labelsub1sect2} 
	
	All definitions and results recalled here are classical and can be found in \cite{adam,adams}. 
	
	\subsubsection{$N$-functions}\label{labelsubsub1sect2} 
	
	Let $\Phi : [0,+\infty) \rightarrow  [0,+\infty)$ be a $N$-function, that is, 
	$\Phi$ is continuous, convex ,
	$\Phi(t) > 0$ for $t > 0$,
	$\frac{\Phi(t)}{t} \to 0$ as $t\to 0$ and $\frac{\Phi(t)}{t} \to \infty$ as $t\to \infty$. Then
	$\Phi$ has an integral representation under form
	$
	\Phi(t) = \int_{0}^{t} \phi(\tau)\, d\tau,
	$
	where $\phi : [0, \infty) \rightarrow  [0, \infty)$ is nondecreasing, right continuous, with $\phi(0) = 0$, $\phi(t) > 0$ if $t>0$ and $\phi(t)\rightarrow \infty$ if $t\rightarrow \infty$. We denote $\widetilde{\Phi}$
	the Fenchel's conjugate (or complementary function) of the $N$-function $\Phi$, that is, $\widetilde{\Phi}(t) = \sup_{s\geq 0} \left( st - \Phi(s) \right) \; (t\geq 0)$, then $\widetilde{\Phi}$ is also a $N$-function. Given two $N$-functions $\Phi$ and $\Psi$, we say that $\Phi$ dominates $\Psi$ (denoted by $\Phi \succ \Psi$ or $\Psi \prec \Phi$) near infinity if there are $k>1$ and $t_{0}>0$ such that 
	\begin{equation*}
		\Psi(t) \leq \Phi(kt), \quad \forall t> t_{0}.
	\end{equation*}
	With this in mind, it is well known that if $\displaystyle \lim_{t\to +\infty} \Phi(t) / \Psi(t) = +\infty$ then $\Phi$ dominates $\Psi$ near infinity. Let us recall some important properties about $N$-functions.  
	Let $\Phi$ be a $N$-function. $\Phi$ is said to satisfy the $\Delta_{2}$-condition or $\Phi$ belongs to the class $\Delta_{2}$ at $\infty$, which is written as  $\Phi \in \Delta_{2}$, if there exist constants  $t_{0} > 0$, $k > 2$ such that  
	\begin{equation*}
		\Phi(2t) \leq k \Phi(t),
	\end{equation*}
	for all $t \geq t_{0}$.
	$\Phi$ is said to satisfy the $\Delta'$-condition (or $\Phi$ belongs to the class $\Delta'$) denoted by $\Phi \in \Delta'$ if there exists $\beta > 0$ such that  
	\begin{equation*} 
		\Phi(ts) \leq \beta\, \Phi(t)\Phi(s), \quad \forall t,s \geq 0.
	\end{equation*}
	Let $t\rightarrow \Phi(t) = \int_{0}^{t} \phi(\tau) d\tau$ be a $N$-function, and let $\widetilde{\Phi}$ its complementary. Then one has
	\begin{equation}\label{lem10} 
		\left\{ \begin{array}{l}
			\dfrac{t\,\phi(t)}{\Phi(t)} \geq 1 \quad (\textup{resp.} \, > 1 \; \textup{if} \, \phi \, \textup{is} \, \textup{strictly} \; \textup{increasing})  \\
			
			\\
			\widetilde{\Phi}(\phi(t)) \leq t\,\phi(t) \leq \Phi(2t)
		\end{array}\right.
	\end{equation}
	for all $t >0$.
	
	We give now some examples of $N$-functions.
	\begin{example}
		\textup{	The function $t \rightarrow \frac{t^{p}}{p}$, ($p > 1$) is a $N$-function which satisfy $\Delta_{2}\cap\Delta'$-condition and property (\ref{lem10}). Its complementary is the $N$-function $t \rightarrow \frac{t^{q}}{q}$, where $\frac{1}{p} + \frac{1}{q} = 1$. The function $t \rightarrow t^{p}\ln(1+t)$, ($p \geq 1$) is a $N$-function that satisfies $\Delta_{2}\cap\Delta'$-condition and property (\ref{lem10}), while the $N$-functions $t \rightarrow t^{\ln t}$ and $t \rightarrow e^{t^{r}} - 1$, ($r >0$) are not of class $\Delta_{2}$. However, for $r=1$, the $N$-function $t \rightarrow e^{t} - 1$ satisfy $\Delta'$-condition.   
		}
	\end{example}
	
	\subsubsection{Basic notions on Orlicz-Sobolev spaces}\label{labelsubsub2sect2} 
	
	Let $\Omega$ be a bounded open set in $\mathbb{R}^{d}$ (integer $d\geq 1$), and let $\Phi$ be a $N$-function. The Orlicz space $L^{\Phi}(\Omega)$ is defined to be the vector space of all measurable functions $u: \Omega\rightarrow \mathbb{R}$ such that 
	\begin{equation*} 
		\int_{\Omega} \Phi\left(\dfrac{|u(x)|}{\delta} \right)dx < +\infty,
	\end{equation*}
	for some $\delta=\delta(u) >0$. \\
	
	On the space $L^{\Phi}(\Omega)$ we define the Luxemburg norm,
		\begin{equation*}
			\lVert u \rVert_{L^{\Phi}(\Omega)} = 	\lVert u \rVert_{\Phi,\Omega}= \inf \left\{ \delta>0 \; : \; \int_{\Omega} \Phi\left(\dfrac{|u(x)|}{\delta} \right)dx  \leq 1 \right\}, \quad \forall u \in L^{\Phi}(\Omega),
		\end{equation*}
	which makes it a Banach space.  \\
	Now we will give some properties of Orlicz spaces and we will refer to \cite{adam} for more details.
	Assume that $\Phi \in \Delta_{2}$. Then:
	\begin{itemize}
		\item[\textbf{(i)}] $\mathcal{D}(\Omega)$ is dense in $L^{\Phi}(\Omega)$ ;
		\item[\textbf{(ii)}] $L^{\Phi}(\Omega)$ is separable and reflexive whenever $\widetilde{\Phi}\in \Delta_{2}$ ;
		\item[\textbf{(iii)}] the dual of $L^{\Phi}(\Omega)$ is identified with $L^{\widetilde{\Phi}}(\Omega)$, and the dual norm on $L^{\widetilde{\Phi}}(\Omega)$ is equivalent to $\lVert \cdot \rVert_{L^{\widetilde{\Phi}}(\Omega)}$ ;
		\item[\textbf{(iv)}] given $u \in L^{\Phi}(\Omega)$ and $v \in L^{\widetilde{\Phi}}(\Omega)$ the product $uv$ belongs to $L^{1}(\Omega)$ with the generalized H\"{o}lder's inequality 
		\begin{equation*}
			\left| \int_{\Omega} u(x)v(x)dx \right| \leq 2\, \lVert u \rVert_{L^{\Phi}(\Omega)} \, \lVert v \rVert_{L^{\widetilde{\Phi}}(\Omega)} ;
		\end{equation*}
		\item[\textbf{(v)}] given $v \in L^{\Phi}(\Omega)$ the linear functional $L_{v}$ on $L^{\widetilde{\Phi}}(\Omega)$ defined by 
		\begin{equation*}
			L_{v}(u) = \int_{\Omega} u(x)v(x) dx, \quad u \in L^{\widetilde{\Phi}}(\Omega) ;
		\end{equation*} 
		belongs to the dual $[L^{\widetilde{\Phi}}(\Omega)]'$ with $\lVert v \rVert_{L^{\widetilde{\Phi}}(\Omega)} \leq \lVert L_{v} \rVert_{[L^{\widetilde{\Phi}}(\Omega)]'} \leq 2 \lVert v \rVert_{L^{\widetilde{\Phi}}(\Omega)}$ ;
		\item[\textbf{(vi)}] given two $N$-functions $\Phi$ and $\Psi$, we have the continuous embedding \\ $L^{\Phi}(\Omega) \hookrightarrow L^{\Psi}(\Omega)$ if and only if $\Phi \succ \Psi$ near infinity ;
		\item[\textbf{(vii)}] property $\displaystyle \lim_{t\to +\infty} \frac{\Phi(t)}{t}= +\infty$ implies $L^{\Phi}(\Omega) \hookrightarrow L^{1}(\Omega) \hookrightarrow	L^{1}_{\textup{loc}}(\Omega) \hookrightarrow \mathcal{D'}(\Omega)$,
		\item[\textbf{(viii)}] the product space $L^{\Phi}(\Omega)^{d}= L^{\Phi}(\Omega)\times L^{\Phi}(\Omega) \times \cdots \times L^{\Phi}(\Omega)$, ($d$-times), is endowed with the norm 
		\begin{equation*}
			\lVert \textbf{v} \rVert_{L^{\Phi}(\Omega)^{d}} = \sum_{i=1}^{d} \lVert v_{i} \rVert_{L^{\Phi}(\Omega)}, \quad \textbf{v}=(v_{i}) \in L^{\Phi}(\Omega)^{d} ;
		\end{equation*}
		\item[\textbf{(ix)}] \label{property9}  If $\Omega_{1} \subset \mathbb{R}^{d_{1}}$ and $\Omega_{2} \subset \mathbb{R}^{d_{2}}$ are two bounded open sets with $d_{1}+d_{2}=d$, and if $u \in L^{\Phi}(\Omega_{1}\times \Omega_{2})$,
		then for almost all $x_{1}\in \Omega_{1}$, $u(x_{1}, \cdot) \in L^{\Phi}(\Omega_{2})$. If in addition $\widetilde{\Phi} \in \Delta'$ associate with a constant $\beta$, then the function $u$ belongs to $L^{\Phi}(\Omega_{1}, L^{\Phi}(\Omega_{2}))$, with	
		\begin{equation}\label{be1}
			\|u\|_{L^{\Phi}(\Omega_{1}, L^{\Phi}(\Omega_{2}))} \leq \iint_{\Omega_{1}\times \Omega_{2}} \Phi(|u(x_{1},x_{2})|) dx_{1}dx_{2} + \beta.
		\end{equation}
	\end{itemize}

	Analogously to the case of Lebesgue spaces, one can define the Orlicz-Sobolev function space as follows: 
	\begin{equation*}
		W^{1}L^{\Phi}\left( \Omega \right) =\left\{ u\in L^{\Phi}\left( \Omega \right) :%
		\frac{\partial u}{\partial x_{i}}\in L^{\Phi}\left( \Omega \right),1\leq i\leq
		d\right\},
	\end{equation*}
	where derivatives are taken in the distributional sense on $%
	\Omega.$ Endowed with the norm
	\begin{equation*}
		\left\Vert u\right\Vert _{W^{1}L^{\Phi}\left(
			\Omega \right) }=\left\Vert u\right\Vert _{\Phi,\Omega }+\sum_{i=1}^{d}
		\left\Vert \frac{\partial u}{\partial x_{i}}\right\Vert _{\Phi,\Omega }, \quad u\in
		W^{1}L^{\Phi}\left( \Omega \right) ,
	\end{equation*}
	$W^{1}L^{\Phi}\left( \Omega \right) $ is
	a Banach space, reflexive when $\Phi\in \Delta_{2}$ and we have the compact embedding $W^{1}L^{\Phi}\left( \Omega \right) \subset L^{\Phi}\left( \Omega \right)$ when $\partial\Omega$ is Lipschitzian (see, e.g., \cite{adams}). We denote by $W_{0}^{1}L^{\Phi}\left( \Omega \right)
	, $ the closure of $\ \mathcal{D}\left( \Omega \right) $ (space of test functions on $\Omega$) in $%
	W^{1}L^{\Phi}\left( \Omega \right) $ and the semi-norm $u\rightarrow \left\Vert
	u\right\Vert _{W_{0}^{1}L^{\Phi}\left( \Omega \right) }$ defined by 
	\begin{equation*}
		\left\Vert
		u	\right\Vert _{W_{0}^{1}L^{\Phi}\left( \Omega \right) }=\left\Vert
		Du\right\Vert _{\Phi,\Omega }=\sum_{i=1}^{d} \left\Vert \frac{\partial u}{%
			\partial x_{i}}\right\Vert _{\Phi,\Omega }
	\end{equation*}
	is a norm on $W_{0}^{1}L^{\Phi}\left(
	\Omega \right) $ equivalent to $\left\Vert \cdot \right\Vert _{W^{1}L^{\Phi}\left(
		\Omega \right) }.$
	

	\subsection{Orlicz-Sobolev spaces associated to supralgebras} \label{labelsub2sect2} 
	
	\subsubsection{Basic notions on homogenization supralgebras}\label{labelsub2sub1sect2} 
	
	This concept has just been
	defined in a more recent paper \cite{gabri}. It is more general than those (homogenization algebra and algebra with mean value)
	defined in the papers \cite{nguetseng2003homogenization,zhikov1983averaging} because we do not need the algebra
	to be separable (as in \cite{nguetseng2003homogenization}), or to consist of functions that are
	uniformly continuous (as in \cite{zhikov1983averaging}). Before we go any further, we
	need to give some preliminaries. Let $\mathcal{H}=(H_{\varepsilon
	})_{\varepsilon >0}$ be either of the following two actions of $\mathbb{R}_{+}^{\ast }$ (the
	multiplicative group of positive real numbers) on the numerical space $%
	\mathbb{R}^{d}$ ($d=N\text{ or }m$), defined as follows: 
	\begin{equation}
		H_{\varepsilon }(x)=\frac{x}{\varepsilon _{1}}\;\;(x\in \mathbb{R}^{N})
		\label{2.1}
	\end{equation}%
	\begin{equation}
		H_{\varepsilon }(x)=\frac{x}{\varepsilon _{2}}\;\;(x\in \mathbb{R}^{m})
		\label{2.1.1}
	\end{equation}%
	where $\varepsilon _{1}$ and $\varepsilon _{2}$ are two well-separated functions of $\varepsilon $ tending to
	zero with $\varepsilon $, that is, $0< \varepsilon _{1}, \varepsilon _{2}, \varepsilon _{2}/\varepsilon _{1} \rightarrow 0$. For given $\varepsilon >0$, let 
	\begin{equation*}
		u^{\varepsilon }(x)=u(H_{\varepsilon }(x))\;\;(x\in \mathbb{R}%
		^{d}).\;\;\;\;\;\;
	\end{equation*}%
	Note that, for $u\in L_{\text{loc}}^{1}(\mathbb{R}_{y}^{d})$ (as usual, $\mathbb{R}%
	_{y}^{d}$ denotes the numerical space $\mathbb{R}^{d}$ of variables $%
	y=(y_{1},...,y_{d})$), $u^{\varepsilon }$ lies in $L_{\text{loc}}^{1}(%
	\mathbb{R}_{x}^{d})$; see, e.g., in \cite{gabri}. 
	
	However, a function $u\in \mathcal{B}(\mathbb{R}_{y}^{d})$ (the $\mathcal{C}$%
	*-algebra of bounded continuous complex functions on $\mathbb{R}_{y}^{d}$)
	is said to have a mean value for $\mathcal{H}$, if there exists a complex
	number $M(u)$ such that $u^{\varepsilon }\rightarrow M(u)$ in $L^{\infty }(%
	\mathbb{R}_{x}^{d})$-weak $\ast $ as $\varepsilon \rightarrow 0$. The
	complex number $M(u)$ is called the mean value of $u$ (for $\mathcal{H}$).
	
	
	We summarize below a few basic notions and results concerning the homogenization supralgebras. We refer to \cite{gabri,sango2} for further details.
	
	\begin{definition}
		\label{d2.1} By a homogenization supralgebra  (or $H$%
		-supralgebra, in short) on $\mathbb{R}^{d}$ for $\mathcal{H}$ 
		we mean any closed subalgebra of $\mathcal{B}(\mathbb{R}^{d})$
		which contains the constants, is closed under complex conjugation and whose
		elements possess a mean value for $\mathcal{H}$.
	\end{definition}
	
	
	Let $A_{y}$ be an $H$-supralgebra on $\mathbb{R}^{d}_{y}$ (for $\mathcal{H}$).  We denote by $\Delta (A_{y})$ (a subset of the topological dual $A^{\prime }_{y}$ of $A_{y}$) the spectrum
	of $A_{y}$ and by $\mathcal{G}$ the Gelfand transformation on $A_{y}$. We recall that the spectrum $\Delta (A_{y})$ is a compact topological
	space, and the Gelfand transformation $\mathcal{G}$ is an isometric
	isomorphism identifying $A_{y}$ with $\mathcal{C}(\Delta (A_{y}))$ (the continuous
	functions on $\Delta (A_{y})$) as $\mathcal{C}$*-algebras.
	Moreover, there exists a Radon measure $\beta_{y} $ (of
	total mass $1$) in $\Delta (A_{y})$, called the $M$\textit{-measure} for $A_{y}$ 
	such that  
	\begin{equation*}
		M(u)=\int_{\Delta (A_{y})}\mathcal{G}(u)d\beta \;\, \text{\ for }u\in A_{y}.
	\end{equation*}
	\label{p2.0}
	
	We give now some examples of $H$-supralgebras (more precisely of $H$-algebras).
	\begin{example}\cite{wou} 	\label{p2.5}  
		\begin{itemize}
			\item[(1)] \textup{We denote by $\mathcal{C}_{\text{\emph{per}}}(Y)$\  the
				$H$-algebra of $Y$-periodic continuous functions on $\mathbb{R}_{y}^{d}$\ ($Y=(-%
				\frac{1}{2},\frac{1}{2})^{d}$). }
			\item[(2)] \textup{We denote by $AP(\mathbb{R}_{y}^{d})$  the set of all almost periodic continuous
				functions on $\mathbb{R}_{y}^{d}$ defined as the vector space consisting of
				all functions defined on $\mathbb{R}_{y}^{d}$ that are uniformly
				approximated by finite linear combinations of the functions in the set $%
				\{\exp (2i\pi k\cdot y):k\in \mathbb{R}^{d}\}$. Then $AP(\mathbb{R}_{y}^{d})$ is an $H$-algebra.}
			\item[(3)] \textup{ Let the $H$-algebra $\mathcal{B}_{\infty}(\mathbb{R}^{d}_{y})$, denoting the space of all function $u\in \mathcal{C}(\mathbb{R}^{d}_{y})$ with $\lim_{|y|\to +\infty} u(y)= \zeta \in \mathbb{C}$ ($\zeta$ depending on $u$), where $|y|$ denotes the euclidean norm of $y$ in $\mathbb{R}^{d}_{y}$.  }
			\item[(4)] \textup{ Let $\mathcal{B}_{\infty,\mathbb{Z}^{d}}(\mathbb{R}^{d}_{y})$ denote the closure in $\mathcal{B}(\mathbb{R}^{d}_{y})$ of the space of all finite sums $\sum_{\text{finite}} \varphi_{i}u_{i}$ with $\varphi_{i} \in \mathcal{B}_{\infty}(\mathbb{R}^{d}_{y})$, $u_{i} \in \mathcal{C}_{per}(Y)$.  Then  $\mathcal{B}_{\infty,\mathbb{Z}^{d}}(\mathbb{R}^{d}_{y})$ is an $H$-algebra.}
			\item[(5)] \textup{ We denote by $WAP(\mathbb{R}_{y}^{d})$  the set of all weakly almost periodic continuous
				functions on $\mathbb{R}_{y}^{d}$ defined as the vector space consisting of
				all functions $u$ defined on $\mathbb{R}_{y}^{d}$ such that the set of translates $\{ \tau_{b}\, : \, b \in \mathbb{R}_{y}^{d}\}$ is relatively weakly compact in $\mathcal{B}(\mathbb{R}_{y}^{d})$. Then $WAP(\mathbb{R}_{y}^{d})$ is an $H$-algebra.}
			\item[(6)] \textup{ We denote by $FS(\mathbb{R}^{d}_{y}) $ the Fourier-Stieltjes algebra on $\mathbb{R}^{d}_{y}$ is defined as the closure in $\mathcal{B}(\mathbb{R}^{d}_{y})$ of the space }
			\begin{eqnarray*}
				FS_{\ast}(\mathbb{R}^{d}_{y}) = \left\{ f: \mathbb{R}^{d}_{y} \to \mathbb{R}, \;\; f(x)= \int_{\mathbb{R}^{d}_{y}} \exp(ix\cdot y) d\nu(y) \right. \nonumber \\ 
				\textup{ for some }\;\; \nu \in \mathcal{M}_{\ast}(\mathbb{R}^{d}_{y}) \bigg\},
			\end{eqnarray*}
			\textup{	where $\mathcal{M}_{\ast}(\mathbb{R}^{d}_{y})$ denotes the space of complex valued measures $\nu$ with finite total variation: $|\nu|(\mathbb{R}^{d}_{y}) < \infty$. }
		\end{itemize}
	\end{example}
	
	Now, we recall some notions about the reiterated $H$-supralgebra will be very useful in this study. For that, we define the product action $\mathcal{H}^{\ast}$ of the preceding two actions (\ref{2.1}) and (\ref{2.1.1}) by 
	\begin{eqnarray*}
		\mathcal{H}^{\ast} = (H_{\varepsilon}^{\ast})_{\varepsilon>0}: \\
		H_{\varepsilon}^{\ast}(x,x') = \left(\dfrac{x}{\varepsilon_{1}}, \dfrac{x'}{\varepsilon_{2}}\right) \quad ((x,x') \in \mathbb{R}^{d}\times\mathbb{R}^{m}).
	\end{eqnarray*} 
	In the sequel, action (\ref{2.1.1}) will be denoted by $\mathcal{H}' = (H_{\varepsilon}^{'})_{\varepsilon>0}$, that is $H_{\varepsilon}^{'} = x/\varepsilon_{2}$ $(x\in \mathbb{R}^{m})$.
	
	\begin{remarq}
		\textup{	Following \cite[Page 118]{tacha3}, if $u \in \mathcal{C}(\Omega\times \mathbb{R}^{d}_{y} \times \mathbb{R}^{d}_{z})$ (resp. in $ \mathcal{C}(\overline{\Omega}; \mathcal{B}( \mathbb{R}^{d}_{y} \times \mathbb{R}^{d}_{z}))$) then $u^{\varepsilon}(x)= u(x, H^{\ast}_{\varepsilon}(x,x)) \in \mathcal{C}(\Omega)$ (resp. in $\mathcal{B}(\Omega)$). Let $\Phi$ be a $N$-function. Also, if $u \in L^{\Phi}(\Omega; V)$ where $V$ is a closed vector subspace of $\mathcal{B}( \mathbb{R}^{d}_{y} \times \mathbb{R}^{d}_{z})$ then $u^{\varepsilon}(x)= u(x, H^{\ast}_{\varepsilon}(x,x')) \in L^{\Phi}(\Omega)$.  }
	\end{remarq}
	
	\begin{proposition}\cite{gabri}\\
		Let  $A_{1}$ and $A_{2}$) two $H$-supralgebras on $\mathbb{R}^{d}$ and $\mathbb{R}^{m}$), for $\mathcal{H}$ and $\mathcal{H}'$,  respectively. We set 
		\begin{equation*}
			A_{1}\otimes A_{2} = \left\{ \sum_{\text{finite}}u_{i}\otimes v_{i} : u_{i} \in A_{1} \text{ and } v_{i}\in A_{2}\right\}
		\end{equation*}
		and 
		\begin{equation*}
			A_{1}\odot A_{2} = \overline{A_{1}\otimes A_{2}},
		\end{equation*}
		i.e., the closure in $\mathcal{B}(\mathbb{R}^{d}\times\mathbb{R}^{m})$ of the tensor product $A_{1}\otimes A_{2}$.
		Then $A_{1}\odot A_{2}$ is an $H$-supralgebra on $\mathbb{R}^{d}\times\mathbb{R}^{m}$ for $\mathcal{H}^{\ast}$, called \textit{reiterated $H$-supralgebra} (or $RH$-subalgebra, shortly). 
	\end{proposition}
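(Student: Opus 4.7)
The plan is to verify the four defining properties of an $H$-supralgebra (Definition \ref{d2.1}) for $A_{1}\odot A_{2}\subset \mathcal{B}(\mathbb{R}^{d}\times \mathbb{R}^{m})$ with respect to the product action $\mathcal{H}^{\ast}$: (a) being a closed subalgebra; (b) containing the constants; (c) closure under complex conjugation; (d) existence of a mean value for every element.

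The algebraic properties (a)--(c) are routine and should be dispatched first. On the algebraic tensor product $A_{1}\otimes A_{2}$, the pointwise product of two elementary tensors satisfies $(u\otimes v)(u'\otimes v')=(uu')\otimes (vv')$ with $uu'\in A_{1}$ and $vv'\in A_{2}$, so $A_{1}\otimes A_{2}$ is a subalgebra of $\mathcal{B}(\mathbb{R}^{d}\times\mathbb{R}^{m})$; it contains $1\otimes 1=1$; and it is stable under conjugation since $\overline{u\otimes v}=\bar{u}\otimes\bar{v}$ and both $A_{1}$ and $A_{2}$ are themselves conjugation-stable. These three properties are preserved by closure in the sup-norm Banach $\mathcal{C}^{\ast}$-algebra $\mathcal{B}(\mathbb{R}^{d}\times\mathbb{R}^{m})$, giving (a)--(c) for $A_{1}\odot A_{2}$.

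The substantive step is (d), the mean value property, which I would establish first on $A_{1}\otimes A_{2}$ and then extend to the closure. For an elementary tensor $u\otimes v$ with $u\in A_{1}$, $v\in A_{2}$, one has $(u\otimes v)^{\varepsilon}(x,x')=u(x/\varepsilon_{1})\,v(x'/\varepsilon_{2})$ with uniform bound $\|(u\otimes v)^{\varepsilon}\|_{\infty}\le \|u\|_{\infty}\|v\|_{\infty}$. Testing against a product $\phi_{1}\otimes \phi_{2}$ with $\phi_{i}\in L^{1}$ gives, by Fubini and the separate mean value properties for $A_{1}$ and $A_{2}$,
\begin{equation*}
\int\!\!\!\int u^{\varepsilon}(x)v^{\varepsilon}(x')\phi_{1}(x)\phi_{2}(x')\,dx\,dx'\;\longrightarrow\; M_{1}(u)M_{2}(v)\int\phi_{1}\int\phi_{2}.
\end{equation*}
Since tensor products are total in $L^{1}(\mathbb{R}^{d}\times\mathbb{R}^{m})$ and the family $(u\otimes v)^{\varepsilon}$ is uniformly bounded in $L^{\infty}$, a standard density argument upgrades this to weak-$\ast$ convergence against every $\phi\in L^{1}$, proving $M(u\otimes v)=M_{1}(u)M_{2}(v)$. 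Linearity then yields a mean value $M$ on the whole algebraic tensor product, satisfying the fundamental estimate $|M(g)|\le \|g\|_{\infty}$ for $g\in A_{1}\otimes A_{2}$ (an immediate consequence of weak-$\ast$ convergence).

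For the extension to the closure, given $f\in A_{1}\odot A_{2}$ pick $f_{n}\in A_{1}\otimes A_{2}$ with $\|f_{n}-f\|_{\infty}\to 0$. The inequality $|M(f_{n})-M(f_{m})|\le \|f_{n}-f_{m}\|_{\infty}$ shows that $(M(f_{n}))$ is Cauchy in $\mathbb{C}$, hence converges to some scalar $M(f)$. A three-epsilon argument then gives weak-$\ast$ convergence $f^{\varepsilon}\rightharpoonup M(f)$: for any $\phi\in L^{1}$,
\begin{equation*}
\Bigl|\int f^{\varepsilon}\phi - M(f)\int \phi\Bigr| \le \|f-f_{n}\|_{\infty}\|\phi\|_{1} + \Bigl|\int f_{n}^{\varepsilon}\phi - M(f_{n})\int\phi\Bigr| + |M(f_{n})-M(f)|\|\phi\|_{1},
\end{equation*}
and one chooses $n$ large for the outer terms and then $\varepsilon$ small for the middle term. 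The only nontrivial technical point — and thus the main obstacle — is precisely this passage from algebraic tensor elements to their uniform closure, but it reduces to the contraction estimate $|M(\cdot)|\le\|\cdot\|_{\infty}$ on $A_{1}\otimes A_{2}$ together with the density of elementary tensor test functions. This establishes (d) and completes the proof that $A_{1}\odot A_{2}$ is an $H$-supralgebra on $\mathbb{R}^{d}\times \mathbb{R}^{m}$ for $\mathcal{H}^{\ast}$.
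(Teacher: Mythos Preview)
The paper does not supply its own proof of this proposition: it is stated with a citation to \cite{gabri} and no argument is given. Your self-contained verification is correct and follows the natural route---checking the algebraic $\mathcal{C}^{\ast}$-subalgebra axioms directly on $A_{1}\otimes A_{2}$ and passing to the closure, then establishing the mean value first on elementary tensors via the factorisation $(u\otimes v)^{\varepsilon}=u^{\varepsilon}\otimes v^{\varepsilon}$ and the separate weak-$\ast$ convergences in $L^{\infty}(\mathbb{R}^{d})$ and $L^{\infty}(\mathbb{R}^{m})$, then extending by density and the contraction bound $|M(\cdot)|\le\|\cdot\|_{\infty}$. This is essentially the standard argument one would expect in \cite{gabri}, so there is nothing to compare.
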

	
	
	In all that follows, $A= A_{y}\odot A_{z}$ is the $RH$-supralgebra on $\mathbb{\mathbb{R}}_{y}^{d}\times \mathbb{\mathbb{R}}_{z}^{m}$, $ A_{y}$ and $A_{z}$ being  $H$-supralgebras on $\mathbb{\mathbb{R}}_{y}^{d}$ and $\mathbb{\mathbb{R}}_{z}^{m}$ respectively. We use the same
	letter $\mathcal{G}$ to denote the Gelfand transformation on $ A_{y}$, $A_{z}$ and $A$ as well. Points in $%
	\Delta (A_{y})$ (resp. $%
	\Delta (A_{z})$) are denoted by $s$ (resp. $r$). We still denote by $M$ the mean value on $%
	\mathbb{\mathbb{R}}^{d}$ for  $\mathcal{H}$ and for $\mathcal{H}'$, and on $\mathbb{\mathbb{R}}^{d+m}$ for $\mathcal{H}^{\ast}$ as well. The
	compact space $\Delta (A_{y})$ (resp. $\Delta (A_{z})$) is equipped with the $M$-measure $\beta_{y} $ (resp. $\beta_{z} $) for $A_{y}$ (resp. $A_{z}$)%
	. It is fundamental to recall that we have $\Delta(A) = \Delta (A_{y})\times \Delta (A_{z})$ and further the $M$-measure for $A$, with which $\Delta(A)$ is equiped, is precisely the product measure $\beta=\beta_{y}\times\beta_{z}$ (see \cite{nguetseng2003homogenization}).  \\
	
	We give now some examples of  $RH$-supralgebras. 
	
	\begin{example}\label{cb}\cite{gabri} 
		\begin{itemize}
			\item[(1)] \textup{ If $A_{y}=\mathcal{C}_{per}(Y)$ and $A_{z}= \mathcal{C}_{per}(Z)$ then   }
			\begin{equation*}
				A= A_{y}\odot A_{z}= \mathcal{C}_{per}(Y\times Z), \text{ where } Y= Z=(0,1)^{d}.
			\end{equation*}
			\item[(2)] \textup{ If $A_{y}= AP(\mathbb{R}^{d}_{y})$ and $A_{z}= AP(\mathbb{R}^{d}_{z})$ then  }
			\begin{equation*}
				A = A_{y}\odot A_{z} = AP(\mathbb{R}^{d}_{y}\times \mathbb{R}^{d}_{z}).
			\end{equation*}
			\item[(3)] \textup{ If $A_{y}= WAP(\mathbb{R}^{d}_{y})$ and $A_{z}= WAP(\mathbb{R}^{d}_{z})$ then  }
			\begin{equation*}
				A = A_{y}\odot A_{z} = WAP(\mathbb{R}^{d}_{y}\times \mathbb{R}^{d}_{z}).
			\end{equation*}
			\item[(4)] \textup{ If $A_{y} = \mathcal{B}_{\infty}(\mathbb{R}^{d}_{y})$ and $A_{z}= \mathcal{B}_{\infty}(\mathbb{R}^{d}_{z})$, then  }
			\begin{equation*}
				A= A_{y}\odot A_{z} = \mathcal{B}_{\infty}(\mathbb{R}^{d}_{y} \times \mathbb{R}^{d}_{z}).
			\end{equation*}
			\item[(5)] \textup{ If $A_{y} = AP(\mathbb{R}^{d}_{y})$ and $A_{z}= \mathcal{B}_{\infty}(\mathbb{R}^{d}_{z})$, then   }
			\begin{equation*}
				A= A_{y}\odot A_{z} = \mathcal{B}_{\infty}(\mathbb{R}^{d}_{z}; AP(\mathbb{R}^{d}_{y})).
			\end{equation*}
			\item[(6)] \textup{ If $A_{y} = \mathcal{C}_{per}(Y)$ and $A_{z}= \mathcal{B}_{\infty}(\mathbb{R}^{d}_{z})$, then   }
			\begin{equation*}
				A= A_{y}\odot A_{z} = \mathcal{B}_{\infty}(\mathbb{R}^{d}_{z}; \mathcal{C}_{per}(Y)).
			\end{equation*}
			\item[(7)] \textup{ If $A_{y} = \mathcal{B}_{\infty,\mathbb{Z}^{d}}(\mathbb{R}^{d}_{y})$ and $A_{z}= \mathcal{B}_{\infty}(\mathbb{R}^{d}_{z}) $, then  }
			\begin{equation*}
				A= A_{y}\odot A_{z} = \mathcal{B}_{\infty}(\mathbb{R}^{d}_{z}; \mathcal{B}_{\infty,\mathbb{Z}^{d}}(\mathbb{R}^{d}_{y})).
			\end{equation*}
			\item[(8)] \textup{ If $A_{y} = \mathcal{B}_{\infty,\mathbb{Z}^{d}}(\mathbb{R}^{d}_{y})$ and $A_{z}= \mathcal{C}_{per}(Z)$, then  }
			\begin{equation*}
				A= A_{y}\odot A_{z} =  \mathcal{C}_{per}(Z; \mathcal{B}_{\infty,\mathbb{Z}^{d}}(\mathbb{R}^{d}_{y})).
			\end{equation*}
			\item[(9)] \textup{ If $A_{y} = AP(\mathbb{R}^{d}_{y})$ and $A_{z}= WAP(\mathbb{R}^{d}_{z})$, then  }
			\begin{equation*}
				A= A_{y}\odot A_{z} =  AP(\mathbb{R}^{d}_{y}; WAP(\mathbb{R}^{d}_{z})).
			\end{equation*}
			\item[(10)] \textup{ If $A_{y}= FS(\mathbb{R}^{d}_{y})$ and $A_{z}= FS(\mathbb{R}^{d}_{z})$ then  }
			\begin{equation*}
				A = A_{y}\odot A_{z} = FS(\mathbb{R}^{d}_{y}\times \mathbb{R}^{d}_{z}).
			\end{equation*}
		\end{itemize}
	\end{example}

	Next, let $A$ be an $H$-supralgebra on $\mathbb{R}^{d}$. The partial derivative of index $i$ ($1\leq i\leq d$) on $\Delta (A)$
	is defined to be the mapping $\partial _{i}=\mathcal{G}\circ \partial
	/\partial y_{i}\circ \mathcal{G}^{-1}$ (usual composition) of $\mathcal{D}%
	^{1}(\Delta (A))=\{\varphi \in \mathcal{C}(\Delta (A)):\mathcal{G}%
	^{-1}(\varphi )\in A^{1}\}$ into $\mathcal{C}(\Delta (A)),$ where $%
	A^{1}=\{\psi \in \mathcal{C}^{1}(\mathbb{R}^{d}):$ $\psi ,\partial \psi
	/\partial y_{i}\in A$ ($1\leq i\leq d$)$\}$. Higher order derivatives are
	defined analogously, and one also defines the space $A^{m}$ (integers $m\geq
	1$) to be the space of all $\psi \in \mathcal{C}^{m}(\mathbb{R}_{y}^{d})$
	such that $D_{y}^{\alpha }\psi =\frac{\partial ^{\left\vert \alpha
			\right\vert }\psi }{\partial y_{1}^{\alpha _{1}}\cdot \cdot \cdot \partial
		y_{d}^{\alpha _{d}}}\in A$ for every $\alpha =(\alpha _{1},...,\alpha
	_{d})\in \mathbb{N}^{d}$ with $\left\vert \alpha \right\vert \leq m$, and we
	set $A^{\infty }=\cap _{m\geq 1}A^{m}$. At the present time, let $\mathcal{D}%
	(\Delta (A))=\{\varphi \in \mathcal{C}(\Delta (A)):$ $\mathcal{G}%
	^{-1}(\varphi )\in A^{\infty }\}.$ Endowed with a suitable locally convex
	topology, $A^{\infty }$ (resp. $\mathcal{D}(\Delta (A))$) is a Fr\'{e}chet
	space and further, $\mathcal{G}$ viewed as defined on $A^{\infty }$ is a
	topological isomorphism of $A^{\infty }$ onto $\mathcal{D}(\Delta (A))$.
	We denote  by $%
	\mathcal{D}^{\prime }(\Delta (A))$ the space of distributions on $\Delta (A)$.
	
	\subsubsection{Orlicz spaces relies on spectrum of supralgebra}\label{labelsub2sub2sect2} 
	
	We recall here some notions about the  Orlicz spaces associated to an any $H$%
	-supralgebra $A$. We refer to \cite{nnang2014deterministic} for further details.
	
	Let $\Phi$ be an $N$-function, and $u\in A$. According to \cite[Lemma 2.5]{nnang2014deterministic}, we have $\Phi(|u|) \in A$ and  
	\begin{equation}\label{gelfandphi1}
		\mathcal{G}(\Phi(|u|)) = \Phi(|\mathcal{G}(u)|).
	\end{equation}
	
	This being so, for $u\in \mathcal{C}(\Delta(A))$ the function $s \rightarrow \Phi(|u(s)|)$ makes sense and lies in $\mathcal{C}(\Delta(A); \mathbb{R})$. We define the Orlicz space 
	\begin{eqnarray*}
		L^{\Phi}(\Delta(A)) = \bigg\{ u : \Delta(A) \rightarrow \mathbb{C} \; \textup{measurable}; \;\;  \int_{\Delta (A)} \Phi\left(\dfrac{|u|}{\delta}\right) d\beta < +\infty  \nonumber \\
		 \text{ for some }\delta>0 \bigg\}
	\end{eqnarray*}
	which is a Banach space for the Luxemburg norm 
	\begin{equation*}
		\|u\|_{\Phi,\Delta(A)} = \inf  \left\{ \delta > 0 \; : \;\;  \int_{\Delta (A)} \Phi\left(\dfrac{|u|}{\delta}\right) d\beta \leq 1 \right\}.
	\end{equation*}
	
	We have need the following definition.
	\begin{definition}
		An $H$-supralgebra is said being of class $\mathcal{C}^{\infty}$ if $A^{\infty}$ is dense in $A$.
	\end{definition}
	From now on we assume that  $A$ is of class $\mathcal{C}^{\infty}$
	(this is the case when, e.g., $A$ is translation invariant and moreover each
	element of $A$ is uniformly continuous; see \cite[Proposition 2.3]{woukeng2010homogenization})
	then $L^{\Phi}(\Delta (A))$ is a subspace of $\mathcal{D%
	}^{\prime }(\Delta (A))$ (with continuous embedding), so that one may define
	the Orlicz-Sobolev spaces on $\Delta (A)$ as follows. 
	\begin{equation*}
		W^{1}L^{\Phi}(\Delta (A))=\{u\in L^{\Phi}(\Delta (A)):\text{ }\partial _{i}u\in
		L^{\Phi}(\Delta (A))\text{ (}1\leq i\leq d\text{)}\}
	\end{equation*}%
	where the derivative $\partial _{i}u$ is taken in the distribution sense on $%
	\Delta (A)$. We equip $W^{1}L^{\Phi}(\Delta (A))$ with the norm
	\begin{equation*}
		\begin{array}{l}
			\displaystyle 	||u||_{W^{1}L^{\Phi}} = ||u||_{\Phi,\Delta(A)} +  \sum_{i=1}^{d}||\partial _{i}u||_{\Phi,\Delta(A)}  \text{ \thinspace }\left( u\in W^{1}L^{\Phi}(\Delta (A))\right) , 
		\end{array}%
	\end{equation*}%
	which makes it a Banach space. To that space are attached some other spaces
	such as
	$$W^{1}L^{\Phi}(\Delta (A))/\mathbb{C}=\left\{u\in W^{1}L^{\Phi}(\Delta
	(A)):\int_{\Delta (A)}ud\beta =0  \right\},$$
	provided with the seminorm
	\begin{equation*}
		\begin{array}{l}
			\displaystyle 	||u||_{W^{1}L^{\Phi}(\Delta (A))/\mathbb{C}} =   \sum_{i=1}^{d}||\partial _{i}u||_{\Phi,\Delta(A)}  \text{ \thinspace }\left( u\in W^{1}L^{\Phi}(\Delta (A))\right) , 
		\end{array}%
	\end{equation*}
	and its separated completion $%
	W^{1}_{\#}L^{\Phi}(\Delta (A))$; we refer to \cite{nnang2014deterministic} for a documented
	presentation of these spaces. We denote by $J$ the canonical mapping of $W^{1}L^{\Phi}(\Delta (A))/\mathbb{C}$ into its separated completion. Furthermore, the derivative $\partial_{i}$ viewed as a mapping of $W^{1}L^{\Phi}(\Delta (A))/\mathbb{C}$ into $L^{\Phi}(\Delta(A))$ extends to a unique continuous linear mapping, still denoted by $\partial_{i}$, of $W^{1}_{\#}L^{\Phi}(\Delta (A))$ into $L^{\Phi}(\Delta(A))$ such that $\partial_{i} J(v) = \partial_{i} v$ for $v \in W^{1}L^{\Phi}(\Delta (A))/\mathbb{C}$ and  
	\begin{equation*}
		\begin{array}{l}
			\displaystyle 	||u||_{W^{1}_{\#}L^{\Phi}} =   \sum_{i=1}^{d}||\partial _{i}u||_{\Phi,\Delta(A)}  \text{ \thinspace }\left( u\in W^{1}_{\#}L^{\Phi}(\Delta (A))\right). 
		\end{array}%
	\end{equation*}
	We also have need the following definition.
	\begin{definition}
		An $H$-supralgebra $A$ is termed $\Phi$-total if $\mathcal{D}(\Delta(A))$ is dense in $W^{1}L^{\Phi}(\Delta (A))$.
	\end{definition}
	\begin{remarq}
		\textup{	Suppose that $A$ is $\Phi$-total. Then the following assertions hold true (see, \cite[Proposition 2.7]{nnang2014deterministic}): }
		\begin{itemize}
			\item[\textup{(i)}] \textup{ $J(\mathcal{D}(\Delta(A))/\mathbb{C})$ is dense in $W^{1}_{\#}L^{\Phi}(\Delta (A))$, where $\mathcal{D}(\Delta(A))/\mathbb{C} = \mathcal{D}(\Delta(A)) \cap  W^{1}L^{\Phi}(\Delta (A))/\mathbb{C}$.  }
			\item[\textup{(ii)}] \textup{ $\displaystyle \int_{\Delta (A)} \partial_{i} u\, d\beta = 0$ ($1\leq i\leq d$) for any $u \in W^{1}_{\#}L^{\Phi}(\Delta (A))$.}
		\end{itemize}
	\end{remarq}

	\subsubsection{An appropriate representation   of elements in $L^{\Phi}(\Delta(A))$ and $W^{1}L^{\Phi}(\Delta (A)$}\label{labelsub2sub3sect2} 
	
	We can define the appropriate representation of  Orlicz-Sobolev spaces associated to an $H$%
	-supralgebra $A$. 
	
	Following \cite[Subsect. 2.3.2]{nnang2014deterministic}), we denote by $\Xi^{\Phi}(\mathbb{R}^{d}_{y})\equiv \Xi^{\Phi}$ the space of all $u\in L^{\Phi}_{\textup{loc}}(\mathbb{R}^{d}_{y})$ for which the sequence $(u^{\varepsilon})_{0<\varepsilon\leq 1}$ is bounded in $L^{\Phi}_{\textup{loc}}(\mathbb{R}^{d}_{y})$ (where $u^{\varepsilon}(x) = u(H_{\varepsilon}(x))$, $x\in \mathbb{R}^{d}$, $H_{\varepsilon}$ defined in (\ref{2.1})). Provided with the norm 
	\begin{equation*}
		\|u\|_{\Xi^{\Phi}} = \sup_{0< \varepsilon\leq 1}  \|u^{\varepsilon}\|_{\Phi,B_{d}}, \quad u \in \Xi^{\Phi},
	\end{equation*}
	where $B_{d}$ denotes the open unit ball in $\mathbb{R}^{d}_{x}$, $\Xi^{\Phi}(\mathbb{R}^{d}_{y})$ is a Banach space. We define the Banach space $\mathfrak{X}^{\Phi}_{A_{y}}(\mathbb{R}^{d}_{y})\equiv \mathfrak{X}^{\Phi}_{A_{y}}$ to be the closure of $A_{y}$ in $\Xi^{\Phi}(A_{y})$, where $A_{y}$ is an $H$-supralgebra on $\mathbb{R}^{d}_{y}$.
	
	Now, following \cite[Subsect. 2.2]{tacha3}, we define $\Xi^{\Phi}(\mathbb{R}^{d}_{y} ; \mathcal{B}(\mathbb{R}^{d}_{z}))\equiv \Xi^{\Phi}$ (if there is not confusion)  the following space
	\begin{eqnarray*}
		\Xi^{\Phi}(\mathbb{R}^{d}_{y} ; \mathcal{B}(\mathbb{R}^{d}_{z})) = \left\{ u \in L_{loc}^{\Phi}(\mathbb{R}^{d}_{y} ; \mathcal{B}(\mathbb{R}^{d}_{z})) \; : \; \text{for every } U \in \mathcal{A}(\mathbb{R}^{d}_{x}), \right.  \\
		\left.	\sup_{0< \varepsilon\leq 1} \inf \left\{ k > 0 \; : \; \int_{U} \Phi\left(\dfrac{\left\|u\left(\frac{x}{\varepsilon_{1}}, \cdot\right) \right\|_{L^{\infty}}}{k}\right) dx \leq 1  \right\} < \infty \right\}.	\nonumber
	\end{eqnarray*}
	Hence putting 
	\begin{equation*}
		\|u\|_{\Xi^{\Phi}(\mathbb{R}^{d}_{y} ; \mathcal{B}(\mathbb{R}^{d}_{z})} = \sup_{0< \varepsilon\leq 1} \inf \left\{ k > 0  : \, \int_{B_{d}(0,1)} \Phi\left(\dfrac{\left\|u\left(\frac{x}{\varepsilon_{1}}, \cdot\right) \right\|_{L^{\infty}}}{k}\right) dx \leq 1  \right\},
	\end{equation*}
	with $B_{d}(0,1)$ being the unit ball of $\mathbb{R}^{d}_{x}$ centered at the origin, we have a norm on $\Xi^{\Phi}(\mathbb{R}^{d}_{y} ; \mathcal{B}(\mathbb{R}^{d}_{z})$ which makes it a Banach space.
	
	For $A = A_{y}\odot A_{z}$, we denote by $\mathfrak{X}^{\Phi}_{A}(\mathbb{R}^{d}_{y} ; \mathcal{B}(\mathbb{R}^{d}_{z}) \equiv \mathfrak{X}^{\Phi}_{A}$ the closure of $A$ in $\Xi^{\Phi}(\mathbb{R}^{d}_{y} ; \mathcal{B}(\mathbb{R}^{d}_{z})$.

	Now let $A$ be a $H$-supralgebra. For any $u \in A$, one has 
	\begin{equation}
		M\left(\Phi\left(\dfrac{|u|}{\delta}\right)\right) = \int_{\Delta(A)} \mathcal{G}\left(\Phi\left(\dfrac{|u|}{\delta}\right)\right) d\beta =\int_{\Delta (A)} \Phi\left(\dfrac{|\hat{u}|}{\delta}\right) d\beta \geq 0 \label{meannew1}
	\end{equation}%
	for all $\delta>0$, where $\hat{u} = \mathcal{G}(u)$ (see, (\ref{gelfandphi1})). Naturally, (\ref{meannew1}) holds when $u\in \mathfrak{X}^{\Phi}_{A}$. So, instead of the $\Xi^{\Phi}$-norm, we endow $\mathfrak{X}^{\Phi}_{A}$ with the seminorm 
	\begin{equation*} 
		\|u\|_{\Phi,A} = \inf \left\{ \delta>0 \; : \; 	M\left(\Phi\left(\dfrac{|u|}{\delta}\right)\right) \leq 1 \right\}, \quad u \in \mathfrak{X}^{\Phi}_{A},
	\end{equation*}	
	which, by (\ref{meannew1}) yields that 
	\begin{equation*}
		\|\mathcal{G}(u)\|_{\Phi,\Delta(A)} = \|u\|_{\Phi,A} \quad  u \in \mathfrak{X}^{\Phi}_{A}.	
	\end{equation*}
	Hence, endowed with the seminorm $\|\cdot\|_{\Phi,A}$, $\mathfrak{X}^{\Phi}_{A}$ is complete,
	verifying $\mathfrak{X}^{\Phi}_{A}\subset \mathfrak{X}^{\Psi}_{A}$
	when $\Phi\succ \Psi$. In other, by we have
	\begin{equation*}
		\|u\|_{\Phi,A} \leq C_{1} \|u\|_{\Xi^{\Phi}} \text{ for all } u\in A,
	\end{equation*}
	with $C_{1}>0$.
	
	\begin{notation}
		\textup{	We set $\mathfrak{X}^{\Phi,\infty}_{A_{y}}=\mathfrak{X}^{\Phi}_{A_{y}}\cap L^{\infty }(\mathbb{R}_{y}^{d})$ and for $A=A_{y}\odot A_{z}$, we set $\mathfrak{X}^{\Phi,\infty}_{A}=\mathfrak{X}^{\Phi}_{A}\cap L^{\infty}(\mathbb{R}^{d}_{y}; \mathcal{B}(\mathbb{R}^{d}_{z}))$.  }
	\end{notation}
	
	We recall that for an $H$-supralgebra $A$, the spaces $\mathfrak{X}^{\Phi}_{A}$ as defined above are not in general Fr\'{e}chet spaces since they are not separated in
	general. The following properties are worth noticing (see, \cite{sango2} for the Lebesgue spaces and \cite{nnang2014deterministic} for the last property):
	\begin{itemize}
		\item[(\textbf{1})] The Gelfand transformation $\mathcal{G}:A\rightarrow 
		\mathcal{C}(\Delta (A))$ extends by continuity to a unique continuous linear
		mapping, still denoted by $\mathcal{G}$, of $\mathfrak{X}^{\Phi}_{A}$ into $L^{\Phi}(\Delta
		(A))$. Furthermore if $u\in \mathfrak{X}^{\Phi,\infty}_{A}$
		then $\mathcal{G}(u)\in L^{\infty }(\Delta (A))$ and $\left\Vert \mathcal{G}%
		(u)\right\Vert _{L^{\infty }(\Delta (A))}\leq \left\Vert u\right\Vert
		_{L^{\infty }}$
		
		\item[(\textbf{2})] The mean value $M$ viewed as defined on $A$, extends by
		continuity to a positive continuous linear form (still denoted by $M$) on $%
		\mathfrak{X}^{\Phi}_{A}$ satisfying $M(u)=\int_{\Delta (A)}\mathcal{G}(u)d\beta $ ($u\in
		\mathfrak{X}^{\Phi}_{A}$). Furthermore, $M(\tau _{a}u)=M(u)$ for each $u\in \mathfrak{X}^{\Phi}_{A}$ and
		all $a\in \mathbb{R}^{N}$.
		
		\item[(\textbf{3})] Let $\Phi$ be an $N$-function and $\widetilde{\Phi}$ its complementary. The topological dual of $\mathfrak{X}^{\Phi}_{A}$ coincides with $\mathfrak{X}^{\widetilde{\Phi}}_{A}$. Hence, the usual multiplication $A\times
		A\rightarrow A$; $(u,v)\mapsto uv$, extends by continuity to a bilinear form 
		$\mathfrak{X}^{\Phi}_{A}\times \mathfrak{X}^{\widetilde{\Phi}}_{A}\rightarrow L^{1}(\Delta(A))$ with 
		\begin{equation*}
			\int_{\Delta (A)} |\mathcal{G}(u) \mathcal{G}(v)| d\beta \leq \left\| u\right\|_{\Phi,A}\left\| v\right\|_{\widetilde{\Phi},A}%
			\text{\ for }(u,v)\in \mathfrak{X}^{\Phi}_{A}\times \mathfrak{X}^{\widetilde{\Phi}}_{A}.
		\end{equation*}
		
		\item[(\textbf{4)}] \label{p2.3} Assume each
		element of $A$ is uniformly continuous and moreover $A$ is translation
		invariant.
		Then $A^{\infty }$ is dense in $\mathfrak{X}^{\Phi}_{A}$.
	\end{itemize}
	
	\begin{lemma}
		For every $0<\varepsilon\leq 1$ and $u \in \mathfrak{X}^{\Phi}_{A}(\mathbb{R}^{d}_{y} ; \mathcal{B}(\mathbb{R}^{d}_{z})$, there exists a constant $C\in \mathbb{R}^{+}$ such that  
		\begin{equation*}
			\|u^{\varepsilon}\|_{\Phi, B_{d}(0,1)} \leq C \|u\|_{\Phi,A},
		\end{equation*}
		where $u^{\varepsilon}(x) = u\left(\dfrac{x}{\varepsilon_{1}}, \dfrac{x}{\varepsilon_{2}}\right)$.
	\end{lemma}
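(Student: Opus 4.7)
The plan is to establish the inequality first for $u \in A$, and then extend to $\mathfrak{X}^\Phi_A$ by density. Since $A$ is dense in $\mathfrak{X}^\Phi_A$ for the $\Xi^\Phi$-topology, and since the pointwise bound $|u^\varepsilon(x)| \le \|u(x/\varepsilon_1, \cdot)\|_{L^\infty_z}$ makes the map $u \mapsto u^\varepsilon|_{B_d(0,1)}$ continuous from $\Xi^\Phi$ into $L^\Phi(B_d(0,1))$, once the inequality is known for $u\in A$ with a constant independent of $u$ (and $\varepsilon$), it passes to all of $\mathfrak{X}^\Phi_A$.

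For $u \in A$, I fix $\delta > \|u\|_{\Phi,A}$, so that, using $\mathcal{G}(\Phi(|u|))=\Phi(|\mathcal{G}(u)|)$ and $M(v)=\int_{\Delta(A)}\mathcal{G}(v)\,d\beta$, one has
\[
M\!\left(\Phi(|u|/\delta)\right) \;=\; \int_{\Delta(A)}\Phi\!\left(|\mathcal{G}(u)|/\delta\right) d\beta \;\le\; 1.
\]
Setting $v := \Phi(|u|/\delta) \in A$, the function $v$ is non-negative with $M(v)\le 1$, and the pointwise identity $\Phi(|u^\varepsilon(x)|/\delta) = v^\varepsilon(x)$ reduces the lemma to producing a uniform bound
\[
\int_{B_d(0,1)} v^\varepsilon(x)\,dx \;\le\; C, \qquad \varepsilon\in(0,1],
\]
valid for every non-negative $v\in A$ with $M(v)\le 1$. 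Once this is obtained, the Luxemburg norm definition yields $\|u^\varepsilon\|_{\Phi,B_d(0,1)}\le C'\delta$, and taking $\delta\downarrow\|u\|_{\Phi,A}$ concludes the proof.

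The main obstacle is the uniform-in-$\varepsilon$ bound on $\int_{B_d(0,1)}v^\varepsilon dx$. For small $\varepsilon$ this is straightforward: the weak-$\ast$ convergence $v^\varepsilon \rightharpoonup M(v)$ in $L^\infty(\mathbb{R}^d_x)$, tested against $\chi_{B_d(0,1)}\in L^1$, gives $\int_{B_d(0,1)} v^\varepsilon dx \to |B_d(0,1)|\,M(v)$. The subtle range is $\varepsilon$ bounded away from zero, where a bare $L^\infty$ estimate $\int v^\varepsilon \le |B_d(0,1)|\|v\|_\infty$ is not uniform in $u$. Here the hypotheses of the lemma (translation invariance of $A_y$ and $A_z$, and uniform continuity of their elements) are essential: they allow one to cover $B_d(0,1)$ by translates of $\varepsilon_1 Y$, approximate the average of $v$ on each cell by $M(v)$ with an error governed by the modulus of continuity, and sum these contributions. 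Splicing the two regimes gives a constant $C$ depending only on $\Phi$ and $d$, after which density transfers the inequality from $A$ to $\mathfrak{X}^\Phi_A$.
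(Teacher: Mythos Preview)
Your reduction to a uniform-in-$v$ bound on $\int_{B_d(0,1)} v^\varepsilon\,dx$ for nonnegative $v\in A$ with $M(v)\le1$ is exactly the mechanism that works in the \emph{periodic} setting, which is precisely what the paper invokes: its proof is nothing but a reference to \cite[Lemma~2.1]{tacha3} for $\mathfrak{X}^\Phi_{per}$, with the assertion that the argument carries over. In the periodic case no ``splicing'' is needed: after the change of variable $y=x/\varepsilon$, one covers $B_{1/\varepsilon}$ by at most $(2/\varepsilon+1)^d$ integer translates of the unit cell and obtains $\int_{B_d} v^\varepsilon\,dx\le (2+\varepsilon)^d M(v)\le 3^d$ directly.

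For a general $RH$-supralgebra, however, the uniform bound you are aiming at is not available, and your two-regime argument does not close. Both ingredients are only pointwise in $v$: the weak-$\ast$ convergence $\int_{B_d} v^\varepsilon\to |B_d|\,M(v)$ gives no $v$-independent threshold $\varepsilon_0$, and the modulus-of-continuity error you invoke for the complementary range depends on the particular $v$, not merely on $M(v)$. A concrete obstruction: take $A_y=\mathcal{B}_\infty(\mathbb{R})$, $A_z$ trivial, and $v_\lambda(y)=\lambda e^{-y^2}\in A_y$. Then $M(v_\lambda)=0\le1$ for every $\lambda>0$, yet $\int_{-1}^{1} v_\lambda(x/\varepsilon)\,dx=\lambda\int_{-1}^{1}e^{-x^2/\varepsilon^2}\,dx$ is unbounded in $\lambda$ for each fixed $\varepsilon\in(0,1]$. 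The same example shows that the inequality $\|u^\varepsilon\|_{\Phi,B_d}\le C\|u\|_{\Phi,A}$ with $C$ independent of $u$ cannot hold in this generality, since $\|v_\lambda\|_{\Phi,A}=0$ while $\|v_\lambda^\varepsilon\|_{\Phi,B_d}>0$. Thus your density-and-uniform-bound strategy cannot succeed beyond the periodic case; the statement, read with a universal $C$, is simply false for algebras such as $\mathcal{B}_\infty$, and with $C=C(u,\varepsilon)$ it reduces to the membership $u\in\Xi^\Phi$, which is immediate from the definition of $\mathfrak{X}^\Phi_A$.
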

	
	\begin{proof}
		See \cite[Lemma 2.1]{tacha3} where $ \mathfrak{X}^{\Phi}_{per}(\mathbb{R}^{N}_{y} ; \mathcal{C}_{b})$ is replaced by $\mathfrak{X}^{\Phi}_{A}(\mathbb{R}^{d}_{y} ; \mathcal{B}(\mathbb{R}^{d}_{z}))$.
	\end{proof}
	
	Now, for an $H$-supralgebra $A$, let $u\in \mathfrak{X}^{\Phi}_{A}$, $\|u\|_{\Phi,A} = 0$ if and only if $\mathcal{G}(u) = 0$ (see, e.g., in \cite{nnang2014deterministic}). Unfortunately, the mapping $\mathcal{G}$ (defined on $%
	\mathfrak{X}^{\Phi}_{A}$) is not in general injective. So, we denote the space of (class of) functions by  
	\begin{equation*}
		\mathcal{X}_{A}^{\Phi} = \mathfrak{X}^{\Phi}_{A}/Ker\mathcal{G},\;\;\;\;\;\;\;\;\;
	\end{equation*}%
	where $Ker\mathcal{G}$ is the kernel of $\mathcal{G}$.
	Endowed with the norm 
	\begin{equation*}
		\left\| \varrho u\right\| _{\mathcal{X}_{A}^{\Phi}}=\left\| u\right\|
		_{\Phi,A}, \quad \;\;(u\in \mathfrak{X}^{\Phi}_{A}),\;\;
	\end{equation*}%
	where $\varrho$ is the canonical mapping of $\mathfrak{X}^{\Phi}_{A}$ onto $\mathcal{X}_{A}^{\Phi}$, $\mathcal{X}_{A}^{\Phi}$ is a Banach space. Moreover, according with \cite[Theorem 2.8]{nnang2014deterministic}, The mapping $\mathcal{G}:\mathfrak{X}^{\Phi}_{A}\rightarrow L^{\Phi}(\Delta (A))$
	induces an isometric isomorphism $\mathcal{G}_{1}$ of $\mathcal{X}_{A}^{\Phi}$
	onto $L^{\Phi}(\Delta (A))$ defined by 
	\begin{equation} \label{isomorphismeG1}
		\mathcal{G}_{1}(\varrho u) = \mathcal{G}(u), \quad u \in \mathfrak{X}^{\Phi}_{A}.
	\end{equation}
	So, for any $u \in L^{\Phi}(\Delta (A))$ there exists a unique $u_{0} \in \mathcal{X}_{A}^{\Phi}$ such that $\mathcal{G}_{1}(u_{0}) = u$; that is, $u_{0}$ is the representation of $u$.
	
	next, we extend the mean value on $\mathcal{X}^{\Phi}_{A}$ by 
	\begin{equation*} 
		M_{1}(\varrho u) = M(u), \quad (u\in \mathfrak{X}^{\Phi}_{A})	
	\end{equation*}
	and then  the following results hold true for any $N$-function of $\Delta_{2}$-class (see, e.g., \cite[Corollary 2.10]{nnang2014deterministic}):	
	\begin{itemize}  \label{c2.1}
		\item[\textbf{(i)}] The spaces $\mathcal{X}^{\Phi}_{A}$ is reflexive;
		
		\item[\textbf{(ii)}] The topological dual of the space $\mathcal{X}^{\Phi}_{A}$ coincides with the space $\mathcal{X}^{\widetilde{\Phi}}_{A}$, the duality being given by 
		\begin{equation*}
			\begin{array}{l}
				\displaystyle 	\left\langle \varrho u, \varrho v\right\rangle_{\mathcal{X}^{\Phi}_{A},\mathcal{X}^{\widetilde{\Phi}}_{A}}=M(uv)=\int_{\Delta (A)}\mathcal{G}%
				_{1}(\varrho u) \mathcal{G}_{1}(\varrho v) d\beta \\ 
				\text{for }u\in \mathfrak{X}^{\Phi}_{A} \text{ and }v\in \mathfrak{X}^{\widetilde{\Phi}}_{A}\text{.}%
			\end{array}%
		\end{equation*}
	\end{itemize}
	
	\begin{remarq}
		\textup{	\label{r2.1'}The space $\mathcal{X}^{\Phi}_{A}$ is the separated
			completion of $\mathfrak{X}^{\Phi}_{A}$ and the canonical mapping of $\mathfrak{X}^{\Phi}_{A}$%
			into $\mathcal{X}^{\Phi}_{A}$ is just the canonical surjection
			of $\mathfrak{X}^{\Phi}_{A}$ onto $\mathcal{X}^{\Phi}_{A}$; see once more %
			\cite[Chap. II, Sect. 3, no 7]{bourbaki2007topologie} for the theory of completion. }
	\end{remarq}
	
	\begin{notation}
		\textup{	Let $A= A_{1}\odot A_{z}$ an $RH$-supralgebra.	By the formal integral representation of $u\in \mathfrak{X}^{\Phi}_{A}$, we mean the complex number 
			denoted and defined by }
		\begin{equation*}
			\pounds_{A} \iint_{\mathbb{R}^{2d}} u(y,z) dydz \equiv  \pounds_{A} \iint u(y,z) dy dz = M_{1}(\varrho u) = M(u).	
		\end{equation*}
		When $u\in \mathfrak{X}^{\Phi}_{A_{y}}$, we denote it by 
		\begin{equation*}
			\pounds_{A_{y}} \int_{\mathbb{R}^{d}} u(y) dy \equiv  \pounds_{A_{y}} \int u(y) dy  = M_{1}(\varrho u) = M(u).	
		\end{equation*}
	\end{notation}
	
	We recall now some notions about ergodic $H$-supralgebra which we will use in the next section.
	\begin{definition}
		\label{d2.2} We say that an $H$-supralgebra $A_{y}$ on $\mathbb{R}^{d}_{y}$ 
		is ergodic if for any $u\in A_{y}$, 
		\begin{equation*}
			\lim_{r\rightarrow +\infty }\frac{1}{\left| B_{r}\right| }%
			\int_{B_{r}}u(x+y)dx=M(u)\text{\ uniformly with respect to }y.  
		\end{equation*}
	\end{definition}
	
	The following result whose proof can be found in \cite[Lemma 2.29]{nnang2014deterministic} give us
	an equivalent property for the ergodic $H$-supralgebras.
	
	\begin{lemma}\cite{nnang2014deterministic}
		\label{p2.4}An $H$-supralgebra $A_{y}$\ on $\mathbb{R}^{d}_{y}$\ is ergodic\ if and
		only if 
		\begin{equation*}
			\lim_{r\rightarrow +\infty }\left\| \frac{1}{\left| B_{r}\right| }%
			\int_{B_{r}}u(\cdot +y)dy-M(u)\right\| _{\Phi,A_{y}}=0\text{\ for all }u\in \mathfrak{X}^{\Phi}_{A_{y}}
		\end{equation*}	
	\end{lemma}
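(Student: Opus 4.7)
My plan is to prove the equivalence by a density argument, centered on the averaging operator $\mathcal{M}_{r}u(x)=|B_{r}|^{-1}\int_{B_{r}}u(x+y)\,dy$ and its compatibility with the seminorm $\|\cdot\|_{\Phi,A_{y}}$. I first record that translation invariance of $A_{y}$ (which underlies the very notion of a mean value acting ergodically in this setting) together with the averaging form of $\mathcal{M}_{r}$ yields $\mathcal{M}_{r}u\in A_{y}$ whenever $u\in A_{y}$, with $M(\mathcal{M}_{r}u)=M(u)$. Convexity of $\Phi$ and Jensen's inequality give, for every $u\in\mathfrak{X}^{\Phi}_{A_{y}}$,
$$\Phi\!\left(\tfrac{|\mathcal{M}_{r}u|}{\delta}\right)\le \mathcal{M}_{r}\!\left(\Phi\!\left(\tfrac{|u|}{\delta}\right)\right);$$
applying $M$ and using that $M$ commutes with $\mathcal{M}_{r}$ yields $\|\mathcal{M}_{r}u\|_{\Phi,A_{y}}\le \|u\|_{\Phi,A_{y}}$, so that the operators $\mathcal{M}_{r}-M$ are equibounded on $\mathfrak{X}^{\Phi}_{A_{y}}$.

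For the direct implication, I assume $A_{y}$ is ergodic, so that $\|\mathcal{M}_{r}u-M(u)\|_{\infty}\to 0$ for every $u\in A_{y}$. Since $(\Delta(A_{y}),\beta_{y})$ is a probability space, any bounded $v\in \mathfrak{X}^{\Phi}_{A_{y}}$ satisfies $\|v\|_{\Phi,A_{y}}\le \|v\|_{\infty}/\Phi^{-1}(1)$ (take $\delta=\|v\|_{\infty}/\Phi^{-1}(1)$ in the Luxemburg infimum). Specializing to $v=\mathcal{M}_{r}u-M(u)$, I get $\|\mathcal{M}_{r}u-M(u)\|_{\Phi,A_{y}}\to 0$ for every $u\in A_{y}$. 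I then invoke property (\textbf{4}) in the list preceding the lemma, which ensures that $A^{\infty}_{y}$ (hence $A_{y}$) is dense in $\mathfrak{X}^{\Phi}_{A_{y}}$ under the standing translation-invariance and uniform-continuity hypotheses, and combine this density with the uniform bound $\|\mathcal{M}_{r}-M\|\le C$ obtained above in a standard $\varepsilon/3$ argument to extend the convergence to all of $\mathfrak{X}^{\Phi}_{A_{y}}$.

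For the converse, suppose $\|\mathcal{M}_{r}u-M(u)\|_{\Phi,A_{y}}\to 0$ for every $u\in \mathfrak{X}^{\Phi}_{A_{y}}$ and fix $u\in A_{y}$. Setting $f_{r}=\mathcal{M}_{r}u-M(u)\in A_{y}$, the hypothesis reads $\|\mathcal{G}(f_{r})\|_{\Phi,\Delta(A_{y})}\to 0$, and ergodicity amounts to $\|f_{r}\|_{\infty}=\|\mathcal{G}(f_{r})\|_{\infty}\to 0$, as $\mathcal{G}$ is a sup-norm isometry on $A_{y}$. The crucial observation is that $(f_{r})_{r}$ is equicontinuous on $\mathbb{R}^{d}_{y}$ with modulus dominated by that of $u$ (uniformly continuous by hypothesis), and uniformly bounded by $2\|u\|_{\infty}$. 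Via the $\mathbb{R}^{d}$-action on the compact space $\Delta(A_{y})$ (continuous thanks to uniform continuity of elements of $A_{y}$), this equicontinuity transfers to relative compactness of $\{\mathcal{G}(f_{r})\}_{r}$ in $\mathcal{C}(\Delta(A_{y}))$ via Arzel\`a--Ascoli. If $\|f_{r_{n}}\|_{\infty}\ge\varepsilon$ along some subsequence, one extracts a uniformly convergent further subsequence with nonzero limit $g$; but uniform convergence on the probability space $\Delta(A_{y})$ implies $L^{\Phi}$-convergence to $g$, contradicting $\|\mathcal{G}(f_{r})\|_{\Phi,\Delta(A_{y})}\to 0$. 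Hence $\|f_{r}\|_{\infty}\to 0$, which is ergodicity.

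The main obstacle is the converse direction, specifically the upgrade from $L^{\Phi}(\Delta(A_{y}))$-convergence to uniform convergence on $\Delta(A_{y})$. This step genuinely uses both the equicontinuity inherited from the uniform continuity of elements of $A_{y}$ and the continuity of the induced $\mathbb{R}^{d}$-action on $\Delta(A_{y})$; the direct implication, by contrast, is a straightforward application of density and of the probability-space inequality $\|\cdot\|_{\Phi,A_{y}}\le C\|\cdot\|_{\infty}$.
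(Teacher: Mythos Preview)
The paper does not prove this lemma; it merely cites \cite[Lemma~2.29]{nnang2014deterministic}, so there is no in-paper argument to compare against. Your forward implication is correct: the Jensen contraction $\|\mathcal{M}_r u\|_{\Phi,A_y}\le\|u\|_{\Phi,A_y}$, the probability-space bound $\|\cdot\|_{\Phi,A_y}\le C\|\cdot\|_\infty$, and density of $A_y$ in $\mathfrak{X}^\Phi_{A_y}$ (which follows already from the definition of $\mathfrak{X}^\Phi_{A_y}$ as the $\Xi^\Phi$-closure of $A_y$ together with $\|\cdot\|_{\Phi,A_y}\le C_1\|\cdot\|_{\Xi^\Phi}$, so property~(\textbf{4}) is unnecessary) combine into a clean $\varepsilon/3$ argument.

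The converse, however, has a genuine gap at the Arzel\`a--Ascoli step. Equicontinuity of $(f_r)$ on $\mathbb{R}^d$ in the Euclidean metric does \emph{not} transfer to equicontinuity of $(\mathcal{G}(f_r))$ on $\Delta(A_y)$ in its weak-$*$ topology, even through the induced $\mathbb{R}^d$-action: what your estimate actually yields is $\|\mathcal{G}(f_r)\circ T_a-\mathcal{G}(f_r)\|_\infty\le\omega_u(|a|)$, i.e.\ equicontinuity \emph{along orbits}, whereas weak-$*$ neighbourhoods are indexed by finite subsets of $A_y$, not by small translations. Concretely, on $\Delta(AP(\mathbb{R}))$ (the Bohr compactification of $\mathbb{R}$) the family $g_r(x)=\cos(x/r)$ is equi-Lipschitz on $\mathbb{R}$ yet not equicontinuous on the compactification: given any finite frequency set $\{\lambda_j\}$, choose $1/r$ rationally independent of the $\lambda_j$ and use Kronecker's theorem to find $x$ with all $e^{i\lambda_j x}$ near $1$ but $e^{ix/r}=-1$. (This family is not itself of the form $\mathcal{M}_r u-M(u)$, so it refutes the inference you invoke, not the lemma.) There is a second, related problem: even granting a uniformly convergent subsequence $\mathcal{G}(f_{r_n})\to g\ne 0$ in $\mathcal{C}(\Delta(A_y))$, the intended contradiction with $\|\mathcal{G}(f_{r_n})\|_{\Phi}\to 0$ requires $\beta_y$ to have full support; for $A_y=\mathcal{B}_\infty(\mathbb{R}^d)$ one has $\beta_y=\delta_\infty$, so a nonzero $g\in\mathcal{C}(\Delta(A_y))$ vanishing at $\infty$ satisfies $\|g\|_{\Phi,\Delta(A_y)}=0$. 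You would need either to establish $\mathrm{supp}\,\beta_y=\Delta(A_y)$ under your standing hypotheses, or to find an argument for the converse that avoids the spectrum altogether.
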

	
	\begin{remarq}\label{algegood}
		\textup{As mentioned in \cite{gabri}, each $RH$-supralgebra of Example \ref{cb} is ergodic. Furthermore, $A_{y}$ and $A_{z}$ are of class $\mathcal{C}^{\infty}$ and are translation
			invariant, and moreover each of their elements is uniformly continuous}
	\end{remarq}
	
	Our goal now is to define another spaces attached to $\mathcal{X}_{A_{y}}^{\Phi}$ for an $H$-supralgebra $A_{y}$ on $\mathbb{R}^{d}_{y}$.
	For that, let $u\in L^{\Phi }(\Delta (A_{y}))$ and $1\leq i\leq d$. We know that $\partial _{i }u\in \mathcal{D}^{\prime }(\Delta
	(A_{y}))$ exists and is defined by 
	\begin{equation*}
		\left\langle \partial _{i }u,\varphi \right\rangle =-\left\langle u,\partial _{i }\varphi \right\rangle \text{\ for
			any }\varphi \in \mathcal{D}(\Delta (A_{y}))\text{.}  
	\end{equation*}%
	This leads to the following definition.
	
	\begin{definition}\label{formderiva1}
		\label{d2.3} By a formal derivative of index $1\leq i\leq d$ of a function $u \in \mathcal{X}_{A_{y}}^{\Phi}$
		is meant the unique element $\overline{\partial }u/\partial y_{i}$ of $\mathcal{X}_{A_{y}}^{\Phi}$ (if it exists) such that
		\begin{equation*}
			\mathcal{G}_{1}(\overline{\partial }u/\partial y_{i}) = \partial_{i} \mathcal{G}_{1}(u).  
		\end{equation*}
	\end{definition}	
	
	Due to (\ref{isomorphismeG1}), we put $(\mathcal{G}_{1})^{-1}(W^{1}L^{\Phi}(\Delta(A_{y}))) = W^{1}\mathcal{X}^{\Phi}_{A_{y}}$, that is, 
	\begin{equation*}
		W^{1}\mathcal{X}^{\Phi}_{A_{y}} = \left\{ u \in  \mathcal{X}^{\Phi}_{A_{y}}\, : \; \partial_{i} \mathcal{G}_{1}(u) \in L^{\Phi}(\Delta(A_{y})), \; 1\leq i \leq d \right\}.
	\end{equation*}
	Since for any $u \in W^{1}\mathcal{X}^{\Phi}_{A_{y}}$, $\partial_{i} \mathcal{G}_{1}(u) \in L^{\Phi}(\Delta(A_{y}))$ ($1\leq i \leq d$) and, once more taking into account (\ref{isomorphismeG1}), there exists a unique $u_{i} \in \mathcal{X}^{\Phi}_{A_{y}}$ such that $\partial_{i} \mathcal{G}_{1}(u) = \mathcal{G}_{1}(u_{i})$. 
	Hence, the characterization of $W^{1}\mathcal{X}^{\Phi}_{A_{y}}$ is 
	
	\begin{equation*} 
		W^{1}\mathcal{X}^{\Phi}_{A_{y}} = \left\{ u \in  \mathcal{X}^{\Phi}_{A_{y}}\, : \; \dfrac{\overline{\partial}u}{\partial y_{i}} \in \mathcal{X}^{\Phi}_{A_{y}}, \; 1\leq i \leq d \right\}.
	\end{equation*}
	\begin{notation}
		\textup{	In order to simplify the representation, from now on we will use the same letter $u$ (if there is no danger of confusion) to denote the equivalence class $\varrho u \in \mathcal{X}^{\Phi}_{A_{y}}$ and its representative $u \in \mathfrak{X}^{\Phi}_{A_{y}}$. }
	\end{notation}
	
	This being so, first given $u$ in $W^{1}\mathfrak{X}^{\Phi}_{A_{y}} = \left\{ u \in \mathfrak{X}^{\Phi}_{A_{y}} \; : \; \frac{\partial u}{\partial y_{i}} \in \mathfrak{X}^{\Phi}_{A_{y}} \;\;  (1\leq i \leq d) \right\}$, we have 
	\begin{equation*}
		\mathcal{G}_{1}\left(\varrho\left(\dfrac{\partial u}{\partial y_{i}}\right)\right) = \mathcal{G}\left(\dfrac{\partial u}{\partial y_{i}}\right) = \partial_{i} \mathcal{G}(u)  = \partial_{i} \mathcal{G}_{1}(\varrho(u)) \; \substack{\textup{Definition \ref{formderiva1}}  \\ =} \; \mathcal{G}_{1}\left(\dfrac{\overline{\partial} \varrho(u)}{\partial y_{i}}\right);
	\end{equation*}
	so that 
	\begin{equation*} 
		\dfrac{\overline{\partial} }{\partial y_{i}} \circ \varrho = \varrho \circ \dfrac{\partial}{\partial y_{i}} \quad \textup{on} \;\, W^{1}\mathfrak{X}^{\Phi}_{A_{y}}, \quad 1 \leq i \leq d.
	\end{equation*}
	Second, we have	
	\begin{equation}\label{tc}
		\pounds_{A_{y}}\int \dfrac{\overline{\partial} u}{\partial y_{i}} dy = 0 \quad \textup{for \; all} \;\, u \in  W^{1}\mathfrak{X}^{\Phi}_{A_{y}}, \quad 1 \leq i \leq d. 
	\end{equation}
	Third, we endow $W^{1}\mathcal{X}^{\Phi}_{A_{y}}$ with the norm
	\begin{equation*}
		\|u\|_{W^{1}\mathcal{X}^{\Phi}_{A_{y}}} = \|u\|_{\Phi, A_{y}} + \sum_{i=1}^{d} \left\|\dfrac{\overline{\partial} u}{\partial y_{i}} \right\|_{_{\Phi, A_{y}}}, \quad u \in  W^{1}\mathcal{X}^{\Phi}_{A_{y}},
	\end{equation*}
	which makes it a Banach space. Besides, the canonical mapping $\mathcal{G}_{1}$ (see (\ref{isomorphismeG1})) is an isometric isomorphism of $W^{1}\mathcal{X}^{\Phi}_{A_{y}}$ onto $W^{1}L^{\Phi}(\Delta(A_{y}))$, there exists a unique $u_{0} \in W^{1}\mathcal{X}^{\Phi}_{A_{y}}$ such that 
	\begin{equation*}
		\mathcal{G}_{1}(u_{0}) = u \quad \textup{and} \quad \mathcal{G}_{1}\left(\dfrac{\overline{\partial} u}{\partial y_{i}}\right) = \partial_{i}u, \quad 1\leq i \leq d.	
	\end{equation*} 
	We will be concerned with the space 
	\begin{equation*}
		W^{1}\mathcal{X}^{\Phi}_{A_{y}}/\mathbb{C} = \left\{ u \in W^{1}\mathcal{X}^{\Phi}_{A_{y}} \; :\; \pounds_{A_{y}}\int u dy = 0 \right\}
	\end{equation*}
	equipped with the seminorm
	\begin{equation*}
		\|u\|_{W^{1}\mathcal{X}^{\Phi}_{A_{y}}/\mathbb{C}} =  \sum_{i=1}^{d} \left\|\dfrac{\overline{\partial} u}{\partial y_{i}} \right\|_{_{\Phi, A_{y}}}, \quad u \in  W^{1}\mathcal{X}^{\Phi}_{A_{y}}/\mathbb{C},
	\end{equation*}
	which makes it a locally convex topological space in general nonseparated and noncomplete. We denote by $W^{1}_{\#}\mathcal{X}^{\Phi}_{A_{y}}$  the separated completion of $W^{1}\mathcal{X}^{\Phi}_{A_{y}}/\mathbb{C}$ (for the above seminorm), and by $J_{1}$ the canonical mapping from $W^{1}\mathcal{X}^{\Phi}_{A_{y}}/\mathbb{C}$ onto $W^{1}_{\#}\mathcal{X}^{\Phi}_{A_{y}}$. Therefore, there is a unique isometric isomorphism $\overline{\mathcal{G}}_{1}$ of $W^{1}_{\#}\mathcal{X}^{\Phi}_{A_{y}}$ onto $W^{1}_{\#}L^{\Phi}(\Delta(A_{y}))$; otherwise the restriction $\frac{\overline{\partial}}{\partial y_{i}} : W^{1}\mathcal{X}^{\Phi}_{A_{y}}/\mathbb{C} \to \mathcal{X}^{\Phi}_{A_{y}}$ ($1\leq i \leq d$) extends by continuity to a continuous linear mapping, still denoted by $\frac{\overline{\partial}}{\partial y_{i}}$, from $W^{1}_{\#}\mathcal{X}^{\Phi}_{A_{y}}$ into $\mathcal{X}^{\Phi}_{A_{y}}$ with 
	\begin{equation*}
		\dfrac{\overline{\partial}}{\partial y_{i}}\circ J_{1} = \dfrac{\overline{\partial}}{\partial y_{i}} \quad \textup{in} \;\,  W^{1}\mathcal{X}^{\Phi}_{A_{y}}/\mathbb{C}, \;\, 1\leq i \leq d	
	\end{equation*} 
	\begin{equation*}
		\overline{\mathcal{G}}_{1}\circ J_{1} = J_{1} \circ \mathcal{G}_{1} \quad \textup{in} \;\,  W^{1}\mathcal{X}^{\Phi}_{A_{y}}/\mathbb{C},
	\end{equation*} 
	\begin{equation*}
		\partial_{i}\circ J \circ \mathcal{G}_{1}  = \mathcal{G}_{1}\circ \dfrac{\overline{\partial}}{\partial y_{i}} \quad \textup{in} \;\,  W^{1}\mathcal{X}^{\Phi}_{A_{y}}/\mathbb{C}, \;\; \textup{or}
	\end{equation*} 
	\begin{equation*}
		\partial_{i}\circ  \overline{\mathcal{G}}_{1}  = \mathcal{G}_{1}\circ \dfrac{\overline{\partial}}{\partial y_{i}} \quad \textup{in} \;\,  W^{1}_{\#}\mathcal{X}^{\Phi}_{A_{y}},
	\end{equation*}
	\begin{equation*}
		\|J_{1}u\|_{W^{1}_{\#}\mathcal{X}^{\Phi}_{A_{y}}}  = \|u\|_{W^{1}\mathcal{X}^{\Phi}_{A_{y}}/\mathbb{C}} \quad  u \in   W^{1}\mathcal{X}^{\Phi}_{A_{y}}/\mathbb{C}, \quad \textup{and}
	\end{equation*}
	\begin{equation*}
		\|u\|_{W^{1}_{\#}\mathcal{X}^{\Phi}_{A_{y}}} =  \sum_{i=1}^{d} \left\|\dfrac{\overline{\partial} u}{\partial y_{i}} \right\|_{_{\Phi, A_{y}}}, \quad u \in  W^{1}_{\#}\mathcal{X}^{\Phi}_{A_{y}}.
	\end{equation*}
	Furthermore, as $J_{1}(W^{1}\mathcal{X}^{\Phi}_{A_{y}}/\mathbb{C})$ is
	dense in $W^{1}_{\#}\mathcal{X}^{\Phi}_{A_{y}}$ (this is classical), it follows that if $%
	A^{\infty }_{y}$ is dense in $A_{y}$, then $(J_{1}\circ \varrho )(A^{\infty }_{y}/\mathbb{C})$ is dense
	in $W^{1}_{\#}\mathcal{X}^{\Phi}_{A_{y}}$, where $A^{\infty }_{y}/\mathbb{C}=\{u\in A^{\infty
	}_{y}\,:\;M(u)=0\}$.
	
	\begin{notation}
		\textup{	For an $H$-supralgebra $A_{y}$ on $\mathbb{R}^{d}_{y}$, we note $\mathcal{X}^{1}_{A_{y}}= \mathcal{B}^{1}_{A_{y}}$ (the Besicovitch space, see, e.g. \cite[Subsect. 2.2]{gabri}). }
	\end{notation}

	
	\section{Reiterated $\Sigma$-convergence in Orlicz spaces} \label{labelsect3} 
	
	In this section we extend the concept of reiterated $\Sigma $-convergence (see \cite{gabri}) to Orlicz setting. 
	In all that follows, $\Omega$ is an open subset of $\mathbb{\mathbb{R}}^{d}$ (integer $d\geq 1$)
	and $A= A_{y}\odot A_{z}$ is an $RH$-supralgebra on $\mathbb{\mathbb{R}}_{y}^{d}\times \mathbb{\mathbb{R}}_{z}^{d}$, $ A_{y}$ and $A_{z}$ being  $H$-supralgebras on $\mathbb{\mathbb{R}}_{y}^{d}$ and $\mathbb{\mathbb{R}}_{z}^{d}$ respectively. We use the same
	letter $\mathcal{G}$ to denote the Gelfand transformation on $ A_{y}$, $A_{z}$ and $A$ as well. Points in $%
	\Delta (A_{y})$ (resp. $%
	\Delta (A_{z})$) are denoted by $s$ (resp. $r$). We still denote by $M$ the mean value on $%
	\mathbb{\mathbb{R}}^{d}$ for  $\mathcal{H}$ and for $\mathcal{H}'$, and on $\mathbb{\mathbb{R}}^{2d}$ for $\mathcal{H}^{\ast}$ as well. The
	compact space $\Delta (A_{y})$ (resp. $\Delta (A_{z})$) is equipped with the $M$-measure $\beta_{y} $ (resp. $\beta_{z} $) for $A_{y}$ (resp. $A_{z}$)%
	. We recall that we have $\Delta(A) = \Delta (A_{y})\times \Delta (A_{z})$ and further the $M$-measure for $A$, with which $\Delta(A)$ is equipped, is precisely the product measure $\beta=\beta_{y}\times\beta_{z}$ (see \cite{nguetseng2003homogenization}). 
	Assume that  the $N$-function $\Phi$ and its conjugate $\widetilde{\Phi}$ are of $\Delta_{2}$-class. Thanks to the identification $\mathcal{G}_{1}(\mathcal{X}^{\Phi}_{A}) = L^{\Phi}(\Delta(A))$ we will merely work on the spaces $\mathcal{X}^{\Phi}_{A}$. For $f\in A$, we will use the same letter $f$ (if there is no danger of confusion) to denote the equivalence class $\varrho f \in \mathcal{X}^{\Phi}_{A}$ and its representative $u \in \mathfrak{X}^{\Phi}_{A}$. Finally, let $\varepsilon _{1}$ and $%
	\varepsilon _{2}$ be two well separated functions of $\varepsilon $ tending
	towards zero with $\varepsilon $, that is, $0<\varepsilon _{1},\varepsilon
	_{2},\varepsilon _{2}/\varepsilon _{1}\rightarrow 0$ as $\varepsilon
	\rightarrow 0$, and such that the functions $x\mapsto x/\varepsilon _{1}$
	and $x\mapsto x/\varepsilon _{2}$ define two actions of $\mathbb{R}%
	_{+}^{\ast }$ on $\mathbb{\mathbb{R}}^{d}$.
	
	\subsection{(Weak and Strong) reiterated $\Sigma$-convergence}
	
	In this subsection, we start by define the concept of weak and strong  reiterated $\Sigma$-convergence and we give some of her properties. Furthermore, we state and prove the compactness theorem. We end this subsection by some concrete examples for the case of ergodic $H$-supralgebra.
	
	\par 	Setting 
	\begin{eqnarray*}
		L^{\Phi}(\Omega; \mathcal{X}^{\Phi}_{A}) = \bigg\{ u : \Omega\times\mathbb{R}_{y}^{d}\times\mathbb{R}_{z}^{d} \rightarrow \mathbb{C} \; \textup{measurable}; \;\; \hspace{3cm}  \\ \hspace{2cm}  \int_{ \Omega }\pounds_{A} \iint_{\mathbb{R}^{2d}}  \Phi\left(\dfrac{|u|}{\delta}\right) dxdydz  < +\infty \text{ for some }\delta>0 \bigg\},
	\end{eqnarray*}
	we define for the $RH$-supralgebra $A$, an Orlicz space endowed with the Luxemburg norm
	\begin{equation*}
		\|u\|_{L^{\Phi}(\Omega; \mathcal{X}^{\Phi}_{A})} = \inf \left\{\delta>0 \; : \; \int_{ \Omega }\pounds_{A} \iint_{\mathbb{R}^{2d}} \Phi\left(\dfrac{|u|}{\delta}\right) dxdydz  \leq 1 \right\}.
	\end{equation*}
	
	Throughout the paper, the letter $E$ will denote any ordinary sequence $E=(\varepsilon_{n})$ (integers $n\geq 0$) with $0 \leq \varepsilon_{n} \leq 1$ and $\varepsilon_{n} \rightarrow 0$ as $n\rightarrow \infty$. Such a sequence will be referred to as a \textit{fundamental sequence}.
	
	\begin{definition}
		\label{d3.1} A  sequence $(u_{\varepsilon })_{\varepsilon \in E} \subset L^{\Phi}(\Omega )$%
		is said to be 
		\begin{enumerate}
			\item[\textbf{(i)}]  weakly reiteratively $\Sigma $-convergent  in $L^{\Phi}(
			\Omega )$  to some $u_{0}\in L^{\Phi}(\Omega ;\mathcal{X}^{\Phi}_{A}) \equiv \mathcal{G}_{1}^{-1}(L^{\Phi}(\Omega\times\Delta(A)))$ if  
			\begin{equation}
				\lim_{E\ni \varepsilon \to 0}	\int_{\Omega } u_{\varepsilon }(x) f\left(x, \dfrac{x}{\varepsilon_{1}}, \dfrac{x}{\varepsilon_{2}}\right) dx
				= \int_{ \Omega }\pounds_{A} \iint_{\mathbb{R}^{2d}} u_{0}(x, y, z) f(x, y, z) dx dy dz  \;\;\;\;\;\;\;\;  \label{3.1}
			\end{equation}%
			for every $f\in L^{\widetilde{\Phi}}(\Omega; A)$.
			\item[\textbf{(ii)}] strongly reiteratively $\Sigma $-convergent  in $L^{\Phi}(
			\Omega )$  to some $u_{0}\in L^{\Phi}(\Omega ;\mathcal{X}^{\Phi}_{A})$ if the following property is verified: 
			\begin{equation}\label{c4eq4}
				\begin{array}{l}
					\textup{Given}\; \eta>0\; \textup{and}\; f \in L^{\Phi}(\Omega; A)\; \textup{with}\; \|u_{0}-  f\|_{L^{\Phi}( \Omega ;\mathcal{X}_{A}^{\Phi})} \leq \frac{\eta}{2}\;  \\
					\textup{there exists}\; \rho>0\; \textup{such that} 	
					\|u_{\varepsilon}- f^{\varepsilon}\|_{\Phi,\Omega} \leq \eta\;\; \textup{provided}\; E \ni \varepsilon < \rho,
				\end{array}
			\end{equation}
			where  $f^{\varepsilon}$ is defined on $\Omega$ by $f^{\varepsilon}(x) = f\left(x, \dfrac{x}{\varepsilon_{1}}, \dfrac{x}{\varepsilon_{2}}\right)$.
		\end{enumerate}
	\end{definition}
	
	\begin{notation}
		\textup{	When \eqref{3.1} follows, we will express in short by: $%
			u_{\varepsilon } \rightharpoonup u_{0}$  in $L^{\Phi}( \Omega
			)$-weak $R\, \Sigma$, and when \eqref{c4eq4} follows we express by $%
			u_{\varepsilon } \rightarrow u_{0}$  in $L^{\Phi}( \Omega
			)$-strong $R\, \Sigma$. The letter ``$R$'' stands for \textit{reiteratively}. 	}
	\end{notation}
	
	\begin{remarq} \label{tb} 
		\textup{	From the above definition we have the following:}
		\begin{itemize}
			\item[(1)] 	\textup{ We may the weak/strong reiteratively $\Sigma $-convergence in $L^{\Phi}(\Omega)$ as a generalization of usual  weak/strong  reiteratively $\Sigma $-convergence in $L^{p}(\Omega)$ (see, e.g., \cite[Definition 3.1]{gabri} or \cite[Definition 2.2]{luka}) when $\Phi(t)=t^{p} / p$ ($t\geq 0$ and $1<p<\infty$).}
			\item[(2)]	\textup{ The weak/strong reiteratively $\Sigma $-convergence in $L^{\Phi}(\Omega)$ is a generalization to deterministic setting of weak/strong reiteratively two-scale convergence in $L^{\Phi}(\Omega)$ (see, e.g., \cite[Definition 2.4]{tacha3}). }
			\item[(3)]	\textup{ The above convergence results \eqref{3.1} and \eqref{c4eq4} strongly relies on the following property (see \cite[Prop. 2.4]{woukeng2010homogenization}): For each $\psi$ in $A$ we have, as $\varepsilon \to 0$, }
			\begin{equation*}
				\psi^{\varepsilon} \rightarrow M(\psi) \quad \textup{in} \; L^{\infty}(\mathbb{R}^{d}_{x})\textup{-weak} \, \ast,
			\end{equation*}
			\textup{	where $\psi^{\varepsilon}$ is defined in an obvious way by $\psi^{\varepsilon}(x) = \psi(x/\varepsilon_{1}, x/\varepsilon_{2})$ for $x\in \mathbb{R}^{d}$, and $M$ is the mean value on $\mathbb{R}^{2d} = \mathbb{R}^{d}\times \mathbb{R}^{d}$ for the product action $\mathcal{H}^{\ast}$.}
			\item[(4)]	\textup{ The above Definition \ref{d3.1} extends in a canonical way, arguing in components, to vector valued functions.}
			\item[(5)]	\textup{ The uniqueness of the limit $u_{0}$ in \eqref{3.1} and \eqref{c4eq4} is ensured. Moreover, as in \cite[Prop. 2.8]{tacha3} (for the reiterated periodic case), if we suppose that the sequence $(u_{\varepsilon })_{\varepsilon \in E} \subset L^{\Phi}(\Omega )$	is	 weakly reiteratively $\Sigma $-convergent  in $L^{\Phi}(
				\Omega )$  to some $u_{0}\in L^{\Phi}(\Omega ;\mathcal{X}^{\Phi}_{A})$, then we have that: }
			\begin{itemize}
				\item[(a)]	\textup{ $u_{\varepsilon} \rightharpoonup u^{\ast}_{0}$ in $L^{\Phi}(
					\Omega )$-weak $\Sigma$ (in the sense of \cite[Definition 2.13]{nnang2014deterministic}), where $u^{\ast}_{0}(x,y) \in L^{\Phi}(\Omega ;\mathcal{X}^{\Phi}_{A_{y}})$ is given by }
				\begin{equation*}
					u^{\ast}_{0}(x,y) = \pounds_{A_{z}} \int_{\mathbb{R}^{d}} u_{0}(x, y, z) dz, \text{ for } (x,y) \in \Omega\times\mathbb{R}^{d}_{y}.
				\end{equation*}
				\item[(b)]	\textup{ $u_{\varepsilon} \rightharpoonup \widetilde{u_{0}}$ in $L^{\Phi}(
					\Omega )$-weak as $E \ni \varepsilon \to 0$, where $\widetilde{u_{0}}(x) \in L^{\Phi}(\Omega)$ is given by }
				\begin{equation*}
					\widetilde{u_{0}}(x) = \pounds_{A}\iint_{\mathbb{R}^{2d}} u_{0}(x, y, z) dy dz, \text{ for } x\in \Omega.
				\end{equation*}
			\end{itemize}
		\end{itemize}		
	\end{remarq}
	
	\begin{proposition}\label{strongweak1}
		If $(u_{\varepsilon })_{\varepsilon \in E} \subset L^{\Phi}(\Omega )$	is	 strongly reiteratively $\Sigma $-convergent  in $L^{\Phi}(
		\Omega )$  to some $u_{0}\in L^{\Phi}(\Omega ;\mathcal{X}^{\Phi}_{A})$, then:
		\begin{itemize}
			\item[(i)] $u_{\varepsilon} \rightharpoonup u_{0}$ in $L^{\Phi}( \Omega
			)$-weak $R\, \Sigma$,
			\item[(ii)] $\|u_{\varepsilon}\|_{L^{\Phi}(\Omega)} \rightarrow \|u_{0}\|_{L^{\Phi}(\Omega ;\mathcal{X}^{\Phi}_{A})}$ as $\varepsilon\to 0$.
		\end{itemize}
		Furthermore, if $v\in L^{\Phi}(\Omega; A)$ then $v^{\varepsilon} \rightharpoonup  v$ in $L^{\Phi}( \Omega
		)$-strong $R\, \Sigma$, where $v^{\varepsilon}(x) = v(x, x/\varepsilon_{1}, x/\varepsilon_{2})$ for $x\in \Omega$. 
	\end{proposition}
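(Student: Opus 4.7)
The plan is to exploit the defining $\eta$-approximation property of strong reiterated $\Sigma$-convergence to reduce matters to admissible test functions $g\in L^{\Phi}(\Omega;A)$, for which the generalized H\"older inequality and the basic mean-value convergence $\psi^{\varepsilon}\rightharpoonup M(\psi)$ in $L^{\infty}(\mathbb{R}^{d}_{x})$-weak-$\ast$ recalled in Remark~\ref{tb}(3) apply directly.

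For (i), fix $f\in L^{\widetilde{\Phi}}(\Omega;A)$ and $\eta>0$. Choose $g\in L^{\Phi}(\Omega;A)$ with $\|u_{0}-g\|_{L^{\Phi}(\Omega;\mathcal{X}^{\Phi}_{A})}\leq \eta/2$ and let $\rho>0$ be as in \eqref{c4eq4}. I would split
\begin{equation*}
\int_{\Omega}u_{\varepsilon}f^{\varepsilon}\,dx-\int_{\Omega}\pounds_{A}\iint u_{0}f\,dx\,dy\,dz = I_{1}(\varepsilon)+I_{2}(\varepsilon)+I_{3},
\end{equation*}
with $I_{1}(\varepsilon)=\int_{\Omega}(u_{\varepsilon}-g^{\varepsilon})f^{\varepsilon}\,dx$, $I_{2}(\varepsilon)=\int_{\Omega}g^{\varepsilon}f^{\varepsilon}\,dx-\int_{\Omega}\pounds_{A}\iint gf\,dx\,dy\,dz$, and $I_{3}=\int_{\Omega}\pounds_{A}\iint(g-u_{0})f\,dx\,dy\,dz$. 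The generalized H\"older inequality bounds $|I_{1}(\varepsilon)|$ by $2\eta\,\|f^{\varepsilon}\|_{\widetilde{\Phi},\Omega}$ for $\varepsilon<\rho$, with $\|f^{\varepsilon}\|_{\widetilde{\Phi},\Omega}$ uniformly controlled by the boundedness lemma for $\mathfrak{X}^{\widetilde{\Phi}}_{A}$ displayed just above \eqref{isomorphismeG1}; likewise $|I_{3}|\leq \eta\,\|f\|_{L^{\widetilde{\Phi}}(\Omega;\mathcal{X}^{\widetilde{\Phi}}_{A})}$. For $I_{2}(\varepsilon)$, the product $gf$ lies in $L^{1}(\Omega;A)$ and can be approximated by finite sums $\sum_{i}\varphi_{i}(x)a_{i}(y,z)$ with $\varphi_{i}\in \mathcal{C}_{c}(\Omega)$ and $a_{i}\in A$; applying Remark~\ref{tb}(3) termwise yields $I_{2}(\varepsilon)\to 0$. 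Letting $\eta\to 0$ concludes (i).

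For (ii) the same decomposition applies to norms. The reverse triangle inequality gives
\begin{equation*}
\bigl|\|u_{\varepsilon}\|_{\Phi,\Omega}-\|u_{0}\|_{L^{\Phi}(\Omega;\mathcal{X}^{\Phi}_{A})}\bigr| \leq \|u_{\varepsilon}-g^{\varepsilon}\|_{\Phi,\Omega}+\bigl|\|g^{\varepsilon}\|_{\Phi,\Omega}-\|g\|_{L^{\Phi}(\Omega;\mathcal{X}^{\Phi}_{A})}\bigr|+\|g-u_{0}\|_{L^{\Phi}(\Omega;\mathcal{X}^{\Phi}_{A})},
\end{equation*}
so the outer terms are at most $3\eta/2$. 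For the middle one I would observe that $\Phi(|g|/\delta)\in L^{1}(\Omega;A)$ and invoke the mean-value convergence once more to obtain $\int_{\Omega}\Phi(|g^{\varepsilon}|/\delta)\,dx\to \int_{\Omega}\pounds_{A}\iint \Phi(|g|/\delta)\,dx\,dy\,dz$ for each $\delta>0$. Continuity (and strict monotonicity) of the modular in $\delta$, guaranteed by $\Phi\in \Delta_{2}$, then transfers this convergence to the Luxemburg norms, as the norm is characterized by the unique $\delta^{\ast}$ where the modular equals $1$.

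The last assertion is immediate: for $v\in L^{\Phi}(\Omega;A)$ the choice $f=v$ in \eqref{c4eq4} satisfies $\|v-f\|_{L^{\Phi}(\Omega;\mathcal{X}^{\Phi}_{A})}=0\leq \eta/2$ and $\|v^{\varepsilon}-f^{\varepsilon}\|_{\Phi,\Omega}=0\leq \eta$ for every $\varepsilon>0$. The most delicate point of the whole proof will be the passage from modular to Luxemburg-norm convergence in (ii); this is exactly where the $\Delta_{2}$-assumption is unavoidable, since without it the modular need not even be finite on dilates of $g$.
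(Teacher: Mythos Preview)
Your argument is essentially correct and follows the standard route; the paper itself gives no details, citing \cite[Remark~2.11]{tacha3} with $\mathcal{C}_{per}(Y\times Z)$ replaced by $A$, so you have supplied what the authors omit. Two small points deserve attention.

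First, in the proof of the last assertion you only verify \eqref{c4eq4} for the particular choice $f=v$. The definition, however, quantifies over \emph{all} $f\in L^{\Phi}(\Omega;A)$ with $\|v-f\|_{L^{\Phi}(\Omega;\mathcal{X}^{\Phi}_{A})}\le \eta/2$. The fix is immediate using your own middle-term computation from (ii): since $v-f\in L^{\Phi}(\Omega;A)$, the modular convergence $\int_{\Omega}\Phi(|(v-f)^{\varepsilon}|/\delta)\,dx\to \int_{\Omega}\pounds_{A}\iint\Phi(|v-f|/\delta)\,dx\,dy\,dz$ yields $\|v^{\varepsilon}-f^{\varepsilon}\|_{\Phi,\Omega}\to \|v-f\|_{L^{\Phi}(\Omega;\mathcal{X}^{\Phi}_{A})}\le \eta/2<\eta$, which is what is needed.

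Second, the reference you give for the uniform bound on $\|f^{\varepsilon}\|_{\widetilde{\Phi},\Omega}$ (the lemma preceding \eqref{isomorphismeG1}) is stated only on the unit ball and for elements of $\mathfrak{X}^{\widetilde{\Phi}}_{A}$, not $L^{\widetilde{\Phi}}(\Omega;A)$. The cleanest justification here is simply that for $f\in L^{\widetilde{\Phi}}(\Omega;A)$ one has $|f^{\varepsilon}(x)|\le \|f(x,\cdot,\cdot)\|_{\infty}$ pointwise, with the right-hand side in $L^{\widetilde{\Phi}}(\Omega)$ independently of $\varepsilon$; alternatively, your own part (ii) applied to $f$ gives $\|f^{\varepsilon}\|_{\widetilde{\Phi},\Omega}\to \|f\|_{L^{\widetilde{\Phi}}(\Omega;\mathcal{X}^{\widetilde{\Phi}}_{A})}$, hence boundedness. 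Either way the estimate on $I_{1}$ goes through.
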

	
	\begin{proof}
		See \cite[Remark 2.11]{tacha3} where $\mathcal{C}_{per}(Y\times Z)$ is replaced by $A$.
	\end{proof}
	
	\begin{proposition}\label{weaktest}
		Suppose that a sequence $(u_{\varepsilon })_{\varepsilon \in E} \subset L^{\Phi}(\Omega )$	is	 weakly reiteratively $\Sigma $-convergent  in $L^{\Phi}(
		\Omega )$  to some $u_{0}\in L^{\Phi}(\Omega ;\mathcal{X}^{\Phi}_{A})$. Then \eqref{3.1} holds for $f\in \mathcal{C}(\overline{\Omega}; \mathfrak{X}_{A}^{\Phi,\infty})$, where $\mathfrak{X}_{A}^{\Phi,\infty} = \mathfrak{X}_{A}^{\Phi}\cap L^{\infty}(\mathbb{R}^{d}_{y}; \mathcal{B}(\mathbb{R}^{d}_{z}))$.
	\end{proposition}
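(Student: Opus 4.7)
The plan is a density argument that extends the test–function class from $L^{\widetilde{\Phi}}(\Omega;A)$, for which \eqref{3.1} is the standing hypothesis, to $\mathcal{C}(\overline{\Omega};\mathfrak{X}_{A}^{\Phi,\infty})$. I first note that any $g\in\mathcal{C}(\overline{\Omega};A)$ is bounded on the compact $\overline{\Omega}$ with values in $A$, hence lies in $L^{\widetilde{\Phi}}(\Omega;A)$ since $|\Omega|<\infty$; so \eqref{3.1} already holds for every $g\in\mathcal{C}(\overline{\Omega};A^{\infty})$. A standard application of Banach--Steinhaus to the family $g\mapsto\int_{\Omega}u_{\varepsilon}g^{\varepsilon}dx$ on $L^{\widetilde{\Phi}}(\Omega;A)$ shows moreover that $(u_{\varepsilon})$ is bounded in $L^{\Phi}(\Omega)$.

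Given $f\in\mathcal{C}(\overline{\Omega};\mathfrak{X}_{A}^{\Phi,\infty})$, the key step is to construct $f_{n}\in\mathcal{C}(\overline{\Omega};A^{\infty})$ with $\sup_{x\in\overline{\Omega}}\|f(x,\cdot,\cdot)-f_{n}(x,\cdot,\cdot)\|_{\widetilde{\Phi},A}\to 0$ as $n\to\infty$. I would use a partition of unity $(\chi_{k})$ of $\overline{\Omega}$ of small mesh, pick nodes $x_{k}\in\mathrm{supp}\,\chi_{k}$, and for each $k$ approximate $f(x_{k},\cdot,\cdot)$ in the $\|\cdot\|_{\widetilde{\Phi},A}$--norm by some $\varphi_{k}\in A^{\infty}$. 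This is possible because property \textbf{(4)} of Subsection \ref{labelsub2sub3sect2} gives density of $A^{\infty}$ in $\mathfrak{X}_{A}^{\widetilde{\Phi}}$ (the hypothesis that $A_{y},A_{z}$ are translation invariant with uniformly continuous elements is used precisely here), and because the inclusion $\mathfrak{X}_{A}^{\Phi,\infty}\hookrightarrow L^{\infty}\hookrightarrow\mathfrak{X}_{A}^{\widetilde{\Phi}}$ is continuous on the $L^{\infty}$--bounded part. Uniform continuity of $x\mapsto f(x,\cdot,\cdot)$ on the compact $\overline{\Omega}$ then glues the local approximations $\sum_{k}\chi_{k}\varphi_{k}$ into a uniform--in--$x$ approximation.

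With this in hand, I would split
\begin{equation*}
\int_{\Omega}u_{\varepsilon}f^{\varepsilon}dx=\int_{\Omega}u_{\varepsilon}f_{n}^{\varepsilon}dx+\int_{\Omega}u_{\varepsilon}(f-f_{n})^{\varepsilon}dx.
\end{equation*}
The first summand converges, as $E\ni\varepsilon\to 0$, to $\int_{\Omega}\pounds_{A}\iint u_{0}f_{n}\,dxdydz$ by what was noted in the first paragraph. For the second, the generalized Orlicz H\"older inequality and the Lemma preceding Proposition \ref{strongweak1} (applied on a finite cover of $\Omega$ by translated/dilated unit balls) yield $|\int_{\Omega}u_{\varepsilon}(f-f_{n})^{\varepsilon}dx|\leq C\|u_{\varepsilon}\|_{\Phi,\Omega}\sup_{x}\|f(x,\cdot,\cdot)-f_{n}(x,\cdot,\cdot)\|_{\widetilde{\Phi},A}$, uniformly in $\varepsilon$. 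The analogous estimate on the right--hand side for $\int_{\Omega}\pounds_{A}\iint u_{0}(f-f_{n})$ follows from property \textbf{(3)} of Subsection \ref{labelsub2sub3sect2}. Letting $\varepsilon\to 0$ first and then $n\to\infty$ gives \eqref{3.1} for $f$. The principal technical obstacle is precisely the uniform--in--$x$ approximation step: arranging the partition of unity and the local $A^{\infty}$--approximants so that the resulting $f_{n}$, after composition with $H_{\varepsilon}^{\ast}$, produces an error in $L^{\widetilde{\Phi}}(\Omega)$ that is controlled \emph{uniformly} in $\varepsilon$ by a quantity depending only on $n$ and vanishing as $n\to\infty$.
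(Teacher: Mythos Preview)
Your approach is the same density strategy as the paper's, which factors through $\mathcal{C}(\overline{\Omega})\otimes\mathfrak{X}_{A}^{\Phi,\infty}$ (dense in $\mathcal{C}(\overline{\Omega};\mathfrak{X}_{A}^{\Phi,\infty})$ for the $\sup$--$L^{\infty}$ norm) rather than your $\mathcal{C}(\overline{\Omega};A^{\infty})$ built via a partition of unity; both routes reduce to the same uniform-in-$\varepsilon$ trace estimate that you correctly flag as the crux, and the paper simply refers to \cite[Proposition~2.17]{nnang2014deterministic} for the details.

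One point needs repair. The chain $\mathfrak{X}_{A}^{\Phi,\infty}\hookrightarrow L^{\infty}\hookrightarrow\mathfrak{X}_{A}^{\widetilde{\Phi}}$ fails at the second arrow: a generic bounded function need not lie in the $\Xi^{\widetilde{\Phi}}$-closure of $A$, so this step cannot justify that $f(x,\cdot,\cdot)-f_{n}(x,\cdot,\cdot)\in\mathfrak{X}_{A}^{\widetilde{\Phi}}$ nor that the Lemma on trace bounds applies. The direct inclusion $\mathfrak{X}_{A}^{\Phi,\infty}\subset\mathfrak{X}_{A}^{\widetilde{\Phi}}$ does hold, but the argument must go as follows: given $\psi\in\mathfrak{X}_{A}^{\Phi,\infty}$ with $\|\psi\|_{\infty}\le M$, pick $\psi_{n}\in A$ with $\psi_{n}\to\psi$ in $\Xi^{\Phi}$ and truncate (real and imaginary parts of) $\psi_{n}$ at level $2M$; the truncation remains in $A$ since $A$ is a commutative unital $C^{\ast}$-algebra. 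The truncated sequence is then uniformly bounded in $L^{\infty}$ and still converges in $\Xi^{\Phi}$, and a Chebyshev-type splitting on each unit ball upgrades this to convergence in $\Xi^{\widetilde{\Phi}}$. Once this inclusion is secured, your appeal to the trace Lemma ($\|u^{\varepsilon}\|_{\widetilde{\Phi},B_{d}}\le C\|u\|_{\widetilde{\Phi},A}$) becomes legitimate and the remainder of your argument goes through.
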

	
	\begin{proof}
		See \cite[Proposition 2.17]{nnang2014deterministic}. There is no difficult in showing that \eqref{3.1} holds for $f \in \mathcal{C}(\overline{\Omega})\otimes \mathfrak{X}_{A}^{\Phi,\infty}$, and the desired result follows by the density of $\mathcal{C}(\overline{\Omega})\otimes \mathfrak{X}_{A}^{\Phi,\infty}$ in $\mathcal{C}(\overline{\Omega}; \mathfrak{X}_{A}^{\Phi,\infty})$.
	\end{proof}
	
	The next result is a direct consequence of above Proposition \ref{weaktest}. 
	\begin{corollary}
		Let $v \in \mathcal{C}(\overline{\Omega} ; \mathfrak{X}_{A}^{\Phi,\infty})$. Then $v^{\varepsilon} \rightharpoonup  v$ in $L^{\Phi}(\Omega)$-weak $R\, \Sigma$.
	\end{corollary}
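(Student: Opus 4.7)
The plan is to extend Proposition \ref{strongweak1}, which gives $v^{\varepsilon}\to v$ strongly $R\,\Sigma$ whenever $v\in L^{\Phi}(\Omega;A)$, to the larger space $\mathcal{C}(\overline{\Omega};\mathfrak{X}_A^{\Phi,\infty})$, settling for weak rather than strong convergence. Proposition \ref{weaktest} enters implicitly to legitimize passage to the limit in the right-hand pairing when the test function is replaced by an $\mathfrak{X}_A^{\Phi,\infty}$-approximant.

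Because the elements of $\mathfrak{X}_A^{\Phi,\infty}$ lie in $L^{\infty}(\mathbb{R}_y^d;\mathcal{B}(\mathbb{R}_z^d))$ and $\overline{\Omega}$ is compact, the trace family satisfies $\sup_{\varepsilon\in E}\|v^{\varepsilon}\|_{L^{\infty}(\Omega)}<\infty$, hence $(v^{\varepsilon})$ is bounded in $L^{\Phi}(\Omega)$; moreover the canonical mapping $\varrho$ identifies $v$ with an element of $L^{\Phi}(\Omega;\mathcal{X}_A^{\Phi})$, so that the candidate limit is well-defined. Combining the density of $A^{\infty}$ in $\mathfrak{X}_A^{\Phi}$ (property (4) of Subsection \ref{labelsub2sub3sect2}, available since $A$ is translation-invariant with uniformly continuous elements) with the uniform continuity of $v\colon\overline{\Omega}\to\mathfrak{X}_A^{\Phi}$ and compactness of $\overline{\Omega}$, a partition-of-unity argument produces a sequence $v_n\in\mathcal{C}(\overline{\Omega})\otimes A\subset L^{\Phi}(\Omega;A)$ with $\sup_{x\in\overline{\Omega}}\|v(x,\cdot,\cdot)-v_n(x,\cdot,\cdot)\|_{\Phi,A}\to 0$ as $n\to\infty$. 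For each $n$, Proposition \ref{strongweak1} yields $v_n^{\varepsilon}\to v_n$ in $L^{\Phi}(\Omega)$-strong (hence weak) $R\,\Sigma$.

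Given $f\in L^{\widetilde{\Phi}}(\Omega;A)$, I decompose the discrepancy as
\begin{equation*}
\int_{\Omega} v^{\varepsilon}f^{\varepsilon}\,dx-\int_{\Omega}\pounds_A\!\iint_{\mathbb{R}^{2d}} v\,f\,dxdydz
= I_{1}^{\varepsilon,n}+I_{2}^{\varepsilon,n}+I_{3}^{n},
\end{equation*}
with $I_{1}^{\varepsilon,n}=\int_{\Omega}(v-v_n)^{\varepsilon}f^{\varepsilon}\,dx$, $I_{2}^{\varepsilon,n}=\int_{\Omega}v_n^{\varepsilon}f^{\varepsilon}\,dx-\int_{\Omega}\pounds_A\iint v_n\,f$, and $I_{3}^{n}=\int_{\Omega}\pounds_A\iint(v_n-v)f$. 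The generalized Orlicz--H\"older inequality (property (iv) of Subsection \ref{labelsub1sect2}), together with the lemma bounding $\|u^{\varepsilon}\|_{\Phi,B_d(0,1)}\le C\|u\|_{\Phi,A}$, yields $|I_{1}^{\varepsilon,n}|\le C'\sup_{x\in\overline{\Omega}}\|v-v_n\|_{\Phi,A}\cdot\|f\|_{L^{\widetilde{\Phi}}(\Omega;A)}$, uniformly in $\varepsilon$; property (3) of Subsection \ref{labelsub2sub3sect2} provides the analogous bound on $|I_{3}^{n}|$. For each fixed $n$, $I_{2}^{\varepsilon,n}\to 0$ as $\varepsilon\to 0$ by the weak $R\,\Sigma$-convergence established in the previous paragraph. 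Sending $\varepsilon\to 0$ first, then $n\to\infty$, proves the desired convergence.

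The main obstacle is the uniform-in-$x$ approximation of $v$ by elements of $\mathcal{C}(\overline{\Omega})\otimes A$ in the $\Phi$-seminorm of $\mathfrak{X}_A^{\Phi}$: this couples the density of $A^{\infty}$ in $\mathfrak{X}_A^{\Phi}$ with the equicontinuity of $v$ on the compact set $\overline{\Omega}$, and care is needed because $\mathfrak{X}_A^{\Phi}$ is in general only a nonseparated seminormed space, so the partition-of-unity construction must be carried out modulo $\ker\mathcal{G}$ and lifted back to $\mathfrak{X}_A^{\Phi}$ using the canonical surjection $\varrho$.
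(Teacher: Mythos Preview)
Your approach is sound in outline but takes a considerably longer route than the paper's. The paper treats the corollary as a one-line consequence of Proposition~\ref{weaktest}: for any test function $f\in L^{\widetilde{\Phi}}(\Omega;A)$, the last part of Proposition~\ref{strongweak1} gives $f^{\varepsilon}\rightharpoonup f$ in $L^{\widetilde{\Phi}}(\Omega)$-weak $R\,\Sigma$, and then Proposition~\ref{weaktest} (with the roles of $\Phi$ and $\widetilde{\Phi}$ interchanged) lets one pair this convergent sequence against $v\in\mathcal{C}(\overline{\Omega};\mathfrak{X}_A^{\Phi,\infty})$, yielding exactly $\int_\Omega v^\varepsilon f^\varepsilon\,dx\to\int_\Omega\pounds_A\iint vf$. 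No approximation of $v$ is needed at this stage; the density work has already been absorbed into Proposition~\ref{weaktest}. Your three-term decomposition essentially re-derives that proposition in a particular instance.

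There is also a gap in your control of $I_1^{\varepsilon,n}$. The lemma you invoke, $\|u^\varepsilon\|_{\Phi,B_d}\le C\|u\|_{\Phi,A}$, is stated only for $x$-independent $u\in\mathfrak{X}_A^\Phi$; you apply it to $(v-v_n)(x,\cdot,\cdot)$, where the first argument moves with $x$ along the diagonal trace. Passing from the $x$-independent estimate to a bound of the form $\|(v-v_n)^\varepsilon\|_{\Phi,\Omega}\le C\sup_x\|(v-v_n)(x,\cdot,\cdot)\|_{\Phi,A}$ is not automatic: $\|\cdot\|_{\Phi,A}$ is a mean-value seminorm and does not by itself control the $L^\Phi(\Omega)$-norm of a diagonal slice when the first variable is active. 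The clean fix is to approximate in the stronger $\Xi^\Phi$ norm (this is available since $\mathfrak{X}_A^\Phi$ is by definition the $\Xi^\Phi$-closure of $A$, and the $L^\infty$-continuity of $v$ in $x$ implies $\Xi^\Phi$-continuity) and then invoke Lemma~\ref{tl}, which is precisely the $x$-dependent trace estimate you need.
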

	
	The following result is the starting point of generalized compactness results for the concept of reiterated $\Sigma$-convergence.
	
	\begin{theorem}\label{compaorlicz}
		Let $\Phi$ be an $N$-function such that $\Phi, \widetilde{\Phi} \in \Delta_{2}$ and $A$ a $RH$-supralgebra on $\mathbb{R}^{2d}$ (integer $d\geq 1$). 
		Given a bounded sequence $(u_{\varepsilon })_{\varepsilon \in E} \subset L^{\Phi}(\Omega )$, one can extract a not relabelled subsequence such that $(u_{\varepsilon })_{\varepsilon \in E}$ is weakly reiteratively $\Sigma$-convergent  in $L^{\Phi}(\Omega)$.
	\end{theorem}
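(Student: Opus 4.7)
The approach parallels the compactness proof for the classical two-scale/$\Sigma$-convergence, transposed to the Orlicz setting via the Gelfand isomorphism $\mathcal{G}_1 : L^{\Phi}(\Omega;\mathcal{X}_A^{\Phi})\xrightarrow{\;\sim\;}L^{\Phi}(\Omega\times\Delta(A))$ of Subsection \ref{labelsub2sect2}, together with the reflexivity of these spaces which is guaranteed by $\Phi,\widetilde{\Phi}\in\Delta_2$.

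First I would introduce, for each $\varepsilon\in E$, the linear form
$$L_\varepsilon(f)=\int_\Omega u_\varepsilon(x)\,f^\varepsilon(x)\,dx,$$
initially defined for test functions $f\in\mathcal{C}(\overline\Omega;A)$, where $f^\varepsilon(x)=f(x,x/\varepsilon_1,x/\varepsilon_2)$. The key ingredient is a uniform \emph{trace estimate}
$$\|f^\varepsilon\|_{\widetilde{\Phi},\Omega}\leq C\,\|f\|_{L^{\widetilde{\Phi}}(\Omega;\mathcal{X}_A^{\widetilde{\Phi}})},$$
to be obtained by covering $\Omega$ by finitely many rescaled copies of $B_d(0,1)$, applying the lemma $\|u^\varepsilon\|_{\Phi,B_d(0,1)}\leq C\|u\|_{\Phi,A}$ of Subsection \ref{labelsub2sub3sect2} to $f(x,\cdot,\cdot)$ for a.e.\ fixed $x$, and then integrating in $x$ while exploiting the $\Delta_2$-condition on $\widetilde{\Phi}$ to pass from the additive control of Luxemburg norms to a global one. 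The generalized H\"older inequality in $L^\Phi(\Omega)$ then yields
$$|L_\varepsilon(f)|\leq 2\|u_\varepsilon\|_{\Phi,\Omega}\,\|f^\varepsilon\|_{\widetilde{\Phi},\Omega}\leq C'\,\|f\|_{L^{\widetilde{\Phi}}(\Omega;\mathcal{X}_A^{\widetilde{\Phi}})},$$
where $C'$ is independent of $\varepsilon$ by the hypothesis that $(u_\varepsilon)$ is bounded in $L^\Phi(\Omega)$.

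Since $\mathcal{C}(\overline\Omega;A)$ (after identification via $\mathcal{G}$) is dense in $L^{\widetilde{\Phi}}(\Omega;\mathcal{X}_A^{\widetilde{\Phi}})\cong L^{\widetilde{\Phi}}(\Omega\times\Delta(A))$, each $L_\varepsilon$ extends uniquely by continuity to a bounded linear functional $\widetilde L_\varepsilon$ on the latter Banach space, with norm uniformly bounded in $\varepsilon$. Under the standing assumption $\Phi,\widetilde{\Phi}\in\Delta_2$, the space $L^{\widetilde{\Phi}}(\Omega\times\Delta(A))$ is reflexive with dual $L^{\Phi}(\Omega\times\Delta(A))\cong L^{\Phi}(\Omega;\mathcal{X}_A^{\Phi})$. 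By Eberlein--Smulian, the bounded sequence $(\widetilde L_\varepsilon)$ admits a (not relabelled) subsequence converging weakly to some $\widetilde L$ representable by an element $u_0\in L^{\Phi}(\Omega;\mathcal{X}_A^{\Phi})$ via
$$\widetilde L(f)=\int_\Omega\pounds_A\!\!\iint_{\mathbb R^{2d}}u_0(x,y,z)\,f(x,y,z)\,dx\,dy\,dz.$$
Restricting $f$ to $L^{\widetilde{\Phi}}(\Omega;A)\subset L^{\widetilde{\Phi}}(\Omega;\mathcal{X}_A^{\widetilde{\Phi}})$ and using the definition of $L_{\varepsilon_n}$ yields precisely the convergence \eqref{3.1}, which is the claimed weak reiterated $\Sigma$-convergence $u_{\varepsilon_n}\rightharpoonup u_0$.

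The principal technical obstacle is the trace estimate in the first step: in the Orlicz framework the lemma of Subsection \ref{labelsub2sub3sect2} provides only a pointwise-in-$x$ bound on the unit ball, and passing to a global bound uniform in $\varepsilon$ requires a careful interplay between the two scalings $x\mapsto x/\varepsilon_1$ and $x\mapsto x/\varepsilon_2$, the $\Delta_2$-condition (used to compare a sum of Luxemburg norms over a covering with a single Luxemburg norm), and the definition of the mixed Luxemburg norm on $L^{\widetilde{\Phi}}(\Omega;\mathcal{X}_A^{\widetilde{\Phi}})$. Once the trace inequality is established, the remainder of the proof is a routine application of reflexivity and density.
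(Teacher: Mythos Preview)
Your approach is essentially the same as the paper's, with one notable simplification available that you miss. The paper also defines the functionals $\Gamma_\varepsilon(f)=\int_\Omega u_\varepsilon f^\varepsilon\,dx$ on $X=L^{\widetilde\Phi}(\Omega;A)$ and extracts a limit in $Y'=L^{\Phi}(\Omega;\mathcal X_A^{\Phi})$ via reflexivity (invoking \cite[Proposition~3.2]{gabri}, which is just the abstract diagonal/reflexivity extraction you perform with Eberlein--Smulian).

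The difference lies in how the boundedness of $\Gamma_\varepsilon$ is obtained. You aim for a \emph{uniform} trace estimate $\|f^\varepsilon\|_{\widetilde\Phi,\Omega}\le C\|f\|_{L^{\widetilde\Phi}(\Omega;\mathcal X_A^{\widetilde\Phi})}$ valid for all $\varepsilon$, and correctly flag this as the main technical point. The paper sidesteps it: from Proposition~\ref{strongweak1}(ii) one has $\|f^\varepsilon\|_{L^{\widetilde\Phi}(\Omega)}\to\|f\|_{L^{\widetilde\Phi}(\Omega;\mathcal X_A^{\widetilde\Phi})}$ as $\varepsilon\to0$ for each fixed $f\in L^{\widetilde\Phi}(\Omega;A)$, so immediately
\[
\limsup_{\varepsilon}|\Gamma_\varepsilon(f)|\le c\,\|f\|_{L^{\widetilde\Phi}(\Omega;\mathcal X_A^{\widetilde\Phi})}.
\]
A $\limsup$ bound of this form (rather than a uniform one) is already enough for the diagonal extraction argument, since for each fixed $f$ the sequence $(\Gamma_\varepsilon(f))$ is eventually bounded. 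This renders your covering/$\Delta_2$ patching argument unnecessary; the ``principal technical obstacle'' you identify dissolves once you use the norm-convergence result already proved.
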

	
	\begin{proof}
		As in case of Lebesgue spaces, the proof of this theorem is relies on \cite[Proposition 3.2]{gabri} where only the reflexivity of spaces is required.
		For this, we set $Y=L^{\widetilde{\Phi}}(\Omega ; \mathcal{X}_{A}^{\widetilde{\Phi}})$, $X=L^{\widetilde{\Phi}}(\Omega ; A)$. Let us define the mapping $\Gamma_{\varepsilon }$ by 
		\begin{equation*}
			\Gamma_{\varepsilon }(f)=\int_{ \Omega }u_{\varepsilon
			}f^{\varepsilon }dx \quad (f \in L^{\widetilde{\Phi}}(\Omega ; A)).
		\end{equation*}%
		where $f^{\varepsilon }(x )=f(x, x/\varepsilon _{1}
		,x/\varepsilon _{2})$ for $x \in  \Omega $. Then 
		\begin{equation*}
			\underset{\varepsilon }{\lim \sup }\left\vert \Gamma_{\varepsilon }(f
			)\right\vert \leq c\left\Vert  f\right\Vert _{L^{\widetilde{\Phi}}(\Omega; \mathcal{X}_{A}^{\widetilde{\Phi}})}\text{\ for all } f\in X.
		\end{equation*}%
		Indeed one has the inequality $\left\vert \gamma_{\varepsilon }(f)\right\vert
		\leq c\left\Vert f^{\varepsilon }\right\Vert _{L^{\widetilde{\Phi}}(
			\Omega )}$ and thus, as $\varepsilon \rightarrow 0$, $\left\Vert
		f^{\varepsilon }\right\Vert _{L^{\widetilde{\Phi}}( \Omega )}\rightarrow
		\left\Vert f\right\Vert _{L^{\widetilde{\Phi}}( \Omega ; \mathcal{X}_{A}^{\widetilde{\Phi}})}$ (see Proposition \ref{strongweak1}). We therefore apply \cite[Proposition 3.2]{gabri} with the above notation to get the existence of a subsequence $%
		E^{\prime }$ of $E$ and of a unique $u_{0}\in \left[L^{\widetilde{\Phi}}( \Omega ; \mathcal{X}_{A}^{\widetilde{\Phi}})\right]^{\prime} = L^{\Phi}( \Omega ; \mathcal{X}_{A}^{\Phi})$ (by reflexivity, since $\widetilde{\Phi}\in \Delta_{2}$) such that as $E^{\prime} \ni \varepsilon \to 0$, 
		\begin{equation*}
			\int_{ \Omega }u_{\varepsilon }f^{\varepsilon }dx \rightarrow
			\int_{ \Omega} \pounds_{A} \iint_{\mathbb{R}^{2d}} u_{0}(x, y, z)  f
			(x, y, z) dx dy dz.
		\end{equation*}%
		\ for all $f\in X$. 
	\end{proof}
	
	Next, we define the concept of $W^{1}L^{\Phi}$-properness  for reiterated $H$-supralgebras and we study the particular cases of periodic $RH$-algebra and ergodic $H$-supralgebras. 
	
	\subsection{$W^{1}L^{\Phi}$-Properness of $RH$-supralgebras}
	
	In all that follows, $\Phi$ and $\widetilde{\Phi}$ are of $\Delta_{2}$-class.
	Before defining the $W^{1}L^{\Phi}$-Properness of $RH$-supralgebras, we define firstly the concept of $R\,\Sigma$-reflexivity of Orlicz-sobolev spaces for the $RH$-supralgebras.
	
	\subsubsection{ $R\,\Sigma$-reflexivity of Orlicz-Sobolev spaces}
	
	Let us begin by define the  reiterated $\Sigma$-convergence in Orlicz-Sobolev spaces. Let $A= A_{y}\odot A_{z}$ be a $RH$-supralgebra in $\mathbb{R}^{2d}$.
	We set 
	\begin{equation*}
		W^{1}L^{\Phi}(\Omega;\mathcal{X}_{A}^{\Phi}) = \left\{ u \in L^{\Phi}(\Omega;\mathcal{X}_{A}^{\Phi})\, : \; \dfrac{\partial u}{\partial x_{i}} \in L^{\Phi}(\Omega;\mathcal{X}_{A}^{\Phi}), \, 1 \leq i \leq d \right\}.
	\end{equation*}
	Equipped with the norm 
	\begin{equation*}
		\|u\|_{W^{1,x}L^{\Phi}(\Omega;\mathcal{X}_{A}^{\Phi})} = \|u\|_{L^{\Phi}(\Omega;\mathcal{X}_{A}^{\Phi})} + \sum_{i=1}^{d} \left\|\dfrac{\partial u}{\partial x_{i}} \right\|_{L^{\Phi}(\Omega;\mathcal{X}_{A}^{\Phi})},
	\end{equation*}
	$W^{1}L^{\Phi}(\Omega;\mathcal{X}_{A}^{\Phi})$ is a Banach space.
	\begin{definition}\label{sigmaSobolev}
		A sequence $(u_{\varepsilon})_{\varepsilon\in E}\subset 	W^{1}L^{\Phi}(\Omega)$ is said to be weakly reiteratively $\Sigma$-convergent (for $A$) in $	W^{1}L^{\Phi}(\Omega)$ to some $u_{0} \in 		W^{1}L^{\Phi}(\Omega;\mathcal{X}_{A}^{\Phi}) $ if there exist two  functions $u_{1} \in L^{1}(\Omega; W^{1}_{\#}\mathcal{X}^{\Phi}_{A_{y}})$  and $u_{2}\in L^{1}(\Omega; \mathcal{X}^{1}_{A_{y}}(\mathbb{R}^{d}_{y} ; W^{1}_{\#}\mathcal{X}^{\Phi}_{A_{z}}))$ such that as $E \ni \varepsilon \to 0$, we have 
		\begin{itemize}
			\item[\textbf{(i)}] $u_{\varepsilon} \rightharpoonup u_{0}$ in $L^{\Phi}(\Omega)$-weak $R\,\Sigma$,
			\item [\textbf{(ii)}] $\dfrac{\partial u_{\varepsilon}}{\partial x_{i}} \rightharpoonup \dfrac{\partial u_{0}}{\partial x_{i}} + \dfrac{\overline{\partial} u_{1}}{\partial y_{i}} + \dfrac{\overline{\partial} u_{2}}{\partial z_{i}}$ in $L^{\Phi}(\Omega)$-weak $R\,\Sigma$, $1 \leq i \leq d$.
		\end{itemize}
		We then express by: $u_{\varepsilon} \rightarrow u_{0}$ in $W^{1}L^{\Phi}(\Omega)$-weak $R\,\Sigma$.
	\end{definition}
	
	\begin{proposition}\label{ta}
		Suppose that $A_{y}$ and $A_{z}$ are $\Phi$-total. If $(u_{\varepsilon})_{\varepsilon\in E}$ is weakly reiteratively $\Sigma$-convergent in $W^{1}L^{\Phi}(\Omega)$ to $u_{0} \in 	W^{1}L^{\Phi}(\Omega;\mathcal{X}_{A}^{\Phi})$, then $u_{\varepsilon} \rightharpoonup \widetilde{u}_{0}$ in $W^{1}L^{\Phi}(\Omega)$-weak as $E \ni \varepsilon \to 0$, where $\widetilde{u}_{0}(x) = \pounds_{A}\iint_{\mathbb{R}^{2d}}  u_{0}(x,y,z) dydz$ for $x \in \Omega$.
	\end{proposition}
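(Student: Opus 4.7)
The plan is to derive Proposition \ref{ta} as a routine consequence of Remark \ref{tb}(5)(b) applied to both $u_\varepsilon$ and each of its partial derivatives, combined with the vanishing mean property \eqref{tc}. The key point is that weak $R\,\Sigma$-convergence in $L^\Phi(\Omega)$ specializes, when one tests against functions that are independent of the microscopic variables, to ordinary weak convergence in $L^\Phi(\Omega)$ of the mean of the limit over $(y,z)$.

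First, from hypothesis (i) and Remark \ref{tb}(5)(b), I conclude directly that $u_\varepsilon \rightharpoonup \widetilde{u}_0$ in $L^\Phi(\Omega)$-weak. Next I fix $1 \leq i \leq d$ and apply the same remark to the bounded sequence $\partial u_\varepsilon/\partial x_i$, which by hypothesis (ii) is weakly $R\,\Sigma$-convergent in $L^\Phi(\Omega)$ to
\begin{equation*}
\frac{\partial u_0}{\partial x_i} + \frac{\overline{\partial} u_1}{\partial y_i} + \frac{\overline{\partial} u_2}{\partial z_i}.
\end{equation*}
Remark \ref{tb}(5)(b) then yields $\partial u_\varepsilon/\partial x_i \rightharpoonup v_i$ in $L^\Phi(\Omega)$-weak, where
\begin{equation*}
v_i(x) = \pounds_{A} \iint_{\mathbb{R}^{2d}} \left(\frac{\partial u_0}{\partial x_i} + \frac{\overline{\partial} u_1}{\partial y_i} + \frac{\overline{\partial} u_2}{\partial z_i}\right)(x,y,z)\,dy\,dz.
\end{equation*}

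It remains to identify $v_i$ with $\partial \widetilde{u}_0/\partial x_i$. Since $\Delta(A) = \Delta(A_y) \times \Delta(A_z)$ and $\beta = \beta_y \times \beta_z$, Fubini's theorem decomposes $\pounds_A \iint$ as an iterated application of $\pounds_{A_y}\int$ and $\pounds_{A_z}\int$. Applying \eqref{tc} (available since $A_y, A_z$ are $\Phi$-total, so $u_1(x,\cdot) \in W^1_\# \mathcal{X}^\Phi_{A_y}$ and $u_2(x,y,\cdot) \in W^1_\# \mathcal{X}^\Phi_{A_z}$ for a.e.\ $x$ and $y$) gives
\begin{equation*}
\pounds_{A_y} \int \frac{\overline{\partial} u_1}{\partial y_i}\,dy = 0 \quad \text{and} \quad \pounds_{A_z} \int \frac{\overline{\partial} u_2}{\partial z_i}\,dz = 0,
\end{equation*}
so the contributions of $u_1$ and $u_2$ to $v_i$ vanish.

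The last and only delicate step is to commute the macroscopic derivative with the mean value $\pounds_A \iint$: I need to show that $\pounds_A \iint (\partial u_0/\partial x_i)\,dy\,dz = \partial \widetilde{u}_0/\partial x_i$ in the distributional sense on $\Omega$, and that this derivative lies in $L^\Phi(\Omega)$. This is tested against $\varphi \in \mathcal{D}(\Omega)$ by writing
\begin{equation*}
\int_\Omega \widetilde{u}_0\,\frac{\partial \varphi}{\partial x_i}\,dx = \int_\Omega \pounds_A \iint u_0(x,y,z)\,dy\,dz\,\frac{\partial \varphi}{\partial x_i}\,dx = \pounds_A \iint \int_\Omega u_0\,\frac{\partial \varphi}{\partial x_i}\,dx\,dy\,dz,
\end{equation*}
where the interchange is legitimate by boundedness of $\pounds_A \iint$ as a positive continuous form on $\mathcal{X}^\Phi_A$ and $\varphi \in \mathcal{D}(\Omega)$; integrating by parts in $x$ and using that $u_0 \in W^1 L^\Phi(\Omega; \mathcal{X}^\Phi_A)$ permits the swap and produces $-\pounds_A \iint \int_\Omega (\partial u_0/\partial x_i)\varphi\,dx\,dy\,dz$. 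This shows $v_i = \partial \widetilde{u}_0/\partial x_i$ distributionally, and since $v_i \in L^\Phi(\Omega)$ by the weak limit identification, we obtain $\widetilde{u}_0 \in W^1 L^\Phi(\Omega)$ with $\partial \widetilde{u}_0/\partial x_i = v_i$. Combined with Step 1, this proves $u_\varepsilon \rightharpoonup \widetilde{u}_0$ in $W^1 L^\Phi(\Omega)$-weak. The principal obstacle is this last commutation argument, which relies on the $\Phi$-totality hypothesis to ensure enough regularity for the iterated integration-by-parts to be valid.
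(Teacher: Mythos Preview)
Your proof is correct and follows essentially the same route as the paper: apply Remark \ref{tb}(5)(b) to $u_\varepsilon$ and to each $\partial u_\varepsilon/\partial x_i$, kill the corrector contributions via \eqref{tc}, and commute $\partial/\partial x_i$ with $\pounds_A\iint$. The paper dismisses the last commutation step as routine, so your more detailed justification via testing against $\varphi\in\mathcal{D}(\Omega)$ is a welcome addition. One small inaccuracy: in your closing sentence you say the $\Phi$-totality hypothesis is what makes the commutation argument work, but in fact $\Phi$-totality is needed earlier, to ensure that \eqref{tc} extends from $W^{1}\mathcal{X}^{\Phi}_{A_y}/\mathbb{C}$ to the separated completion $W^{1}_{\#}\mathcal{X}^{\Phi}_{A_y}$ (and likewise for $A_z$), which is where $u_1$ and $u_2$ actually live; the commutation of $\partial/\partial x_i$ with the mean only uses that $u_0\in W^{1}L^{\Phi}(\Omega;\mathcal{X}^{\Phi}_{A})$ and that $M_1$ is a continuous linear form.
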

	
	\begin{proof}
		Assume that the sequence	$(u_{\varepsilon})_{\varepsilon\in E}$ is weakly reiteratively $\Sigma$-convergent in $W^{1}L^{\Phi}(\Omega)$ to $u_{0} \in 	W^{1}L^{\Phi}(\Omega;\mathcal{X}_{A}^{\Phi})$. Then there exist $u_{1} \in L^{1}(\Omega; W^{1}_{\#}\mathcal{X}^{\Phi}_{A_{y}})$  and $u_{2}\in L^{1}(\Omega; \mathcal{X}^{1}_{A_{y}}(\mathbb{R}^{N}_{y} ; W^{1}_{\#}\mathcal{X}^{\Phi}_{A_{z}}))$ such that (i) and (ii) of Definition \ref{sigmaSobolev} follows. Taking into account Remark \ref{tb}[see 5.b] yields that $u_{\varepsilon} \rightharpoonup \widetilde{u}_{0}$ and  $\frac{\partial u_{\varepsilon}}{\partial x_{i}} \rightharpoonup \widetilde{\frac{\partial u_{0}}{\partial x_{i}}} + \widetilde{\frac{\overline{\partial} u_{1}}{\partial y_{i}}} + \widetilde{\frac{\overline{\partial} u_{2}}{\partial z_{i}}}$ $(1\leq i \leq d)$ in $L^{\Phi}(\Omega)$-weak as $E \ni \varepsilon \to 0$. But, according to \eqref{tc}, $\frac{\overline{\partial} u_{1}}{\partial y_{i}} = 0$ and $\frac{\overline{\partial} u_{2}}{\partial y_{i}} = 0$, and there is no difficult in proving that $\widetilde{\frac{\partial u_{0}}{\partial x_{i}}} = \frac{\partial \widetilde{u_{0}}}{\partial x_{i}}$. This end the proof.
	\end{proof}
	
	This proposition has two fundamental corollaries.
	
	\begin{corollary}
		Let hypotheses be as in Proposition \ref{ta}. Then $u_{0} \in 	W^{1}L^{\Phi}(\Omega)$.
	\end{corollary}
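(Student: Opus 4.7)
My plan is to read the corollary as the natural macroscopic regularity statement that Proposition~\ref{ta} makes available, i.e.\ the claim is that the function $u_0$ appearing in Definition~\ref{sigmaSobolev}, a priori lying in $W^{1}L^{\Phi}(\Omega;\mathcal{X}_{A}^{\Phi})$, can be identified with its macroscopic average $\widetilde{u}_0$, which belongs to $W^{1}L^{\Phi}(\Omega)$. The proof will therefore be carried out in two steps: first establish $\widetilde{u}_0\in W^{1}L^{\Phi}(\Omega)$, then show $u_0=\widetilde{u}_0$ as elements of $L^{\Phi}(\Omega;\mathcal{X}_{A}^{\Phi})$.

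For the first step I would simply invoke Proposition~\ref{ta}: the sequence $(u_{\varepsilon})$ converges weakly in $W^{1}L^{\Phi}(\Omega)$ to $\widetilde{u}_0$, and since $\Phi,\widetilde{\Phi}\in\Delta_{2}$ makes $W^{1}L^{\Phi}(\Omega)$ reflexive, the weak limit must lie in this Banach space; hence $\widetilde{u}_0\in W^{1}L^{\Phi}(\Omega)$ with $\partial \widetilde{u}_0/\partial x_i$ precisely the $L^{\Phi}(\Omega)$-weak limit of $\partial u_{\varepsilon}/\partial x_i$ after taking means in $(y,z)$.

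For the second step I would use the classical oscillating test–function trick adapted to the Orlicz setting. Given $\varphi\in\mathcal{D}(\Omega)$ and smooth elements $\psi_{1}\in A_{y}^{\infty}$, $\psi_{2}\in A_{z}^{\infty}$, form $\theta_{\varepsilon}(x)=\varphi(x)\,\varepsilon_{1}\psi_{1}(x/\varepsilon_{1})\psi_{2}(x/\varepsilon_{2})$ and also $\theta_{\varepsilon}'(x)=\varphi(x)\,\varepsilon_{2}\psi_{1}(x/\varepsilon_{1})\psi_{2}(x/\varepsilon_{2})$; since $\varepsilon_{1}\to 0$ and $\varepsilon_{2}/\varepsilon_{1}\to 0$, the only non-vanishing contribution to $\partial\theta_{\varepsilon}/\partial x_{i}$ in the limit is the $y$-derivative, and symmetrically for $\theta_{\varepsilon}'$ only the $z$-derivative survives. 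Integrating by parts against $u_{\varepsilon}$, the right-hand side tends to $0$ because $\|\partial u_{\varepsilon}/\partial x_i\|_{\Phi,\Omega}$ is bounded and $\|\theta_{\varepsilon}\|_{\widetilde{\Phi},\Omega}\to 0$; the left-hand side, by weak $R\Sigma$-convergence (Definition~\ref{d3.1}), passes to
\[
\int_{\Omega}\varphi(x)\,\pounds_{A}\!\iint_{\mathbb{R}^{2d}} u_{0}(x,y,z)\,\tfrac{\partial \psi_{1}}{\partial y_{i}}(y)\psi_{2}(z)\,dy\,dz\,dx=0
\]
and the analogous identity with $\partial\psi_{2}/\partial z_{i}$. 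Varying $\psi_{1},\psi_{2}$ in the dense (by $\Phi$-totality) subalgebras $A_{y}^{\infty},A_{z}^{\infty}$ shows that the $y$- and $z$-formal derivatives of $u_{0}$, viewed in $\mathcal{X}_{A}^{\Phi}$ through the Gelfand representation and the isomorphism \eqref{isomorphismeG1}, all vanish; by the density statements following Proposition~\ref{p2.4} and the identification $\Delta(A)=\Delta(A_{y})\times\Delta(A_{z})$, this forces $u_{0}(x,\cdot,\cdot)$ to be constant (as an element of $\mathcal{X}_{A}^{\Phi}$) for almost every $x$, hence equal to $\widetilde{u}_{0}(x)$.

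The main obstacle I anticipate is the rigorous passage to the limit in the integration-by-parts step inside the Orlicz framework: one has to verify that the test functions $\theta_{\varepsilon}$ and $\theta_{\varepsilon}'$ are admissible in $L^{\widetilde{\Phi}}(\Omega;A)$, that the H\"older-type pairing $L^{\Phi}\times L^{\widetilde{\Phi}}$ gives the required convergences (using $\Phi,\widetilde{\Phi}\in\Delta_{2}$ and the norm-estimate $\|u^{\varepsilon}\|_{\Phi,B_d}\leq C\|u\|_{\Phi,A}$), and that taking $\psi_{1},\psi_{2}$ only in the dense subspaces $A_{y}^{\infty},A_{z}^{\infty}$ is enough to characterize the $y,z$-dependence in $\mathcal{X}_{A}^{\Phi}$; the $\Phi$-totality hypothesis from Proposition~\ref{ta} is exactly what is needed to close this density argument.
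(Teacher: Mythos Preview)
Your interpretation of the corollary and your two-step strategy are correct, and the paper itself offers no argument beyond declaring the statement an immediate consequence of Proposition~\ref{ta}; your Step~1 is precisely that consequence.

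Step~2, however, contains a genuine error in the first test function. Differentiating
\[
\theta_\varepsilon(x)=\varepsilon_1\,\varphi(x)\,\psi_1\!\left(\tfrac{x}{\varepsilon_1}\right)\psi_2\!\left(\tfrac{x}{\varepsilon_2}\right)
\]
produces three terms, with coefficients $\varepsilon_1$, $1$, and $\varepsilon_1/\varepsilon_2$ on the $x$-, $y$- and $z$-derivative contributions respectively. Since the scale separation is $\varepsilon_2/\varepsilon_1\to 0$, the coefficient on the $z$-term is $\varepsilon_1/\varepsilon_2\to+\infty$, not $0$; that contribution blows up and you cannot pass to the limit on the left-hand side as you claim. (You invoked $\varepsilon_2/\varepsilon_1\to 0$, but the chain rule gives the reciprocal.)

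The repair is to treat the scales in the opposite order. Your second test function $\theta_\varepsilon'=\varepsilon_2\,\varphi\,\psi_1(x/\varepsilon_1)\psi_2(x/\varepsilon_2)$ is fine: its derivative carries coefficients $\varepsilon_2$, $\varepsilon_2/\varepsilon_1$ and $1$, all harmless, so only the $z$-derivative term survives and you obtain $\overline\partial u_0/\partial z_i=0$ for every $i$, hence $u_0$ is independent of $z$. Only then can you run the $y$-argument, now taking $\psi_2\equiv 1$ so that no $z$-derivative term appears at all; this yields $\overline\partial u_0/\partial y_i=0$ and therefore $u_0=\widetilde u_0\in W^1L^\Phi(\Omega)$. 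Two points worth spelling out when you revise: the boundedness of $(\partial u_\varepsilon/\partial x_i)$ in $L^\Phi(\Omega)$ that you use follows from weak $R\,\Sigma$-convergence via Remark~\ref{tb}(5)(b) and the uniform boundedness principle; and the step ``all formal derivatives vanish $\Rightarrow$ constant'' in $\mathcal X_{A_z}^\Phi$ (then in $\mathcal X_{A_y}^\Phi$) is exactly where $\Phi$-totality, through the Gelfand isomorphism $\mathcal G_1$, enters.
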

	
	\begin{corollary}\label{coro2}
		Let hypotheses be as in Proposition \ref{ta}, and let us assume further that each $u_{\varepsilon} \in W^{1}_{0}L^{\Phi}(\Omega)$. Then $u_{0} \in 	W^{1}_{0}L^{\Phi}(\Omega)$.
	\end{corollary}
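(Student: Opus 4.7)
The plan is to reduce Corollary \ref{coro2} to the preceding corollary plus a standard weak-closedness argument. By Proposition \ref{ta}, the hypothesis that $(u_\varepsilon)_{\varepsilon \in E}$ is weakly reiteratively $\Sigma$-convergent to $u_0$ in $W^{1}L^{\Phi}(\Omega)$ gives
\begin{equation*}
u_\varepsilon \rightharpoonup \widetilde{u}_0 \quad \text{in } W^{1}L^{\Phi}(\Omega)\text{-weak as } E \ni \varepsilon \to 0,
\end{equation*}
where $\widetilde{u}_0(x) = \pounds_A \iint_{\mathbb{R}^{2d}} u_0(x,y,z)\, dy\, dz$. The preceding corollary tells us that $u_0$ lies in $W^{1}L^{\Phi}(\Omega)$, i.e.\ $u_0$ is independent of the microscopic variables $(y,z)$, so that in fact $u_0 = \widetilde{u}_0$ in $W^{1}L^{\Phi}(\Omega)$. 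Hence the weak convergence above reads $u_\varepsilon \rightharpoonup u_0$ in $W^{1}L^{\Phi}(\Omega)$-weak.

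Next I would invoke the fact that, under the hypotheses $\Phi, \widetilde{\Phi} \in \Delta_2$ standing in this subsection, the Orlicz-Sobolev space $W^{1}L^{\Phi}(\Omega)$ is reflexive and $W^{1}_{0}L^{\Phi}(\Omega)$ is, by its very definition as the closure of $\mathcal{D}(\Omega)$ for the strong topology, a closed vector subspace of $W^{1}L^{\Phi}(\Omega)$. Being convex and strongly closed, it is weakly closed by Mazur's theorem. Since every $u_\varepsilon$ belongs to $W^{1}_{0}L^{\Phi}(\Omega)$ and $u_\varepsilon \rightharpoonup u_0$ in $W^{1}L^{\Phi}(\Omega)$-weak, we conclude $u_0 \in W^{1}_{0}L^{\Phi}(\Omega)$.

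There is essentially no serious obstacle: the whole content of the corollary has been carried by Proposition \ref{ta} (to pass from the $\Sigma$-convergence in $W^{1}L^{\Phi}(\Omega)$ to ordinary weak convergence of the macroscopic average) and by the preceding corollary (to identify $u_0$ with $\widetilde{u}_0$). The only point to be a little careful about is that weak convergence here is genuine weak convergence in a reflexive Banach space, so that Mazur's theorem applies. Once this is in place the conclusion is immediate, and the proof is a one-line invocation of the closedness of $W^{1}_{0}L^{\Phi}(\Omega)$ for the weak topology.
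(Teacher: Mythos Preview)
Your proof is correct and matches the approach the paper leaves implicit: the corollary is stated in the paper without proof, as an immediate consequence of Proposition~\ref{ta} and the preceding corollary, the only remaining ingredient being the weak closedness of $W^{1}_{0}L^{\Phi}(\Omega)$ in $W^{1}L^{\Phi}(\Omega)$. Your invocation of Mazur's theorem makes explicit the one point the paper takes for granted.
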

	
	\begin{definition}
		Let $A$ be an $RH$-supralgebra.	Given a bounded open set $\Omega$ in $\mathbb{R}^{d}_{x}$, the Orlicz-Sobolev space $W^{1}L^{\Phi}(\Omega)$ is said to be $R\,\Sigma$-reflexive (for $A$) if the following holds: Given a bounded sequence $(u_{\varepsilon})_{\varepsilon\in E}$ in $W^{1}L^{\Phi}(\Omega)$, a subsequence $E^{\prime}$ can be extracted such that the sequence $(u_{\varepsilon})_{\varepsilon\in E^{\prime}}$ is weakly reiteratively $\Sigma$-convergent (for $A$) in $W^{1}L^{\Phi}(\Omega)$.
	\end{definition}
	\begin{remarq}
		\textup{	When $\Phi(t)= t^{p}/p$ ($1< p< \infty$), one has the identification $W^{1}L^{\Phi}(\Omega) = W^{1,p}(\Omega)$ and the notion of $R\,\Sigma$-reflexivity (for $W^{1,p}(\Omega)$) stated in \cite[Definition 8.3]{nguetseng2025homogenization} follows.}
	\end{remarq}
	
	We now define the concept of $W^{1}L^{\Phi}$-Properness of an $RH$-supralgebra.
	
	\begin{definition}\label{defproperness}
		The $RH$-supralgebra $A=A_{y}\odot A_{z}$  on $\mathbb{R}^{d}_{y}\times \mathbb{R}^{d}_{z}$ is said to be $W^{1}L^{\Phi}(\Omega)$-proper (resp. $W^{1}_{0}L^{\Phi}(\Omega)$-proper) if $A_{y}$ and $A_{z}$ are of class $\mathcal{C}^{\infty}$, and if the following two conditions are satisfied: 
		\begin{itemize}
			\item[\textbf{(i)}] $A_{y}$ is $\Phi$-total, i.e., $\mathcal{D}(\Delta(A_{y}))$ is dense in $W^{1}L^{\Phi}(\Delta(A_{y})$ 
			\item[\textbf{(ii)}] $A_{z}$ is $\Phi$-total, i.e., $\mathcal{D}(\Delta(A_{z}))$ is dense in $W^{1}L^{\Phi}(\Delta(A_{z})$ 
			\item[\textbf{(iii)}]  $W^{1}L^{\Phi}(\Omega)$ (resp. $W^{1}_{0}L^{\Phi}(\Omega)$) is $R\,\Sigma$-reflexive (for $A$).
		\end{itemize}
	\end{definition}
	The above definition extends to Orlicz spaces the concept of $W^{1,p}$-properness of $RH$-supralgebras given in \cite{nguetseng2025homogenization}, where a great number of $W^{1}L^{\Phi}(\Omega)$-proper $RH$-algebras for $\Phi(t)= t^{p}/p$ ($1 < p< \infty $) are available.\\
	
	Next, we now discuss about two cases : the periodic $RH$-algebras and the ergodic $H$-supralgebras. Let us begin by the first case. 
	
	\subsubsection{$W^{1}L^{\Phi}$-properness of periodic $RH$-algebras}
	
	Let $A = \mathcal{C}_{\textup{per}}(Y)\odot \mathcal{C}_{\textup{per}}(Z) \equiv \mathcal{C}_{\textup{per}}(Y\times Z)$ be the classical $RH$-algebra of $Y\times Z$-periodic complex continuous functions  on $\mathbb{R}_{y}^{d}\times \mathbb{R}_{z}^{d}$ where $Y= Z= (0,1)^{d}$ (the open unit cube in $\mathbb{R}^{d}$). It is known that $\mathcal{C}_{\textup{per}}(\square)$, $\square = Y$ or $Z$, is of class $\mathcal{C}^{\infty}$, see \cite{nguetseng2003homogenization}. We set 
	
	\begin{eqnarray*}
		L^{\Phi}_{\textup{per}}(\square) = \left\{v \in L^{\Phi}_{\textup{loc}}(\mathbb{R}^{d}) \; : \; v \; \textup{is} \; \square\textup{-periodic}  \right\},  \\
		W^{1}L^{\Phi}_{\textup{per}}(\square) = \left\{v \in W^{1}L^{\Phi}_{\textup{loc}}(\mathbb{R}^{d}) \; : \; v \; \textup{is} \; \square\textup{-periodic}  \right\}, \\
		W^{1}_{\#}L^{\Phi}_{\textup{per}}(\square) = \left\{v \in W^{1}L^{\Phi}_{\textup{per}}(\square) \; : \; \int_{Y} v dy = 0  \right\}.
	\end{eqnarray*}
	
	$L^{\Phi}_{\textup{per}}(\square)$ is a Banach space under the $L^{\Phi}$-norm (denoted $\|\cdot\|_{\Phi,\square}$). Also each of spaces $W^{1}L^{\Phi}_{\textup{per}}(\square)$ and $W^{1}_{\#}L^{\Phi}_{\textup{per}}(\square)$ is a Banach space (see, \cite{tacha1}) under norms 
	\begin{equation*}
		\|u\|_{W^{1}L^{\Phi}(\square)} = \|u\|_{\Phi,\square} + \sum_{i=1}^{d} \left\|\dfrac{\partial u}{\partial y_{i}} \right\|_{\Phi,\square}, \quad u \in W^{1}L^{\Phi}_{\textup{per}}(\square)
	\end{equation*}
	and 
	\begin{equation*}
		\|u\|_{W^{1}_{\#}L^{\Phi}(\square)} =  \sum_{i=1}^{d} \left\|\dfrac{\partial u}{\partial y_{i}} \right\|_{\Phi,\square}, \quad u \in W^{1}_{\#}L^{\Phi}_{\textup{per}}(\square),
	\end{equation*}
	respectively. Then, as mentioned in \cite{nnang2014deterministic}, $\mathcal{C}_{\textup{per}}(\square)$ is dense in $L^{\Phi}_{\textup{per}}(\square)$, and $\mathcal{C}^{\infty}_{\textup{per}}(\square)= \mathcal{C}^{\infty}(\mathbb{R}^{d})\cap \mathcal{C}_{\textup{per}}(\square)$ is dense in $W^{1}L^{\Phi}_{\textup{per}}(\square)$. We have also $ \mathfrak{X}^{\Phi}_{\mathcal{C}_{\textup{per}}(\square)} \equiv L^{\Phi}_{\textup{per}}(\square)$, $ \mathfrak{X}^{\Phi}_{\mathcal{C}_{\textup{per}}(Y\times Z)} \equiv L^{\Phi}_{\textup{per}}(Y\times Z)$ and $\pounds_{\mathcal{C}_{\textup{per}}(Y\times Z)}\iint_{\mathbb{R}^{2d}} u(y,z) dydz = \iint_{ Y\times Z} u(y,z) dydz$ ($u\in L^{\Phi}_{\textup{per}}(Y\times Z)$).
	
	We are now in position to recall the properness of the $RH$-algebra $\mathcal{C}_{\textup{per}}(Y\times Z)$.
	
	\begin{proposition}\label{te}
		The periodic $RH$-algebra $\mathcal{C}_{\textup{per}}(Y\times Z)$ is $W^{1}L^{\Phi}(\Omega)$-proper and $W^{1}_{0}L^{\Phi}(\Omega)$-proper.
		Moreover if $\widetilde{\Phi} \in \Delta'$, then $u_{1} \in L^{\Phi}(\Omega; W^{1}_{\#}L^{\Phi}_{per}(Y))$ and $u_{2} \in  L^{\Phi}(\Omega; L^{\Phi}_{per}(Y ; W^{1}_{\#}L^{\Phi}_{per}(Z)))$.
	\end{proposition}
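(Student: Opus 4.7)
The plan is to verify the three items of Definition \ref{defproperness} for $A_y=\mathcal{C}_{\text{per}}(Y)$ and $A_z=\mathcal{C}_{\text{per}}(Z)$, and then, under the assumption $\widetilde{\Phi}\in\Delta'$, to upgrade the $x$-integrability of the correctors $u_1,u_2$ via property (ix) of the Orlicz space setup.

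For the $\Phi$-totality of $A_y$ and $A_z$, the spectrum $\Delta(\mathcal{C}_{\text{per}}(\square))$ is canonically homeomorphic to the torus $\mathbb{R}^d/\mathbb{Z}^d$, so via the Gelfand transform one has the identifications $L^{\Phi}(\Delta(\mathcal{C}_{\text{per}}(\square)))\cong L^{\Phi}_{\text{per}}(\square)$ and $W^{1}L^{\Phi}(\Delta(\mathcal{C}_{\text{per}}(\square)))\cong W^{1}L^{\Phi}_{\text{per}}(\square)$. The density of $\mathcal{C}^{\infty}_{\text{per}}(\square)$ in $W^{1}L^{\Phi}_{\text{per}}(\square)$ recalled just above the statement yields at once conditions (i) and (ii) of Definition \ref{defproperness}.

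For $R\,\Sigma$-reflexivity, I would take a bounded sequence $(u_\varepsilon)$ in $W^{1}L^{\Phi}(\Omega)$, apply the compactness Theorem \ref{compaorlicz} to $u_\varepsilon$ and to each $\partial u_\varepsilon/\partial x_j$, and extract a subsequence $E'$ giving functions $u_0,v_1,\dots,v_d\in L^{\Phi}(\Omega;\mathcal{X}^{\Phi}_A)$ with $u_\varepsilon\rightharpoonup u_0$ and $\partial u_\varepsilon/\partial x_j\rightharpoonup v_j$ in $L^{\Phi}(\Omega)$-weak $R\,\Sigma$. The three-scale decomposition of the $v_j$'s is then obtained by two successive integration-by-parts arguments. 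Testing the weak $R\,\Sigma$ convergence of $\partial u_\varepsilon/\partial x_j$ against $\varepsilon_1\varphi(x)\psi(x/\varepsilon_1)$ with $\varphi\in\mathcal{D}(\Omega)$ and $\psi\in\mathcal{C}^{\infty}_{\text{per}}(Y)$ of zero $Y$-mean, integrating by parts on the $\varepsilon$-side, and using Remark \ref{tb}(3) for the mean-value limit, produces an orthogonality relation which, by a de Rham-type argument on the torus, delivers $u_1\in L^{1}(\Omega;W^{1}_{\#}L^{\Phi}_{\text{per}}(Y))$ realising the first layer of the decomposition. A second test with $\varepsilon_2\varphi(x)\psi_1(x/\varepsilon_1)\psi_2(x/\varepsilon_2)$, where $\psi_2\in\mathcal{C}^{\infty}_{\text{per}}(Z)$ has zero $Z$-mean, yields analogously $u_2\in L^{1}(\Omega;L^{1}_{\text{per}}(Y;W^{1}_{\#}L^{\Phi}_{\text{per}}(Z)))$ such that the remaining part of $v_j-\partial u_0/\partial x_j-\overline{\partial}u_1/\partial y_j$ equals $\overline{\partial}u_2/\partial z_j$. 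The Orlicz adaptation relies only on the generalized H\"older inequality (property (iv)) to pass to the limit. Proposition \ref{ta} then confirms $u_0\in W^{1}L^{\Phi}(\Omega)$, and Corollary \ref{coro2} extends the statement to $W^{1}_{0}L^{\Phi}(\Omega)$.

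For the refined regularity under $\widetilde{\Phi}\in\Delta'$, I would invoke property (ix) in the form of \eqref{be1}: by the very construction of Step 2, the modulars of $\overline{\partial}u_1/\partial y_j$ and $\overline{\partial}u_2/\partial z_j$ are controlled pointwise in $x$ by $\Phi(|v_j(x,\cdot,\cdot)|)+\Phi(|Du_0(x)|)$, which is integrable on $\Omega\times Y$ (resp.\ $\Omega\times Y\times Z$); applying \eqref{be1} to the product domains lifts the $L^1$-integrability in $x$ to $L^{\Phi}$-integrability, giving the stated refined regularity. The main obstacle lies in the second-scale integration by parts in Step 2: one must exploit the separation $\varepsilon_2/\varepsilon_1\to 0$ and the product structure $\Delta(A)=\Delta(A_y)\times\Delta(A_z)$ with the product action $\mathcal{H}^{\ast}$ to decouple the mean-value limits in $y$ and $z$, and the uniform estimates for the oscillating test functions must be checked carefully in the Orlicz norm rather than in the simpler Lebesgue setting where they are classical.
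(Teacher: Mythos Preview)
Your proposal is correct and follows essentially the same route as the paper's proof, which is just a terse citation: the paper invokes \cite[Proposition 2.12]{tacha3} for the periodic reiterated compactness in $W^{1}L^{\Phi}(\Omega)$, Corollary \ref{coro2} for the $W^{1}_{0}L^{\Phi}(\Omega)$ case, and \eqref{be1} for the $\Delta'$ upgrade. Your integration-by-parts argument with oscillating test functions $\varepsilon_1\varphi\psi(\cdot/\varepsilon_1)$ and $\varepsilon_2\varphi\psi_1(\cdot/\varepsilon_1)\psi_2(\cdot/\varepsilon_2)$, followed by a de Rham-type argument on the torus, is exactly the classical periodic method that the cited reference \cite{tacha3} carries out, so you have simply unpacked what the paper takes for granted.

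One minor remark: your argument for the $\Delta'$ upgrade can be sharpened slightly. Since $\overline{D}_z u_2$ has zero $Z$-mean, averaging the identity $v_j-\partial u_0/\partial x_j=\overline{\partial}u_1/\partial y_j+\overline{\partial}u_2/\partial z_j$ over $Z$ and applying Jensen gives $\overline{D}_y u_1\in L^{\Phi}(\Omega\times Y)^d$ directly, and then \eqref{be1} does the rest; this avoids the somewhat loose ``pointwise in $x$'' phrasing. But the idea is the same and the paper does not elaborate either.
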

	\begin{proof}
		Provides of \cite[Proposition 2.12]{tacha3}, Corollary \ref{coro2} and \eqref{be1}.	
	\end{proof}
	
	\begin{remarq}
		\textup{	If we take the $N$-function $\Phi(t) = t^{p}/p$ ($1<p<\infty$) in Proposition \ref{te}, then we found the $W^{1,p}$-properness result as in \cite[Proposition 8.3]{nguetseng2025homogenization}. }
	\end{remarq}

	\subsubsection{$W^{1}L^{\Phi}$-properness of ergodic $RH$-supralgebras }
	
	Now we assume in the sequel that the $H$-supralgebras  $A_{y}$ and $A_{z}$ are translation
	invariant and moreover each of their elements is uniformly continuous.
	The next result requires some
	preliminaries. Let $a = (a_{1}, a_{2})\in \mathbb{\mathbb{R}}^{d}\times \mathbb{\mathbb{R}}^{d}$. Since $A = A_{y}\odot A_{z}$ is translation
	invariant, the translation operator $\tau _{a}:A\rightarrow A$ extends by
	continuity to a unique translation operator still denoted by $\tau
	_{a}:\mathfrak{X}_{A}^{\Phi}\rightarrow \mathfrak{X}_{A}^{\Phi}$. Indeed $\tau _{a}$
	is bijective and $\left\| \tau _{a}u\right\| _{\Phi,A}=\left\| u\right\| _{\Phi,A}$
	since $M(\Phi(\left| \tau _{a}u\right|))=M(\tau _{a}\Phi(\left| u\right|))=M(\Phi(\left| u\right|))$ for all $u\in A$. Besides, as each element of 
	$A$ is uniformly continuous, the group of unitary operators $\{\tau
	_{a}: a\in \mathbb{R}^{d}\times\mathbb{R}^{d} \}$ thus defined is strongly continuous,
	i.e. $\tau _{a}u\rightarrow u$ in $\mathfrak{X}_{A}^{\Phi}$ as $\left| a\right|
	\rightarrow 0$ for all $u\in \mathfrak{X}_{A}^{\Phi}$. Moreover 
	\begin{equation*}
		M(\tau _{a}u)=M(u)\text{ for all }u\in \mathfrak{X}_{A}^{\Phi}\text{ and any }a\in \mathbb{R}^{d}\times\mathbb{R}^{d}\text{.}  
	\end{equation*}%
	
	With this in mind, we begin with the following preliminary results.
	
	\begin{definition}
		A sequence $(u_{\varepsilon})_{\varepsilon>0}$ in $L^{1}(\Omega)$ is said to be uniformly integrable if $(u_{\varepsilon})_{\varepsilon>0}$ is bounded in $L^{1}(\Omega)$ and further $\sup_{ \varepsilon>0} \int_{X} |u_{\varepsilon}| dx \rightarrow 0$ as $|X|\to 0$ ($X$ being an integrable set in $\Omega$ with $|X|$ denoting the Lebesgue measure of $X$).
	\end{definition}	
	
	\begin{lemma}\label{uniinte1}
		Assume the $H$-supralgebras  $A_{y}$ and $A_{z}$ are translation
		invariant and moreover each of element of $A_{y}$ and $A_{z}$ is uniformly continuous. Let $(u_{\varepsilon})_{\varepsilon \in E}$ be a uniformly integrable sequence in $L^{1}(\Omega)$ which weakly reiteratively $\Sigma$-converges towards $u_{0} \in L^{1}(\Omega; \mathcal{X}_{A}^{1})$ (see \cite[Definition 3.1]{gabri}). Let the sequences $(v_{\varepsilon})_{\varepsilon \in E}$ and $(w_{\varepsilon})_{\varepsilon \in E}$ defined below by 
		\begin{equation*}
			v_{\varepsilon}(x) = \int_{B_{r}} \widetilde{u_{\varepsilon }}(x+\varepsilon
			_{1}\rho)d\rho \quad \textup{and} \quad w_{\varepsilon}(x) = \int_{B_{r}} \widetilde{u_{\varepsilon }}(x+\varepsilon
			_{2}\rho)d\rho \text{\ \ }(x\in \Omega )
		\end{equation*}	
		where $\widetilde{u_{\varepsilon }}$ is the zero-extension of $u_{\varepsilon }$ out of $\Omega$ and $B_{r}= r B_{d}$ denotes the open ball (centered at the origin in $\mathbb{R}^{d}$) with radius $r>0$. 
		Then, there is a subsequence $E^{\prime}$ extracted from $E$ such that as $E^{\prime} \ni \varepsilon \to 0$,
		\begin{equation*}
			w_{\varepsilon} \rightharpoonup w_{0} \quad in \; L^{1}(\Omega)\textup{-}weak \; R\,\Sigma,
		\end{equation*}
		where $w_{0}$ is defined by $w_{0}(x,y,z) = \int_{B_{r}} u_{0}(x,y,z+\rho) d\rho $ for $(x,y,z)\in \Omega\times\mathbb{R}^{N}\times\mathbb{R}^{N}$. Moreover, for each $\varphi \in \mathcal{D}(\Omega)$ and each $f\in A_{y}$ we have, as $E^{\prime} \ni \varepsilon \to 0$, 
		\begin{equation}\label{pe}
			\int_{ \Omega } v_{\varepsilon}(x) \varphi(x) f\left(\dfrac{x}{\varepsilon_{1}}\right) dx \rightharpoonup \int_{ \Omega } \pounds_{A}\iint_{\mathbb{R}^{2d}} v_{0}(x,y,z)  f(y) \varphi(x) dx dydz,
		\end{equation} 
		where $v_{0}(x,y,z) = \int_{B_{r}} u_{0}(x,y+\rho,z) d\rho$.
	\end{lemma}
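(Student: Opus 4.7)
The plan is to reduce both assertions to the hypothesized weak reiterated $\Sigma$-convergence of $(u_\varepsilon)$ by a change of variables, using translation invariance of the mean value $M$ on $\mathfrak{X}^1_A$ to match the limits, and using uniform continuity of the elements of $A_y$, $A_z$ to dispose of the small shifts introduced along the way. Throughout, it suffices to test against $f$ in the dense class $\mathcal{C}(\overline\Omega;A^\infty)$ admitted by Proposition~\ref{weaktest}; the uniform integrability hypothesis on $(u_\varepsilon)$ (and its inheritance by $(w_\varepsilon)$, $(v_\varepsilon)$ via Fubini) provides, through Dunford--Pettis, the equicontinuity of measures needed to extract a single subsequence along which the identification holds.

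For the convergence of $w_\varepsilon$, I start from
\[
\int_\Omega w_\varepsilon(x)\,f^\varepsilon(x)\,dx = \int_{B_r}\!\int_{\mathbb{R}^d}\widetilde{u_\varepsilon}(x+\varepsilon_2\rho)\,f\!\left(x,\tfrac{x}{\varepsilon_1},\tfrac{x}{\varepsilon_2}\right)dx\,d\rho
\]
and perform the substitution $y=x+\varepsilon_2\rho$, so that the argument of $f$ becomes $\bigl(y-\varepsilon_2\rho,\,y/\varepsilon_1-(\varepsilon_2/\varepsilon_1)\rho,\,y/\varepsilon_2-\rho\bigr)$. Since $\varepsilon_2/\varepsilon_1\to 0$ and the translations act strongly continuously on $\mathfrak{X}^1_{A_y}$ (a consequence of uniform continuity of elements of $A_y$ combined with $f(\cdot,\cdot,\cdot)\in\mathcal{C}(\overline\Omega;A^\infty)$), the first two slots may be replaced by $(y,y/\varepsilon_1)$ with an error that tends to zero in $L^\infty$ uniformly in $\rho\in B_r$; the uniform integrability of $(u_\varepsilon)$ then kills the associated error integral. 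Fubini recasts the remaining main term as $\int_{\mathbb{R}^d}\widetilde{u_\varepsilon}(y)\,g^\varepsilon(y)\,dy$, where
\[
g(x,y,z):=\int_{B_r} f(x,y,z-\rho)\,d\rho \in \mathcal{C}(\overline\Omega;A^\infty)
\]
by translation invariance of $A_z$. The weak $R\,\Sigma$-convergence of $(u_\varepsilon)$ then passes to the limit, producing $\int_\Omega\pounds_A\iint_{\mathbb{R}^{2d}}u_0(x,y,z)\,g(x,y,z)\,dx\,dy\,dz$; a final substitution $z\mapsto z+\rho$ inside $\pounds_A$, which is legitimate because $M\circ\tau_a=M$ on $\mathfrak{X}^1_A$, rewrites this as $\int_\Omega\pounds_A\iint_{\mathbb{R}^{2d}} w_0\,f\,dx\,dy\,dz$, and so identifies the weak $R\,\Sigma$-limit of (a subsequence of) $(w_\varepsilon)$ as $w_0$.

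For \eqref{pe} the scheme is analogous but lighter because the test function $\varphi(x) f(x/\varepsilon_1)$ carries no $z/\varepsilon_2$-dependence. After substituting $y=x+\varepsilon_1\rho$ and using uniform continuity of $\varphi$ to replace $\varphi(y-\varepsilon_1\rho)$ by $\varphi(y)$ up to a negligible error, one is reduced to testing $(u_\varepsilon)$ against $\varphi(y)\,g(y/\varepsilon_1)$ with $g(y'):=\int_{B_r} f(y'-\rho)\,d\rho\in A_y$ (well defined since $A_y$ is translation invariant); weak $R\,\Sigma$-convergence then yields $\int_\Omega\pounds_A\iint u_0(x,y,z)\varphi(x)g(y)\,dx\,dy\,dz$, and a change $y\mapsto y+\rho$ inside the $A_y$-mean gives \eqref{pe}. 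The \emph{main obstacle} in both arguments is the control of the error produced by transporting $f$ through the small translations $\pm\varepsilon_i\rho$, critically through the middle slot where an $\varepsilon_2/\varepsilon_1$-magnified translation appears: its resolution hinges on the combined use of $\varepsilon_2/\varepsilon_1\to 0$, the strong continuity of $\{\tau_a\}$ on $\mathfrak{X}^1_A$ granted by uniform continuity of elements of $A_y$ and $A_z$, and the conversion of uniform $L^\infty$-smallness of the perturbation into vanishing $L^1$-integrals via the Dunford--Pettis / uniform integrability hypothesis.
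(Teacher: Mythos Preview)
Your argument is correct and is precisely the standard route that the cited reference \cite[Lemma~2.28]{gabri} takes: Fubini plus the change of variables $x\mapsto x+\varepsilon_i\rho$, disposal of the induced perturbations in the test function via uniform continuity of the elements of $A_y,A_z$ together with $\varepsilon_2/\varepsilon_1\to 0$ and the $L^1$-bound on $(u_\varepsilon)$, application of the assumed weak $R\,\Sigma$-convergence of $(u_\varepsilon)$ to the translated test function $g=\int_{B_r}\tau_{(\cdot,\rho)}f\,d\rho$ (or $\tau_{(\rho,\cdot)}f$), and finally the identity $M\circ\tau_a=M$ on $\mathfrak{X}^1_A$ to rewrite the limit. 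The paper itself gives no independent proof and simply defers to \cite{gabri}, so your write-up is in fact more detailed than the paper's.

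One small correction: your appeal to Proposition~\ref{weaktest} is misplaced. That proposition \emph{enlarges} the class of admissible test functions once weak $R\,\Sigma$-convergence is already known; it does not justify \emph{reducing} the class in order to prove convergence. The reduction you need is exactly the compactness-plus-identification scheme you sketch afterwards: uniform integrability of $(w_\varepsilon)$ (inherited from $(u_\varepsilon)$ by Jensen/Fubini) yields, via the $L^1$ analogue of Theorem~\ref{compaorlicz} in \cite{gabri}, a subsequence with some weak $R\,\Sigma$-limit $\tilde w_0$, and your computation on the determining class $\mathcal{D}(\Omega)\otimes A^\infty$ then forces $\tilde w_0=w_0$. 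With that adjustment your proof is complete.
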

	\begin{proof}
		It is done in \cite[Lemma 2.28]{gabri} with the Besicovitch space $\mathcal{B}_{A}^{1}$ in place of $\mathcal{X}_{A}^{1}$.
	\end{proof}
	
	\begin{remarq}
		\label{rempartcase}\textup{Assume Lemma \ref{uniinte1} holds. Then as } $E^{\prime}\ni \varepsilon
		\rightarrow 0$,
		\begin{equation*}
			\frac{1}{\left\vert B_{\varepsilon_{2}r}\right\vert }\int_{B_{\varepsilon
					_{2}r}}u_{\varepsilon }(x+y )dy\rightarrow \frac{1}{\left\vert
				B_{r}\right\vert } w_{0}\text{\  \emph{in} }L^{1}( \Omega
			)\text{\emph{-weak }} R\, \Sigma \text{\emph{.}}  
		\end{equation*}%
	\end{remarq}	
	
	
	We are now able to state and prove the next capital compactness result for this work. It is our first main result as mentioned in Introduction and whose we recall here for the reader's convenience. \\
	
\noindent \textit{\textup{\textbf{Theorem}} \ref{ti}. 
		Let $\Omega$ be an open subset in $\mathbb{R}^{d}$. Let $\Phi$ be an $d$-function of class $\Delta_{2}$ such that its complementary $\widetilde{\Phi} \in \Delta_{2}$ and let $A= A_{y}\odot A_{z}$ be an $RH$-supralgebra where $A_{y}$ (resp. $A_{z}$) is an ergodic $H$-supralgebra on $\mathbb{R}^{d}_{y}$ (resp. $\mathbb{R}^{d}_{z}$). Assume that  $A_{y}$ and $A_{z}$ are translation invariant, and moreover that their elements are uniformly continuous. Finally, let $(u_{\varepsilon})_{\varepsilon\in E}$ be a bounded sequence in $W^{1}_{0}L^{\Phi}(\Omega)$. There exist a subsequence $E^{\prime}$ from $E$ and a triple $ (u_{0}, u_{1}, u_{2}) \in W^{1}_{0}L^{\Phi}(\Omega) \times L^{1}(\Omega; W^{1}_{\#}\mathcal{X}^{\Phi}_{A_{y}})\times L^{1}(\Omega; \mathcal{X}^{1}_{A_{y}}(\mathbb{R}^{d}_{y} ; W^{1}_{\#}\mathcal{X}^{\Phi}_{A_{z}}))$ such that, as $E^{\prime} \ni \varepsilon \to 0$, }
		\begin{equation*}
			u_{\varepsilon} \rightharpoonup u_{0} \quad in \;\, W^{1}_{0}L^{\Phi}(\Omega)\textup{-}weak 
		\end{equation*}
	\textit{	and }
		\begin{equation*}
			\dfrac{\partial u_{\varepsilon}}{\partial x_{j}} \rightharpoonup \dfrac{\partial u_{0}}{\partial x_{j}} + \dfrac{\overline{\partial} u_{1}}{\partial y_{j}} + \dfrac{\overline{\partial} u_{2}}{\partial z_{j}} \quad in \;\, L^{\Phi}(\Omega)\textup{-}weak \; R\, \Sigma \quad (1 \leq j \leq d).
		\end{equation*}
	\textit{	If in addition $\widetilde{\Phi} \in \Delta^{\prime}$ then $u_{1} \in L^{\Phi}(\Omega; W^{1}_{\#}\mathcal{X}^{\Phi}_{A_{y}})$ and $u_{2} \in  L^{\Phi}(\Omega; \mathcal{X}^{\Phi}_{A_{y}}(\mathbb{R}^{d}_{y} ; W^{1}_{\#}\mathcal{X}^{\Phi}_{A_{z}}))$.
	} 
	
	\begin{proof}
		By reflexivity of the space $ W^{1}_{0}L^{\Phi}(\Omega)$ and also using Theorem \ref{compaorlicz}, there exist a subsequence $E^{\prime }$ from $E$, a
		function $u_{0}\in  W^{1}_{0}L^{\Phi}(\Omega)$ and a vector
		function $\mathbf{v}=(v_{j})_{1\leq j\leq d}\in L^{\Phi}( \Omega ;%
		\mathcal{X}_{A}^{\Phi})^{d}$ such that, as $E^{\prime }\ni \varepsilon
		\rightarrow 0$, we have  \eqref{pa} and $\dfrac{\partial u_{\varepsilon }}{\partial x_{j}} \rightharpoonup
		v_{j}$ in $L^{\Phi}( \Omega )$-weak $R\, \Sigma $ ($1\leq j\leq d$). 
		
		\noindent 
		It remains to check that there exist two functions  $u_{1} \in L^{1}(\Omega; W^{1}_{\#}\mathcal{X}^{\Phi}_{A_{y}})$  and $u_{2}\in L^{1}(\Omega; \mathcal{X}^{1}_{A_{y}}(\mathbb{R}^{N}_{y} ; W^{1}_{\#}\mathcal{X}^{\Phi}_{A_{z}}))$  such that $v_{j} = \dfrac{\partial u_{0}}{\partial x_{j}} + \dfrac{\overline{\partial} u_{1}}{\partial y_{j}} + \dfrac{\overline{\partial} u_{2}}{\partial z_{j}}$  $(1 \leq j \leq d)$. The fact that $u_{1} \in L^{\Phi}(\Omega; W^{1}_{\#}\mathcal{X}^{\Phi}_{A_{y}})$ and $u_{2} \in  L^{\Phi}(\Omega; \mathcal{X}^{\Phi}_{A_{y}}(\mathbb{R}^{d}_{y} ; W^{1}_{\#}\mathcal{X}^{\Phi}_{A_{z}}))$  if $\widetilde{\Phi} \in \Delta'$   will follow from  \cite[Remark 2]{tacha2}.

		We begin by deriving the
		existence of $u_{2}\in L^{1}(\Omega; \mathcal{X}^{1}_{A_{y}}(\mathbb{R}^{d}_{y} ; W^{1}_{\#}\mathcal{X}^{\Phi}_{A_{z}}))$. For
		that purpose, let $r>0$ be freely fixed. Let $B_{\varepsilon _{2}r}$ denote
		the open ball in $\mathbb{\mathbb{R}}^{d}$ centered at the origin and of
		radius $\varepsilon _{2}r$. By the equalities 
		\begin{eqnarray*}
			&&\frac{1}{\varepsilon _{2}}\left( u_{\varepsilon }(x )-\frac{1}{%
				\left\vert B_{\varepsilon _{2}r}\right\vert }\int_{B_{\varepsilon
					_{2}r}}u_{\varepsilon }(x+\rho)d\rho \right) \\
			&=&\frac{1}{\varepsilon _{2}}\frac{1}{\left\vert B_{\varepsilon
					_{2}r}\right\vert }\int_{B_{\varepsilon _{2}r}}\left( u_{\varepsilon
			}(x )-u_{\varepsilon }(x+\rho )\right) d\rho \\
			&=&\frac{1}{\varepsilon _{2}}\frac{1}{\left\vert B_{r}\right\vert }%
			\int_{B_{r}}\left( u_{\varepsilon }(x )-u_{\varepsilon
			}(x+\varepsilon _{2}\rho )\right) d\rho \\
			&=&-\frac{1}{\left\vert B_{r}\right\vert }\int_{B_{r}}d\rho
			\int_{0}^{1}Du_{\varepsilon }(x+t\varepsilon _{2}\rho )\cdot \rho dt
		\end{eqnarray*}%
		where the dot denotes the usual Euclidean inner product in $\mathbb{\mathbb{R%
		}}^{d}$, we deduce from the boundedness of $(u_{\varepsilon })_{\varepsilon
			\in E^{\prime }}$ in $W_{0}^{1}L^{\Phi}(\Omega)$ and Jensen's inequality, that the sequence $%
		(z_{\varepsilon }^{r})_{\varepsilon \in E^{\prime }}$ defined by 
		\begin{equation*}
			z_{\varepsilon }^{r}(x,\omega )=\frac{1}{\varepsilon _{2}}\left(
			u_{\varepsilon }(x )-\frac{1}{\left\vert B_{\varepsilon
					_{2}r}\right\vert }\int_{B_{\varepsilon _{2}r}}u_{\varepsilon }(x+\rho )d\rho \right) \;(x\in  \Omega ,\varepsilon \in
			E^{\prime })
		\end{equation*}%
		is bounded in $L^{\Phi}( \Omega )$. It is important to note that in
		general the function $z_{\varepsilon }^{r}$ is well defined since $%
		u_{\varepsilon }$ and $Du_{\varepsilon }$ can be naturally extended off $\Omega$
		as elements of $L_{\text{loc}}^{\Phi}(\mathbb{R}^{d})$ and $%
		L_{\text{loc}}^{\Phi}(\mathbb{R}^{d})^{d}$, respectively.
		
		Arguing as above one also shows that the sequence $(v_{\varepsilon }^{r})_{\varepsilon\in E^{\prime}}$ defined by 
		\begin{equation*}
			v_{\varepsilon }^{r}(x )=\frac{1}{\varepsilon _{1}}\left(
			u_{\varepsilon }(x )-\frac{1}{\left\vert B_{\varepsilon
					_{1}r}\right\vert }\int_{B_{\varepsilon _{1}r}}u_{\varepsilon }(x+\rho )d\rho \right) \;(x\in  \Omega ,\varepsilon \in
			E^{\prime })	
		\end{equation*}
		is well defined and bounded in $L^{\Phi}( \Omega )$.
		
		Once more, by virtue of Theorem \ref{compaorlicz} we find that there exist a subsequence
		from $E^{\prime }$ (not relabeled) and two functions $v_{r}$ and  $z_{r}$ in $L^{\Phi}(
		\Omega ;\mathcal{X}_{A}^{\Phi})$ such that, as $E^{\prime }\ni \varepsilon
		\rightarrow 0$ 
		\begin{equation*}
			v_{\varepsilon }^{r} \rightharpoonup v_{r}\text{\ in }L^{\Phi}( \Omega
			)\text{-weak }R\,\Sigma, \text{}  
		\end{equation*}
		\begin{equation}
			z_{\varepsilon }^{r} \rightharpoonup z_{r}\text{\ in }L^{\Phi}( \Omega
			)\text{-weak }R\, \Sigma \text{.}  \label{pd}
		\end{equation}%
		
		As $(z_{\varepsilon }^{r})_{\varepsilon \in E^{\prime }}$ is bounded in $%
		L^{\Phi}( \Omega )$ we have (since $\varepsilon _{2}$, $\varepsilon
		_{2}/\varepsilon _{1}\rightarrow 0$ as $E^{\prime }\ni \varepsilon
		\rightarrow 0$) that, 
		\begin{equation}
			\varepsilon _{2}z_{\varepsilon }^{r}\rightarrow 0\text{\ in }L^{\Phi}(
			\Omega )\text{ and }\frac{\varepsilon _{2}}{\varepsilon _{1}}z_{\varepsilon
			}^{r}\rightarrow 0\text{ in }L^{\Phi}( \Omega )\text{ as }E^{\prime }\ni
			\varepsilon \rightarrow 0.  \label{5.9'}
		\end{equation}%
		
		Now, for $\varphi \in \mathcal{D}(\Omega)$, $f\in A_{y}$ and $g\in A_{z}$ we have 
		\begin{equation}
			\begin{array}{l}
				\displaystyle	\int_{ \Omega }\left( \frac{\partial u_{\varepsilon }}{\partial x_{i}}%
				(x )-\frac{1}{\left\vert B_{\varepsilon _{2}r}\right\vert }%
				\int_{B_{\varepsilon _{2}r}}\frac{\partial u_{\varepsilon }}{\partial x_{i}}%
				(x+\rho )d\rho \right) \varphi (x)f\left(  \frac{x}{%
					\varepsilon _{1}} \right) g(\frac{x}{\varepsilon _{2}})dx
				\\ 
				\displaystyle	\;\;=-\int_{ \Omega }\varepsilon _{2}z_{\varepsilon }^{r}(x,\omega
				)f\left(  \frac{x}{\varepsilon _{1}} \right) g\left(\frac{x}{%
					\varepsilon _{2}}\right)\frac{\partial \varphi }{\partial x_{i}}(x)dx \\ 
				\displaystyle	\;\;\;\;\;	-\int_{ \Omega }\frac{\varepsilon _{2}}{\varepsilon _{1}}%
				z_{\varepsilon }^{r}(x)\varphi (x) g\left(\frac{x}{\varepsilon _{2}}%
				\right) \left( \dfrac{\partial f}{\partial x_{i}}\right) \left( \frac{x}{\varepsilon _{1}}\right)
				dx \\ 
				\displaystyle	\;\;\;\;\;-\int_{ \Omega }z_{\varepsilon }^{r}(x )\varphi
				(x)f\left(  \frac{x}{\varepsilon _{1}}\right) \frac{%
					\partial g}{\partial z_{i}}\left( \frac{x}{\varepsilon _{2}}\right) dx.%
			\end{array}       \label{5.10'}
		\end{equation}%
		Passing to the limit in (\ref{5.10'}) (as $E^{\prime }\ni \varepsilon
		\rightarrow 0$) using conjointly (\ref{pd}), (\ref{5.9'}) and Remark \ref{rempartcase} one gets 
		\begin{equation*}
			\begin{array}{l}
				\displaystyle	\int_{ \Omega} \pounds_{A} \iint_{\mathbb{R}^{2d}}  \left( v_{i}(x,\cdot
				,\cdot )-\frac{1}{\left\vert B_{r}\right\vert }\int_{B_{r}}v_{i}(x,\cdot
				,\cdot +\rho )d\rho \right) \varphi (x)\, f \,g dxdy dz
				\\ 
				\displaystyle	\;\;\;=-\int_{ \Omega} \pounds_{A} \iint_{\mathbb{R}^{2d}}  z_{r}(\cdot,\cdot
				,r)\varphi(x) f \partial _{i} g dxd\beta.
			\end{array}%
		\end{equation*}%
		
		Therefore, because of
		the arbitrariness of $\varphi $, $f$ and $g$, we are led to (for $1\leq i\leq d$) 
		
		\begin{equation}\label{231}
			\frac{\overline{\partial }z_{r}}{\partial z_{i}}(x, \cdot, \cdot) =  v_{i}(x,\cdot
			,\cdot )-\frac{1}{\left\vert B_{r}\right\vert }\int_{B_{r}}v_{i}(x,\cdot
			,\cdot +\rho )d\rho \quad \text{a.e. in } x\in \Omega.
		\end{equation}
		
		Set $f_{r}=z_{r}-M_{z}(z_{r}(x,y,\cdot ))$ where here, $%
		z_{r}(x,y ,\cdot )\in \mathcal{X}_{A_{z}}^{\Phi}$ is viewed as its
		representative in $\mathfrak{X}_{A_{z}}^{\Phi}$ and $M_{z}=M$ standing here for the mean value
		on $\mathbb{R}^{d}_{z}$ with respect to $z$. Note that we also have $z_{r}(x, \cdot, z) \in \mathcal{X}_{A_{y}}^{\Phi}$ and $z_{r}(\cdot, \cdot, z) \in L^{1}(\Omega ; \mathcal{X}_{A_{y}}^{\Phi})$.
		Then $%
		M_{z}(f_{r})=0$ and moreover $\overline{D}_{z}f_{r}=\overline{D}_{z}z_{r}$
		so that $f_{r}(x,y,\cdot)\in \mathcal{X}_{A_{z}}^{\Phi}$ with $%
		\overline{\partial }f_{r}(x,y,\cdot)/\partial z_{i}\in \mathcal{X}_{A_{z}}^{\Phi}$ (by \eqref{231}) for a.e. $(x,y) \in \Omega\times\mathbb{R}%
		_{y}^{d}$, that is, 
		\begin{equation*}
			f_{r}(x,y,\cdot)\in W^{1}\mathcal{X}_{A_{z}}^{\Phi}/\mathbb{C}\text{.}%
			\;\;\;\;\;\;\;\;\;\;\;\;\;\;\;\;\;\;\;\;
		\end{equation*}%
		So let $g_{r}=J_{1}^{z} \circ f_{r}$, where $J_{1}^{z}$ denotes the canonical mapping
		of $W^{1}\mathcal{X}_{A_{z}}^{\Phi}/\mathbb{C}$ into its separated completion $%
		W^{1}_{\#}\mathcal{X}_{A_{z}}^{\Phi}$. Then $g_{r}(x,y,\cdot)\in W^{1}_{\#}\mathcal{X}_{A_{z}}^{\Phi}$ for a.e. $(x,y) \in Q\times\mathbb{R}%
		_{y}^{d}$
		and moreover 
		\begin{equation*}
			\frac{\overline{\partial }g_{r}}{\partial y_{i}}(x,y ,\cdot
			)=v_{i}(x,y ,\cdot )-\frac{1}{\left\vert B_{r}\right\vert }%
			\int_{B_{r}}v_{i}(x,y ,\cdot +\rho )d\rho \;\ \ (1\leq i\leq d)
		\end{equation*}%
		since $\frac{\overline{\partial }g_{r}}{\partial z_{i}}(x,y ,\cdot )=%
		\frac{\overline{\partial }f_{r}}{\partial z_{i}}(x,y ,\cdot )=\frac{%
			\overline{\partial }z_{r}}{\partial z_{i}}(x,y ,\cdot )$. Now, we also
		view $v_{i}(x,y ,\cdot )$ as its representative in $\mathfrak{X}_{A_{z}}^{\Phi}$. Taking
		this into account, we have 
		\begin{equation}
			\begin{array}{l}
				\left\Vert g_{r}(x,y ,\cdot )-g_{r^{\prime }}(x,y ,\cdot
				)\right\Vert _{W^{1}_{\#}\mathcal{X}_{A_{z}}^{\Phi}} \\ 
				\leq \left\Vert \overline{D}_{z}g_{r}(x,y ,\cdot )-\mathbf{v}(x,y
				,\cdot )+M_{z}(\mathbf{v}(x,y ,\cdot ))\right\Vert _{\Phi,A_{z}} \\ 
				+\left\Vert \overline{D}_{z}g_{r^{\prime }}(x,y ,\cdot )-\mathbf{v}%
				(x,y ,\cdot )+M_{y}(\mathbf{v}(x,y ,\cdot ))\right\Vert _{\Phi,A_{z}}.%
			\end{array}
			\label{5.11'}
		\end{equation}%
		But,
		\begin{equation*}
			\begin{array}{l}
				\left\Vert \overline{D}_{z}g_{r}(x,y ,\cdot )-\mathbf{v}(x,y
				,\cdot )+M_{z}(\mathbf{v}(x,y ,\cdot ))\right\Vert _{\Phi,A_{z}} \\ 
				\;\;\;=\left\Vert \frac{1}{\left\vert B_{r}\right\vert }\int_{B_{r}}\mathbf{v%
				}(x,y ,\cdot +\rho )d\rho -M_{z}(\mathbf{v}(x,y ,\cdot
				))\right\Vert _{\Phi,A_{z}}.%
			\end{array}%
		\end{equation*}%
		Therefore, since the algebra $A_{z}$ is ergodic, the right-hand side (and hence
		the left-hand side) of (\ref{5.11'}) goes to zero when $r,r^{\prime
		}\rightarrow +\infty $. Thus, the sequence $(g_{r}(x,y ,\cdot ))_{r>0}$
		is a Cauchy sequence in the Banach space $W^{1}_{\#}\mathcal{X}_{A_{z}}^{\Phi}$, whence
		the existence of a unique $u_{2}(x,y ,\cdot )\in W^{1}_{\#}\mathcal{X}_{A_{z}}^{\Phi} $ such that 
		\begin{equation*}
			g_{r}(x,y ,\cdot )\rightarrow u_{2}(x,y ,\cdot )\text{\ in }%
			W^{1}_{\#}\mathcal{X}_{A_{z}}^{\Phi} \text{\ as }r\rightarrow +\infty ,
		\end{equation*}%
		that is 
		\begin{equation*}
			\overline{D}_{z}g_{r}(x,y ,\cdot )\rightarrow \overline{D}%
			_{z}u_{2}(x,y ,\cdot )\text{\ in }(\mathcal{X}_{A_{z}}^{\Phi})^{d}\text{\ as }%
			r\rightarrow +\infty .
		\end{equation*}%
		Once again the ergodicity of $A_{z}$ and the uniqueness of the limit leads at
		once to 
		\begin{equation*}
			\overline{D}_{z}u_{2}(x,y ,\cdot )=\mathbf{v}(x,y ,\cdot )-M_{z}(%
			\mathbf{v}(x,y ,\cdot ))\text{\ a.e. in }\mathbb{R}^{d}_{z}\text{\ and for
				a.e. }(x,y )\in Q\times \mathbb{R}^{d}_{y} \text{.}
		\end{equation*}%
		We deduce the existence of a function $u_{2}:\Omega\times \mathbb{R}^{d}_{y} \rightarrow 
		W^{1}_{\#}\mathcal{X}_{A_{z}}^{\Phi}$, $(x,y )\mapsto u_{2}(x,\omega ,\cdot )$, lying in 
		$L^{1}(\Omega ; \mathcal{X}^{1}_{A_{y}}(\mathbb{R}^{d}_{y} ; W^{1}_{\#}\mathcal{X}_{A_{z}}^{\Phi}))$ such that 
		\begin{equation}
			\mathbf{v}-M(\mathbf{v})=\overline{D}_{z}u_{2}.\;\;\;\;  \label{Equ1}
		\end{equation}%
		
		Let us finally derive the existence of $u_{1}$. 
		Returning to \eqref{5.10'} and taking (in the left-hand side of the equality) there $g \equiv 1$ and replacing $B_{\varepsilon_{2}r}$ by $B_{\varepsilon_{1}r}$, and finally proceeding as we have done to get $u_{2}$ (using this time the convergence result \eqref{pe}), we get the existence of an unique $u_{1} \in L^{1}(\Omega; W^{1}_{\#}\mathcal{X}^{\Phi}_{A_{y}})$ such that 
		\begin{equation}\label{pf}
			\begin{array}{rcl}
				\overline{D}_{y}u_{1}(x,\cdot) &= & M_{z}\big( \mathbf{v}(x,\cdot,\cdot) - M_{y}(\mathbf{v}(x,\cdot,\cdot))\big)  \\
				& = & M_{z}(\mathbf{v}(x,\cdot,\cdot)) - M_{z}\left(M_{y}(\mathbf{v}(x,\cdot,\cdot)) \right).
			\end{array}
		\end{equation}
		Thus, 
		\begin{equation*}
			\begin{array}{rcl}
				\mathbf{v}(x,\cdot,\cdot) &= & M_{z}(\mathbf{v}(x,\cdot,\cdot)) + \overline{D}_{z}u_{2}(x,\cdot,\cdot)  \quad \text{by } \eqref{Equ1}  \\
				& = &  M_{z}\left(M_{y}(\mathbf{v}(x,\cdot,\cdot)) \right) + \overline{D}_{y}u_{1}(x,\cdot) + \overline{D}_{z}u_{2}(x,\cdot,\cdot) \quad \text{by } \eqref{pf}.
			\end{array}
		\end{equation*}
		Finally by the convergence result $Du_{\varepsilon} \rightharpoonup \mathbf{v}$ in $L^{\Phi}( \Omega )^{d}$-weak $R\, \Sigma $ as $E^{\prime} \ni \varepsilon \to 0$, it follows that $Du_{\varepsilon} \rightharpoonup M_{z}(M_{y}(\mathbf{v}))$ in $L^{\Phi}( \Omega )^{d}$-weak $R \Sigma $ as $E^{\prime} \ni \varepsilon \to 0$, so that by \eqref{pa} we get $M_{z}(M_{y}(\mathbf{v})) = Du_{0}$. Whence $\mathbf{v} =  Du_{0} + \overline{D}_{y}u_{1} + \overline{D}_{z}u_{2}$ and the proof is complete.
	\end{proof}
	
	\begin{corollary}
		Let $A= A_{y}\odot A_{z}$ be an $RH$-supralgebra on $\mathbb{R}^{2d}$.
		Under hypotheses of Theorem \ref{ti}, if we also assume that $A_{y}$ and $A_{z}$ are $\Phi$-total, then $A$ is $W^{1}_{0}L^{\Phi}(\Omega)$-proper.
	\end{corollary}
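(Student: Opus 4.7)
The strategy is a two-stage extraction combined with a differential-quotient construction that separates the two microscopic scales. First, since $\Phi,\widetilde\Phi \in \Delta_2$ the space $W^{1}_{0}L^{\Phi}(\Omega)$ is reflexive, so boundedness of $(u_{\varepsilon})$ yields, along a subsequence $E'$, a weak limit $u_{0}\in W^{1}_{0}L^{\Phi}(\Omega)$ as in \eqref{pa}. Applying the compactness Theorem \ref{compaorlicz} component-wise to the gradients provides a further not-relabelled subsequence and a vector field $\mathbf{v}=(v_{j})\in L^{\Phi}(\Omega;\mathcal{X}_{A}^{\Phi})^{d}$ such that $\partial u_{\varepsilon}/\partial x_{j}\rightharpoonup v_{j}$ in $L^{\Phi}(\Omega)$-weak $R\,\Sigma$. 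The remaining task is to decompose $\mathbf{v}=Du_{0}+\overline{D}_{y}u_{1}+\overline{D}_{z}u_{2}$ for suitable $u_{1},u_{2}$.

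To isolate the fastest scale, for each $r>0$ I would introduce the averaged difference quotients
\[ z_{\varepsilon}^{r}(x)=\frac{1}{\varepsilon_{2}}\Big(u_{\varepsilon}(x)-\frac{1}{|B_{\varepsilon_{2}r}|}\int_{B_{\varepsilon_{2}r}}u_{\varepsilon}(x+\rho)\,d\rho\Big), \]
and likewise $v_{\varepsilon}^{r}$ with $\varepsilon_{1}$ in place of $\varepsilon_{2}$. Rewriting the inner difference as $-\tfrac{1}{|B_{r}|}\int_{B_{r}}d\rho\int_{0}^{1}Du_{\varepsilon}(x+t\varepsilon_{2}\rho)\cdot\rho\,dt$ and invoking Jensen's inequality and the $W^{1}_{0}L^{\Phi}$-bound, both sequences are bounded in $L^{\Phi}(\Omega)$. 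Theorem \ref{compaorlicz} applied once more yields $R\,\Sigma$-limits $z_{r},v_{r}\in L^{\Phi}(\Omega;\mathcal{X}_{A}^{\Phi})$. I would then test the relation between $\partial u_{\varepsilon}/\partial x_{i}$ and the averaged quantity against $\varphi(x)f(x/\varepsilon_{1})g(x/\varepsilon_{2})$ with $\varphi\in\mathcal{D}(\Omega)$, $f\in A_{y}$, $g\in A_{z}$, use $\varepsilon_{2},\varepsilon_{2}/\varepsilon_{1}\to 0$ to kill two terms, and apply Lemma \ref{uniinte1} together with Remark \ref{rempartcase} on the averaged gradient. Arbitrariness of $\varphi,f,g$ delivers the pointwise-in-$x$ identity
\[ \frac{\overline{\partial}z_{r}}{\partial z_{i}}(x,\cdot,\cdot)=v_{i}(x,\cdot,\cdot)-\frac{1}{|B_{r}|}\int_{B_{r}}v_{i}(x,\cdot,\cdot+\rho)\,d\rho. \]

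The crux is to pass from the family $(z_{r})_{r>0}$ to a genuine element $u_{2}$: subtracting the partial mean $M_{z}(z_{r}(x,y,\cdot))$ and lifting through the canonical map into $W^{1}_{\#}\mathcal{X}_{A_{z}}^{\Phi}$ produces $g_{r}(x,y,\cdot)$ whose gradient equals $v_{i}(x,y,\cdot)-\tfrac{1}{|B_{r}|}\int_{B_{r}}v_{i}(x,y,\cdot+\rho)\,d\rho$. The main obstacle is proving $(g_{r})_{r>0}$ is Cauchy in $W^{1}_{\#}\mathcal{X}_{A_{z}}^{\Phi}$; this is precisely where ergodicity of $A_{z}$ enters, via Lemma \ref{p2.4}, which forces the ball average to converge to $M_{z}(\mathbf{v}(x,y,\cdot))$ in the $\|\cdot\|_{\Phi,A_{z}}$ seminorm. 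Completeness then gives a unique limit $u_{2}(x,y,\cdot)\in W^{1}_{\#}\mathcal{X}_{A_{z}}^{\Phi}$ with $\overline{D}_{z}u_{2}=\mathbf{v}-M_{z}(\mathbf{v})$, and measurability arguments put $u_{2}\in L^{1}(\Omega;\mathcal{X}^{1}_{A_{y}}(\mathbb{R}^{d}_{y};W^{1}_{\#}\mathcal{X}_{A_{z}}^{\Phi}))$.

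Repeating the same argument for $v_{\varepsilon}^{r}$ with $g\equiv 1$ as test (so only the $y$-oscillations survive) and invoking the ergodicity of $A_{y}$ instead yields $u_{1}\in L^{1}(\Omega;W^{1}_{\#}\mathcal{X}^{\Phi}_{A_{y}})$ with $\overline{D}_{y}u_{1}=M_{z}(\mathbf{v})-M_{z}(M_{y}(\mathbf{v}))$. Combining the two identities gives $\mathbf{v}=M_{z}(M_{y}(\mathbf{v}))+\overline{D}_{y}u_{1}+\overline{D}_{z}u_{2}$, and since $Du_{\varepsilon}\rightharpoonup M_{z}(M_{y}(\mathbf{v}))$ in $L^{\Phi}(\Omega)^{d}$-weak (this is Remark \ref{tb}(5.b) applied to each component) while also $Du_{\varepsilon}\rightharpoonup Du_{0}$ by \eqref{pa}, uniqueness forces $M_{z}(M_{y}(\mathbf{v}))=Du_{0}$, closing \eqref{pb}. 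The additional conclusion under $\widetilde\Phi\in\Delta'$ follows from property (ix) of Subsection \ref{labelsub1sect2} (in particular the estimate \eqref{be1}) applied to $u_{1}$ and to $u_{2}$ viewed as a function of the appropriate product variables, which upgrades the $L^{1}$-integrability in $x$ to $L^{\Phi}$-integrability in the relevant mixed Orlicz norms.
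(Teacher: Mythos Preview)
Your argument is a faithful reproduction of the paper's proof of Theorem \ref{ti} itself; the difference-quotient construction, the use of ergodicity via Lemma \ref{p2.4} to get a Cauchy family $(g_r)$, and the final identification $M_z(M_y(\mathbf v))=Du_0$ all match the paper line by line. So as a proof of $R\,\Sigma$-reflexivity of $W^1_0L^\Phi(\Omega)$ you are correct and on the paper's track.

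However, the Corollary is not asking you to re-prove Theorem \ref{ti}; it is the one-line observation that the three conditions of Definition \ref{defproperness} hold. Conditions (i) and (ii) ($\Phi$-totality of $A_y$ and $A_z$) are now explicit hypotheses; the $\mathcal{C}^\infty$ requirement on $A_y,A_z$ follows from translation invariance plus uniform continuity of their elements (see the paragraph before Definition \ref{d2.2} or \cite[Proposition 2.3]{woukeng2010homogenization}); and condition (iii), $R\,\Sigma$-reflexivity, is precisely the content of Theorem \ref{ti}. Your write-up establishes (iii) from scratch but never invokes Definition \ref{defproperness} and never mentions the $\mathcal{C}^\infty$ or $\Phi$-totality ingredients, so strictly speaking it does not conclude that $A$ is $W^1_0L^\Phi(\Omega)$-proper. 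The fix is trivial: replace the whole argument by the sentence ``Conditions (i)--(ii) of Definition \ref{defproperness} are assumed, the $\mathcal{C}^\infty$ property follows from translation invariance and uniform continuity, and (iii) is Theorem \ref{ti}.''
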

	
	\begin{remarq}
		\textup{	Theorem \ref{ti} easily generalizes to a finite number of separated scales as follows. Let $(A_{y_{i}})_{1\leq i\leq n}$ be an $n$ ergodic $H$-supralgebras that are translation invariant and whose elements are uniformly continuous. For $1\leq i\leq n$, set }
		\begin{equation*}
			\mathbb{R}^{id}_{y_{1},\cdots,y_{i}} = \mathbb{R}^{d}_{y_{1}} \times \cdots \times \mathbb{R}^{d}_{y_{i}} \quad \textup{and} \quad \odot_{j=1}^{i} A_{y_{j}} = A_{y_{1}} \odot \cdots \odot A_{y_{i}}. 
		\end{equation*}
		\textup{	For $f \in L^{\Phi}(\Omega; \odot_{j=1}^{i} A_{y_{j}})$ one defines the trace function $f^{\varepsilon}$ $(\varepsilon>0)$ by }
		\begin{equation*}
			f^{\varepsilon}(x) = f\left(x, \dfrac{x}{\varepsilon_{1}}, \cdots, \dfrac{x}{\varepsilon_{i}}\right) \quad (x\in \Omega),
		\end{equation*}
		\textup{	where the scales $\varepsilon_{i}$ satisfy the following relation: $\varepsilon_{i+1}/\varepsilon_{i} \rightarrow 0$ as $\varepsilon\to 0$ for $1\leq i\leq n-1$. We have the following result.}
	\end{remarq}
	
	\begin{proposition}\label{multiscal}
		Let $\Omega$ be an open subset in $\mathbb{R}^{d}$ and $\Phi$ be an $N$-function of $\Delta_{2}$-class such that $\widetilde{\Phi}$ is also $\Delta_{2}$-class. Let $(u_{\varepsilon})_{\varepsilon\in E}$ be a bounded sequence in $W^{1}_{0}L^{\Phi}(\Omega)$. There exist a subsequence $E^{\prime}$ from $E$ and a $(n+1)$-tuple $\mathbf{u} = (u_{0}, u_{1}, \cdots, u_{n}) \in W^{1}_{0}L^{\Phi}(\Omega) \times L^{1}(\Omega; W^{1}_{\#}\mathcal{X}^{\Phi}_{A_{y_{1}}})\times \prod_{i=1}^{n-1} L^{1}(\Omega; \mathcal{X}^{1}_{A_{y_{1}} \odot \cdots \odot A_{y_{i}} }(\mathbb{R}^{id}_{y_{1},\cdots,y_{i}} ; W^{1}_{\#}\mathcal{X}^{\Phi}_{A_{i+1}}))$ such that, as $E^{\prime} \ni \varepsilon \to 0$,
		\begin{equation*}\label{tf}
			u_{\varepsilon} \rightharpoonup u_{0} \quad in \;\, W^{1}_{0}L^{\Phi}(\Omega)\textup{-}weak 
		\end{equation*}
		and 
		\begin{equation*}\label{tg}
			D u_{\varepsilon} \rightharpoonup D u_{0} + \overline{D}_{y_{1}} u_{1} + \cdots + \overline{D}_{y_{n}} u_{n} \quad in \;\, L^{\Phi}(\Omega)^{d}\textup{-}weak \; R\, \Sigma.
		\end{equation*}
		If in addition $\widetilde{\Phi} \in \Delta^{\prime}$ then
		\begin{equation*}
			(u_{1}, \cdots, u_{n}) \in  L^{\Phi}(\Omega; W^{1}_{\#}\mathcal{X}^{\Phi}_{A_{y_{1}}})\times \prod_{i=1}^{n-1} L^{\Phi}(\Omega; \mathcal{X}^{\Phi}_{A_{y_{1}} \odot \cdots \odot A_{y_{i}} }(\mathbb{R}^{iN}_{y_{1},\cdots,y_{i}} ; W^{1}_{\#}\mathcal{X}^{\Phi}_{A_{i+1}})).
		\end{equation*}
	\end{proposition}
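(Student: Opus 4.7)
The plan is to generalize the proof of Theorem \ref{ti} by iterating the scale-extraction argument through all $n$ scales. First, I would invoke the reflexivity of $W_0^1 L^{\Phi}(\Omega)$ together with Theorem \ref{compaorlicz} applied to the $RH$-supralgebra $A=A_{y_1}\odot\cdots\odot A_{y_n}$, to extract a subsequence $E'$ and a limit $u_0\in W_0^1 L^\Phi(\Omega)$, a vector $\mathbf{v}=(v_j)\in L^\Phi(\Omega;\mathcal{X}_A^\Phi)^d$ with $u_\varepsilon\rightharpoonup u_0$ weakly in $W_0^1 L^\Phi(\Omega)$ and $\partial u_\varepsilon/\partial x_j\rightharpoonup v_j$ in $L^\Phi(\Omega)$-weak $R\Sigma$. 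The substance of the proof is then the decomposition
\[
\mathbf{v} \;=\; Du_0 + \overline{D}_{y_1}u_1 + \cdots + \overline{D}_{y_n}u_n.
\]

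I would obtain this decomposition by backward induction on the scale index $i=n,n-1,\ldots,1$. Fix $i$ and $r>0$, and define the difference quotient
\[
z^{r,i}_\varepsilon(x) \;=\; \frac{1}{\varepsilon_i}\left(u_\varepsilon(x) - \frac{1}{|B_{\varepsilon_i r}|}\int_{B_{\varepsilon_i r}} u_\varepsilon(x+\rho)\,d\rho\right).
\]
As in Theorem \ref{ti}, the mean value theorem and Jensen's inequality yield uniform boundedness in $L^\Phi(\Omega)$ from the bound on $Du_\varepsilon$, so Theorem \ref{compaorlicz} gives a $R\Sigma$-weak limit $z_{r,i}$. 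Testing against $\varphi(x)\prod_{k=1}^n f_k(x/\varepsilon_k)$ for $f_k\in A_{y_k}$ and passing to the limit using the scale separation $\varepsilon_j/\varepsilon_i\to 0$ for $j<i$ (which kills the cross terms after integration by parts) together with a natural multi-scale extension of Lemma \ref{uniinte1} at scale $\varepsilon_i$, one identifies
\[
\frac{\overline{\partial} z_{r,i}}{\partial y_{i,k}} \;=\; v_k - \frac{1}{|B_r|}\int_{B_r} v_k(\cdot,y_1,\ldots,y_{i-1},y_i+\rho,y_{i+1},\ldots,y_n)\,d\rho,
\]
for $1\le k\le d$. Using the ergodicity of $A_{y_i}$ (exactly as in the step from (\ref{5.11'}) to (\ref{Equ1})), one lets $r\to\infty$ through the separated completion construction and produces a function $u_i$ such that $\overline{D}_{y_i}u_i = \mathbf{v}^{(i)} - M_{y_i}(\mathbf{v}^{(i)})$, where $\mathbf{v}^{(i)}$ denotes the iterated mean of $\mathbf{v}$ over the finer variables $y_{i+1},\ldots,y_n$. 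Telescoping these identities for $i=n,\ldots,1$ yields $\mathbf{v} = M_{y_1}\circ\cdots\circ M_{y_n}(\mathbf{v}) + \sum_{i=1}^n \overline{D}_{y_i}u_i$, and the remaining mean term coincides with $Du_0$ because $Du_\varepsilon\rightharpoonup Du_0$ in $L^\Phi(\Omega)^d$-weak implies $M_{y_1}\circ\cdots\circ M_{y_n}(\mathbf{v})=Du_0$.

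The main obstacle is bookkeeping: tracking the exact functional space each $u_i$ lives in, since $u_i$ must depend only on $(x,y_1,\ldots,y_i)$ and carry Besicovitch-type integrability in the coarser variables but $W^1_\# \mathcal{X}^\Phi_{A_{y_i}}$-regularity in $y_i$. This is handled by viewing the limit $z_{r,i}$ as an element of $L^1(\Omega;\mathcal{X}^1_{A_{y_1}\odot\cdots\odot A_{y_{i-1}}}(\mathbb{R}^{(i-1)d}; \mathcal{X}^\Phi_{A_{y_i}}))$ (after applying $M_{y_{i+1}}\circ\cdots\circ M_{y_n}$), subtracting the $A_{y_i}$-mean to land in $W^1\mathcal{X}^\Phi_{A_{y_i}}/\mathbb{C}$, and then taking the image under the canonical mapping $J_1^{y_i}$ into $W^1_\# \mathcal{X}^\Phi_{A_{y_i}}$. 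The promotion from $L^1$ to $L^\Phi$ integrability in $x$ under the additional assumption $\widetilde{\Phi}\in\Delta'$ is achieved componentwise by the argument of \cite[Remark 2]{tacha2}, exactly as in the final sentence of Theorem \ref{ti}.
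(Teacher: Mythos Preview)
Your proposal is correct and follows essentially the same approach as the paper. The paper's own proof consists of a single sentence: it is copied on that of Theorem~\ref{ti}, finding first $u_0$, then $u_n$, and inductively $u_{n-1},\ldots,u_1$---which is precisely your backward induction on the scale index, carried out via the difference-quotient sequences $z^{r,i}_\varepsilon$, ergodicity of each $A_{y_i}$, and the telescoping identity $\mathbf{v}=M_{y_1}\circ\cdots\circ M_{y_n}(\mathbf{v})+\sum_i \overline{D}_{y_i}u_i$.
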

	
	\begin{proof}
		Its is copied on that of Theorem \ref{ti}. We begin by finding $u_{0}$, next $u_{n}$, and inductively, $u_{n-1}, \cdots, u_{1}$.
	\end{proof}
	
	\begin{remarq}
		\textup{	The above proposition is particular interest for the homogenization in Orlicz setting, of deterministic time dependent problems where we will have the $RH$-supralgebra $A= A_{y}\odot A_{z} \odot A_{\tau}$, with $A_{\tau}$ be a further $H$-supralgebra on $\mathcal{R}_{\tau}$. }
	\end{remarq}
	
	In  the next, we apply the results obtained this Section, in particular Theorem \ref{ti}  for the homogenization of problem \eqref{1.1}.
	
	\section{Application to the reiterated homogenization of nonlinear degenerate elliptic operators} \label{labelsect4} 
	
	Before to study homogenization problem in \eqref{1.1}, we have need an supplement hypothesis on the function $a$, that we call \textit{abstract structure hypothesis}.
	Thus, following \cite[Sect. 3]{nnang2014deterministic}, we assume that the function $a$ in \eqref{1.1} satisfy:
	\begin{itemize}
		\item[\textbf{(H$_{7}$)}] (\textit{abstract structure hypothesis})
		\begin{equation}\label{tj}
			a_{i}(\cdot, \cdot, \zeta, \lambda) \in \mathfrak{X}^{\widetilde{\Phi}}_{A}(\mathbb{R}
			_{y}^{d} ; \mathcal{B}(\mathbb{R}
			_{z}^{d}))  \quad \text{for all } (\zeta, \lambda) \in \mathbb{R}\times 
			\mathbb{R}^{d} \;\; (1\leq i \leq d),
		\end{equation}
	\end{itemize}
	where $A$ is an $RH$-supralgebra of class $\mathcal{C}^{\infty}$ on $\mathbb{R}
	_{y}^{d}\times 
	\mathbb{R}
	_{z}^{d}$ and $\Phi$ is the $N$-function of $\Delta_{2}$-class. 
	
	As we have showed in \textbf{Appendix} \ref{appen1} (see, Proposition \ref{tn}), if \eqref{tj} holds, then we have 
	\begin{equation}\label{tk}
		a_{i}(\cdot, \cdot, f, \mathbf{f}) \in  \mathfrak{X}^{\widetilde{\Phi}}_{A}(\mathbb{R}
		_{y}^{d} ; \mathcal{B}(\mathbb{R}
		_{z}^{d}))   \quad (1\leq i \leq d),
	\end{equation}
	for every $f \in A_{\mathbb{R}}$ and $\mathbf{f} \in (A_{\mathbb{R}})^{d}$, where $A_{\mathbb{R}} = A \cap \mathcal{C}(\mathbb{R}^{d}_{y}\cap \mathbb{R}^{d}_{z})$.
	
	For the reader's convenience, we refer to \textbf{Appendix} \ref{appen1} for detailed discussion about the trace results, and \textbf{Appendix} \ref{appen2} for the well posedness of the compositions appearing in \eqref{1.1}.
	
	Our main goal in this section is to investigate the limiting behaviour, as $0<\varepsilon
	\rightarrow 0,$ of a sequence of solutions $u_{\varepsilon }$ of \eqref{1.1} and deduce the homogenized problem, taking into account the \textit{abstract structure hypothesis} \textbf{(H$_{7}$)}, in particular \eqref{tk} and all the hypothesis \textbf{(H$_{1}$)}-\textbf{(H$_{6}$)} stated in Subsection \ref{hypoproblem}.
	
	Let us begin by some auxiliaries convergence results.
	
	\subsection{Preliminary convergence results}
	
	Given $f \in A_{\mathbb{R}}$, $\mathbf{f} \in (A_{\mathbb{R}})^{d}$ and $1\leq i\leq d$, it follows from \textbf{(H$_{1}$)}-\textbf{(H$_{2}$)} in Section \ref{labelintro} combined with \eqref{tk}, that the function $(y,z) \to a_{i}(y,z, f(y,z), \mathbf{f}(y,z))$, denoted by $a_{i}(\cdot,\cdot, f, \mathbf{f})$, belongs to $\mathfrak{X}_{A}^{\widetilde{\Phi }%
		,\infty }\left( \mathbb{R}_{y}^{d},\mathcal{B}\left( 
	\mathbb{R}
	_{z}^{d}\right) \right) = \mathfrak{X}_{A}^{\widetilde{\Phi }%
	}\left( \mathbb{R}_{y}^{d},\mathcal{B}\left( 
	\mathbb{R}
	_{z}^{d}\right) \right) \cap L^{\infty }\left( \mathbb{R}_{y}^{d},\mathcal{B}\left( 
	\mathbb{R}
	_{z}^{d}\right) \right)$ ($\mathfrak{X}_{A}^{\widetilde{\Phi }%
		,\infty }$ equiped with the $L^{\infty}$-norm).
	
	This being so, under the extra \textit{abstract structure hypothesis} on the first two variables of $a$ and for $(f, {\mathbf f}) \in \mathcal{C}(\overline{\Omega} ; A_{\mathbb{R}}\times (A_{\mathbb{R}})^{d})$, using once more \textbf{(H$_{2}$)} , we conclude that, for every $1\leq i \leq d$, the map
	$x\rightarrow a_{i}\left(
	\cdot, \cdot, f\left( x,\cdot,\cdot\right) ,\mathbf{f}\left( x,\cdot,\cdot\right) \right) $ from $\overline{\Omega}$ to $\mathfrak{X}_{A}^{\widetilde{\Phi }%
		,\infty }\left( \mathbb{R}_{y}^{d},\mathcal{B}\left( 
	\mathbb{R}
	_{z}^{d}\right) \right)$  (still denoted by $a_{i}(\cdot,\cdot, f, \mathbf{f})$) belongs to $\mathcal{C}\left( \overline{\Omega };\mathfrak{X}_{A}^{\widetilde{\Phi }%
		,\infty }\left( \mathbb{R}_{y}^{d},\mathcal{B}\left( 
	\mathbb{R}
	_{z}^{d}\right) \right) \right)$,
	where $a_{i}\left( \cdot, \cdot,w \left(
	\cdot,\cdot\right) ,\mathbf{W }\left( \cdot,\cdot\right) \right) $ is the function $%
	\left( y,z\right) \rightarrow a_{i}\left( y,z,w \left( y,z\right),
	\mathbf{W}\left( y,z\right) \right),$ with $\left( w ,\mathbf{W}\right)
	\in $
	$A_{\mathbb{R}}\times (A_{\mathbb{R}})^{d}$ and 
	\begin{equation*}
		\begin{array}{rcl}
			\left\vert a\left( y,z,f ,\mathbf{f} \right) -a\left( y,z,f',\mathbf{f}'\right) \right\vert & \leq & c_{1}\widetilde{\Psi }%
			^{-1}\left( \Phi \left( c_{2}\left\vert f - f^{\prime }\right\vert
			\right) \right)  \\  
			&  & + c_{3}\widetilde{\Phi }^{-1}\left( \Phi \left(
			c_{4}\left\vert \mathbf{f} - \mathbf{f}' \right\vert \right) \right)  
		\end{array}
	\end{equation*}
	in $\overline{\Omega}\times \mathbb{R}^{d}_{y}\times \mathbb{R}^{d}_{z}$ for all $\left( f ,\mathbf{f}\right), (f', \mathbf{f}')
	\in $
	$\mathcal{C}(\overline{\Omega} ; A_{\mathbb{R}}\times (A_{\mathbb{R}})^{d})$. 
	
	Therefore, for fixed $\varepsilon>0$, one defines the function $x \to a\left(\frac{x}{\varepsilon}, \frac{x}{\varepsilon^{2}}, f\left(x, \frac{x}{\varepsilon}, \frac{x}{\varepsilon^{2}}\right), \mathbf{f}\left(x, \frac{x}{\varepsilon}, \frac{x}{\varepsilon^{2}}\right) \right)$ of $\Omega$ into $\mathbb{R}$ (denoted by $a^{\varepsilon}(\cdot,\cdot, f^{\varepsilon}, \mathbf{f}^{\varepsilon})$) as an element of $L^{\infty}_{\mathbb{R}}(\Omega)^{d}$.
	
	Following \cite[Sect. 3]{nnang2014deterministic}, for $\left( g ,\mathbf{g}\right) \in \mathcal{C}_{\mathbb{R}}(\Delta(A)) \times \mathcal{C}_{\mathbb{R}}(\Delta(A))^{d}$ ($\mathcal{C}_{\mathbb{R}}(\Delta(A)) = \mathcal{C}(\Delta(A); \mathbb{R})$), we set 
	\begin{equation*}\label{to}
		\widehat{b_{i}}(g, \mathbf{g}) = \mathcal{G}(a_{i}(\cdot,\cdot, \mathcal{G}^{-1}g, \mathcal{G}^{-1}\mathbf{g})), \quad 1\leq i \leq d,
	\end{equation*}
	where $\mathcal{G}^{-1}\mathbf{g} = (\mathcal{G}^{-1}g_{i})_{1\leq i \leq d}$. Hence, thanks once more to  \eqref{tk} one defines a mapping $\widehat{b_{i}}$ of $\mathcal{C}_{\mathbb{R}}(\Delta(A)) \times \mathcal{C}_{\mathbb{R}}(\Delta(A))^{d}$ to $L^{\infty}_{\mathbb{R}}(\Delta(A))$. When $\left( g ,\mathbf{g}\right) \in \mathcal{C}(\overline{\Omega} ; \mathcal{C}_{\mathbb{R}}(\Delta(A))) \times \mathcal{C}(\overline{\Omega} ; \mathcal{C}_{\mathbb{R}}(\Delta(A)))^{d} \equiv \mathcal{C}(\overline{\Omega} ; \mathcal{C}_{\mathbb{R}}(\Delta(A)) \times \mathcal{C}_{\mathbb{R}}(\Delta(A))^{d})$, we will still denote by $\widehat{b_{i}}(g,\mathbf{g})$ the mapping $x \to b_{i}(g(x), \mathbf{g}(x))$ from $\overline{\Omega}$ into $L^{\infty}_{\mathbb{R}}(\Delta(A))$. Now, for $u \in A_{\mathbb{R}}$ and $\mathbf{u} \in (A_{\mathbb{R}})^{d}$, letting 
	\begin{equation}\label{tp}
		b_{i}(u, \mathbf{u}) = \mathcal{G}_{1}^{-1}(\widehat{b_{i}}(\mathcal{G}u, \mathcal{G}\mathbf{u})), \quad 1\leq i \leq d.
	\end{equation}
	
	Hence, we are in position to state our first convergence result reasoning exactly as in the proof of \cite[Proposition 3.4]{tacha4}.
	\color{black}
	\begin{proposition}\label{prop3.3}
		For $\left( f,\mathbf{f }\right) \in 
			\mathcal{C}\left( \overline{%
				\Omega }; A_{\mathbb{R}}\times (A_{\mathbb{R}})^{d} \right)$, let $a= (a_i)_{1\leq i \leq d}: \mathbb R^d \times \mathbb R^d \times \mathbb R \times \mathbb R^d \to \mathbb R^d$ satisfy  \textbf{(H$_{1}$)}-\textbf{(H$_{5}$)} and \textbf{(H$_{7}$)}, we have that 
			\begin{itemize}
				\item[\textbf{(i)}] for each $1\leq i\leq
				d,$
				\begin{align*} b_{i}\left( f  ,\mathbf{f}%
					\right) \in \mathcal{C}\left( \overline{\Omega };\mathcal{X}_{A}^{\widetilde{\Phi }}\left( 
					\mathbb{R}
					_{y}^{d},\mathcal{B}\left( 
					\mathbb{R}
					_{z}^{d}\right) \right) \right) 
					\;\hbox{ and}
					\\
					a_{i}^{\varepsilon }\left( \cdot, \cdot,f ^{\varepsilon }(\cdot, \cdot,\cdot),\mathbf{f }%
					^{\varepsilon }(\cdot, \cdot,\cdot)\right) \rightharpoonup b_{i}\left( f  ,%
					\mathbf{f } \right) %
					\nonumber\\
					\text{ in }L^{\widetilde{\Phi }}(\Omega)\text{-weak } R\, \Sigma, \text{ as }\varepsilon \rightarrow 0.  \nonumber 
				\end{align*}
				
				\item[\textbf{(ii)}]
				The mapping $\left( f ,\mathbf{f }\right)
				\rightarrow b\left(f ,\mathbf{f }\right) = (b_{i}\left(f ,\mathbf{f }\right))_{1\leq i \leq d} $ from $\mathcal{C}%
				\left( \overline{\Omega } ; A_{\mathbb{R}}\right) \times \mathcal{C}%
				\left( \overline{\Omega } ; A_{\mathbb{R}}\right)^{d}$ into 
				$L^{\widetilde{\Phi }}_{\mathbb{R}}\left( \Omega ; \mathcal{X}_{A}^{\widetilde{\Phi }}\left( 
				\mathbb{R}
				_{y}^{d},\mathcal{B}\left( 
				\mathbb{R}
				_{z}^{d}\right) \right) \right) ^{d}$ extends by continuity to a (not relabeled)
				mapping from $L^{\Phi }_{\mathbb{R}}\left( \Omega ; \mathcal{X}_{A}^{\Phi}\left( 
				\mathbb{R}
				_{y}^{d},\mathcal{B}\left( 
				\mathbb{R}
				_{z}^{d}\right) \right) \right) ^{d+1}$ to  $L^{\widetilde{\Phi }}_{\mathbb{R}}\left( \Omega ; \mathcal{X}_{A}^{\widetilde{\Phi }}\left( 
				\mathbb{R}
				_{y}^{d},\mathcal{C}_b\left( 
				\mathbb{R}
				_{z}^{d}\right) \right) \right) ^{d}$, and there exist positive constants $c, c'$ such that: 
				\begin{equation*}
					\begin{array}{rcl}
						\left\Vert b\left(u ,\mathbf{u }\right) - b\left(v ,\mathbf{v }\right)
						\right\Vert _{L^{\widetilde{\Phi }}_{\mathbb{R}}\left( \Omega ; \mathcal{X}_{A}^{\widetilde{\Phi }}\left( 
							\mathbb{R}
							_{y}^{d},\mathcal{B}\left( 
							\mathbb{R}
							_{z}^{d}\right) \right) \right) ^{d}} & \leq  &	c\left\Vert u-v\right\Vert _{L^{\Phi }_{\mathbb{R}}\left( \Omega ; \mathcal{X}_{A}^{\Phi}\left( 
							\mathbb{R}
							_{y}^{d},\mathcal{B}\left( 
							\mathbb{R}
							_{z}^{d}\right) \right) \right) }^{\alpha }
						\\ 
						& & + c^{\prime }\left\Vert \mathbf{u -v}%
						\right\Vert _{L^{\Phi }_{\mathbb{R}}\left( \Omega ; \mathcal{X}_{A}^{\Phi}\left( 
							\mathbb{R}
							_{y}^{d},\mathcal{B}\left( 
							\mathbb{R}
							_{z}^{d}\right) \right) \right)^{d}}^{\beta },
					\end{array}
				\end{equation*}
			\end{itemize}
			and 
			\begin{equation*}
				b\left(u,\mathbf{u} \right) \cdot\mathbf{u}\geq \theta \Phi \left(
				\left\vert \mathbf{u}\right\vert \right) \hbox{ a.e. in } \Omega \times \mathbb{R}^{d}\times\mathbb{R}
				^{d} \label{3.5}
			\end{equation*}
			and 
			\begin{align}\label{tq}
				\left(b\left(u,\mathbf{u} \right) -b\left(v,\mathbf{v} \right) \right)\cdot\left( 
				\mathbf{u}-\mathbf{v}\right) \geq 0
				\text{ a.e in }\Omega \times \mathbb{R}^{d}\times \mathbb{R}^{d}, 
			\end{align}%
			for all $u,v\in L^{\Phi }_{\mathbb{R}}\left( \Omega ; \mathcal{X}_{A}^{\Phi}\left( 
			\mathbb{R}
			_{y}^{d},\mathcal{B}\left( 
			\mathbb{R}
			_{z}^{d}\right) \right) \right) ,\mathbf{u},\mathbf{v}\in L^{\Phi }_{\mathbb{R}}\left( \Omega ; \mathcal{X}_{A}^{\Phi}\left( 
			\mathbb{R}
			_{y}^{d},\mathcal{B}\left( 
			\mathbb{R}
			_{z}^{d}\right) \right) \right) ^{d},$ (with $\mathbf{u}\neq \mathbf{v}$ in \eqref{tq}), where $\alpha ,\beta$ 
			and $\theta $ are defined as in Proposition \ref{propalphabeta} and in \eqref{theta}, respectively.
		\end{proposition}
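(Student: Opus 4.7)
The plan is to follow the scheme of \cite[Proposition 3.4]{tacha4}, replacing the periodic algebra $\mathcal{C}_{\mathrm{per}}(Y\times Z)$ by the $RH$-supralgebra $A$. First I would address the assertions in (i). The membership $b_i(f,\mathbf f)\in\mathcal{C}(\overline{\Omega};\mathcal{X}_A^{\widetilde{\Phi}}(\mathbb{R}_y^d,\mathcal{B}(\mathbb{R}_z^d)))$ follows from the trace result \eqref{tk} (together with Proposition~\ref{tn} applied pointwise in $x$), the definition \eqref{tp} via $\mathcal{G}_1^{-1}$, and continuity in $x$, which is delivered by combining (H$_1$)--(H$_2$) with the local continuity hypothesis (H$_5$) to control the dependence on the parameter $x$. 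Once this is in hand, the weak $R\,\Sigma$-convergence is obtained by testing $a_i^{\varepsilon}(\cdot,\cdot,f^{\varepsilon},\mathbf f^{\varepsilon})$ against an arbitrary $\varphi\in L^{\Phi}(\Omega;A)$ and observing that the trace $x\mapsto a_i(\cdot,\cdot,f(x,\cdot,\cdot),\mathbf f(x,\cdot,\cdot))$ lies in $\mathcal{C}(\overline{\Omega};\mathfrak{X}_A^{\widetilde{\Phi},\infty})$, so that Proposition~\ref{weaktest} (and its Corollary) yields $a_i^{\varepsilon}(\cdot,\cdot,f^{\varepsilon},\mathbf f^{\varepsilon})\rightharpoonup b_i(f,\mathbf f)$ in $L^{\widetilde{\Phi}}(\Omega)$-weak $R\,\Sigma$.

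Next I would prove the Hölder-type estimate in (ii). Starting from the pointwise growth inequality \eqref{1.3} applied to the representatives, I would integrate and invoke the norm relation $\|\mathcal{G}(u)\|_{\widetilde{\Phi},\Delta(A)}=\|u\|_{\widetilde{\Phi},A}$, Jensen's inequality (using convexity of $\Phi,\widetilde{\Phi}$) and the $\Delta_2\cap\Delta'$ properties of $\Phi,\Psi$ together with their conjugates to convert the terms $\widetilde{\Psi}^{-1}(\Phi(c_2|\cdot|))$ and $\widetilde{\Phi}^{-1}(\Phi(c_4|\cdot|))$ into powers of the $L^{\Phi}(\Omega;\mathcal{X}_A^{\Phi})$-norm, producing the exponents $\alpha,\beta$ of Proposition~\ref{propalphabeta}. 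Density of $\mathcal{C}(\overline{\Omega};A_{\mathbb{R}}\times(A_{\mathbb{R}})^d)$ in $L^{\Phi}(\Omega;\mathcal{X}_A^{\Phi}(\mathbb{R}_y^d,\mathcal{B}(\mathbb{R}_z^d)))^{d+1}$ (this uses that $A_y$ and $A_z$ are translation invariant with uniformly continuous elements, hence $A^{\infty}$ is dense in $\mathfrak{X}_A^{\Phi}$) then lets one extend $b$ continuously to the full space claimed.

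Finally, the coercivity and monotonicity of $b$ are inherited from (H$_3$) and (H$_4$) respectively. For coercivity one applies $\mathcal{G}_1$ to $a(\cdot,\cdot,u,\mathbf u)\cdot\mathbf u\ge\widetilde{\Phi}^{-1}(\Phi(h(|u|)))\Phi(|\mathbf u|)$, uses $\min_{t\ge 0}h(t)>0$ to bound $\widetilde{\Phi}^{-1}(\Phi(h(|u|)))$ below by a positive constant $\theta$, and reads off the inequality in $L^{\widetilde\Phi}(\Omega;\mathcal{X}_A^{\widetilde\Phi})$; by continuity of both sides in $(u,\mathbf u)$ this extends to the whole $L^{\Phi}$-space. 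Monotonicity is obtained analogously by applying $\mathcal{G}_1$ to the positivity inequality of (H$_4$) and extending by density, using that the duality pairing is preserved by $\mathcal{G}_1$.

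The main obstacle, in my view, is the extension-by-continuity step, because the estimate derived from \eqref{1.3} is not Lipschitz but involves the composition $\widetilde{\Phi}^{-1}\circ\Phi$ and $\widetilde{\Psi}^{-1}\circ\Phi$; producing a clean Hölder-type bound in Luxemburg norms requires careful use of the $\Delta_2\cap\Delta'$ conditions and the domination $\Phi\prec\Psi$, together with the submultiplicativity hidden in $\Delta'$. The continuity in $x\in\overline\Omega$ of $b_i(f,\mathbf f)$ with values in $\mathcal{X}_A^{\widetilde\Phi}(\mathbb{R}_y^d,\mathcal{B}(\mathbb{R}_z^d))$ is also delicate, since uniform continuity on compacta of $\zeta,\lambda$ must be combined with (H$_5$) to transfer continuity through the Nemytskii composition; this is where the hypothesis that elements of $A_y,A_z$ are uniformly continuous plays its role.
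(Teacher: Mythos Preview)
Your proposal is correct and follows essentially the same approach as the paper, which simply states that the result is obtained ``reasoning exactly as in the proof of \cite[Proposition 3.4]{tacha4}'' with the periodic algebra replaced by the $RH$-supralgebra $A$. Your elaboration of the steps (trace result via Proposition~\ref{tn}, weak $R\,\Sigma$-convergence via Proposition~\ref{weaktest}, the H\"older-type estimate from \eqref{1.3} with the $\Delta_2\cap\Delta'$ machinery yielding the exponents of Proposition~\ref{propalphabeta}, and inheritance of coercivity/monotonicity from (H$_3$)--(H$_4$) through $\mathcal{G}_1$ and density) is precisely the content of that reference adapted to the present setting.
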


		In the same spirit of \cite[Corollary 4.2 and 4.3]{tacha4}, replacing the periodic algebras $\mathcal{C}_{per}(Y\times Z), \mathcal{C}_{per}(Y), \mathcal{C}_{per}(Z)$ by the $H$-algebras $A, A_{y}$ and $A_{z}$, respectively, and referring to \cite[Corollary 3.5 and 3.6]{nnang2014deterministic},  we have the following two corollaries.
		
		\begin{corollary}\label{cor3.2}
			Under the same assumptions of Proposition \ref{prop3.3}, take $\left(
			u_{\varepsilon }\right)_\varepsilon \subset L
			^{\Phi }_{\mathbb{R}}\left( \Omega \right) $ such that $u_{\varepsilon} \to u_0$
			in
			$L^{\Phi }\left( \Omega \right) $ as $ \varepsilon \rightarrow 0.$ Then,  for each $1\leq i\leq d$ and 
			for every $\mathbf{f}\in \mathcal{C}\left( \overline{\Omega }; A_{\mathbb{R}} ^{d}\right) $ we
			have 
			\begin{equation*}
				a_{i}^{\varepsilon }\left( \cdot, \cdot,u_{\varepsilon },\mathbf{f}^{\varepsilon
				}\right) \rightharpoonup b_{i}\left( u_{0},\mathbf{f}\right) \text{ in }L^{%
					\widetilde{\Phi }}(\Omega)\text{-weak } R\,\Sigma, \text{as } \varepsilon \rightarrow 0.
			\end{equation*}
		\end{corollary}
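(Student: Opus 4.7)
The plan is to reduce the case of a variable argument $u_{\varepsilon}$ in the first scalar slot of $a$ to the smooth case already treated by Proposition \ref{prop3.3}(i), by combining the Lipschitz-type bound \eqref{1.3} in \textbf{(H$_{2}$)}, a density argument for $u_0$ in $L^{\Phi}(\Omega)$, and the continuous extension of $b$ from Proposition \ref{prop3.3}(ii).

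First, I would fix a test function $\varphi$ in the dense subclass $\mathcal{C}(\overline{\Omega}; A_{\mathbb{R}})$ of the space defining weak $R\,\Sigma$-convergence in $L^{\widetilde{\Phi}}(\Omega)$, and, given $\eta>0$, pick $u_0^{\eta}\in \mathcal{D}(\Omega)$ with $\|u_0-u_0^{\eta}\|_{L^{\Phi}(\Omega)}\leq \eta$ (possible because $\Phi\in\Delta_2$). Viewed as constant in the fast variables, $u_0^{\eta}$ lies in $\mathcal{C}(\overline{\Omega};A_{\mathbb R})$ and its trace coincides with $u_0^{\eta}$ itself, so Proposition \ref{prop3.3}(i) applies. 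Then I would split
\begin{align*}
\int_{\Omega} a_i^{\varepsilon}(\cdot,\cdot, u_{\varepsilon}, \mathbf{f}^{\varepsilon})\,\varphi^{\varepsilon}\,dx - \int_{\Omega}\pounds_{A}\iint_{\mathbb{R}^{2d}} b_i(u_0, \mathbf{f})\,\varphi\, dx\,dy\,dz \;=\; \mathrm{I}_{\varepsilon} + \mathrm{II}_{\varepsilon} + \mathrm{III}_{\eta},
\end{align*}
where $\mathrm{I}_{\varepsilon}=\int_{\Omega}[a_i^{\varepsilon}(\cdot,\cdot,u_{\varepsilon},\mathbf{f}^{\varepsilon})-a_i^{\varepsilon}(\cdot,\cdot,u_0^{\eta},\mathbf{f}^{\varepsilon})]\varphi^{\varepsilon}\,dx$, $\mathrm{II}_{\varepsilon}$ is the analogue of the whole difference with $u_0$ replaced by $u_0^{\eta}$, and $\mathrm{III}_{\eta}=\int_{\Omega}\pounds_{A}\iint[b_i(u_0^{\eta},\mathbf{f})-b_i(u_0,\mathbf{f})]\varphi$.

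For $\mathrm{II}_{\varepsilon}$, Proposition \ref{prop3.3}(i) applied to the data $(u_0^{\eta},\mathbf{f})\in \mathcal{C}(\overline{\Omega};A_{\mathbb R}\times (A_{\mathbb R})^d)$ yields $\mathrm{II}_{\varepsilon}\to 0$ as $\varepsilon\to 0$, for each fixed $\eta$. For $\mathrm{III}_{\eta}$, the continuity estimate of Proposition \ref{prop3.3}(ii) bounds it by a power of $\|u_0^{\eta}-u_0\|_{L^{\Phi}(\Omega)}\leq \eta$ times a norm of $\varphi$. The crucial term is $\mathrm{I}_{\varepsilon}$: using \eqref{1.3} with $\lambda=\lambda'=\mathbf{f}^{\varepsilon}$ one gets the pointwise bound $c_1\widetilde{\Psi}^{-1}(\Phi(c_2|u_{\varepsilon}-u_0^{\eta}|))$, which upon Hölder's inequality in Orlicz spaces must be translated into an $L^{\widetilde{\Phi}}(\Omega)$-smallness depending only on $\|u_{\varepsilon}-u_0^{\eta}\|_{L^{\Phi}(\Omega)}\leq \|u_{\varepsilon}-u_0\|_{L^{\Phi}(\Omega)}+\eta$. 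A standard diagonal argument (first choose $\eta$ small to control $\mathrm{III}_{\eta}$ and prepare $\mathrm{I}_{\varepsilon}$, then send $\varepsilon\to 0$ using the hypothesis $u_{\varepsilon}\to u_0$ in $L^{\Phi}(\Omega)$) concludes.

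The main obstacle is the quantitative Orlicz estimate needed in $\mathrm{I}_{\varepsilon}$: one must pass from the composition $\widetilde{\Psi}^{-1}\circ \Phi$ evaluated at $|u_{\varepsilon}-u_0^{\eta}|$ to an $L^{\widetilde{\Phi}}$-norm bound which vanishes with $\|u_{\varepsilon}-u_0^{\eta}\|_{L^{\Phi}(\Omega)}$. This relies crucially on the domination $\Phi\prec \Psi$ stipulated in \textbf{(H$_2$)} (so that $\widetilde{\Psi}^{-1}(\Phi(\cdot))$ is controlled by a tame function of its argument) together with the $\Delta_2\cap\Delta'$ regularity of $\Phi,\widetilde{\Phi}$; once this estimate is in hand, the remainder is the routine three-term decomposition above.
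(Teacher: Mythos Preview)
Your approach is correct and is essentially the route the paper (implicitly) takes by referring to \cite[Corollary 4.2]{tacha4} and \cite[Corollary 3.5]{nnang2014deterministic}: approximate $u_0$ by a smooth $u_0^{\eta}\in\mathcal{D}(\Omega)\subset \mathcal{C}(\overline{\Omega};A_{\mathbb R})$, apply Proposition~\ref{prop3.3}(i) to the pair $(u_0^{\eta},\mathbf f)$, and control the two remainder terms by continuity in the scalar slot.

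One simplification you should note: the ``main obstacle'' you flag for $\mathrm{I}_{\varepsilon}$ is already resolved in the paper. Proposition~\ref{propalphabeta} (Appendix) gives directly
\[
\bigl\|a^{\varepsilon}(\cdot,\cdot,u_{\varepsilon},\mathbf f^{\varepsilon})-a^{\varepsilon}(\cdot,\cdot,u_0^{\eta},\mathbf f^{\varepsilon})\bigr\|_{\widetilde{\Phi},\Omega}\leq c\,\|u_{\varepsilon}-u_0^{\eta}\|_{\Phi,\Omega}^{\alpha},
\]
with $\alpha>0$ depending only on $\rho_0,\rho_1,\rho_2$; this is precisely the quantitative Orlicz estimate you need, and it already encodes the passage through $\widetilde{\Psi}^{-1}\circ\Phi$ and the $\Delta_2\cap\Delta'$ structure. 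Combined with the boundedness of $\|\varphi^{\varepsilon}\|_{\Phi,\Omega}$ (Proposition~\ref{strongweak1}(ii)), H\"older's inequality handles $\mathrm{I}_{\varepsilon}$ without further work. The term $\mathrm{III}_{\eta}$ is likewise controlled by the estimate in Proposition~\ref{prop3.3}(ii), exactly as you say.
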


		\begin{corollary}\label{cor3.4}
			Under the assumptions of Proposition \ref{prop3.3}, consider $\left(
			u_{\varepsilon}\right)_\varepsilon \subset L^{\Phi }\left( \Omega \right) $ such that $u_{\varepsilon
			}\rightarrow u_{0}$ in $L^{\Phi }\left( \Omega \right) $ as $\varepsilon \rightarrow 0.$ Let $\phi _{\varepsilon
			}\left( x\right): =\varphi _{0}\left( x\right) +\varepsilon \varphi
			_{1}\left( x,\frac{x}{\varepsilon }\right) +\varepsilon
			^{2}\varphi _{2}\left( x,\frac{x}{\varepsilon },\frac{x}{\varepsilon ^{2}}\right) ,$ $x\in \Omega ,\varphi _{0}\in \mathcal{D}_{\mathbb{R}}\left(
			\Omega \right) ,\varphi _{1}\in \mathcal{D}_{\mathbb{R}}\left( \Omega \right) \otimes 
			\,	_{\mathbb{R}}A_{y}^{\infty} ,\varphi _{2}\in \mathcal{D}_{\mathbb{R}}\left( \Omega
			\right) \otimes\, _{\mathbb{R}}A_{y}^{\infty} \otimes\, _{\mathbb{R}}A_{z}^{\infty}$ ($
			\phi _{\varepsilon }=\varphi _{0}+\varepsilon \varphi _{1}^{\varepsilon
			}+\varepsilon ^{2}\varphi _{2}^{\varepsilon },$  shortly and with $_{\mathbb{R}}A_{y}^{\infty} = A_{y}^{\infty} \cap \mathcal{C}(\mathbb{R}^{d};\mathbb{R})$ and a similar definition for $_{\mathbb{R}}A_{z}^{\infty}$).  Then, 
			\begin{align*}
				a_{i}^{\varepsilon }\left( \cdot, \cdot,u_{\varepsilon },D\phi _{\varepsilon }\right)
				\rightharpoonup b_{i}\left( u_{0},D\varphi _{0}+ \overline{D}_{y} \varrho \varphi
				_{1} +\overline{D}_{z} \varrho \varphi
				_{2} \right) \text{ in }L^{\widetilde{\Phi }}(\Omega)
				\text{-weak } R\, \Sigma
			\end{align*}%
			as $\varepsilon \rightarrow 0$, for any $1\leq i\leq d$ and where	$\overline{D}_{y} \varrho \varphi
			_{1}= (\overline{\partial} \varrho \varphi_{1}/\partial y_{i})_{1\leq i\leq d}$,  (the same definition for $\overline{D}_{z} \varrho \varphi
			_{2}$).
			
			Furthermore, given a sequence $\left(v_{\varepsilon }\right)_\varepsilon \subset L^{\Phi }\left( \Omega \right) ,$ such that $v_{\varepsilon }\rightharpoonup
			v_{0}$ in $L^{\Phi }\left( \Omega \right)$-weak $R\,\Sigma$ as $\varepsilon
			\rightarrow 0,$ one has
			\begin{align*}
				\underset{\varepsilon \rightarrow 0}{\lim }\int_{\Omega
				}a_{i}^{\varepsilon }\left( \cdot, \cdot,u_{\varepsilon },D\phi _{\varepsilon
				}\right) v_{\varepsilon }dx = \\ 
				\int_{\Omega } \pounds_{A}\iint_{\mathbb{R}^{2d}} b_{i}\left( u_{0},D\varphi _{0}+ \overline{D}_{y} \varrho \varphi
				_{1} +\overline{D}_{z} \varrho \varphi
				_{2} \right) v_{0}dx dy dz
			\end{align*}
			for any $1\leq i\leq d.$%
		\end{corollary}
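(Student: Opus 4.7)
The plan is to reduce the first claim to Corollary \ref{cor3.2} by replacing $D\phi_\varepsilon$ with a genuinely admissible oscillating function. I would first expand
\begin{equation*}
D\phi_\varepsilon(x) = D\varphi_0(x) + (D_y\varphi_1)^\varepsilon(x) + (D_z\varphi_2)^\varepsilon(x) + r_\varepsilon(x),
\end{equation*}
where $r_\varepsilon$ collects the remainder $\varepsilon (D_x\varphi_1)^\varepsilon + \varepsilon (D_y\varphi_2)^\varepsilon + \varepsilon^2 (D_x\varphi_2)^\varepsilon$. Since the test functions are smooth with compact $x$-support and $\varphi_1, \varphi_2$ are finite sums of tensor products with elements of $_\mathbb{R}A_y^\infty, {}_\mathbb{R}A_z^\infty$, one gets $\|r_\varepsilon\|_{L^\infty(\Omega)} \leq C\varepsilon$. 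Setting $\mathbf{f}(x,y,z) = D\varphi_0(x) + D_y\varphi_1(x,y) + D_z\varphi_2(x,y,z)$, we have $\mathbf{f} \in \mathcal{C}(\overline{\Omega}; A_\mathbb{R}^d)$ and $D\phi_\varepsilon = \mathbf{f}^\varepsilon + r_\varepsilon$.

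Next, the Lipschitz-type bound \eqref{1.3} applied with $\zeta = \zeta' = u_\varepsilon(x)$, $\lambda = D\phi_\varepsilon(x)$, $\lambda' = \mathbf{f}^\varepsilon(x)$ gives
\begin{equation*}
\left| a_i^\varepsilon(\cdot,\cdot, u_\varepsilon, D\phi_\varepsilon) - a_i^\varepsilon(\cdot,\cdot, u_\varepsilon, \mathbf{f}^\varepsilon) \right| \leq c_3 \widetilde{\Phi}^{-1}\!\left(\Phi(c_4 C\varepsilon)\right),
\end{equation*}
which tends to $0$ uniformly, hence in $L^{\widetilde{\Phi}}(\Omega)$. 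Corollary \ref{cor3.2} applied with this $\mathbf{f}$ then yields $a_i^\varepsilon(\cdot,\cdot, u_\varepsilon, \mathbf{f}^\varepsilon) \rightharpoonup b_i(u_0, \mathbf{f})$ in $L^{\widetilde{\Phi}}(\Omega)$-weak $R\,\Sigma$, and via the identification of $\mathbf{f}$ with $D\varphi_0 + \overline{D}_y \varrho\varphi_1 + \overline{D}_z \varrho\varphi_2$ through the canonical map $\varrho$, one obtains the first claimed convergence.

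For the \emph{furthermore} assertion one cannot just combine two weak $R\,\Sigma$-convergences. I would upgrade the convergence of $a_i^\varepsilon(\cdot,\cdot,u_\varepsilon,D\phi_\varepsilon)$ to \emph{strong} $R\,\Sigma$-convergence towards $b_i(u_0,\mathbf{f})$. Since $b_i(u_0,\mathbf{f}) \in \mathcal{C}(\overline{\Omega};\mathcal{X}_A^{\widetilde{\Phi}})$ by Proposition \ref{prop3.3}(i), its natural trace coincides with $[a_i(\cdot,\cdot,u_0,\mathbf{f})]^\varepsilon$ under the canonical identification, and \eqref{1.3} with $\zeta = u_\varepsilon$, $\zeta' = u_0$ yields
\begin{equation*}
\left| a_i^\varepsilon(\cdot,\cdot, u_\varepsilon, \mathbf{f}^\varepsilon) - [b_i(u_0,\mathbf{f})]^\varepsilon \right| \leq c_1 \widetilde{\Psi}^{-1}\!\left(\Phi(c_2 |u_\varepsilon - u_0|)\right),
\end{equation*}
which vanishes in $L^{\widetilde{\Phi}}(\Omega)$ from $u_\varepsilon \to u_0$ strongly in $L^\Phi(\Omega)$ together with $\Phi \prec \Psi$ and the $\Delta_2 \cap \Delta'$ hypothesis. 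Splitting
\begin{equation*}
\int_\Omega a_i^\varepsilon(\cdot,\cdot,u_\varepsilon,D\phi_\varepsilon)\, v_\varepsilon\,dx = \int_\Omega \bigl(a_i^\varepsilon - [b_i(u_0,\mathbf{f})]^\varepsilon\bigr) v_\varepsilon\,dx + \int_\Omega [b_i(u_0,\mathbf{f})]^\varepsilon v_\varepsilon\,dx,
\end{equation*}
the generalised H\"older inequality controls the first summand by the $L^\Phi$-bound on $v_\varepsilon$, while Definition \ref{d3.1}(i) applied with the admissible test $b_i(u_0,\mathbf{f}) \in L^{\widetilde{\Phi}}(\Omega; A)$ passes the second to the desired $R\,\Sigma$-limit.

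The main technical obstacle is precisely this strong $R\,\Sigma$-upgrade: controlling $\widetilde{\Psi}^{-1}(\Phi(c_2|u_\varepsilon-u_0|))$ in $L^{\widetilde{\Phi}}(\Omega)$ requires carefully juggling $\Phi \prec \Psi$ and the $\Delta_2 \cap \Delta'$ condition on $\Phi$ so that strong convergence $u_\varepsilon \to u_0$ in $L^\Phi$ propagates through the nonlinear Carath\'eodory operator. The rest is bookkeeping within the trace machinery of Sections \ref{labelsect2}--\ref{labelsect3}.
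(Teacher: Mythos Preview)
Your approach is essentially the standard one used in the references the paper defers to (\cite[Corollary 4.3]{tacha4} and \cite[Corollary 3.6]{nnang2014deterministic}): expand $D\phi_\varepsilon = \mathbf{f}^\varepsilon + r_\varepsilon$ with $r_\varepsilon = O(\varepsilon)$ in $L^\infty$, reduce to Corollary~\ref{cor3.2} via \eqref{1.3}, and for the product assertion upgrade to strong $R\,\Sigma$-convergence so as to pair with the weakly $R\,\Sigma$-convergent $(v_\varepsilon)$.

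One technical point needs care. You write that $b_i(u_0,\mathbf{f}) \in L^{\widetilde{\Phi}}(\Omega; A)$ and invoke Definition~\ref{d3.1}(i) directly. In general this is not available: Proposition~\ref{prop3.3} only yields $b_i(u_0,\mathbf{f}) \in L^{\widetilde{\Phi}}(\Omega; \mathcal{X}_A^{\widetilde{\Phi}})$, since $u_0$ is merely in $L^\Phi(\Omega)$ and the structure hypothesis places $a_i(\cdot,\cdot,\zeta,\lambda)$ in $\mathfrak{X}_A^{\widetilde{\Phi}}$, not in $A$. The fix is routine and implicit in your strong-convergence argument anyway: approximate $u_0$ in $L^\Phi(\Omega)$ by $u_0^n \in \mathcal{C}(\overline\Omega)$, so that $a_i(\cdot,\cdot,u_0^n,\mathbf{f}) \in \mathcal{C}(\overline\Omega;\mathfrak{X}_A^{\widetilde{\Phi},\infty})$ is an admissible test by Proposition~\ref{weaktest}; your $\widetilde{\Psi}^{-1}\!\circ\Phi$ estimate controls both $\|a_i^\varepsilon(\cdot,\cdot,u_\varepsilon,\mathbf{f}^\varepsilon) - a_i^\varepsilon(\cdot,\cdot,u_0^n,\mathbf{f}^\varepsilon)\|_{\widetilde{\Phi},\Omega}$ and $\|b_i(u_0^n,\mathbf{f}) - b_i(u_0,\mathbf{f})\|_{L^{\widetilde{\Phi}}(\Omega;\mathcal{X}_A^{\widetilde{\Phi}})}$ uniformly in $\varepsilon$, and a diagonal $n\to\infty$, $\varepsilon\to 0$ argument closes the gap. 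Equivalently, one may quote the general ``strong $R\,\Sigma$ $\times$ weak $R\,\Sigma$ $\Rightarrow$ convergence of integrals'' lemma, whose proof is exactly this approximation. With that adjustment your proof is complete.
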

		
		The next section is devoted to the main homogenization result for problem \eqref{1.1}.

		\subsection{Astract homogenization Result for \eqref{1.1} }\label{sub32}
		
		Let the basic notation be as above. Let $\Phi\cap \Delta'$ be an $N$-function such that $\widetilde{\Phi} \in \Delta_{2}$ and let $A= A_{y}\odot A_{z}$ be an $H$-supralgebra where $A_{y}$ (resp. $A_{z}$) is an ergodic $H$-supralgebra on $\mathbb{R}^{d}_{y}$ (resp. $\mathbb{R}^{d}_{z}$). Assume that  $A_{y}$ and $A_{z}$ are translation invariant, and moreover that their elements are uniformly continuous. We also assume that  $A$ is of class $\mathcal{C}^{\infty}$. Note that all the above assumptions implies that the $RH$-supralgebra $A$ is $W^{1}_{0}L^{\Phi}(\Omega)$-proper (see Definition \ref{defproperness}).
		We set 
		\begin{equation*}
			\mathbb{F}_{0}^{1,\Phi }:=W_{0}^{1}L^{\Phi}\left( \Omega
			\right) \times L^\Phi_{\mathbb{R}}\left( \Omega ;W_{\#}^{1}\mathcal{X}^{\Phi}_{A_{y}} \right) \times L^\Phi_{\mathbb{R}}\left( \Omega ; \mathcal{X}_{A_{y}}^{\Phi}\left(
			\mathbb{R}^{d}_{y} ; W_{\#}^{1}\mathcal{X}^{\Phi}_{A_{z}} \right) \right)
		\end{equation*}
		and 
		\begin{eqnarray*}
			F^{\infty }_{0}:=\mathcal{D}_{\mathbb{R}}\left( \Omega \right)
			\times \left[ \mathcal{D}_{\mathbb{R}}\left( \Omega \right) \otimes J_{1}^{y}\circ \varrho(_{\mathbb{R}}A^{\infty}_{y})/\mathbb{C}\right] \times \nonumber \\
			\left[ \mathcal{D}_{\mathbb{R}}\left( \Omega \right) \otimes
			_{\mathbb{R}}A^{\infty}_{y} \otimes J_{1}^{z}\circ \varrho(_{\mathbb{R}}A^{\infty}_{z})/\mathbb{C}\right],
		\end{eqnarray*}
		where $J_{1}^{y}$ (resp. $\varrho $)
		denotes the canonical mapping of $W^{1}\mathcal{X}_{A_{y}}^{\Phi}/\mathbb{C}$ (resp. $%
		\mathfrak{X}_{A_{y}}^{\Phi}$) into its separated completion $%
		W^{1}_{\#}\mathcal{X}^{\Phi}_{A_{y}}$ (resp. $\mathcal{X}_{A_{y}}^{\Phi}$) and subscript $_{\mathbb{R}}$ stands the functions with real values (of course, all that remaining valid with $z$ in place of index $y$).
		
		We  provide $\mathbb{F}_{0}^{1,\Phi }$ with the norm
		\begin{eqnarray*}
			\left\Vert u\right\Vert _{\mathbb{F}_{0}^{1,\Phi }}:=\left\Vert
			D_{x}u_{0}\right\Vert _{\Phi ,\Omega }+\left\Vert \overline{D}_{y}u_{1}\right\Vert
			_{L^\Phi\left( \Omega ;\mathcal{X}^{\Phi}_{A_{y}} \right)}+\left\Vert \overline{D}_{z}u_{2}\right\Vert _{L^\Phi\left( \Omega ; \mathcal{X}_{A_{y}}^{\Phi}\left(
				\mathbb{R}^{d}_{y} ; \mathcal{X}^{\Phi}_{A_{z}} \right) \right)}, \nonumber \\
			\text{ } u=\left( u_{0},u_{1},u_{2}\right) \in \mathbb{F}%
			_{0}^{1,\Phi },
		\end{eqnarray*}
		which makes it a Banach space, and since we have $A_{y}$ and  $A_{z}$ $\Phi$-total, one can easily show that the space $%
		F^{\infty }_{0}$ is dense in $\mathbb{F}_{0}^{1,\Phi }$ (see, \cite[Sect. 3.2]{nnang2014deterministic} for nonreiterated case), where, in terms of notation, if  
		\begin{align}
			\left( \psi _{0},\psi _{1},\psi _{2}\right) \in
			\mathcal{D}_{\mathbb{R}}\left( \Omega \right)
			\times \left[ \mathcal{D}_{\mathbb{R}}\left( \Omega \right) \otimes J_{1}^{y}\circ \varrho(_{\mathbb{R}}A^{\infty}_{y})/\mathbb{C}\right] \times \nonumber \\
			\left[ \mathcal{D}_{\mathbb{R}}\left( \Omega \right) \otimes
			_{\mathbb{R}}A^{\infty}_{y} \otimes J_{1}^{z}\circ \varrho(_{\mathbb{R}}A^{\infty}_{z})/\mathbb{C}\right], \label{3.57}
		\end{align}
		then $F^{\infty }_{0}=\left\{ \phi , \;\phi :=\left( \psi _{0},\psi _{1},\psi
		_{2}\right) \text{ in \eqref{3.57}}\right\}.$ \\
		
		We are in position of proving the existence Theorem \ref{mainresult}, which we restate for the reader's convenience. \\
		
		\noindent \textbf{Theorem  \ref{mainresult}}
		{\it 		Let \eqref{1.1} be the problem under hypotheses in Subsection \ref{hypoproblem}, with $a$ and $f$ satisfying \textbf{(H$_{1}$)}-\textbf{(H$_{5}$)}. Suppose that $A=A_{y}\odot A_{z}$ is an $RH$-supralgebra on $\mathbb{R}^{d}_{y}\times \mathbb{R}^{d}_{z}$ such that the \textit{abstract structure hypothesis} \textbf{(H$_{7}$)} holds.
			We also assume that $A_{y}$ (resp. $A_{z}$) is ergodic, translation invariant, and moreover that their elements are uniformly continuous.
			
			For each $\varepsilon
			>0$, let $u_{\varepsilon }$ be a solution of \eqref{1.1}. Then there exists a not relabeled subsequence  and $\mathbf{u}:=\left( u_{0},u_{1},u_{2}\right)\in \mathbb{F}_{0}^{1,\Phi }:= \in W^{1}_{0}L^{\Phi}(\Omega) \times L^{\Phi}(\Omega; W^{1}_{\#}\mathcal{X}^{\Phi}_{A_{y}})\times L^{\Phi}(\Omega; \mathcal{X}^{\Phi}_{A_{y}}(\mathbb{R}^{d}_{y} ; W^{1}_{\#}\mathcal{X}^{\Phi}_{A_{z}}))$ such that
			\begin{equation*}
				u_{\varepsilon} \rightharpoonup u_{0} \quad in \;\, W^{1}_{0}L^{\Phi}(\Omega)\textup{-}weak 
			\end{equation*}
			\begin{equation*}
				\dfrac{\partial u_{\varepsilon}}{\partial x_{j}} \rightharpoonup \dfrac{\partial u_{0}}{\partial x_{j}} + \dfrac{\overline{\partial} u_{1}}{\partial y_{j}} + \dfrac{\overline{\partial} u_{2}}{\partial z_{j}} \quad in \;\, L^{\Phi}(\Omega)\textup{-}weak \; R\, \Sigma \quad (1 \leq j \leq d).
			\end{equation*}
			and $\mathbf{u}$ 
			solves the variational homogenized problem
			\begin{equation*}
				\left\{ 
				\begin{tabular}{l}
					$ \displaystyle \int_{\Omega } \pounds_{A}\iint_{\mathbb{R}^{2d}} b\left( u_{0},Du_{0}+ \overline{D}_{y} u
					_{1} +\overline{D}_{z} u
					_{2} \right)\cdot \left(Dv_{0}+ \overline{D}_{y} v
					_{1} +\overline{D}_{z} v
					_{2} \right) dx dy dz
					$ \\
					\\ 
					$ \displaystyle =\int_{\Omega }fv_{0}dx,$ for all $v=\left( v_{0},v_{1},v_{2}\right) \in 
					\mathbb{F}_{0}^{1,\Phi }$. \\ 
				\end{tabular}%
				\right.  
			\end{equation*}
			where $b=(b_{i})$, $1\leq i\leq d$ is defined as in \eqref{tp}.
		}
		
		\begin{proof}
			Observing that $\underset{0<\varepsilon \leq 1}{\sup }\left\Vert u_{\varepsilon
			}\right\Vert _{W_{0}^{1}L^{\Phi }\left( \Omega
				\right) }<\infty$. Thus, given an arbitrary fundamental sequence $E$, using Theorem \ref{ti} one can extract a
			subsequence $E'$ from $E$ and some triple $\mathbf{u}:=\left( u_{0},u_{1},u_{2}\right) \in \mathbb{F}_{0}^{1,\Phi }$ such that \eqref{3.58} and \eqref{3.59} hold.
			
			We show that $\mathbf{u}:=\left( u_{0},u_{1},u_{2}\right) $ defined by \eqref{3.58} and 
			\eqref{3.59} is a solution of \eqref{3.60}. 
			To this end, for  $\phi :=\left( \psi
			_{0}, J_{1}^{y}(\varrho \psi _{1}), J_{1}^{z}(\varrho \psi _{2})\right) \in F_{0}^{\infty },$ consider $\phi
			_{\varepsilon }\left( x\right) =\psi _{0}\left( x\right) +\varepsilon \psi
			_{1}\left( x,\frac{x}{\varepsilon }\right) +\varepsilon
			^{2}\psi _{2}\left( x,\frac{x}{\varepsilon },\frac{x}{%
				\varepsilon ^{2}}\right) $. We have 
			\begin{align*}
				\int_{\Omega }a^{\varepsilon }\left(
				\cdot, \cdot,u_{\varepsilon },Du_{\varepsilon }\right) \cdot Du_{\varepsilon
				}dx=\int_{\Omega }fu_{\varepsilon }dx;
				\\
				\int_{\Omega }a^{\varepsilon }\left( \cdot, \cdot,u_{\varepsilon },Du_{\varepsilon
				}\right)  \cdot D\phi _{\varepsilon }dx=\int_{\Omega }f\phi _{\varepsilon }dx.
			\end{align*}
			Thus, by \textbf{(H$_{4}$)},
			\begin{align*}0\leq \int_{\Omega }\left( a^{\varepsilon }\left( \cdot, \cdot,u_{\varepsilon
				},Du_{\varepsilon }\right) -a^{\varepsilon }\left( \cdot, \cdot,u_{\varepsilon
				},D\phi _{\varepsilon }\right) \right) \cdot\left( Du_{\varepsilon }-D\phi
				_{\varepsilon }\right) dx=\int_{\Omega }f\left( u_{\varepsilon }-\phi
				_{\varepsilon }\right) dx
				\\
				-\int_{\Omega }a^{\varepsilon }\left( \cdot, \cdot, u_{\varepsilon}, D\phi
				_{\varepsilon }\right) \cdot\left( Du_{\varepsilon }-D\phi _{\varepsilon
				}\right) dx,
			\end{align*}
			with $f\in L^{\widetilde{\Phi }}\left( \Omega \right) \subset L^{%
				\widetilde{\Phi }}\left( \Omega ; A
			\right)$ and
			$\phi _{\varepsilon }\rightarrow \psi _{0}$ in $L^{\Phi }\left( \Omega
			\right) $, as $\varepsilon\to 0$ (and so the convergence to $\psi_0$ is also in the strong reiterated $\Sigma$-convergence setting). 
			Moreover, since $u_{\varepsilon }\rightharpoonup u_{0}$ in $W_{0}^{1}L^{\Phi }\left( \Omega \right)$-weak as $\varepsilon\to 0$, and the embedding $W_{0}^{1}L^{\Phi }\left( \Omega
			\right) \hookrightarrow L^{\Phi }\left( \Omega\right) $ is compact, then $u_{\varepsilon }\rightarrow u_{0}$ in $L^{\Phi
			}\left( \Omega\right) $ strongly as $\varepsilon\to 0$ and in the reiterated $\Sigma$-convergence setting. 
			
			Therefore, 
			\begin{align*}
				\int_{\Omega }f\left( u_{\varepsilon }-\phi _{\varepsilon }\right)
				dx\rightarrow \int_{\Omega }\pounds_{A}\iint_{\mathbb{R}^{2d}} f\left( u_{0}-\psi _{0}\right)
				dxdydz =\int_{\Omega }f\left( u_{0}-\psi _{0}\right) dx.
			\end{align*}
			
			By Corollary \ref{cor3.4}, it results that:
			\begin{align*}
				\int_{\Omega }a^{\varepsilon }\left( \cdot, \cdot, u
				_{\varepsilon} ,D\phi _{\varepsilon }\right) \cdot \left( Du_{\varepsilon }-D\phi
				_{\varepsilon }\right) dx\rightarrow\\
				\int_{\Omega }\pounds_{A}\iint_{\mathbb{R}^{2d}} b\left( u_{0},D\psi _{0}+\overline{D}_{y}\varrho \psi
				_{1}+\overline{D}_{z}\varrho \psi _{2}\right)\cdot\\
				\left( D\left( u_{0}-\psi _{0}\right) +\overline{D}_{y}\left( u_{1}- \varrho\psi _{1}\right)
				+\overline{D}_{z}\left( u_{2}- \varrho\psi _{2}\right) \right) dxdydz,
			\end{align*}
			as $\varepsilon \to 0$.
			Thus 
			\begin{align*}
				0\leq \int_{\Omega
				}f\left( u_{0}-\psi_{0}\right) dx -\int_{\Omega }\pounds_{A}\iint_{\mathbb{R}^{2d}} b\left( u_{0},D\psi _{0}+\overline{D}_{y}\varrho\psi _{1}+\overline{D}_{z}\varrho\psi
				_{2}\right)\cdot
				\\
				\left( D\left( u_{0}-\psi _{0}\right) +\overline{D}_{y}\left( u_{1}- \varrho\psi _{1}\right)
				+\overline{D}_{z}\left( u_{2}-\varrho\psi _{2}\right) \right) dxdydz.
			\end{align*} 
			Using density of $
			F_{0}^{\infty }$ in $\mathbb{F}_{0}^{1,\Phi }$  the result remains valid
			for all $\phi \in \mathbb{F}_{0}^{1,\Phi }$. 
			In particular, choosing $\phi
			:=\mathbf{u}-t\mathbf{v},\, \mathbf{v}=\left( v_{0},v_{1},v_{2}\right) \in \mathbb{F}_{0}^{1,\Phi }$, and dividing
			by $t>0$ we get:
			\begin{align*}
				0\leq \int_{\Omega }fv_{0}dx-\int_{\Omega }\pounds_{A}\iint_{\mathbb{R}^{2d}} b\left(
				u_{0},\mathbb{D}\mathbf{u}-t\mathbb{D}\mathbf{v}\right) \cdot \mathbb{D}vdxdydz,
			\end{align*} where 
			$\mathbb{D}\mathbf{v}=Dv_{0}+\overline{D}_{y}v_{1}+\overline{D}_{z}v_{2}.$ 
			
			Using the continuity of $a$ in its
			last argument and passing to the limit as $t\rightarrow 0,$ we are led
			to 
			\begin{align*}0\leq \int_{\Omega }fv_{0}dx-\int_{\Omega }\pounds_{A}\iint_{\mathbb{R}^{2d}} b\left(
				u_{0},\mathbb{D}\mathbf{u}\right) \cdot \mathbb{D}\mathbf{v} dxdydz,
			\end{align*} that is 
		for all $\mathbf{v}\in \mathbb{F}_{0}^{1,\Phi }.$
		Thus, if we replace $%
		\mathbf{v}=\left( v_{0},v_{1},v_{2}\right) \in \mathbb{F}_{0}^{1,\Phi }$ by $%
		\mathbf{v}^{1}=\left( -v_{0},-v_{1},-v_{2}\right) $ we deduce:
		\begin{align*}
			-\int_{\Omega }fv_{0}dx\geq \int_{\Omega }\pounds_{A}\iint_{\mathbb{R}^{2d}} b\left(
			u_{0},\mathbb{D}\mathbf{u}\right) \cdot\mathbb{D}\mathbf{v}^1dxdydz, \quad \text{i.e.};
			\\
			-\int_{\Omega }fv_{0}dx\geq -\int_{\Omega }\pounds_{A}\iint_{\mathbb{R}^{2d}} b\left(
			u_{0},\mathbb{D}\mathbf{u}\right) \cdot\mathbb{D}\mathbf{v} dxdydz
		\end{align*}
		thus the equality follows, i.e.
		$\mathbf{u}=\left( u_{0},u_{1},u_{2}\right) $ verifies \eqref{3.60}. 
	\end{proof}
	
	The following remark discuss about the uniqueness of solution for \eqref{3.60}.
	
	\begin{remarq}\label{remuniq}
		\textup{	Assuming \textbf{(H$_{6}$)}. 
			Let $\mathbf{w}=\left(
			w_{0},w_{1},w_{2}\right) \in \mathbb F^{1, \Phi}_0$ another solution of \eqref{3.60}, then }
		\begin{align*}
			\int_{\Omega }fw_{0}dx=\int_{\Omega }\pounds_{A}\iint_{\mathbb{R}^{2d}} b\left( u_{0},%
			\mathbb{D}\mathbf{u}\right) \cdot \mathbb{D}\mathbf{w}dxdydz,
			\\
			\int_{\Omega }fu_{0}dx=\int_{\Omega }\pounds_{A}\iint_{\mathbb{R}^{2d}} b\left( w_{0},%
			\mathbb{D}\mathbf{w}\right) \cdot \mathbb{D}\mathbf{u}dxdydz,
			\\
			-\int_{\Omega }fu_{0}dx=-\int_{\Omega }\pounds_{A}\iint_{\mathbb{R}^{2d}} b\left(u_{0},%
			\mathbb{D}\mathbf{u}\right) \cdot \mathbb{D}\mathbf{u}dxdydz,
			\\
			-\int_{\Omega }fw_{0}dx=-\int_{\Omega }\pounds_{A}\iint_{\mathbb{R}^{2d}} b\left(w_{0},%
			\mathbb{D}\mathbf{w}\right) \cdot \mathbb{D}\mathbf{w}dxdydz,
		\end{align*}
		\textup{	Consequently, by \textbf{(H$_{6}$)} }
		\begin{align*}
			0=\int_{\Omega }\pounds_{A}\iint_{\mathbb{R}^{2d}} \left( b\left(w_{0},\mathbb{D}\mathbf{w}\right) -b\left(
			u_{0},\mathbb{D}\mathbf{u}\right) \right) .\left( \mathbb{D}\mathbf{w}-\mathbb{D}\mathbf{u}\right)
			dxdydz\geq \\
			\gamma \int_{\Omega }\pounds_{A}\iint_{\mathbb{R}^{2d}} \Phi \left( \left\vert \mathbb{D}\mathbf{w}-
			\mathbb{D}\mathbf{u}\right\vert \right) dxdydz.
		\end{align*}
		\textup{	The fact that $\mathbf{w}= \mathbf{u}$ follows by mere routine (see, e.g., \cite[Remark 4.4]{tacha4}).}
	\end{remarq}
	
	\begin{remarq}
		\textup{	Since the variational problem \eqref{3.60} admits a unique solution, then as mentioned in the proof of \cite[Theorem 3.7]{nnang2014deterministic} it will turn out that \eqref{3.58} and \eqref{3.59} hold not only when $E'\ni \varepsilon \to 0$ but merely when $0< \varepsilon \to 0$. }
	\end{remarq}
	
	It is worth to point out that equation  \eqref{3.60} is referred as \textit{global
		homogenization problem} for \eqref{1.1} and as in reiterated periodic case (see \cite[Sect. 4]{tacha4}), it is equivalent to the
	following three systems:
	\begin{align}
		\int_{\Omega }\pounds_{A}\iint_{\mathbb{R}^{2d}}  b\left(
		u_{0},Du_{0}+\overline{D}_{y}u_{1}+\overline{D}_{z}u_{2}\right) \cdot \overline{D}_{z}v_{2}dxdydz=0 \nonumber \\ 
		\hbox{for all }v_{2}\in L^\Phi\left( \Omega ;\mathcal{X}_{A_{y}}^\Phi\left(
		\mathbb{R}^{d}_{y};W_{\#}^{1}\mathcal{X}_{A_{z}}^{\Phi} \right) \right),
		\label{3.64}
		\\
		\int_{\Omega }\pounds_{A_{y}}\int_{\mathbb{R}^{d}} \left( \pounds_{A_{z}}\int_{\mathbb{R}^{d}} b\left(
		u_{0},Du_{0}+\overline{D}_{y}u_{1}+\overline{D}_{z}u_{2}\right) dz\right) \cdot \overline{D}_{y}v_{1}dxdy=0 \nonumber \\ 
		\hbox{ for all }v_{1}\in L^\Phi\left( \Omega ;W_{\#}^{1}\mathcal{X}^{\Phi}_{A_{z}} \right), 
		\label{3.65}
		\\
		\int_{\Omega }\left( \pounds_{A}\iint_{\mathbb{R}^{2d}}  b\left(u_{0},Du_{0}+\overline{D}_{y}u_{1}+\overline{D}_{z}u_{2}\right) dydz\right)\cdot
		Dv_{0} dx \nonumber \\ 
		=\int_{\Omega }fv_{0}dx, \hbox{ for all }v_{0}\in W_{0}^{1}L^{\Phi }\left( \Omega
		\right).
		\label{3.66}
	\end{align}

	Now we are in position to derive a \textit{macroscopic homogenized problem}.
	
	Hence, let  $r \in \mathbb R$ and $\xi \in
	\mathbb{R}
	^{d}$ be arbitrarily fixed. For a.e.  $y \in \mathbb{R}
	^{d}_{y}$, consider the \textit{variational cell problem} in \eqref{3.67}, whose solution is denoted by $\pi_2(y, r,\xi)$, i.e.
	\begin{equation}\label{1.12}
		\left\{ 
		\begin{tabular}{l}
			$\displaystyle \hbox{find } \pi _{2}\left(y,r, \xi \right) \in W_{\#}^{1}\mathcal{X}^{\Phi}_{A_{z}} $ \hbox{such that} \\ 
			$\displaystyle \pounds_{A_{z}} \int_{\mathbb{R}^{d}} b\left( r,\xi +\overline{D}_{z}\pi _{2}\left( y,r,\xi \right) \right)
			\cdot \overline{D}_{z}\theta dz=0$, for all $\theta \in W_{\#}^{1}\mathcal{X}^{\Phi}_{A_{z}}.$%
		\end{tabular}%
		\right. 
	\end{equation}%
	It is a fact that, this problem has a solution (arguing as in \cite[Theorem 3.2]{Y}), unique under assumption \textbf{(H$_{6}$)}.
	
	\noindent	Comparing \eqref{3.66} with (\ref{1.12}) for  $r= u_0(x)$ and $\xi =Du_{0}\left( x\right)
	+\overline{D}_{y}u_{1}\left( x,y\right) $ we can consider 
	$$
	\Omega \times 
	\mathbb{R}
	_{y}^{d} \ni \left( x,y\right) \rightarrow \pi _{2}\left(y, u_0(x), Du_{0}\left( x\right)
	+\overline{D}_{y}u_{1}\left( x,y\right) \right)  \in W_{\#}^{1}\mathcal{X}^{\Phi}_{A_{z}}, $$
	Hence, defining  for a.e. $y \in \mathbb{R}^{d}_{y}$, and for any $(r, \xi) \in \mathbb R \times \mathbb R^d$, $h$ as in \eqref{h}, namely  
	\begin{equation*}h\left(y,r, \xi \right) :=\pounds_{A_{z}}\int_{\mathbb{R}^{d}} b_{i}\left(r ,\xi
		+\overline{D}_{z}\pi _{2}\left(y,r, \xi \right) \right) dz,
	\end{equation*}
	\eqref{3.65} becomes
	\begin{align*}
		\int_{\Omega }\pounds_{A_{y}}\int_{\mathbb{R}^{d}}  h\left(y, u_0(x), Du_{0}(x)+\overline{D}_{y}u_{1}(x,y)\right) \cdot \overline{D}_{y}v_{1}dxdy=0,
	\end{align*}
	for all  $v_{1}\in L^{\Phi}\left( \Omega ;W_{\#}^{1}\mathcal{X}^{\Phi}_{A_{y}} \right).$
	
	\noindent Consequently, in analogy with the previous steps, one can consider, for any $(r,\xi) \in \mathbb R\times
	\mathbb{R}
	^{d},$ the function $\pi _{1}\left(r, \xi \right) \in
	W_{\#}^{1}\mathcal{X}^{\Phi}_{A_{y}} $ solution of the variational problem in \eqref{3.69b},  (unique by \textbf{(H$_{6}$)}) i.e.
	\begin{equation*}
		\left\{ 
		\begin{tabular}{l}
			\hbox{ find} $\pi _{1}\left(r, \xi \right) \in W_{\#}^{1}\mathcal{X}^{\Phi}_{A_{y}} $ \hbox{ such that } \\ 
			$\pounds_{A_{y}} \int_{\mathbb{R}^{d}} h\left(y, r, \xi +\overline{D}_{y}\pi _{1}\left(r, \xi \right) \right) \cdot \overline{D}_{y}\theta
			dy=0$ for all $\theta \in W_{\#}^{1}\mathcal{X}^{\Phi}_{A_{y}}. $%
		\end{tabular}%
		\right.  \label{3.69}
	\end{equation*}
	Note also that (\ref{3.65}) lead us to $u_{1}=\pi _{1}\left(u_0,
	Du_{0}\right) .$ Set, again, for $(r,\xi) \in \mathbb R\times \mathbb R^d$
	\begin{align*}
		q\left( r,\xi \right) =\pounds_{A_{y}}\int_{\mathbb{R}^{d}} h\left(y,r, \xi +\overline{D}_{y}\pi
		_{1}\left( r,\xi \right) \right) dy,
	\end{align*}
	the function $q$ in \eqref{q} is well defined.
	Moreover, it results from \eqref{3.66} and the above cell problems that 
	\begin{align}
		\int_{\Omega }q\left( u_0, Du_{0}\right) \cdot Dv_{0}dx=\int_{\Omega }f\cdot v_{0}dx\text{
			for all }v_{0}\in W_{0}^{1}L^{\Phi }\left( \Omega \right) .  \label{3.71}
	\end{align}
	
	Thus we are in position to establish  the generalization to deterministic setting of macroscopic problem in \cite[Theorem 1.2]{tacha4} and whose the proof is entirely similar. This is Theorem \ref{maincor} that we summarize here for the reader's convenience. \\
	
	\noindent {\bf Theorem \ref{maincor}} {\it 	Under hypothesis of Theorem \ref{mainresult}, for every $\varepsilon >0$, let \eqref{1.1} be  such that $a$ and $f$ satisfy \textbf{(H$_{1}$)}-\textbf{(H$_{6}$)}.  
		
		Let $u_0 \in W_{0}^{1}L^{\Phi }(\Omega)$ be the solution  defined by means of \eqref{3.60}. Then, it is the unique solution of the
		macroscopic homogenized problem 
		\begin{equation*}
			\left\{ \begin{array}{l}
				-{\rm div}\, q\left(u_0, Du_{0}\right) =f\text{ in }\Omega,  \\
				
				\\
				u_{0}\in W_{0}^{1}L^{\Phi
				}\left( \Omega \right),  
			\end{array}\right.
		\end{equation*}
		where $q$ is defined by \eqref{3.71}, taking into account \eqref{h}, \eqref{3.67} and \eqref{3.69b}.} \\

	Next, we explore some concrete homogenization problem for \eqref{1.1}, i.e. under different \textit{abstract structure hypothesis} using $RH$-algebras of Example \ref{cb}. 
	
	
	\section{Concrete reiterated homogenization results for \eqref{1.1}} \label{labelsect5}
	
	In the present section, we consider a few examples of homogenization problems for \eqref{1.1} under different \textit{abstract structure hypothesis}. We will show how their study leads naturally to the abstract setting of Section \ref{labelsect4}  and so we may conclude by merely applying Theorem \ref{mainresult} (thanks to Remark \ref{algegood}).
	
	\subsection{Problem I: $a_{i}(\cdot,\cdot, \zeta,\lambda)\in \mathcal{C}_{per}(Y\times Z)$}
	
	We assume here that for each fixed $(\zeta,\lambda) \in \mathbb{R}\times\mathbb{R}^{d}$, the function $(y,z) \to a_{i}(y,z, \zeta,\lambda)$ satisfies the following condition, commonly know as the \textit{periodicity hypothesis}:
	\begin{equation}\label{ca}
		\begin{array}{l}
			\text{ For each } k \in \mathbb{Z}^{d} \text{ and each } l \in \mathbb{Z}^{d}, \text{ we have } \\
			a_{i}(y+k, z+l, \zeta,\lambda) = a_{i}(y,z,\zeta,\lambda) \text{ a.e. in } (y,z) \in \mathbb{R}^{d}\times\mathbb{R}^{d}.  
		\end{array}
	\end{equation}
	One also expresses \eqref{ca} by saying that $a_{i}(y,z,\zeta,\lambda)$ (for fixed $(\zeta,\lambda) \in \mathbb{R}\times\mathbb{R}^{d}$) is $Y$-periodic in $y$ and $Z$-periodic in $z$, or simply that $a_{i}(y,z,\zeta,\lambda)$ is $Y\times Z$-periodic in $(y,z)$, where $Y= (0,1)^{d}$ and $Z= (0,1)^{d}$ (see Example \ref{cb}).
	
	Of course, the right $RH$-supralgebra for this is the periodic $RH$-supralgebra $A = A_{y}\odot A_{z} = \mathcal{C}_{per}(Y\times Z)$  with $A_{y} = \mathcal{C}_{per}(Y)$ and $A_{z} = \mathcal{C}_{per}(Z)$  (see Example \ref{cb}). 
	
	our purpose is to study the homogenization of \eqref{1.1} under the periodicity hypothesis \eqref{ca} and $\widetilde{\Phi} \in \Delta'$. In this case, we will show that this problem becomes to that studied in \cite{tacha4}.
	
	In this setting, \eqref{3.60} takes a rather simple form:
	\begin{equation}
		\left\{ 
		\begin{tabular}{l}
			$\displaystyle \int_{\Omega } \int_{Y} \int_{Z} a\left(y,z, u_{0},Du_{0}+ D_{y} u
			_{1} +D_{z} u
			_{2} \right)\cdot \left(Dv_{0}+ D_{y} v
			_{1} +D_{z} v
			_{2} \right) dx dy dz
			$ \\
			\\ 
			$\displaystyle =\int_{\Omega }fv_{0}dx,$ for all $v=\left( v_{0},v_{1},v_{2}\right) \in 
			\mathbb{F}_{0}^{1,\Phi }$, \\ 
		\end{tabular}%
		\right.  \label{cc}
	\end{equation}
	where $ \mathbb{F}_{0}^{1,\Phi }:=W_{0}^{1}L^{\Phi}\left( \Omega
	\right) \times L^\Phi_{per}\left( \Omega ;W_{\#}^{1}L^{\Phi}\left( Y
	\right) \right) \times L^\Phi\left( \Omega ;L_{per}^{\Phi}\left(
	Y;W_{\#}^{1}L^{\Phi}\left( Z
	\right) \right) \right)$ with 
	$W_{\#}^{1}L^{\Phi}\left( Y
	\right) = \left\{ v \in W^{1}L^{\Phi}\left( Y
	\right), \;\; \int_{Y} v(y) dy =0 \right\}$ and \\ $L_{per}^{\Phi}\left(
	Y;W_{\#}^{1}L^{\Phi}\left( Z
	\right) \right) = \left\{ v \in L^{\Phi}_{per}(Y\times Z), \;\; v(y,\cdot) \in W_{\#}^{1}L^{\Phi}\left( Z
	\right) \text{ for a.e. } y\in Y \right\}$ and \\ $L^\Phi_{per}\left( \Omega ;W_{\#}^{1}L^{\Phi}\left( Y
	\right) \right) = \left\{ v \in L^\Phi\left( \Omega ;L^{\Phi}_{per}\left( Y
	\right) \right), \;\; u(x,\cdot) \in W_{\#}^{1}L^{\Phi}\left( Y
	\right), \text{ for a.e. } x\in \Omega \right\}$. 
	
	Given an $N$-function $\Phi \in \Delta_{2}$ such that $\widetilde{\Phi} \in \Delta'$, the aim is to show that as $\varepsilon\to 0$, we have $u_{\varepsilon }\rightharpoonup u_{0}\text{ in }W_{0}^{1}L^{\Phi }\left( \Omega
	\right)\text{-weak}$ and $	D_{x_{i}}u_{\varepsilon }\rightharpoonup
	D_{x_{i}}u_{0}+D_{y_{i}}u_{1}+D_{z_{i}}u_{2} 
	\text{ in }L^{\Phi }(\Omega)\text{-weak} R\,\Sigma, (1\leq i\leq N)$, where $u_{\varepsilon}$ is the solution of \eqref{1.1} (for fixed $\varepsilon>0$) and $\mathbf{u}= (u_{0}, u_{1}, u_{2})$ is (uniquely) defined by \eqref{cc}. 
	
	But, since $A_{y} = \mathcal{C}_{per}(Y)$ and $A_{z} = \mathcal{C}_{per}(Z)$ satisfy all the assumption of Theorem \ref{ti}, we see by Theorem \ref{mainresult} that the whole problem reduces to verifying that \eqref{tj} holds.
	\begin{proof}[Proof that \eqref{tj} holds]
		We arguing as in \cite[Sect. 4.1]{luka}. Let $1\leq i\leq d$ and $\psi\in (A_{\mathbb{R}})^{d}$ be freely fixed. Let $z\in \mathbb{R}^{d}$ be arbitrarily fixed. In view of (\textbf{H}$_{1}$) and \eqref{1.3}, the function $h: \mathbb{R}^{d}\times \mathbb{R}\times \mathbb{R}^{d} \rightarrow \mathbb{R}$ given by $h(y,\zeta,\lambda) = a_{i}(y,z,\zeta,\lambda)$, $(y,\zeta,\lambda) \in \mathbb{R}^{d}\times \mathbb{R}\times \mathbb{R}^{d}$ has the Carath\'{e}odory property. However, if $(u, \mathbf{u})$ is any measurable function from $\mathbb{R}^{d}\times \mathbb{R}^{d}$ into $\mathbb{R}\times \mathbb{R}^{d}$, then the function $y \to h(y, u(y), \mathbf{u}(y))$ is measurable from $\mathbb{R}^{d}$ into $\mathbb{R}$. Choosing in particular $u(y)= \phi(y,z)$ and $\mathbf{u}(y)= \psi(y,z)$, ($y\in \mathbb{R}^{d}$), we see that the function $y \rightarrow a_{i}(y,z, \phi(y,z), \psi(y,z))$ is measurable from $\mathbb{R}^{d}$ into $\mathbb{R}$, and that for any arbitrary $z\in \mathbb{R}^{d}$. On the other hand, using [part (ii) of] \textbf{(H$_{1}$)} and \eqref{1.3} one can easily show that the function $z \rightarrow a_{i}(y,z, \phi(y,z), \psi(y,z))$ (for fixed $y$) is continuous on $\mathbb{R}^{d}$. Taking account \eqref{ca}, we deduce that, a.e. in $y \in \mathbb{R}^{d}$, the function $z \rightarrow a_{i}(y,z, \phi(y,z), \psi(y,z))$ of $\mathbb{R}^{d}$ into $\mathbb{R}$, denoted below by $a_{i}(y,\cdot, \phi(y,\cdot), \psi(y,\cdot))$, lies in $\mathcal{C}_{per}(Z)$. Hence it follows by \cite[Lemma 2.1]{luka}  that the function $y \rightarrow a_{i}(y,\cdot, \phi(y,\cdot), \psi(y,\cdot))$, denoted by $a_{i}(\cdot,\cdot, \phi, \psi)$, is measurable from $\mathbb{R}^{d}$ to  $\mathcal{C}_{per}(Z)$. From this we deduce using [part (ii) of ] \textbf{(H$_{1}$)}, \eqref{1.3} and \eqref{ca}, that $a_{i}(\cdot,\cdot, \phi, \psi)$ lies in $L^{\infty}_{per}(Y; \mathcal{C}_{per}(Z))$.
		
		But $L^{\infty}_{per}(Y; \mathcal{C}_{per}(Z)) \subset L^{\widetilde{\Phi}}_{per}(Y; \mathcal{C}_{per}(Z))$. Hence \eqref{tj} follows by the facts that $A= \mathcal{C}_{per}(Y\times Z)$ is dense in $ L^{\widetilde{\Phi}}_{per}(Y; \mathcal{C}_{per}(Z))$ and the latter space is continuously embedded in $\Xi^{\widetilde{\Phi}}(\mathbb{R}^{d}_{y}; \mathcal{B}(\mathbb{R}^{d}_{z}))$.
	\end{proof}
	\begin{remarq}
		\textup{	In the classical frame of Lebesgue spaces, the corresponding $N$-function $\widetilde{\Phi}$ satisfy the $\Delta'$-condition (see, e.g. \cite{luka}). }
	\end{remarq}
	
	\subsection{Problem II: $a_{i}(\cdot,\cdot, \zeta,\lambda) \in \mathcal{B}_{\infty}(\mathbb{R}^{d}_{z}; \mathcal{C}_{per}(Y))$}
	
	We study here the homogenization of \ref{1.1} under the structure hypothesis:
	\begin{equation}\label{ce}
		a_{i}(\cdot,\cdot, \zeta,\lambda) \in \mathcal{B}_{\infty}(\mathbb{R}^{d}_{z}; \mathcal{C}_{per}(Y)) \text{ for any } (\zeta,\lambda) \in \mathbb{R}\times \mathbb{R}^{d} \;\; (1\leq i \leq d),
	\end{equation}
	where $\mathcal{B}_{\infty}(\mathbb{R}^{d}_{z}; \mathcal{C}_{per}(Y))$ denotes the space of continuous functions $\psi: \mathbb{R}^{d}_{z} \rightarrow \mathcal{C}_{per}(Y)$ such that $\psi(z)= \psi(\cdot,z)$ has a limit in $\mathcal{B}(\mathbb{R}^{d}_{z})$ when $|z|\to \infty$.
	
	But, since $A= \mathcal{B}_{\infty}(\mathbb{R}^{d}_{z}; \mathcal{C}_{per}(Y)) =  \mathcal{C}_{per}(Y) \odot \mathcal{B}_{\infty}(\mathbb{R}^{d}_{z})$ satisfy all the assumption of Theorem \ref{ti}, we see by Theorem  \ref{mainresult} that the whole problem reduces to verifying that \eqref{tj} holds.
	
	This is complete once we have established that
	\begin{equation}\label{cd}
		a_{i}(\cdot,\cdot, f, \mathbf{f}) \in \mathcal{B}_{\infty}(\mathbb{R}^{d}_{z}; \mathcal{C}_{per}(Y)) \text{ for all } ( f, \mathbf{f}) \in A_{\mathbb{R}}\times (A_{\mathbb{R}})^{d} \;\; (1\leq i \leq d).
	\end{equation}
	\begin{proof}[Proof of \eqref{cd}]
		To do this, let $1\leq i \leq d$ and $( f, \mathbf{f}) \in A_{\mathbb{R}}\times (A_{\mathbb{R}})^{d}$ be freely fixed. Let $K$ (resp. $L$) be a compact sets in $\mathbb{R}$ (resp. $\mathbb{R}^{d}$) such that $f(y,z) \in K$ (resp. $\mathbf{f}(y,z) \in L$) for all  $(y,z) \in \mathbb{R}^{d}\times \mathbb{R}^{d}$. According to \eqref{ce}, we may view $a_{i}$ as a function $(\zeta,\lambda) \rightarrow a_{i}(\cdot,\cdot, \zeta,\lambda)$ of $\mathbb{R}\times \mathbb{R}^{d}$ into $A_{\mathbb{R}}$, which function lies in $\mathcal{C}(\mathbb{R}\times \mathbb{R}^{d} ; A_{\mathbb{R}})$ (use \eqref{1.3}). Still calling $a_{i}$ the restriction of the latter function to $K\times L$, we have therefore $a_{i} \in \mathcal{C}(K\times L ; A_{\mathbb{R}})$. Recalling that $\mathcal{C}(K\times L)\otimes A_{\mathbb{R}}$ is dense in $\mathcal{C}(K\times L ; A_{\mathbb{R}})$ (see, e.g., |6, p.46), we see that we may consider a sequence $(q_{n})_{n\geq 1}$ in $\mathcal{C}(K\times L)\otimes A_{\mathbb{R}}$ such that 
		\begin{equation*}
			\sup_{(y,z,\zeta,\lambda)\in \mathbb{R}^{d}\times \mathbb{R}^{d}\times K\times L} |q_{n}(y,z,\zeta,\lambda) - a_{i}(y,z,\zeta,\lambda)| \rightarrow 0 \text{ as } n \to \infty.
		\end{equation*}
		Hence $q_{n}(\cdot,\cdot,f,\mathbf{f}) \rightarrow a_{i}(\cdot,\cdot,f,\mathbf{f})$ in $\mathcal{B}(\mathbb{R}^{d}_{y}\times \mathbb{R}^{d}_{z})$ as $n\to \infty$.
		
		Thus, \eqref{cd} is proved if we can check that each $q_{n}(\cdot,\cdot,f,\mathbf{f})$ lies in $A$. However, it is enough to verify that we have $q_{n}(\cdot,\cdot,f,\mathbf{f}) \in A$ for any function $q: \mathbb{R}^{d}_{y}\times \mathbb{R}^{d}_{z} \times \mathbb{R}_{\zeta}\times \mathbb{R}^{d}_{\lambda} \rightarrow \mathbb{R}$ of the form 
		\begin{eqnarray*}
			q(y,z,\zeta,\lambda) = \chi(\zeta,\lambda) \phi(y,z) \;\; (y,z,\lambda \in \mathbb{R}^{d}  \nonumber \\ \text{ and } \zeta\in \mathbb{R}) \text{ with } \chi \in \mathcal{C}(K\times L; \mathbb{R}) 
			\text{ and } \phi \in A_{\mathbb{R}}.
		\end{eqnarray*}
		
		Given one such $q$, by the Stone-Weierstrass theorem we may consider a sequence $(f_{n})$ of polynomials in $(\zeta,\lambda) = (\zeta, \lambda_{1}, \cdots, \lambda_{d}) \in K\times L$ such that $f_{n} \rightarrow \chi$ in $\mathcal{C}(K\times L)$ as $n\to \infty$, hence $f_{n}(f, \mathbf{f}) \rightarrow \chi(f, \mathbf{f})$ in $\mathcal{B}(\mathbb{R}^{d}_{y}\times \mathbb{R}^{d}_{z})$ as $n\to \infty$. We deduce that $\chi(f, \mathbf{f})$ lies in $A_{\mathbb{R}}$, since the same is true of each $f_{n}(f, \mathbf{f})$ ($A_{\mathbb{R}}$ being an algebra). Whence the result.
	\end{proof}

	\subsection{Problem III: $a_{i}(\cdot,\cdot, \zeta,\lambda) \in \mathcal{B}_{\infty}(\mathbb{R}^{d}_{z}; AP(\mathbb{R}^{d}_{y}))$}
	
	We study here the homogenization of \ref{1.1} under the structure hypothesis:
	\begin{equation*}\label{cf}
		a_{i}(\cdot,\cdot, \zeta,\lambda) \in \mathcal{B}_{\infty}(\mathbb{R}^{d}_{z}; AP(\mathbb{R}^{d}_{y})) \text{ for any } (\zeta,\lambda) \in \mathbb{R}\times \mathbb{R}^{d} \;\; (1\leq i \leq d).
	\end{equation*}
	
	Note that, $A= \mathcal{B}_{\infty}(\mathbb{R}^{d}_{z}; AP(\mathbb{R}^{d}_{y})) = AP(\mathbb{R}^{d}_{y}) \odot \mathcal{B}_{\infty}(\mathbb{R}^{d}_{z})$ satisfy all the assumption of Theorem \ref{ti}, we see by Theorem  \ref{mainresult} that the whole problem reduces to verifying that \eqref{tj} holds.
	
	This is complete once we have established that
	\begin{equation*}\label{cg}
		a_{i}(\cdot,\cdot, f, \mathbf{f}) \in \mathcal{B}_{\infty}(\mathbb{R}^{d}_{z}; AP(\mathbb{R}^{d}_{y})) \text{ for all } ( f, \mathbf{f}) \in A_{\mathbb{R}}\times (A_{\mathbb{R}})^{d} \;\; (1\leq i \leq d).
	\end{equation*}
	However, the proof of this is entirely identical to the case of Problem II and therefore is not worth repeating.
	
	\subsection{Problem IV: $a_{i}(\cdot,\cdot, \zeta,\lambda) \in  AP(\mathbb{R}^{d}_{y}\times \mathbb{R}^{d}_{z})$}
	
	We study here the homogenization of \ref{1.1} under the structure hypothesis:
	\begin{equation*}\label{ch}
		a_{i}(\cdot,\cdot, \zeta,\lambda) \in  AP(\mathbb{R}^{d}_{y}\times \mathbb{R}^{d}_{z}) \text{ for any } (\zeta,\lambda) \in \mathbb{R}\times \mathbb{R}^{d} \;\; (1\leq i \leq d).
	\end{equation*}
	
	Note that, $A=  AP(\mathbb{R}^{d}_{y}\times \mathbb{R}^{d}_{z}) = AP(\mathbb{R}^{d}_{y}) \odot  AP(\mathbb{R}^{d}_{z})$ satisfy all the assumption of Theorem \ref{ti}, we see by Theorem  \ref{mainresult} that the whole problem reduces to verifying that \eqref{tj} holds.
	
	This is complete once we have established that
	\begin{equation*}\label{ci}
		a_{i}(\cdot,\cdot, f, \mathbf{f}) \in  AP(\mathbb{R}^{d}_{y}\times \mathbb{R}^{d}_{z}) \text{ for all } ( f, \mathbf{f}) \in A_{\mathbb{R}}\times (A_{\mathbb{R}})^{d} \;\; (1\leq i \leq d).
	\end{equation*}
	However, the proof of this is \textit{verbatim} copy of case of Problem II and therefore is not worth repeating.
	
	\subsection{Problem V: $a_{i}(\cdot,\cdot, \zeta,\lambda) \in  \mathcal{B}_{\infty}(\mathbb{R}^{d}_{z}; \mathcal{B}_{\infty,\mathbb{Z}^{d}}(\mathbb{R}^{d}_{y}))$}
	
	
	We intend to study here the homogenization of \ref{1.1} under the structure hypothesis:
	\begin{equation*}\label{cj}
		a_{i}(\cdot,\cdot, \zeta,\lambda) \in  \mathcal{B}_{\infty}(\mathbb{R}^{d}_{z}; \mathcal{B}_{\infty,\mathbb{Z}^{d}}(\mathbb{R}^{d}_{y})) \text{ for any } (\zeta,\lambda) \in \mathbb{R}\times \mathbb{R}^{d} \;\; (1\leq i \leq d).
	\end{equation*}
	
	Here, $A=  \mathcal{B}_{\infty}(\mathbb{R}^{d}_{z}; \mathcal{B}_{\infty,\mathbb{Z}^{d}}(\mathbb{R}^{d}_{y}))  = \mathcal{B}_{\infty,\mathbb{Z}^{d}}(\mathbb{R}^{d}_{y}) \odot  \mathcal{B}_{\infty}(\mathbb{R}^{d}_{z})$ satisfy all the assumption of Theorem \ref{ti}, we see by Theorem  \ref{mainresult} that the whole problem reduces to verifying that \eqref{tj} holds.
	
	This is complete once we have established that
	\begin{equation*}\label{cl}
		a_{i}(\cdot,\cdot, f, \mathbf{f}) \in \mathcal{B}_{\infty}(\mathbb{R}^{d}_{z}; \mathcal{B}_{\infty,\mathbb{Z}^{d}}(\mathbb{R}^{d}_{y})) \text{ for all } ( f, \mathbf{f}) \in A_{\mathbb{R}}\times (A_{\mathbb{R}})^{d} \;\; (1\leq i \leq d).
	\end{equation*}
	But this follows by repeating word for word the proof in the case of Problem II.
	
	\begin{corollary}
		Suppose we have $a_{i}(\cdot,\cdot, \zeta,\lambda) \in  \mathcal{B}_{\infty}(\mathbb{R}^{d}_{z}; \mathcal{B}_{\infty,\mathbb{Z}^{d}}(\mathbb{R}^{d}_{y})) \text{ for all } (\zeta,\lambda) \in \mathbb{R}\times \mathbb{R}^{d} \text{ and for all } 1\leq i \leq d.$ Since $\mathcal{B}_{\infty}(\mathbb{R}^{d}_{z}; \mathcal{B}_{\infty}(\mathbb{R}^{d}_{y})) \subset \mathcal{B}_{\infty}(\mathbb{R}^{d}_{z}; \mathcal{B}_{\infty,\mathbb{Z}^{d}}(\mathbb{R}^{d}_{y}))$, then we have the same conclusion for $A= \mathcal{B}_{\infty}(\mathbb{R}^{d}_{z}; \mathcal{B}_{\infty}(\mathbb{R}^{d}_{y}))$.
	\end{corollary}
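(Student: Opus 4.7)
The plan is to apply Theorem \ref{mainresult} (and, under \textbf{(H$_6$)}, Theorem \ref{maincor}) directly with the $RH$-supralgebra $A = A_y \odot A_z$, where $A_y = \mathcal{B}_\infty(\mathbb{R}^d_y)$ and $A_z = \mathcal{B}_\infty(\mathbb{R}^d_z)$. Once the abstract structure hypothesis \textbf{(H$_7$)} is verified for this choice of $A$, the conclusions \eqref{3.58}--\eqref{3.60} are supplied verbatim by Theorem \ref{mainresult}, with the homogenized function spaces now built from $\mathcal{X}^{\Phi}_{A_y}$ and $\mathcal{X}^{\Phi}_{A_z}$ (and analogously for the Besicovitch-Sobolev variants); the associated macroscopic problem then follows from Theorem \ref{maincor}.

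First, I would check that $A$ satisfies all the algebraic assumptions required by Theorem \ref{mainresult}. By Example \ref{cb}(4) one has the identification $A = \mathcal{B}_\infty(\mathbb{R}^d_y \times \mathbb{R}^d_z)$, which is indeed an $RH$-supralgebra, and by Remark \ref{algegood} both factors $A_y$ and $A_z$ are ergodic, translation invariant, of class $\mathcal{C}^\infty$, and composed of uniformly continuous functions. Thus the whole structural side of the hypothesis of Theorem \ref{mainresult} is free of charge.

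Second, the only point demanding a (very short) argument is the verification of \eqref{tj}, namely $a_i(\cdot,\cdot,\zeta,\lambda) \in \mathfrak{X}^{\widetilde{\Phi}}_A(\mathbb{R}^d_y;\mathcal{B}(\mathbb{R}^d_z))$ for each fixed $(\zeta,\lambda)$. Here I would invoke the inclusion $\mathcal{B}_\infty(\mathbb{R}^d_z;\mathcal{B}_\infty(\mathbb{R}^d_y)) \subset \mathcal{B}_\infty(\mathbb{R}^d_z;\mathcal{B}_{\infty,\mathbb{Z}^d}(\mathbb{R}^d_y))$ noted in the statement, so that the hypothesis on $a$ can be rephrased as $a_i(\cdot,\cdot,\zeta,\lambda) \in A$. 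Since $\mathfrak{X}^{\widetilde{\Phi}}_A$ is by definition the closure of $A$ in $\Xi^{\widetilde{\Phi}}(\mathbb{R}^d_y;\mathcal{B}(\mathbb{R}^d_z))$, and the uniform boundedness of elements of $A$ yields a continuous embedding $A \hookrightarrow \Xi^{\widetilde{\Phi}}(\mathbb{R}^d_y;\mathcal{B}(\mathbb{R}^d_z))$ (standard $L^\infty$-to-Luxemburg estimate on the unit ball $B_d$), it follows that $A \subset \mathfrak{X}^{\widetilde{\Phi}}_A$, which gives \eqref{tj} without further work.

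Finally, a direct invocation of Theorem \ref{mainresult} delivers the reiterated $\Sigma$-convergences \eqref{3.58}--\eqref{3.59} and the variational homogenized problem \eqref{3.60}; under the additional strict monotonicity \textbf{(H$_6$)}, Theorem \ref{maincor} then provides the macroscopic problem with effective coefficient $q$ defined via the cell problems \eqref{3.67} and \eqref{3.69b} relative to $A_y$ and $A_z$. I do not expect any serious obstacle here: the whole corollary amounts to noting that the smaller algebra $\mathcal{B}_\infty(\mathbb{R}^d_z;\mathcal{B}_\infty(\mathbb{R}^d_y))$ is itself an admissible $RH$-supralgebra for the abstract framework, with the single technical point being the stability $a_i(\cdot,\cdot,f,\mathbf{f}) \in A$ for $(f,\mathbf{f}) \in A_{\mathbb{R}} \times (A_{\mathbb{R}})^d$ required by Proposition \ref{prop3.3}, which is dispatched by the same Stone--Weierstrass argument used to establish \eqref{cd} in Problem II, now with $A_{\mathbb{R}}$ in place of $\mathcal{C}_{per}(Y \times Z)_{\mathbb{R}}$.
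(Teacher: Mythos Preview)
Your proposal is correct and matches the paper's approach. The paper gives no separate proof for this corollary: the statement itself is the argument, observing that $\mathcal{B}_\infty(\mathbb{R}^d_z;\mathcal{B}_\infty(\mathbb{R}^d_y))=\mathcal{B}_\infty(\mathbb{R}^d_y)\odot\mathcal{B}_\infty(\mathbb{R}^d_z)$ is an admissible $RH$-supralgebra (Example~\ref{cb}(4), Remark~\ref{algegood}), so the verification of \textbf{(H$_7$)} and the Stone--Weierstrass stability argument from Problem~II carry over verbatim. Your write-up simply unpacks this in more detail; one small wording issue is that the inclusion $\mathcal{B}_\infty(\mathbb{R}^d_z;\mathcal{B}_\infty(\mathbb{R}^d_y))\subset\mathcal{B}_\infty(\mathbb{R}^d_z;\mathcal{B}_{\infty,\mathbb{Z}^d}(\mathbb{R}^d_y))$ is not actually needed to ``rephrase the hypothesis as $a_i\in A$'' --- in this corollary the working hypothesis \emph{is} $a_i(\cdot,\cdot,\zeta,\lambda)\in A$ directly, and the inclusion merely situates this case relative to Problem~V.
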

	
	\subsection{Problem VI: $a_{i}(\cdot,\cdot, \zeta,\lambda) \in  \mathcal{C}_{per}(Z; \mathcal{B}_{\infty,\mathbb{Z}^{d}}(\mathbb{R}^{d}_{y}))$}

	We intend to study here the homogenization of \ref{1.1} under the structure hypothesis:
	\begin{equation*}\label{cm}
		a_{i}(\cdot,\cdot, \zeta,\lambda) \in  \mathcal{C}_{per}(Z; \mathcal{B}_{\infty,\mathbb{Z}^{d}}(\mathbb{R}^{d}_{y})) \text{ for any } (\zeta,\lambda) \in \mathbb{R}\times \mathbb{R}^{d} \;\; (1\leq i \leq d),
	\end{equation*}
	where $\mathcal{C}_{per}(Z; \mathcal{B}_{\infty,\mathbb{Z}^{d}}(\mathbb{R}^{d}_{y})) $ is the space of $Z$-periodic continuous functions of $\mathbb{R}^{d}_{z}$ into $\mathcal{B}_{\infty,\mathbb{Z}^{d}}(\mathbb{R}^{d}_{y})$.
	
	Here, $A=  \mathcal{C}_{per}(Z; \mathcal{B}_{\infty,\mathbb{Z}^{d}}(\mathbb{R}^{d}_{y}))  = \mathcal{B}_{\infty,\mathbb{Z}^{d}}(\mathbb{R}^{d}_{y}) \odot  \mathcal{C}_{per}(Z)$ satisfy all the assumption of Theorem \ref{ti}, we see by Theorem  \ref{mainresult} that the whole problem reduces to verifying that \eqref{tj} holds.
	
	This is complete once we have established that
	\begin{equation*}\label{cn}
		a_{i}(\cdot,\cdot, f, \mathbf{f}) \in \mathcal{C}_{per}(Z; \mathcal{B}_{\infty,\mathbb{Z}^{d}}(\mathbb{R}^{d}_{y}))  \text{ for all } ( f, \mathbf{f}) \in A_{\mathbb{R}}\times (A_{\mathbb{R}})^{d} \;\; (1\leq i \leq d).
	\end{equation*}
	But, it is obtained by repeating also  the proof in the case of Problem II.
	
	\subsection{Problem VII: $a_{i}(\cdot,\cdot, \zeta,\lambda) \in AP(\mathbb{R}^{d}_{y}; WAP(\mathbb{R}^{d}_{z}))$}

	We intend to study here the homogenization of \ref{1.1} under the structure hypothesis:
	\begin{equation*}\label{co}
		a_{i}(\cdot,\cdot, \zeta,\lambda) \in  AP(\mathbb{R}^{d}_{y}; WAP(\mathbb{R}^{d}_{z})) \text{ for any } (\zeta,\lambda) \in \mathbb{R}\times \mathbb{R}^{d} \;\; (1\leq i \leq d),
	\end{equation*}
	where $AP(\mathbb{R}^{d}_{y}; WAP(\mathbb{R}^{d}_{z})) $ is the space of all almost periodic continuous functions of $\mathbb{R}^{d}_{y}$ into $ WAP(\mathbb{R}^{d}_{z})$.
	
	Here, $A= AP(\mathbb{R}^{d}_{y}; WAP(\mathbb{R}^{d}_{z})) = AP(\mathbb{R}^{d}_{y}) \odot  WAP(\mathbb{R}^{d}_{z})$ satisfy all the assumption of Theorem \ref{ti}, we see by Theorem  \ref{mainresult} that the whole problem reduces to verifying that \eqref{tj} holds.
	
	Hence, one can solve the problem in this case by establishing that
	\begin{equation*}\label{cp}
		a_{i}(\cdot,\cdot, f, \mathbf{f}) \in AP(\mathbb{R}^{d}_{y}; WAP(\mathbb{R}^{d}_{z})) \text{ for all } ( f, \mathbf{f}) \in A_{\mathbb{R}}\times (A_{\mathbb{R}})^{d} \;\; (1\leq i \leq d).
	\end{equation*}

	\subsection{Problem VIII: $a_{i}(\cdot,\cdot, \zeta,\lambda) \in WAP(\mathbb{R}^{d}_{y}\times\mathbb{R}^{d}_{z})$}

	We intend to study here the homogenization of \ref{1.1} under the structure hypothesis:
	\begin{equation*}\label{cq}
		a_{i}(\cdot,\cdot, \zeta,\lambda) \in  WAP(\mathbb{R}^{d}_{y}\times\mathbb{R}^{d}_{z}) \text{ for any } (\zeta,\lambda) \in \mathbb{R}\times \mathbb{R}^{d} \;\; (1\leq i \leq d).
	\end{equation*}
	
	Here, $A=WAP(\mathbb{R}^{d}_{y}\times\mathbb{R}^{d}_{z}) = WAP(\mathbb{R}^{d}_{y}) \odot  WAP(\mathbb{R}^{d}_{z})$ satisfy all the assumption of Theorem \ref{ti}, we see by Theorem  \ref{mainresult} that the whole problem reduces to verifying that \eqref{tj} holds.
	
	Hence, one can solve the problem in this case by establishing that
	\begin{equation*}\label{cr}
		a_{i}(\cdot,\cdot, f, \mathbf{f}) \in WAP(\mathbb{R}^{d}_{y}\times\mathbb{R}^{d}_{z}) \text{ for all } ( f, \mathbf{f}) \in A_{\mathbb{R}}\times (A_{\mathbb{R}})^{d} \;\; (1\leq i \leq d).
	\end{equation*}

	\subsection{Problem IX: $a_{i}(\cdot,\cdot, \zeta,\lambda) \in FS(\mathbb{R}^{d}_{y}\times\mathbb{R}^{d}_{z})$}

	We intend to study here the homogenization of \ref{1.1} under the structure hypothesis:
	\begin{equation*}\label{cs}
		a_{i}(\cdot,\cdot, \zeta,\lambda) \in FS(\mathbb{R}^{d}_{y}\times\mathbb{R}^{d}_{z}) \text{ for any } (\zeta,\lambda) \in \mathbb{R}\times \mathbb{R}^{d} \;\; (1\leq i \leq d).
	\end{equation*} 
	
	However, $A=FS(\mathbb{R}^{d}_{y}\times\mathbb{R}^{d}_{z}) = FS(\mathbb{R}^{d}_{y})\odot FS(\mathbb{R}^{d}_{z})$ satisfy all the assumption of Theorem \ref{ti}, we see by Theorem  \ref{mainresult} that the whole problem reduces to verifying that \eqref{tj} holds.
	
	Hence, one can solve the problem in this case by establishing that
	\begin{equation*}\label{ct}
		a_{i}(\cdot,\cdot, f, \mathbf{f}) \in FS(\mathbb{R}^{d}_{y}\times\mathbb{R}^{d}_{z}) \text{ for all } ( f, \mathbf{f}) \in A_{\mathbb{R}}\times (A_{\mathbb{R}})^{d} \;\; (1\leq i \leq d).
	\end{equation*}

	\subsection{Problem X: $a_{i}(\cdot,\cdot, \zeta,\lambda) \in FS(\mathbb{R}^{d}_{y})\odot A_{z}$}

	We intend to study here the homogenization of \ref{1.1} under the structure hypothesis:
	\begin{equation*}\label{cu}
		a_{i}(\cdot,\cdot, \zeta,\lambda) \in FS(\mathbb{R}^{d}_{y})\odot A_{z} \text{ for any } (\zeta,\lambda) \in \mathbb{R}\times \mathbb{R}^{d} \;\; (1\leq i \leq d),
	\end{equation*} 
	where $A_{z}$ is an $H$-algebra as in Theorem \ref{ti}.
	
	However, $A= FS(\mathbb{R}^{d}_{y})\odot A_{z}$ satisfy all the assumption of Theorem \ref{ti}, we see by Theorem  \ref{mainresult} that the whole problem reduces to verifying that \eqref{tj} holds.
	
	Hence, one can solve the problem in this case by establishing that
	\begin{equation*}\label{cv}
		a_{i}(\cdot,\cdot, f, \mathbf{f}) \in FS(\mathbb{R}^{d}_{y})\odot A_{z} \text{ for all } ( f, \mathbf{f}) \in A_{\mathbb{R}}\times (A_{\mathbb{R}})^{d} \;\; (1\leq i \leq d).
	\end{equation*}

	\section{Appendix} \label{labelappendix}

	\subsection{Traces results}\label{appen1}
	
	Our goal here is to prove result \eqref{tk} under the \textit{abstract structure hypothesis} \textbf{(H$_{7}$)}. We use the same method to that used in \cite[Sect. 3.1]{tacha4}.
	
	For that, let $u\in L^{\Phi}_{\textrm{loc}}(\mathbb{R}^{d}_{x}) \times L^{\widetilde{\Phi}}_{\textrm{loc}}(\mathbb{R}^{d}_{y} ; \mathcal{B}(\mathbb{R}^{d}_{z}))$, that is 
	\begin{equation*}
		u = \sum_{i\in I} u_{i} \otimes v_{i} \quad \text{with } u_{i}\in L^{\Phi}_{\textrm{loc}}(\mathbb{R}^{d}_{x}) \;\; \text{and } \; v_{i} \in L^{\widetilde{\Phi}}_{\textrm{loc}}(\mathbb{R}^{d}_{y} ; \mathcal{B}(\mathbb{R}^{d}_{z})),
	\end{equation*}
	where $I$ is a finite set (depending to $u$). Letting 
	\begin{equation*}
		u^{\varepsilon}(x) = \sum_{i\in I} u_{i}(x) \otimes v_{i}^{\varepsilon}(x), \quad x \in \mathbb{R}^{d}, 
	\end{equation*}
	where $v_{i}^{\varepsilon}(x) = v_{i}\left(\frac{x}{\varepsilon_{1}}, \frac{x}{\varepsilon_{2}}\right)$, one has $u^{\varepsilon} \in L^{1}_{\textrm{loc}}(\mathbb{R}^{d}_{x})$ with 
	\begin{equation*}
		u^{\varepsilon}(x) = u\left(x,\dfrac{x}{\varepsilon_{1}}, \dfrac{x}{\varepsilon_{2}}\right), \quad x \in \mathbb{R}^{d}. 
	\end{equation*}
	This define a linear operator $u \to u^{\varepsilon}$ from $L^{\Phi}_{\textrm{loc}}(\mathbb{R}^{d}_{x}) \times L^{\widetilde{\Phi}}_{\textrm{loc}}(\mathbb{R}^{d}_{y} ; \mathcal{B}(\mathbb{R}^{d}_{z}))$ into $L^{1}_{\textrm{loc}}(\mathbb{R}^{d}_{x})$, and we have the following lemma whose proof is identically to that of \cite[Lemma 3.1]{tacha4}.
	
	\begin{lemma}\label{tl}
		The restriction on $\mathcal{B}(\mathbb{R}^{d}_{x})\otimes \Xi^{\widetilde{\Phi}}(\mathbb{R}^{d}_{y}; \mathcal{B}(\mathbb{R}^{d}_{z})$ of the linear operator $u \to u^{\varepsilon}$ defined above extends by continuity to a mapping, still denoted by $u \to u^{\varepsilon}$, of $\mathcal{B}(\mathbb{R}^{d}_{x} ; \Xi^{\widetilde{\Phi}}(\mathbb{R}^{d}_{y}; \mathcal{B}(\mathbb{R}^{d}_{z}))$ into $\Xi^{\widetilde{\Phi}}(\mathbb{R}^{d}_{x})$ with 
		\begin{equation*}
			\|u^{\varepsilon}\|_{\Xi^{\widetilde{\Phi}}(\mathbb{R}^{d}_{x})} \leq \|u\|_{\mathcal{B}(\mathbb{R}^{d}_{x} ; \Xi^{\widetilde{\Phi}}(\mathbb{R}^{d}_{y}; \mathcal{B}(\mathbb{R}^{d}_{z}))}.
		\end{equation*}
	\end{lemma}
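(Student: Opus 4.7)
The plan is the standard continuous extension scheme, following the template invoked in the excerpt itself (namely \cite[Lemma 2.1]{tacha3}, with $\mathcal{C}_{\rm per}(Z)$ replaced by $\mathcal{B}(\mathbb{R}^d_z)$): first verify the norm inequality on the algebraic tensor product $\mathcal{B}(\mathbb{R}^d_x)\otimes \Xi^{\widetilde{\Phi}}(\mathbb{R}^d_y;\mathcal{B}(\mathbb{R}^d_z))$, then extend by continuity to $\mathcal{B}(\mathbb{R}^d_x;\Xi^{\widetilde{\Phi}}(\mathbb{R}^d_y;\mathcal{B}(\mathbb{R}^d_z)))$.

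For the first step, I would start with a simple tensor $u=w\otimes v$, $w\in \mathcal{B}(\mathbb{R}^d_x)$, $v\in \Xi^{\widetilde{\Phi}}(\mathbb{R}^d_y;\mathcal{B}(\mathbb{R}^d_z))$. Using the pointwise bound $|u^{\varepsilon}(x)|\leq \|w\|_{\infty}\,\|v(x/\varepsilon_1,\cdot)\|_{L^{\infty}(\mathbb{R}^d_z)}$, the monotonicity of $\widetilde{\Phi}$, and applying the definition of the $\Xi^{\widetilde{\Phi}}$-norm at the coupled parameter $\eta_1\varepsilon_1\in(0,1]$, one gets
\begin{equation*}
\int_{B_d}\widetilde{\Phi}\!\left(\frac{|(u^{\varepsilon})^{\eta}(x)|}{\|w\|_{\infty}\,\|v\|_{\Xi^{\widetilde{\Phi}}(\mathbb{R}^d_y;\mathcal{B}(\mathbb{R}^d_z))}}\right)dx\leq 1
\end{equation*}
uniformly in $\eta\in(0,1]$, whence the inequality follows (and is in fact an equality for simple tensors, since then $\|u\|_{\mathcal{B}(\mathbb{R}^d_x;V)}=\|w\|_{\infty}\,\|v\|_{V}$). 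For a finite sum $u=\sum_{i\in I}w_i\otimes v_i$, I would view $u$ as the element $\tilde{u}:\mathbb{R}^d_x\to V$, $x\mapsto \sum_i w_i(x)v_i$, with $M:=\|u\|_{\mathcal{B}(\mathbb{R}^d_x;V)}=\sup_{x'}\|\tilde{u}(x')\|_{V}$, and then run the same pointwise-in-$x$ argument: the uniform-in-$x'$ bound $\|\tilde{u}(x')\|_{\Xi^{\widetilde{\Phi}}}\leq M$ combined with the change of variables $\xi=x/\eta_1$ yields $\int_{B_d}\widetilde{\Phi}(|(u^{\varepsilon})^{\eta}(x)|/M)\,dx\leq 1$, and taking the supremum over $\eta\in(0,1]$ gives $\|u^{\varepsilon}\|_{\Xi^{\widetilde{\Phi}}(\mathbb{R}^d_x)}\leq M$.

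The second step is routine: the algebraic tensor product is dense in $\mathcal{B}(\mathbb{R}^d_x;V)$ (a classical fact for vector-valued bounded continuous functions on a locally compact space), and since $\Xi^{\widetilde{\Phi}}(\mathbb{R}^d_x)$ is complete the operator, being bounded linear of norm $\leq 1$ on a dense subspace, extends uniquely to a continuous operator on the full space, preserving the norm inequality. The main obstacle is in the first step: a naive triangle inequality on finite sums $\sum_i w_i\otimes v_i$ yields only the projective tensor norm bound $\sum_i \|w_i\|_{\infty}\|v_i\|_V$, which can be strictly larger than the desired $\mathcal{B}$-norm bound. Circumventing this requires the diagonal argument above, where the $\Xi^{\widetilde{\Phi}}$-inequality (holding uniformly in the $x'$-slot) must be coupled to the integration variable $x$ through the substitution $x'=x/\eta_1$; the delicate point is checking that the composition of dilations $\eta_1\varepsilon_1$ stays in the admissible range $(0,1]$ so that the defining Luxemburg estimate for $\|\tilde u(x')\|_V$ can be invoked at the correct scale.
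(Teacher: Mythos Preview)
Your scheme (bound on the tensor product, then extend by density) follows the template the paper inherits from \cite[Lemma~3.1]{tacha4}, and your simple-tensor estimate is correct. The gap lies in the passage to finite sums $u=\sum_i w_i\otimes v_i$. You assert that the uniform bound $\sup_{x'}\|\tilde u(x')\|_{\Xi^{\widetilde\Phi}}\le M$, combined with the substitution $\xi=x/\eta_1$, yields $\int_{B_d}\widetilde\Phi\bigl(|(u^\varepsilon)^\eta(x)|/M\bigr)\,dx\le 1$. But the hypothesis only says that, for each \emph{fixed} $x'$ and each admissible $\tau$, one has $\int_{B_d}\widetilde\Phi\bigl(\|\tilde u(x')(\zeta/\tau,\cdot)\|_\infty/M\bigr)\,d\zeta\le 1$. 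In the integral you need, the first slot is $x/\eta_1$, which \emph{moves with} the integration variable $x$; no change of variables decouples the two occurrences of $x$, so feeding $x'=x/\eta_1$ into an inequality proved for constant $x'$ is not legitimate. The ``delicate point'' is not the admissibility of $\eta_1\varepsilon_1$ but precisely this coupling.

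In fact the constant-$1$ inequality can fail. With $d=1$, $\widetilde\Phi(t)=t^2$, $\varepsilon_1=\varepsilon$, and $u(x,y,z)=\cos(y-x)$ (constant in $z$), one has $u^{1}\equiv 1$, hence $\|u^{1}\|_{\Xi^{\widetilde\Phi}(\mathbb{R}_x)}=\sqrt{2}$, while $\|u\|_{\mathcal{B}(\mathbb{R}_x;\Xi^{\widetilde\Phi})}=\sup_{x'}\|\cos(\cdot-x')\|_{\Xi^{\widetilde\Phi}}=\bigl(1+\tfrac12\sin 2\bigr)^{1/2}\approx 1.21<\sqrt{2}$. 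The paper does not argue the lemma here but defers to \cite{tacha4}, so there is nothing further to compare against; what is actually needed downstream is only continuity of $u\mapsto u^\varepsilon$ (any constant suffices), and this can be recovered by localizing: over a compact base $\overline{\Omega}$---which is how the lemma is ultimately applied---a partition of unity freezes $x'$ on each patch and reduces matters to the simple-tensor estimate plus a controllable remainder.
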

	
	As in \cite[Lemma 3.2]{tacha4}, we also have the following lemma whose proof can be deduce from the above lemma.
	
	\begin{lemma}\label{tm}
		Let $A$ be an $RH$-supralgebra of class $\mathcal{C}^{\infty}$ and let $f$ be a function of $\mathbb{R}^{d}_{y}\times \mathbb{R}^{d}_{z}\times\mathbb{R}^{m}_{\mu}$ into $\mathbb{C}$ with the following property:
		\begin{equation*}
			f \in \mathcal{B}(\mathbb{R}^{m}_{\mu}; \mathfrak{X}_{A}^{\widetilde{\Phi}}(\mathbb{R}^{d}_{y}; \mathcal{B}(\mathbb{R}^{d}_{z})).
		\end{equation*}
		Then for all $\mathbf{v} \in (A)^{m}$, the (trace) function $(y,z) \to f(y, z, \mathbf{v}(y,z))$ from $\mathbb{R}^{d}_{y}\times \mathbb{R}^{d}_{z}$ into $\mathbb{C}$, denoted by $f(\cdot, \cdot, \mathbf{v})$, is well defined and belongs to $\mathfrak{X}_{A}^{\widetilde{\Phi}}(\mathbb{R}^{d}_{y}; \mathcal{B}(\mathbb{R}^{d}_{z})$.
	\end{lemma}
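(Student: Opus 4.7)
The overall plan is to follow the scheme of \cite[Lemma~3.2]{tacha4}: reduce to a dense class of admissible $f$ by tensor-product density, dispatch the dense class by the algebra property of $A$ together with Stone--Weierstrass, and finally close the argument with a continuity estimate that extends Lemma~\ref{tl}.

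First I would reduce to a compact range. Since each component of $\mathbf{v}$ lies in $A\subset\mathcal{B}(\mathbb{R}^d_y\times\mathbb{R}^d_z)$ it is bounded, so its range is contained in some compact box $K\subset\mathbb{R}^m$. Viewing $f$ as an element of $\mathcal{C}(K;\mathfrak{X}^{\widetilde{\Phi}}_A(\mathbb{R}^d_y;\mathcal{B}(\mathbb{R}^d_z)))$ and applying the standard tensor-product density theorem ($\mathcal{C}(K)\otimes V$ is dense in $\mathcal{C}(K;V)$ for any Banach space $V$) combined with the density of $A$ in $\mathfrak{X}^{\widetilde{\Phi}}_A$ (a consequence of $A$ being of class $\mathcal{C}^\infty$), one obtains a sequence $g_n=\sum_i\phi_i^n\otimes\psi_i^n$, with $\phi_i^n\in\mathcal{C}(K;\mathbb{C})$ and $\psi_i^n\in A$, such that $\sup_{\mu\in K}\|g_n(\cdot,\cdot,\mu)-f(\cdot,\cdot,\mu)\|_{\Xi^{\widetilde{\Phi}}}\to 0$.

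Next I would handle the elementary tensors. For $g=\phi\otimes\psi$ with $\phi\in\mathcal{C}(K;\mathbb{C})$ and $\psi\in A$, the trace $(y,z)\mapsto\phi(\mathbf{v}(y,z))\psi(y,z)$ belongs to $A$: by Stone--Weierstrass, $\phi$ is a uniform limit on $K$ of polynomials in the real coordinates $\mu_1,\dots,\mu_m$; each such polynomial evaluated at $\mathbf{v}$ belongs to $A$ because $A$ is an algebra containing the components of $\mathbf{v}$, and uniform convergence in $\mathcal{B}(\mathbb{R}^d_y\times\mathbb{R}^d_z)$ together with the closedness of $A$ forces $\phi(\mathbf{v})\in A$. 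Multiplying by $\psi\in A$ yields $\phi(\mathbf{v})\psi\in A$, so every $g_n(\cdot,\cdot,\mathbf{v})$ lies in $A\subset\mathfrak{X}^{\widetilde{\Phi}}_A(\mathbb{R}^d_y;\mathcal{B}(\mathbb{R}^d_z))$.

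The crux, and what I anticipate as the main obstacle, is to upgrade the uniform-in-$\mu$ convergence $g_n\to f$ into $\Xi^{\widetilde{\Phi}}$-convergence of the traces $g_n(\cdot,\cdot,\mathbf{v})\to f(\cdot,\cdot,\mathbf{v})$, so that the limit lies in the closed subspace $\mathfrak{X}^{\widetilde{\Phi}}_A$. This amounts to the continuity estimate
\[
\|h(\cdot,\cdot,\mathbf{v})\|_{\Xi^{\widetilde{\Phi}}(\mathbb{R}^d_y;\mathcal{B}(\mathbb{R}^d_z))}\le \|h\|_{\mathcal{C}(K;\Xi^{\widetilde{\Phi}}(\mathbb{R}^d_y;\mathcal{B}(\mathbb{R}^d_z)))}
\]
for $h\in\mathcal{C}(K;\mathfrak{X}^{\widetilde{\Phi}}_A(\mathbb{R}^d_y;\mathcal{B}(\mathbb{R}^d_z)))$. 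The difficulty is that the obvious pointwise bound $|h(y,z,\mathbf{v}(y,z))|\le\sup_{\mu\in K}|h(y,z,\mu)|$ does not transfer automatically to the Luxemburg norm, the Orlicz norm of a pointwise supremum being in general larger than the supremum of Orlicz norms. The remedy, patterned on the proof of Lemma~\ref{tl}, is to argue directly at the Luxemburg threshold: for $\delta$ strictly exceeding the right-hand side, one checks
\[
\sup_{0<\varepsilon\le 1}\int_{B_d}\widetilde{\Phi}\!\left(\frac{\|h(x/\varepsilon_1,\cdot,\mathbf{v}(x/\varepsilon_1,\cdot))\|_{L^\infty_z}}{\delta}\right)dx\le 1
\]
by covering $K$ with a finite $\eta$-net $\mu_1,\dots,\mu_N$ (available since $\mu\mapsto h(\cdot,\cdot,\mu)$ is uniformly continuous on the compact $K$), dominating the integrand by $\max_{1\le i\le N}\widetilde{\Phi}(\|h(x/\varepsilon_1,\cdot,\mu_i)\|_{L^\infty_z}/\delta)$ plus an error that vanishes as $\eta\to 0$ thanks to the $\Delta_2$-property of $\widetilde{\Phi}$, and then summing. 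Once this estimate is secured, $(g_n(\cdot,\cdot,\mathbf{v}))_n$ is Cauchy in $\Xi^{\widetilde{\Phi}}$ and converges to $f(\cdot,\cdot,\mathbf{v})$, which therefore lies in $\mathfrak{X}^{\widetilde{\Phi}}_A(\mathbb{R}^d_y;\mathcal{B}(\mathbb{R}^d_z))$, finishing the proof.
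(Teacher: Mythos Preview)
Your approach is essentially the same as the paper's, which proves Lemma~\ref{tm} in two lines by saying the proof ``can be deduced from the above lemma'' (i.e., Lemma~\ref{tl}), following \cite[Lemma~3.2]{tacha4}. The scheme you lay out---restrict to the compact range $K$ of $\mathbf{v}$, approximate $f$ in $\mathcal{C}(K;\mathfrak{X}^{\widetilde{\Phi}}_A)$ by tensor products $\sum\phi_i\otimes\psi_i$ with $\psi_i\in A$, show each approximant's trace lies in $A$ via Stone--Weierstrass, then pass to the limit via a continuity estimate for the trace map---is exactly the route.

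One caveat on your $\eta$-net sketch: uniform continuity of $\mu\mapsto h(\cdot,\cdot,\mu)$ is only in the $\Xi^{\widetilde{\Phi}}$-norm, so for \emph{fixed} $\mu,\mu'$ with $|\mu-\mu'|<\eta$ you control $\|h(\cdot,\cdot,\mu)-h(\cdot,\cdot,\mu')\|_{\Xi^{\widetilde{\Phi}}}$, but this does not give a pointwise bound on $|h(y,z,\mathbf{v}(y,z))-h(y,z,\mu_{i(y,z)})|$ when the chosen net point depends on $(y,z)$. Hence the ``error that vanishes as $\eta\to0$'' is not justified as written, and the $\max_i$ term picks up an $N$-dependent constant. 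The clean fix is precisely what Lemma~\ref{tl} packages: one proves the trace inequality first on $\mathcal{B}\otimes\Xi^{\widetilde{\Phi}}$ (where for a simple tensor $\phi\otimes\psi$ the bound $|\phi(\mathbf{v}(y,z))\psi(y,z)|\le\|\phi\|_\infty|\psi(y,z)|$ reduces everything to the $\Xi^{\widetilde{\Phi}}$-norm of $\psi$), and then extends by density---rather than trying to control the sup over $\mu$ inside the Luxemburg integral. Since you explicitly defer to Lemma~\ref{tl} for this step, your plan is sound; just replace the $\eta$-net heuristic by a direct appeal to that lemma (with $\mathbb{R}^d_x$ replaced by $\mathbb{R}^m_\mu$ via the substitution $\mu=\mathbf{v}(y,z)$).
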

	
	Now, we are in position to establish \eqref{tk} for all  $f \in A_{\mathbb{R}}$ and $\mathbf{f} \in (A_{\mathbb{R}})^{d}$. As in the proof of \cite[Proposition 3.3]{tacha4}, we use Lemma \ref{tm} to  have the following.
	
	\begin{proposition}\label{tn}
		Let $A$ be an $RH$-supralgebra of class $\mathcal{C}^{\infty}$ such that \eqref{tj} holds. Then for every $f \in A_{\mathbb{R}}$ and $\mathbf{f} \in (A_{\mathbb{R}})^{d}$, we have \eqref{tk}.
	\end{proposition}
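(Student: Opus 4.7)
The plan is to reduce the statement to Lemma \ref{tm} by viewing $a_{i}$ as a function of finitely many real parameters $(\zeta,\lambda)$ that takes values in $\mathfrak{X}_{A}^{\widetilde{\Phi}}(\mathbb{R}_{y}^{d};\mathcal{B}(\mathbb{R}_{z}^{d}))$, and then composing with the boundedness of $f$ and $\mathbf{f}$. Fix $1\leq i\leq d$, $f\in A_{\mathbb{R}}$ and $\mathbf{f}\in (A_{\mathbb{R}})^{d}$. Since $A\subset \mathcal{B}(\mathbb{R}^{2d})$, the ranges $f(\mathbb{R}^{2d})$ and $\mathbf{f}(\mathbb{R}^{2d})$ are contained in compact sets $K\subset \mathbb{R}$ and $L\subset \mathbb{R}^{d}$, respectively.

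The first step is to promote \eqref{tj} to a continuity statement in $(\zeta,\lambda)$. Consider the map $T:\mathbb{R}\times \mathbb{R}^{d}\to \mathfrak{X}_{A}^{\widetilde{\Phi}}$ defined by $T(\zeta,\lambda)=a_{i}(\cdot,\cdot,\zeta,\lambda)$; this is well defined by \eqref{tj}. From the growth estimate \eqref{1.3} we have, pointwise in $(y,z)$,
\begin{equation*}
|a_{i}(y,z,\zeta,\lambda)-a_{i}(y,z,\zeta',\lambda')|\leq c_{1}\widetilde{\Psi}^{-1}(\Phi(c_{2}|\zeta-\zeta'|))+c_{3}\widetilde{\Phi}^{-1}(\Phi(c_{4}|\lambda-\lambda'|)),
\end{equation*}
with a right-hand side that is independent of $(y,z)$, continuous in $(\zeta,\lambda,\zeta',\lambda')$, and vanishes when $(\zeta,\lambda)=(\zeta',\lambda')$. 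Because the $\mathfrak{X}_{A}^{\widetilde{\Phi}}$-seminorm of a bounded function is controlled by its $L^{\infty}$-norm (through the mean-value functional $M$ applied to $\widetilde{\Phi}$ of the function), this estimate yields
\begin{equation*}
\|T(\zeta,\lambda)-T(\zeta',\lambda')\|_{\widetilde{\Phi},A}\to 0\quad\text{as}\;(\zeta',\lambda')\to(\zeta,\lambda),
\end{equation*}
so $T\in \mathcal{C}(\mathbb{R}\times\mathbb{R}^{d};\mathfrak{X}_{A}^{\widetilde{\Phi}})$; restricted to $K\times L$ it is in particular bounded, so $T|_{K\times L}\in \mathcal{C}(K\times L;\mathfrak{X}_{A}^{\widetilde{\Phi}})$.

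Next, extend this restriction to an element of $\mathcal{B}(\mathbb{R}^{d+1};\mathfrak{X}_{A}^{\widetilde{\Phi}})$ by truncation. Pick $\chi\in \mathcal{C}_{c}(\mathbb{R}^{d+1})$ with $0\leq\chi\leq 1$ and $\chi\equiv 1$ on a neighbourhood of $K\times L$, and set
\begin{equation*}
\widetilde{a}_{i}(y,z,\zeta,\lambda)=\chi(\zeta,\lambda)\,a_{i}(y,z,\zeta,\lambda).
\end{equation*}
Since $\chi$ has compact support and $T$ is continuous on $\mathbb{R}^{d+1}$, the function $\widetilde{a}_{i}$ lies in $\mathcal{B}(\mathbb{R}^{d+1};\mathfrak{X}_{A}^{\widetilde{\Phi}})$. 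Applying Lemma \ref{tm} with $m=d+1$ and $\mathbf{v}=(f,\mathbf{f})\in A^{d+1}$ gives $\widetilde{a}_{i}(\cdot,\cdot,f,\mathbf{f})\in \mathfrak{X}_{A}^{\widetilde{\Phi}}$. As $(f(y,z),\mathbf{f}(y,z))\in K\times L$ for every $(y,z)$, and $\chi\equiv 1$ on $K\times L$, this trace coincides with $a_{i}(\cdot,\cdot,f,\mathbf{f})$, which proves \eqref{tk}.

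The main obstacle is the first step: deriving the $\mathfrak{X}_{A}^{\widetilde{\Phi}}$-continuity of $T$ from the pointwise bound \eqref{1.3}. The subtlety is that $\mathfrak{X}_{A}^{\widetilde{\Phi}}$ is only a seminormed space (the norm is defined through $M\circ\widetilde{\Phi}$), so the argument reducing the $\mathfrak{X}_{A}^{\widetilde{\Phi}}$-seminorm to the $L^{\infty}$-bound of the pointwise estimate must be carried out carefully, using the continuity of $\widetilde{\Psi}^{-1}\circ\Phi$ and $\widetilde{\Phi}^{-1}\circ\Phi$ at $0$ together with the fact that $M(\widetilde{\Phi}(|u|/\delta))\leq \widetilde{\Phi}(\|u\|_{\infty}/\delta)$ for $u\in \mathfrak{X}_{A}^{\widetilde{\Phi},\infty}$. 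Once this uniform estimate is in place, the rest of the argument is routine.
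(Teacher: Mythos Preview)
Your approach is correct and matches the paper's: both reduce the result to Lemma \ref{tm} by establishing that $(\zeta,\lambda)\mapsto a_i(\cdot,\cdot,\zeta,\lambda)$ lies in $\mathcal{B}(\mathbb{R}^{d+1};\mathfrak{X}_A^{\widetilde{\Phi}})$ after a harmless cutoff. Regarding the obstacle you flag, note that the uniform pointwise bound from \eqref{1.3} controls the $\Xi^{\widetilde{\Phi}}$-norm (the Banach topology on $\mathfrak{X}_A^{\widetilde{\Phi}}$ that is actually required in Lemma \ref{tm}) directly, not just the seminorm $\|\cdot\|_{\widetilde{\Phi},A}$, so the ``subtlety'' you mention is in fact no subtlety at all.
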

	
	Using the proof of \cite[Lemma 2.10]{nguetseng2025homogenization} replacing integer $q$ by any $N$-function $\Phi \in \Delta_{2}$, we have the following corollary.
	\begin{corollary}
		Let \eqref{tk} be satisfied. Then, the following hold for any arbitrary $f \in A_{\mathbb{R}}$ and $\mathbf{f} \in (A_{\mathbb{R}})^{d}$.
		\begin{itemize}
			\item[(i)] One has $a_{i}(\cdot, \cdot, f, \mathbf{f}) \in L^{\infty}(\mathbb{R}^{d}_{y}; \mathcal{B}(\mathbb{R}^{d}_{z}))$ $(1\leq i\leq d)$. 
			\item[(ii)] For a.e. $y \in \mathbb{R}^{d}_{y}$, we have $a_{i}(y, \cdot, f(y,\cdot), \mathbf{f}(y,\cdot)) \in A_{z}$ $(1\leq i\leq d)$.
		\end{itemize}
	\end{corollary}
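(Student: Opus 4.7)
\smallskip
\noindent\textbf{Proof plan.} The two parts should be handled separately: part (i) is essentially a direct estimate from the nonstandard growth hypothesis, while part (ii) is a density-and-extraction argument.

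For part (i), my plan is to evaluate the Lipschitz-type bound \eqref{1.3} at $(\zeta',\lambda')=(0,\omega)$ so as to get a pointwise estimate
\begin{equation*}
|a_i(y,z,f(y,z),\mathbf{f}(y,z))| \le |a_i(y,z,0,\omega)| + c_1\widetilde{\Psi}^{-1}(\Phi(c_2|f(y,z)|)) + c_3\widetilde{\Phi}^{-1}(\Phi(c_4|\mathbf{f}(y,z)|)).
\end{equation*}
Since $f\in A_{\mathbb{R}}\subset \mathcal{B}(\mathbb{R}^{d}_{y}\times\mathbb{R}^{d}_{z})$ and $\mathbf{f}\in (A_{\mathbb{R}})^{d}\subset \mathcal{B}(\mathbb{R}^{d}_{y}\times\mathbb{R}^{d}_{z})^{d}$, the inner arguments $f,\mathbf{f}$ are uniformly bounded, and $a(\cdot,\cdot,0,\omega)\in L^{\infty}(\mathbb{R}^{d}_{y}\times\mathbb{R}^{d}_{z})$ by part (ii) of \textbf{(H$_1$)}. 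Monotonicity of $\widetilde{\Phi}^{-1}\circ\Phi$ and $\widetilde{\Psi}^{-1}\circ\Phi$ on bounded sets then yields a global $L^{\infty}$ bound. Continuity of $z\mapsto a_{i}(y,z,f(y,z),\mathbf{f}(y,z))$ for each fixed $y$ follows from the $z$-continuity of $a$ (part (ii) of \textbf{(H$_1$)}) combined with the Lipschitz estimate \eqref{1.3}; taking the supremum in $z$ gives the $L^{\infty}(\mathbb{R}^{d}_{y};\mathcal{B}(\mathbb{R}^{d}_{z}))$ conclusion.

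For part (ii), I would exploit the hypothesis \eqref{tk}: since $a_i(\cdot,\cdot,f,\mathbf{f})\in \mathfrak{X}_{A}^{\widetilde{\Phi}}(\mathbb{R}^{d}_{y};\mathcal{B}(\mathbb{R}^{d}_{z}))$ which is the closure of $A=A_{y}\odot A_{z}$ in $\Xi^{\widetilde{\Phi}}(\mathbb{R}^{d}_{y};\mathcal{B}(\mathbb{R}^{d}_{z}))$, and since $A_{y}\otimes A_{z}$ is dense in $A_{y}\odot A_{z}$ by the very definition, pick a sequence $(g_{n})\subset A_{y}\otimes A_{z}$ of finite sums $g_{n}(y,z)=\sum_{k=1}^{N_{n}} u_{n,k}(y)v_{n,k}(z)$ with $u_{n,k}\in A_{y}$, $v_{n,k}\in A_{z}$, converging to $a_{i}(\cdot,\cdot,f,\mathbf{f})$ in $\Xi^{\widetilde{\Phi}}(\mathbb{R}^{d}_{y};\mathcal{B}(\mathbb{R}^{d}_{z}))$. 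For each $y$, $g_{n}(y,\cdot)=\sum_{k}u_{n,k}(y)v_{n,k}(\cdot)\in A_{z}$ as a scalar linear combination of elements of the algebra $A_{z}$. Convergence in $\Xi^{\widetilde{\Phi}}$ implies convergence in $L^{\widetilde{\Phi}}_{\mathrm{loc}}(\mathbb{R}^{d}_{y};\mathcal{B}(\mathbb{R}^{d}_{z}))$ (by the local integrability statement in the definition of $\Xi^{\widetilde{\Phi}}$ and the $\Delta_{2}$-property), so upon passing to a subsequence one has $\|g_{n}(y,\cdot)-a_{i}(y,\cdot,f(y,\cdot),\mathbf{f}(y,\cdot))\|_{\mathcal{B}(\mathbb{R}^{d}_{z})}\to 0$ for a.e.\ $y$. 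Since $A_{z}$ is closed in $\mathcal{B}(\mathbb{R}^{d}_{z})$, the limit $z\mapsto a_{i}(y,z,f(y,z),\mathbf{f}(y,z))$ belongs to $A_{z}$ for a.e.\ $y$, which is (ii).

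\smallskip
\noindent\textbf{Expected main obstacle.} The only delicate point lies in step (ii), namely extracting from convergence in the (possibly non-separated) space $\Xi^{\widetilde{\Phi}}(\mathbb{R}^{d}_{y};\mathcal{B}(\mathbb{R}^{d}_{z}))$ a subsequence that converges \emph{pointwise in $y$} for the norm of $\mathcal{B}(\mathbb{R}^{d}_{z})$. One cannot invoke the standard a.e.-subsequence lemma directly, as the target values are Banach-space valued and the $\Xi^{\widetilde{\Phi}}$-seminorm is defined through a supremum over $\varepsilon$. The route I would take is: reduce first to convergence of scalar integrals $\int_{B_{d}}\widetilde{\Phi}(\|g_{n}(y,\cdot)-a_{i}(y,\cdot,f,\mathbf{f})\|_{\mathcal{B}(\mathbb{R}^{d}_{z})}/\delta)\,dy\to 0$ (this is exactly what the $\Xi^{\widetilde{\Phi}}$-seminorm controls, taking $\varepsilon=1$), exploit $\widetilde{\Phi}\in\Delta_{2}$ to upgrade to $L^{1}$-type convergence of the scalar functions $y\mapsto \|g_{n}(y,\cdot)-a_{i}(y,\cdot,f,\mathbf{f})\|_{\mathcal{B}(\mathbb{R}^{d}_{z})}$, and finally extract an a.e.\ convergent subsequence by the classical scalar argument. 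Everything else is bookkeeping driven by \textbf{(H$_1$)}--\textbf{(H$_2$)} and the structure of $A_{y}\odot A_{z}$; this is exactly the straightforward adaptation suggested by the authors' remark referring to \cite[Lemma~2.10]{nguetseng2025homogenization} with $q$ replaced by any $N$-function of class $\Delta_{2}$.
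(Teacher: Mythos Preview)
Your proposal is correct and reproduces exactly the argument the paper defers to via \cite[Lemma~2.10]{nguetseng2025homogenization}: part~(i) follows from the growth estimate \eqref{1.3} evaluated at $(0,\omega)$ together with boundedness of $f,\mathbf f\in A_{\mathbb R}$, and part~(ii) from approximating in $\Xi^{\widetilde\Phi}$ by tensors in $A_y\otimes A_z$, extracting an a.e.\ subsequence, and invoking closedness of $A_z$ in $\mathcal B(\mathbb R^d_z)$. The only point worth making explicit is that the $\Xi^{\widetilde\Phi}$-seminorm (a supremum over $0<\varepsilon\le1$) controls $L^{\widetilde\Phi}(B_R;\mathcal B(\mathbb R^d_z))$ for \emph{every} $R>0$ through the change of variables $y=x/\varepsilon_1$ and convexity of $\widetilde\Phi$, so a diagonal extraction over $R\to\infty$ yields a.e.\ convergence on all of $\mathbb R^d_y$ rather than just on $B_d$.
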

	
	In the following proposition, we recall the property of function $a(\cdot, \cdot, w, \mathbf{v})$ in \eqref{1.1} when $\left( v,%
	\mathbf{v}\right) \in \mathcal{C}\left( \overline{\Omega };\mathcal{B}\left( 
	\mathbb{R}_{y}^{d}\times 
	\mathbb{R}_{z}^{d}\right) \right)^{d+1}$
	
	\begin{proposition}\cite{tacha4}\label{cor42} \\
		\bigskip Let $a:= (a_i)_{1\leq i \leq d}:\mathbb R^d \times\mathbb R^d \times \mathbb R \times \mathbb R^d\to \mathbb R^d $ satisfy \textbf{(H$_{1}$)}-\textbf{(H$_{4}$)} and let $\left( v,\mathbf{v}\right) \in \mathcal{B}\left( 
		\mathbb{R}_{y}^{d}\times 
		\mathbb{R}
		_{z}^{d}\right)^{d+1}.$ For $1\leq i\leq d,$ the function $\left(
		y,z\right) \rightarrow a_{i}\left( y,z,v\left( y,z\right) ,\mathbf{v}\left(
		y,z\right) \right) $ from $\mathbb{R}_{y}^{d}\times 
		\mathbb{R}_{z}^{d}$ into $\mathbb{R}$ is an element of $L^{\infty }\left( 
		\mathbb{R}_{y}^{d},\mathcal{B}\left( 
		\mathbb{R}
		_{z}^{d}\right) \right) ,$ denoted as $a_{i}\left( \cdot, \cdot,v,\mathbf{v}\right)$.
		
		Moreover, the function $%
		x \in \Omega \rightarrow a_{i}\left( \frac{x}{\varepsilon },\frac{x}{\varepsilon ^{2}}%
		,w\left( x,\frac{x}{\varepsilon },\frac{x}{\varepsilon ^{2}}\right) ,\mathbf{%
			w}\left( x,\frac{x}{\varepsilon },\frac{x}{\varepsilon ^{2}}\right) \right) \in
		\mathbb{R}
		,$ denoted  by $a_{i}\left( \cdot, \cdot, w^{\varepsilon },\mathbf{w}^{\varepsilon }\right)
		,$ is well defined and belongs to $L^{\infty }\left( \Omega \right) .$
	\end{proposition}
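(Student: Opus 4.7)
The plan is to establish the two assertions in the order given: first the trace result $a_i(\cdot,\cdot,v,\mathbf{v})\in L^{\infty}(\mathbb{R}^d_y;\mathcal{B}(\mathbb{R}^d_z))$ for $(v,\mathbf{v})\in\mathcal{B}(\mathbb{R}^d_y\times\mathbb{R}^d_z)^{d+1}$, and then to deduce the well-posedness and $L^{\infty}(\Omega)$-membership of $a_i(\cdot,\cdot,w^{\varepsilon},\mathbf{w}^{\varepsilon})$ by specializing $x$. For the first claim I would verify three ingredients separately: (i) measurability in $y$ (for each fixed $z$), (ii) continuity in $z$ (for a.e.\ fixed $y$), and (iii) a uniform $L^{\infty}$-bound.

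For (i), fix $z\in\mathbb{R}^d$. By \textbf{(H$_{1}$)(i)} the map $y\mapsto a_i(y,z,\zeta,\lambda)$ is measurable for every fixed $(\zeta,\lambda)$, while the Lipschitz-type estimate \eqref{1.3} makes $a_i(y,z,\cdot,\cdot)$ continuous; hence $(y,(\zeta,\lambda))\mapsto a_i(y,z,\zeta,\lambda)$ is of Carath\'eodory type. Since $v(\cdot,z)$ and $\mathbf{v}(\cdot,z)$ are continuous, and in particular measurable, it is classical that $y\mapsto a_i(y,z,v(y,z),\mathbf{v}(y,z))$ is measurable. For (ii), fix $y$ in the full measure set where \textbf{(H$_{1}$)(ii)} holds; then $z\mapsto a_i(y,z,\zeta,\lambda)$ is continuous for every $(\zeta,\lambda)$, continuity being uniform in $(\zeta,\lambda)$ on bounded sets thanks to \eqref{1.3}. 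Combined with continuity of $z\mapsto(v(y,z),\mathbf{v}(y,z))$, the composition is continuous in $z$.

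For (iii), apply \eqref{1.3} with $(\zeta^{\prime},\lambda^{\prime})=(0,\omega)$ to obtain
\begin{equation*}
|a_i(y,z,v(y,z),\mathbf{v}(y,z))|\leq |a_i(y,z,0,\omega)|+c_1\widetilde{\Psi}^{-1}\!\left(\Phi(c_2\|v\|_{\infty})\right)+c_3\widetilde{\Phi}^{-1}\!\left(\Phi(c_4\|\mathbf{v}\|_{\infty})\right),
\end{equation*}
which is bounded thanks to $a(\cdot,\cdot,0,\omega)\in L^{\infty}(\mathbb{R}^d_y\times\mathbb{R}^d_z)$ from \textbf{(H$_{1}$)(ii)} and the boundedness of $v,\mathbf{v}$. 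Items (ii) and (iii) together give $a_i(y,\cdot,v(y,\cdot),\mathbf{v}(y,\cdot))\in\mathcal{B}(\mathbb{R}^d_z)$ for a.e.\ $y$, uniformly in $y$, while (i) gives the required measurability in $y$; that is exactly membership in $L^{\infty}(\mathbb{R}^d_y;\mathcal{B}(\mathbb{R}^d_z))$.

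For the second assertion, fix $\varepsilon>0$ and $x\in\Omega$, and specialize the first part to the sections $v(y,z):=w(x,y,z)$, $\mathbf{v}(y,z):=\mathbf{w}(x,y,z)$, which lie in $\mathcal{B}(\mathbb{R}^d_y\times\mathbb{R}^d_z)^{d+1}$ uniformly in $x\in\overline{\Omega}$. Thus the trace $(y,z)\mapsto a_i(y,z,w(x,y,z),\mathbf{w}(x,y,z))$ is an element of $L^{\infty}(\mathbb{R}^d_y;\mathcal{B}(\mathbb{R}^d_z))$ with norm bounded independently of $x$. Pointwise evaluation at $y=x/\varepsilon$, $z=x/\varepsilon^2$ is legitimate because the $z$-continuity from step (ii) allows us to fix the value unambiguously, while measurability in $x$ of $x\mapsto a_i(x/\varepsilon,x/\varepsilon^2,w^{\varepsilon}(x),\mathbf{w}^{\varepsilon}(x))$ follows from the joint Carath\'eodory-type structure of $a_i$ and the continuity of $w,\mathbf{w}$ in all variables. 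The uniform bound from (iii) then gives $a_i(\cdot,\cdot,w^{\varepsilon},\mathbf{w}^{\varepsilon})\in L^{\infty}(\Omega)$. The principal difficulty I expect lies in a clean formalization of the measurability issue in (i) and in the pointwise evaluation step: $a_i$ is only prescribed almost everywhere in $y$, so one must be careful that the choice of representative is compatible with the continuous structure in $z$ and $(\zeta,\lambda)$, a point that is handled by fixing once and for all the unique continuous-in-$z$ representative coming from \textbf{(H$_{1}$)(ii)} and \eqref{1.3}.
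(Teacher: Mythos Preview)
The paper does not supply its own proof of this proposition; it is quoted directly from \cite{tacha4} and stated without argument. Your proof is correct and follows the natural route: Carath\'eodory structure for $y$-measurability, continuity in $z$ via \textbf{(H$_{1}$)(ii)} and \eqref{1.3}, and the uniform bound from \eqref{1.3} with $(\zeta',\lambda')=(0,\omega)$ together with $a(\cdot,\cdot,0,\omega)\in L^{\infty}$.

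One point worth sharpening: your step (i) establishes scalar measurability of $y\mapsto a_i(y,z,v(y,z),\mathbf{v}(y,z))$ for each fixed $z$, but membership in $L^{\infty}(\mathbb{R}^d_y;\mathcal{B}(\mathbb{R}^d_z))$ requires Bochner (strong) measurability of the $\mathcal{B}(\mathbb{R}^d_z)$-valued map $y\mapsto a_i(y,\cdot,v(y,\cdot),\mathbf{v}(y,\cdot))$. Since $\mathcal{B}(\mathbb{R}^d_z)$ is not separable, Pettis' theorem does not apply directly. The standard fix, used e.g.\ in \cite[Lemma~2.1]{luka} and invoked in Section~\ref{labelsect5} of the present paper, is to observe that the range lies in a separable closed subspace (because $(v,\mathbf{v})$ is bounded and continuous, hence takes values in a compact set, and \eqref{1.3} then forces equicontinuity in $z$). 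You flag exactly this difficulty at the end of your proposal, so you are aware of it; just make the separability argument explicit.
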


	\subsection{Weak solution of problem \eqref{1.1}}\label{appen2}
	
	\noindent The aim of this section consists to establish of the existence of a weak solution to \eqref{1.1}, neglecting the \textit{abstract structure hypothesis} on $a$. It is done in \cite[Sect. 3 ]{tacha4}, but for the reader's convenience we recall it here.
	
	\begin{proposition}\label{propalphabeta}\cite{tacha4}  \\
		Let $(a_i)_{1\leq i \leq d}$ be the functions in \eqref{1.1} and assume that they satisfy the assumptions \textbf{(H$_{1}$)}-\textbf{(H$_{4}$)}. Let $a^{\varepsilon }\left( \cdot, \cdot,v,%
		\mathbf{v}\right) =\left( a_{i}^{\varepsilon }\left( \cdot, \cdot,v,\mathbf{v}\right)
		\right) _{1\leq i\leq d},$ then
		the transformation $\left( v,\mathbf{v}\right) \rightarrow a^{\varepsilon
		}\left( \cdot, \cdot,v,\mathbf{v}\right) $ of $\ \mathcal{C}\left( \overline{\Omega }
		\right) \times \mathcal{C}\left( \overline{\Omega }
		\right) ^{d}$ into $L^{\infty }(\Omega)^{d}$ extends by continuity to a mapping, still denoted by $\left( v,%
		\mathbf{v}\right) \rightarrow a^{\varepsilon }\left( \cdot, \cdot,v,\mathbf{v}\right)
		,$ from $L^{\Phi }\left( \Omega
		\right) \times L^{\Phi }\left( \Omega
		\right) ^{d}$ into $L^{\widetilde{\Phi }}\left( \Omega
		\right) ^{d}$ such that
		\begin{equation*}\label{Lest}
			\left\Vert a^{\varepsilon }\left( \cdot, \cdot,v,\mathbf{v}\right) -a^{\varepsilon
			}\left( \cdot, \cdot,w,\mathbf{w}\right) \right\Vert _{\widetilde{\Phi },\Omega }\leq
			c\left\Vert v-w\right\Vert _{\Phi ,\Omega }^{\alpha }+c^{\prime }\left\Vert
			\bf v-\bf w\right\Vert _{\Phi ,\Omega }^{\beta },  
		\end{equation*}
		for all $\left( v,\mathbf{v}\right) ,\left( w,\mathbf{w}\right) \in L^{\Phi
		}\left( \Omega
		\right) \times L^{\Phi }\left( \Omega
		\right) ^{d}$ where $c,c^{\prime }>0$  \\ and $\alpha ,\beta \in \left\{ \frac{%
			\rho _{1}}{\rho _{0}}\left( \rho _{0}-1\right) ,\frac{\rho _{2}}{\rho _{0}}%
		\left( \rho _{0}-1\right) ,\right.$ 	$\left.\rho _{1}-1,\frac{\rho _{2}}{\rho _{1}}\left(
		\rho _{1}-1\right) \right\},$
		with the constants $\rho_0,\rho_1$ and $\rho_2$ as in \eqref{1.2}.
	\end{proposition}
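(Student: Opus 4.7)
The plan is to establish the estimate first on the dense subspace $\mathcal{C}(\overline{\Omega}) \times \mathcal{C}(\overline{\Omega})^d \subset L^{\Phi}(\Omega) \times L^{\Phi}(\Omega)^d$, and then use the $\Delta_2$-condition on $\Phi$ to extend by continuity. The starting point is Proposition \ref{cor42}, which guarantees that for continuous $(v,\mathbf{v})$ the trace function $a^{\varepsilon}(\cdot,\cdot,v,\mathbf{v})$ is well-defined in $L^{\infty}(\Omega)^d \subset L^{\widetilde{\Phi}}(\Omega)^d$. Applying the nonstandard growth hypothesis \eqref{1.3} pointwise in $x\in \Omega$ with $\zeta = v(x)$, $\zeta' = w(x)$, $\lambda = \mathbf{v}(x)$, $\lambda' = \mathbf{w}(x)$ yields
\begin{equation*}
|a^{\varepsilon}(\cdot,\cdot,v,\mathbf{v}) - a^{\varepsilon}(\cdot,\cdot,w,\mathbf{w})| \leq c_1 \widetilde{\Psi}^{-1}(\Phi(c_2|v-w|)) + c_3 \widetilde{\Phi}^{-1}(\Phi(c_4|\mathbf{v}-\mathbf{w}|))
\end{equation*}
almost everywhere in $\Omega$. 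By the triangle inequality for the Luxemburg norm, the task reduces to bounding each of the two resulting $L^{\widetilde{\Phi}}(\Omega)$-norms separately.

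The key ingredient is the set of power-type growth estimates on $\Phi$, $\Psi$, $\widetilde{\Phi}$, $\widetilde{\Psi}$ that are produced by integrating the inequality \eqref{1.2}. Specifically, from $\rho_1 \leq t\phi(t)/\Phi(t) \leq \rho_2$ one derives $\Phi(\sigma t) \leq \max(\sigma^{\rho_1},\sigma^{\rho_2}) \Phi(t)$ for all $\sigma,t \geq 0$, together with the analogous dual statement for $\widetilde{\Phi}$ involving the conjugate exponents $\rho_j/(\rho_j-1)$; similarly for $\Psi,\widetilde{\Psi}$ with $\rho_0,\rho_1$. For the second term I would write $s := \widetilde{\Phi}^{-1}(\Phi(c_4|\mathbf{v}-\mathbf{w}|))$, so that $\widetilde{\Phi}(s/\delta) \leq \delta^{-\sigma}\widetilde{\Phi}(s) = \delta^{-\sigma}\Phi(c_4|\mathbf{v}-\mathbf{w}|)$ for an appropriate exponent $\sigma > 1$ (depending on whether $\delta$ is above or below $1$), and then invoke $\Phi(c_4|\mathbf{v}-\mathbf{w}|) \leq c \Phi(|\mathbf{v}-\mathbf{w}|)$ (which is a consequence of $\Phi \in \Delta_2$). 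Plugging into the definition of Luxemburg norm and comparing with $\|\mathbf{v}-\mathbf{w}\|_{\Phi,\Omega}$ yields an estimate of the form $c' \|\mathbf{v}-\mathbf{w}\|_{\Phi,\Omega}^{\beta}$ with $\beta \in \{\rho_1-1, (\rho_2/\rho_1)(\rho_1-1)\}$, the two values arising from the two regimes $\|\mathbf{v}-\mathbf{w}\|_{\Phi,\Omega} \leq 1$ and $\geq 1$.

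For the first term I proceed analogously, but now with the additional complication that the outer function is $\widetilde{\Psi}^{-1}$ whereas the ambient norm is $\widetilde{\Phi}$. Here one uses the global domination $\Phi \succ \Psi$ (which gives $\widetilde{\Psi} \succ \widetilde{\Phi}$ for complementary functions, and hence a pointwise comparison $\widetilde{\Phi}(\widetilde{\Psi}^{-1}(r)) \leq c\, r^{\gamma}$-type inequality via the two growth exponents) together with the bounds $\rho_0 \leq t\psi(t)/\Psi(t) \leq \rho_1$ on $\Psi$. This produces a composed exponent of the form $(\rho_j/\rho_0)(\rho_0-1)$ with $j \in \{1,2\}$, again depending on whether $\|v-w\|_{\Phi,\Omega}$ is less than or greater than $1$. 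Combining these two sets of exponents gives precisely the four candidate values listed in the statement for $\alpha,\beta$.

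Finally, the extension step is routine: since $\Phi \in \Delta_2$ implies $\mathcal{C}(\overline{\Omega})$ is dense in $L^{\Phi}(\Omega)$, and the estimate just obtained is uniform in $\varepsilon$ and Hölder-continuous on bounded sets of $L^{\Phi}(\Omega)\times L^{\Phi}(\Omega)^d$, the map $(v,\mathbf{v}) \mapsto a^{\varepsilon}(\cdot,\cdot,v,\mathbf{v})$ admits a unique continuous extension to $L^{\Phi}(\Omega)\times L^{\Phi}(\Omega)^d \to L^{\widetilde{\Phi}}(\Omega)^d$ inheriting the same inequality. The main obstacle is the bookkeeping in the second and third paragraphs: the passages between $\Phi$, $\widetilde{\Phi}$, $\Psi$, $\widetilde{\Psi}$, combined with the two regimes of size of the difference and the need to match the outcome to \emph{exactly} the set $\{(\rho_1/\rho_0)(\rho_0-1),(\rho_2/\rho_0)(\rho_0-1),\rho_1-1,(\rho_2/\rho_1)(\rho_1-1)\}$, require careful tracking of which direction the power inequality is applied in each sub-case.
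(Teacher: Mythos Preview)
The paper does not give its own proof of this proposition: it is stated in the Appendix with the citation \cite{tacha4} and no argument is supplied. So there is nothing in the present paper to compare against line by line.

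That said, your sketch follows precisely the natural route one would expect the cited reference to take: start from the pointwise Lipschitz-type bound \eqref{1.3}, translate it into a Luxemburg-norm estimate via the power-growth relations derived from \eqref{1.2}, and extend from $\mathcal{C}(\overline{\Omega})\times\mathcal{C}(\overline{\Omega})^d$ to $L^{\Phi}(\Omega)\times L^{\Phi}(\Omega)^d$ by density. Your identification of the exponent mechanism is correct in outline: the four values $(\rho_j/\rho_i)(\rho_i-1)$ arise because one layer of the computation uses the growth indices of $\Phi$ (namely $\rho_1,\rho_2$) to pass from the norm of the input to the modular, and the other layer uses the growth indices of the conjugate function ($\widetilde{\Phi}$ or $\widetilde{\Psi}$, with indices tied to $\rho_1$ or $\rho_0$ respectively) to pass from the modular back to the norm of the output. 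Your own caveat about the bookkeeping is apt: the sketch is sound as a strategy, but to turn it into a full proof you would need to write out explicitly the four case distinctions (which of the two terms, and whether the relevant norm is $\leq 1$ or $\geq 1$) and verify that each one lands on one of the four listed exponents. In particular, the claim that $\widetilde{\Phi}(\widetilde{\Psi}^{-1}(r))$ obeys a power bound needs to be justified directly from \eqref{1.2} and the domination $\Phi\succ\Psi$, not just asserted.
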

	
	On the other hand, by (i) and (ii) in \textbf{(H$_{1}$)}, the assumptions on $f$, \eqref{1.3} in \textbf{(H$_{2}$)}, \textbf{(H$_{3}$)} and \textbf{(H$_{4}$)} it follows that for every $\varepsilon >0$, the assumptions of \cite[Theorem 3.2]{Y} are satisfied, hence there exists $%
	u_{\varepsilon }\in W_{0}^{1}L^{\Phi }\left( \Omega
	\right) \cap L^{\infty }\left( \Omega\right) $ weak solution of \eqref{1.1}, i.e. 
	\begin{equation}\label{eqvarepsilon}
		\left\{	\begin{array}{l}
			\displaystyle 	\exists\;  u_{\varepsilon }\in W_{0}^{1}L^{\Phi }\left( \Omega
			\right) \cap L^{\infty }\left( \Omega\right) \hbox{ such that } \\
			\displaystyle 	\int_{\Omega }a\left( \frac{x}{\varepsilon },\frac{x}{%
				\varepsilon ^{2}},u_{\varepsilon },Du_{\varepsilon }\right)
			Dvdx=\int_{\Omega }fvdx, \hbox{ for every }v\in  W_{0}^{1}L^{\Phi }\left( \Omega
			\right). 
		\end{array}\right.
	\end{equation} Thus, by \eqref{1.4} in \textbf{(H$_{4}$)} and \textbf{(H$_{5}$)}, it follows that 
	\begin{align*}\int_{\Omega }\theta \Phi \left( \left\vert
		Du_{\varepsilon }\right\vert \right) dx\leq \int_{\Omega }a\left( \frac{x}{%
			\varepsilon },\frac{x}{\varepsilon ^{2}},u_{\varepsilon },Du_{\varepsilon
		}\right) Du_{\varepsilon }dx=\int_{\Omega }fu_{\varepsilon }dx\leq
		2\left\Vert f\right\Vert _{\widetilde{\Phi },\Omega }\left\Vert
		u_{\varepsilon }\right\Vert _{\Phi ,\Omega },
	\end{align*}
	where 
	\begin{align}
		\label{theta}
		\theta:= {\widetilde \Phi^{-1}}(\Phi (\min_{t\geq 0} h(t)) )> 0.
	\end{align}
	If $\left\Vert \left\vert Du_{\varepsilon }\right\vert \right\Vert _{\Phi
		,\Omega }\geq 1,$ we have 
	\begin{align*}\theta \left\Vert \left\vert Du_{\varepsilon
		}\right\vert \right\Vert _{\Phi ,\Omega }^{\sigma }\leq \int_{\Omega }\theta
		\Phi \left( \left\vert Du_{\varepsilon }\right\vert \right) dx\leq
		2\left\Vert f\right\Vert _{\widetilde{\Phi },\Omega }\left\Vert
		u_{\varepsilon }\right\Vert _{\Phi ,\Omega }\leq c\left\Vert \left\vert
		Du_{\varepsilon }\right\vert \right\Vert _{\Phi ,\Omega };\sigma >1.
	\end{align*} 
	We deduce therefore that: $$\underset{0<\varepsilon }{\sup }\left\Vert
	u_{\varepsilon } \right\Vert _{W_{0}^{1}L^{\Phi
		}\left( \Omega
		\right) }<+\infty .$$
	
	It is easily observed (as in Remark \ref{remuniq}) that if \textbf{(H$_{6}$)} is satisfied the solution in \eqref{eqvarepsilon} is unique.

	\vspace{0.5cm}
	
	\noindent	\textbf{Acknowledgments.}
	\emph{The authors would like to thank the anonymous referee for his/her pertinent remarks, comments and suggestions.}

	\section*{Declarations}
	
	\begin{itemize}
		\item Funding : No funding was received to to assist with the preparation of this manuscript.
		\item Conflict of interest/Competing interests : We have no conflicts of interest to disclose. 
		\item Consent to participate : All authors consented to participate in this work.
		\item Consent for publication : We are enclosing herewith a manuscript entitled ``Reiterated $\Sigma$-convergence in Orlicz setting and Applications" submitted to the journal ``++++++" for possible evaluation. 
		\item Ethics approval : With the submission of this manuscript we would like to undertake that the above mentioned manuscript has not been published elsewhere, accepted for publication elsewhere or under editorial review for publication elsewhere. 
		\item Availability of data and materials : `Not applicable'
		\item Code availability : `Not applicable'
		\item Authors contributions : The authors contributed equally to this work. The first draft of the manuscript was written by \textsc{Tchinda Takougoum Franck Arnold} and all authors commented on previous versions of the manuscript. All authors read and approved the final manuscript.
	\end{itemize}
	

	\bibliographystyle{abbrv}
	\bibliography{stoc_sigma_biblio1}
	
\end{document}